\documentclass[11pt,reqno]{amsart}
\usepackage[T1]{fontenc}
\usepackage[utf8]{inputenc}
\usepackage{lmodern}
\usepackage{microtype}
\usepackage{amssymb,amsmath, amsthm, amsfonts}

\usepackage[dvipsnames]{xcolor}
\usepackage{graphicx}
\usepackage{listings}

\usepackage[normalem]{ulem}

\usepackage[margin=1in]{geometry}
\usepackage{lstautogobble}
\usepackage{enumerate}
\usepackage[shortlabels]{enumitem}
\usepackage{thmtools}
\usepackage{thm-restate}
\usepackage{amsthm}
\usepackage{verbatim}
\usepackage{accents}
\usepackage{mathtools}
\usepackage{physics}

\usepackage[colorlinks=true]{hyperref}
\hypersetup{
  colorlinks=true,
  linkcolor=blue!45!black,
  citecolor=magenta,
  urlcolor=blue!45!black,
  pdftitle={Sharp mean-field estimates for diffusive log/Riesz gases in the Hilbert--Schmidt regime},
  pdfauthor={Matias G. Delgadino, Rishabh Gvalani, Matthew Rosenzweig}
}
\usepackage[nameinlink,capitalise,noabbrev]{cleveref}
\usepackage[bottom]{footmisc}
\crefformat{equation}{(#2#1#3)}

\usepackage{float}
\restylefloat{table}

\usepackage{mathrsfs}
\setlist{
  listparindent=\parindent,
  parsep=0pt,
}
\setlist[itemize]{topsep=4pt,itemsep=2pt,parsep=1pt}
\setlist[enumerate]{topsep=4pt,itemsep=3pt,parsep=1pt}
\allowdisplaybreaks

\theoremstyle{plain}
\newtheorem{thm}{Theorem}[section]
\newtheorem{prop}[thm]{Proposition}
\newtheorem{lemma}[thm]{Lemma}
\newtheorem{cor}[thm]{Corollary}

\theoremstyle{definition}

\newtheorem{remark}[thm]{Remark}

\newtheorem{conj}[thm]{Conjecture}

\Crefname{thm}{Theorem}{Theorems}
\Crefname{prop}{Proposition}{Propositions}
\Crefname{conj}{Conjecture}{Conjectures}
\Crefname{assumption}{Assumption}{Assumptions}

\numberwithin{equation}{section} 

\DeclarePairedDelimiter{\brak}{\lbrack}{\rbrack}
\DeclarePairedDelimiter{\pa}{\lparen}{\rparen}
\DeclarePairedDelimiter{\brac}{\lbrace}{\rbrace}

\DeclareMathOperator{\supp}{supp}

\DeclareMathOperator{\osc}{osc}

\renewcommand{\det}{\mathrm{det}}

\renewcommand{\d}{\mathsf{d}}
\renewcommand{\dd}{\mathop{}\!\mathrm d}

\newcommand{\R}{{\mathbb{R}}}

\newcommand{\N}{{\mathbb{N}}}
\newcommand{\Q}{{\mathbb{Q}}}
\newcommand{\Z}{{\mathbb{Z}}}
\newcommand{\K}{{\mathsf{K}}}

\newcommand{\T}{{\mathbb{T}}}
\newcommand{\g}{{\mathsf{g}}}

\renewcommand{\k}{\mathsf{k}}

\newcommand{\nab}{\nabla}

\newcommand{\tl}{\tilde}

\newcommand{\nn}{\nonumber}

\newcommand{\ol}{\overline}

\newcommand{\XN}{X_N}

\newcommand{\vep}{\varepsilon}

\newcommand{\be}{\beta}

\newcommand{\Tc}{\mathcal{T}}

\newcommand{\indic}{\mathbf{1}}
\newcommand{\f}{\mathsf{f}}
\newcommand{\Fr}{\mathsf{F}}
\newcommand{\Hr}{\mathsf{H}}

\newcommand{\Er}{\mathsf{E}}

 \newcommand{\iu}{\mathrm{i}\mkern1mu}

\newcommand{\Uprop}{\mathsf{U}}

\newcommand{\E}{{\mathbb{E}}}

\newcommand{\wh}{\widehat}

\newcommand{\s}{\mathsf{s}}

\renewcommand{\P}{\mathcal{P}}

\let\div\relax
\DeclareMathOperator{\div}{\mathrm{div}}

\def\XXint#1#2#3{{\setbox0=\hbox{$#1{#2#3}{\int}$ }
\vcenter{\hbox{$#2#3$ }}\kern-.6\wd0}}

\let\oldtocsection=\tocsection

\let\oldtocsubsection=\tocsubsection

\let\oldtocsubsubsection=\tocsubsubsection

\renewcommand{\tocsection}[2]{\hspace{0em}\oldtocsection{#1}{#2}}
\renewcommand{\tocsubsection}[2]{\hspace{1em}\oldtocsubsection{#1}{#2}}
\renewcommand{\tocsubsubsection}[2]{\hspace{2em}\oldtocsubsubsection{#1}{#2}}

\newcommand{\step}[1]{\par\medskip\noindent\textbf{#1}}

\title[Sharp mean-field estimates for diffusive log/Riesz gases]{Sharp mean-field estimates for diffusive log/Riesz gases in the Hilbert--Schmidt regime}

\author[M.G. Delgadino]{Matias G. Delgadino}
\address{Matias Delgadino, Department of Mathematics, The University of Texas at Austin, PMA 8.100, 2515 Speedway, Austin, TX 78712, USA}
\email{matias.delgadino@math.utexas.edu}
\urladdr{https://math.utexas.edu/directory/matias-delgadino}
\author[R. Gvalani]{Rishabh Gvalani}
\address{Rishabh Gvalani, School of Mathematics, University of Edinburgh, James Clerk Maxwell Building, Peter Guthrie Tait Road, Edinburgh EH9 3FD, United Kingdom}
\email{rishabh.gvalani@ed.ac.uk}
\urladdr{https://webhomes.maths.ed.ac.uk/~rgvalani/}
\author[M. Rosenzweig]{Matthew Rosenzweig}
\address{Matthew Rosenzweig, Department of Mathematical Sciences, Carnegie Mellon University, 7127 Wean Hall, 5000 Forbes Avenue, Pittsburgh, PA 15213, USA}
\email{mrosenz2@andrew.cmu.edu}
\urladdr{https://matthewrosenzweigwork-max.github.io/}
\thanks{M.G.D. was supported by NSF grant DMS-2205937. RG is partially funded by the Deutsche Forschungsgemeinschaft (DFG,
German Research Foundation) - SPP 2410 Hyperbolic Balance Laws in Fluid
Mechanics: Complexity, Scales, Randomness (CoScaRa). M.R. was supported by NSF grants DMS-2441170, DMS-2345533, DMS-2342349.}
\subjclass[2020]{60K35, 60F05, 82B21, 35Q70}
\keywords{logarithmic gases, Riesz gases, modulated partition functions, Hilbert--Schmidt operators, degenerate $U$-statistics, mean-field limits, central limit theorems}

\begin{document}
\raggedbottom

\begin{abstract}
\begingroup
We study fixed-temperature logarithmic and Riesz gases after subtracting the leading mean-field contribution relative to a prescribed background law.  For repulsive interactions in the Hilbert--Schmidt regime, we prove $N$-uniform bounds and quantitative convergence of the resulting modulated partition function to a normalization expressed by the Carleman--Fredholm determinant of the centered interaction operator.  We show that the Hilbert--Schmidt threshold is sharp and obtain explicit lower bounds on the rate of divergence at and above it; these rates are expected to be nonoptimal.  The proof combines positive-definite truncations and a low/high-frequency decomposition with exponential inequalities and Gaussian-chaos asymptotics for canonical degree-two $U$-statistics.  As consequences, we establish entropic commutator estimates with the sharp $O(N^{-1})$ additive scale in the modulated-free-energy method, a static joint linear-statistics central limit theorem, and a dynamical central limit theorem for joint linear statistics at finitely many times.  This extends the logarithmic partition-function estimates of the first two authors to the full Riesz Hilbert--Schmidt range and identifies the limiting determinant normalization.  For the attractive logarithmic interaction at sufficiently small inverse temperature, we also prove analogous results.
\endgroup
\end{abstract}

\maketitle
\enlargethispage{2pt}

\section{Introduction}
\label{sec:intro}

\begingroup

For a symmetric two-body interaction potential $W$ and one-body external potential $V$, consider the mean-field Hamiltonian
\begin{align}\label{eq:intro-general-Hamiltonian}
  \mathcal H_N(\XN)
  \coloneqq
  \frac{1}{2N}\sum_{1\le i\ne j\le N} W(x_i,x_j)
  + \sum_{i=1}^N V(x_i),
  \qquad
  \XN:=(x_1,\ldots,x_N).
\end{align}
The associated canonical Gibbs ensemble is
\begin{align}\label{eq:intro-general-Gibbs}
  \dd\mathbb P_{N,\be}(\XN)
  = \frac{1}{Z_{N,\be}}
  e^{-\be \mathcal H_N(\XN)}\,\dd\XN .
\end{align}
Here the normalizing constant $Z_{N,\be}$ is the partition function.  We work in the fixed-temperature mean-field scaling, in which $\be$ is independent of $N$; we refer to this as the diffusive temperature regime.  Throughout the paper, unless explicitly indicated otherwise, the position variables belong to the whole space $\R^\d$; the flat-torus and kinetic variants are recorded in \cref{rem:torus-kinetic-variants}.

The basic question of this paper is to identify the normalization which remains after the leading-order macroscopic mean-field contribution has been separated relative to a prescribed product background.  Physically, the separation fixes the macroscopic density profile and asks what free-energy correction is left from the correlated microscopic fluctuations.  A bounded residual normalization means that, after the mean-field cost has been removed, the singular interaction contributes only an $O(1)$ fluctuation factor.  Divergence of this residual factor is the ultraviolet obstruction at the Hilbert--Schmidt threshold.  Mathematically, this is the normalization needed to pass between product and modulated Gibbs references in the modulated-free-energy method \cite{RS2023lsi}.  Controlling this normalization also allows us to prove averaged versions of the commutator estimates, deterministic pointwise versions of which have been widely used in the mean-field and fluctuation theory of log/Coulomb/Riesz gases; see Serfaty's lecture notes \cite{SerfatyLN} and the third author's recent survey \cite{Rosenzweig2026Commutators}.

For repulsive logarithmic and Riesz interactions in the Hilbert--Schmidt regime, we prove sharp fixed-temperature partition-function estimates, including a quantitative convergence rate to the limiting determinant normalization and the sharpness of the square-integrability threshold.  We then use these bounds as exponential-moment inputs for entropic commutator estimates governing variations of the modulated energy.  Finally, we derive sharp mean-field estimates for interacting diffusions, a static central limit theorem for joint linear statistics under the modulated Gibbs law, and a dynamical central limit theorem for joint linear statistics at finitely many times.  For the attractive logarithmic interaction at sufficiently small inverse temperature, we also prove analogous results.  These results are used in the authors' companion works {\cite{DGRSizeOfChaos2026,DelgadinoGvalaniStein2026}} to obtain sharp two-sided bounds for the size of chaos for diffusive log/Riesz gas measures in several statistical divergences up to the optimal Hilbert--Schmidt threshold and to prove sharp quantitative CLTs for fluctuations around the mean-field limit.

The precise statements, their relation to previous work, and the proof strategy are given in \cref{sec:main-results-proof-strategy}.

\endgroup

\section{Main results and proof strategy}
\label{sec:main-results-proof-strategy}

\begingroup

\subsection{Modulated Gibbs normalization for log/Riesz gases}
\label{subsec:intro-background-splitting}
\label{subsec:riesz-modulated-energies-partition-functions}
\label{subsec:log-riesz-specialization}
We first isolate the modulated normalization relative to a prescribed background law, then identify its Hilbert--Schmidt determinant limit and specialize the construction to logarithmic and Riesz interactions.

\medskip\noindent\textbf{Splitting relative to a background law.}

Fix a probability measure $\mu$ on $\R^\d$.  When $\mu$ is absolutely continuous with strictly positive density, write $\dd\mu=Z_\mu^{-1}e^{-U}\,\dd x$, with $U=-\log\mu$ understood up to an additive constant.  For integers $N\ge1$, write $[N]\coloneqq\{1,\ldots,N\}$.  Set
\begin{align}\label{eq:hmuImuDef}
  h_\mu(x) \coloneqq \int_{\R^\d} W(x,y)\,\dd\mu(y),
  \qquad
  I_\mu \coloneqq \frac12\iint_{(\R^\d)^2} W(x,y)\,\dd\mu(x)\,\dd\mu(y).
\end{align}
We use the notation
\begin{align}\label{eq:tildedef}
\tl{f}(x,y) \coloneqq f(x,y) - \int_{\R^\d} f(x,z)\,\dd\mu(z) - \int_{\R^\d} f(z,y)\,\dd\mu(z)+ \int_{(\R^\d)^2} f(z,w)\,\dd\mu(z)\,\dd\mu(w)
\end{align}
for the Hoeffding centering of a two-body kernel.  The modulated energy relative to $\mu$ is characterized by the identity\footnote{This identity is often called the splitting formula (relative to thermal equilibrium) in the log/Coulomb/Riesz literature \cite[Chapter~5, Section~5.1]{SerfatyLN}.  It is a special case of the Hoeffding decomposition of a $U$-statistic.}
\begin{align}\label{eq:intro-modulated-splitting}
  \frac{1}{2N}\sum_{1\le i\ne j\le N} W(x_i,x_j)
  = N\Fr_N(\XN,\mu) + \sum_{i=1}^N h_\mu(x_i) - N I_\mu .
\end{align}
Equivalently,
\begin{align}\label{eq:FNdef}
\Fr_N(\XN,\mu) = \frac{1}{2N^2}\sum_{1\le i\ne j\le N}\tl{W}(x_i,x_j) - \frac{1}{N^2}\sum_{i=1}^Nh_\mu(x_i)+\frac{1}{N}I_\mu.
\end{align}
Consequently,
\begin{align}\label{eq:intro-Z-to-K-reduction}
  Z_{N,\be}
  = Z_\mu^N e^{\be N I_\mu}
  \E_{\mu^{\otimes N}}\brak*{\exp\left\{
    -\be N\Fr_N(\XN,\mu)
    - \sum_{i=1}^N \bigl(\be V+\be h_\mu-U\bigr)(x_i)
  \right\}}.
\end{align}
If the one-body term is absorbed into the reference law, equivalently if $U=\be(V+h_\mu)$ up to an additive constant, the expectation in \eqref{eq:intro-Z-to-K-reduction} reduces to the modulated partition function
\begin{align}\label{eq:intro-K-main}
  \K_{N,\be}(\mu)
  \coloneqq
  \E_{\mu^{\otimes N}}\brak*{\exp\{-\be N\Fr_N(\XN,\mu)\}}.
\end{align}
The condition $U=\be(V+h_\mu)$ is precisely the Euler--Lagrange relation satisfied by the thermal equilibrium measure introduced below.  Conversely, for $\be>0$, \eqref{eq:intro-K-main} is an ordinary Gibbs partition function with one-body potential $\be^{-1}U-h_\mu$, up to the deterministic factor in \eqref{eq:intro-Z-to-K-reduction}.  Thus, the adjective ``modulated'' refers to the centered normal form obtained after subtracting the mean-field contribution.

Associated to a configuration $\XN=(x_1,\ldots,x_N)$, denote the empirical measure by
\begin{align}\label{eq:muNdef}
\mu_N\coloneqq \frac1N\sum_{i=1}^N\delta_{x_i}.
\end{align}
The modulated Gibbs measure is
\begin{align}\label{eq:intro-Q-def}
 \dd\Q_{N,\be}(\mu)(\XN)\coloneqq \frac{1}{\K_{N,\be}(\mu)}e^{-\be N\Fr_N(\XN,\mu)}\,\dd\mu^{\otimes N}(\XN).
\end{align}
When $\mu=\mu_\be$ is the thermal equilibrium measure (see, e.g., \cite[Section~2.5, especially Proposition~2.19]{SerfatyLN}), characterized by the Euler--Lagrange relation for the macroscopic/mean-field free energy
\begin{equation}
  \dd\mu_\be = Z_{\mu_\be}^{-1}e^{-\be(V+h_{\mu_\be})}\,\dd x,
\end{equation}
the condition $U=\be(V+h_\mu)$ above is exactly satisfied.  Hence, the modulated Gibbs measure coincides with the original canonical ensemble:
$\Q_{N,\be}(\mu_\be)=\mathbb P_{N,\be}$.
If $\be>0$, the associated modulated free energy
\begin{align}\label{eq:ENdef}
\Er_N(f_N,\mu) \coloneqq \frac{1}{\be N}\Hr(f_N\vert \mu^{\otimes N}) + \E_{\XN\sim f_N}\brak[\big]{\Fr_N(\XN,\mu)},
\qquad f_N\in\P((\R^\d)^N),
\end{align}
satisfies
\begin{align}\label{eq:intro-entropy-rewrite}
  \Er_N(f_N,\mu)
  = \frac{1}{\be N}\left(
    \Hr(f_N\vert \Q_{N,\be}(\mu))-\log \K_{N,\be}(\mu)
  \right).
\end{align}
Jensen's inequality and \eqref{eq:FNdef} give the order-one lower bound
\begin{align}\label{eq:KlowerJensen}
\K_{N,\be}(\mu)\ge \exp\Big(-\be N\E_{\mu^{\otimes N}}[\Fr_N(\XN,\mu)]\Big)=\exp\big(\be I_\mu\big),
\end{align}
provided that $\mu$ has finite energy.  The three questions of interest to us are boundedness of \eqref{eq:intro-K-main} uniformly in $N$, identification of its limit, and sharpness of the assumptions under which such a bound is possible.

\medskip\noindent\textbf{Hilbert--Schmidt centered kernels and limiting normalization.}

The limiting normalization is governed by the quadratic fluctuation operator associated with the Hoeffding-centered interaction.  If $\tl{W}\in L^2(\mu^{\otimes2})$, then $\tl{W}$ defines a Hilbert--Schmidt operator on $L^2(\mu)$, which we denote by $T_\mu:L^2(\mu)\to L^2(\mu)$:
\begin{align}\label{eq:TmuDef}
T_\mu f(x)\coloneqq \int_{\R^\d} \tl{W}(x,y)f(y)\,\dd\mu(y).
\end{align}

We write $(\lambda_j)_{j\ge1}$ for the nonzero eigenvalues of $T_\mu$, repeated with algebraic multiplicity.  For $\be\ge0$, define
\begin{align}\label{eq:detTwoDef}
\det_2(\mathrm{Id}+\be T_\mu)
\coloneqq \prod_{j\ge1}(1+\be\lambda_j)e^{-\be\lambda_j}.
\end{align}
This is the Carleman--Fredholm, or regularized, determinant; see, for example, Simon \cite[Chapter~9]{Simon2005}.  Unlike the ordinary Fredholm determinant $\det(\mathrm{Id}+\be T_\mu)=\prod_j(1+\be\lambda_j)$, which requires $T_\mu$ to be trace class, $\det_2$ is well-defined for Hilbert--Schmidt operators.  The subscript $2$ records the second-order regularization: the linear trace contribution is subtracted.  In the repulsive positive-definite cases below, $T_\mu$ is nonnegative on the centered subspace, so the real logarithm is unambiguous and
\begin{align}\label{eq:detTwoLogDef}
\log\det_2(\mathrm{Id}+\be T_\mu)
=\sum_{j\ge1}\pa*{\log(1+\be\lambda_j)-\be\lambda_j}.
\end{align}
In these positive-definite cases, we set
\begin{align}\label{eq:KinftyDef}
\K_{\infty,\be}(\mu)\coloneqq e^{\be I_\mu}\det_2(\mathrm{Id}+\be T_\mu)^{-1/2}.
\end{align}
At the product level, the centered two-body term is the canonical statistic
\begin{align}\label{eq:intro-U-statistic}
W_N\coloneqq \frac1N\sum_{1\le i\ne j\le N}\tl{W}(\xi_i,\xi_j),
\qquad \xi_1,\ldots,\xi_N\stackrel{\mathrm{iid}}{\sim}\mu .
\end{align}
This is the fluctuation variable whose Laplace transform is exactly the centered two-body factor in the modulated partition function.  The origin of the determinant is transparent from the Gaussian-chaos limit.  Writing $(Z_j)_{j\ge1}$ for independent standard Gaussians, the series $\sum_j\lambda_j(Z_j^2-1)$ converges in $L^2$ and
\begin{align}\label{eq:intro-det2-heuristic}
\E\brak*{\exp\left\{-\frac{\be}{2}\sum_j\lambda_j(Z_j^2-1)\right\}}
=\prod_j e^{\be\lambda_j/2}(1+\be\lambda_j)^{-1/2}
=\det_2(\mathrm{Id}+\be T_\mu)^{-1/2}.
\end{align}
For a finite-rank kernel, the corresponding limit of $W_N$ follows immediately from the multivariate central limit theorem; the general Hilbert--Schmidt case is obtained by spectral approximation.  The main theorem below proves this normalization for the singular interactions to which we now specialize.

\medskip\noindent\textbf{Logarithmic and Riesz interactions.}

For the log/Riesz specialization, we take \(W(x,y)=\g(x-y)\), where
\begin{align}\label{eq:gDef}
  \g(x)=
  \begin{cases}
    -\log |x|, & \s=0,\\
    \frac1\s |x|^{-\s}, & 0<\s<\d.
  \end{cases}
\end{align}
The objects $h_\mu$, $I_\mu$, $\Fr_N$, $\K_{N,\be}$, $\Q_{N,\be}$, and $T_\mu$ introduced above are henceforth understood for this interaction.  With an abuse of notation, we use the same symbol for an absolutely continuous measure and for its Lebesgue density; thus statements such as $\mu\in L^p$, $\|\mu\|_{L^p}$, and pointwise bounds on $\mu$ refer to this density.

Up to a normalizing constant, these interactions are characterized as fundamental solutions of the fractional Laplacian, $(-\Delta)^{\frac{\d-\s}{2}}\g=c_{\d,\s}\delta_0$.  We restrict to the \emph{potential} regime $\s<\d$, in which $\g$ is locally integrable, thereby excluding the \emph{hypersingular} case $\s\ge\d$, studied for instance in \cite{HSST2020,hardin_large_2018}.  The case $\s=\d-2$ corresponds to the classical \emph{Coulomb} potential from electrostatics/gravitation.  Based on this threshold, it is convenient to call the interaction \emph{sub-Coulomb} if $\s<\d-2$ and \emph{super-Coulomb} if $\s>\d-2$.  More generally, interactions of the form \eqref{eq:gDef} play an important role in physics \cite{dauxois_dynamics_2002}, approximation theory \cite{borodachov_discrete_2019}, and machine learning \cite{altekruger_neural_2023,hertrich_generative_2023,hertrich_wasserstein_2023,hagemann_posterior_2023}.  In the singular regime considered below, these same kernels are also central in the analysis of Coulomb/Riesz gases and singular mean-field limits.  For this interaction, the modulated energy $\Fr_N$ originated in the study of the statistical mechanics of Coulomb/Riesz gases \cite{SandierSerfaty2015,RougerieSerfaty2016,PetracheSerfaty2017,LebleSerfaty2017,Serfaty2023} and later was used in the derivation of mean-field dynamics \cite{Duerinckx2016,Serfaty2020,NRS2021} and following works; see \cite[Chapter~4]{SerfatyLN} for a comprehensive discussion of the modulated energy. For a nonsingular positive-definite kernel, the quadratic form $\iint_{(\R^\d)^2} \g(x-y)\,\dd(\nu-\mu)(x)\,\dd(\nu-\mu)(y)$ is the squared maximum mean discrepancy (MMD) between $\nu$ and $\mu$ \cite{GrettonBorgwardtRaschScholkopfSmola2012,MuandetFukumizuSriperumbudurScholkopf2017}.  Thus, $\Fr_N$ may be viewed, up to the conventional factor $1/2$, as the diagonal-renormalized log/Riesz analogue of this MMD energy with $\nu=\mu_N$.

The positive definiteness of the repulsive kernels implies that $T_\mu$ is nonnegative on the centered subspace.  For $0<\s<\d/2$ and $\mu\in L^\infty$, the local singularity satisfies $\g\in L^2_{\mathrm{loc}}$ and hence $\tl\g\in L^2(\mu^{\otimes2})$; the logarithmic singularity is locally square-integrable under the logarithmic integrability assumptions stated below.  Thus, the range $\s<\d/2$ is precisely the Hilbert--Schmidt range for the Riesz kernels.

\begin{remark}[Torus and kinetic variants]\label{rem:torus-kinetic-variants}
All of the results below also hold on the flat torus $\T^\d$ where \eqref{eq:gDef} is replaced by the periodic (spectral) logarithmic/Riesz potential.  We do not state the torus case separately because the proofs are unchanged after these notational substitutions.

The static spatial results also have an immediate kinetic variant.  If the starting Hamiltonian $\mathcal H_N$ is augmented by the standard quadratic kinetic energy, then the velocity variables tensorize as an independent Maxwellian factor.  Consequently, the spatial modulated partition-function and entropic commutator estimates carry over verbatim for spatial observables.  The diffusive mean-field convergence application below is specific to the overdamped dynamics and is not part of this kinetic extension.  For a kinetic mean-field convergence result that exploits precisely this Maxwellian factorization, see the work of Duerinckx and Jabin \cite{DuerinckxJabin2026} discussed in \cref{subsec:intro-related-work}.
\end{remark}

\subsection{Partition-function estimates}
\label{subsec:main-results}
\label{subsec:partition-function-estimates}

Our first set of results controls the normalization of the modulated Gibbs measure.  The main theorem gives the fixed-temperature uniform bound and the regularized-determinant limit.  The Hilbert--Schmidt threshold is sharp, as quantified after the theorem.  The convergence rate in \eqref{eq:partitionLogFallbackStatement} is likewise not expected to be optimal; see \cref{rem:rateNotOptimal}.

\begin{thm}[Repulsive logarithmic/Riesz case]\label{thm:partitionFunctionBound}
Fix $\be\ge0$ and assume $0\le\s<\d/2$ and $\mu\in L^\infty\cap\P(\R^\d)$; if $\s=0$, assume in addition
\begin{align}\label{eq:logEnergyAssumption}
\int_{(\R^\d)^2}|\log |x-y||\,\dd\mu^{\otimes2}(x,y)<\infty.
\end{align}
\begin{enumerate}[(i)]
\item\emph{(Uniform bound.)}  There is a finite constant $C_{\mathrm{ub}}(\be,\mu)$ such that
\begin{align}\label{eq:partitionUniformBoundStatement}
\sup_{N\ge1}\K_{N,\be}(\mu)\le C_{\mathrm{ub}}(\be,\mu).
\end{align}
\item\emph{(Quantitative convergence.)}  If $\s=0$, assume moreover that
\begin{align}\label{eq:logRateExtraAssumptions}
\int_{\R^\d}e^{-4\be h_\mu}\,\dd\mu<\infty,
\qquad \tl\g\in L^2(\mu^{\otimes2}).
\end{align}
Then there are constants $C_{\mathrm{rate}}(\be,\mu)<\infty$ and, in the logarithmic case, $c_{\log}(\be,\mu)>0$ such that, for all $N\ge1$,
\begin{align}\label{eq:partitionLogFallbackStatement}
\Big|\K_{N,\be}(\mu)-\K_{\infty,\be}(\mu)\Big|
\le C_{\mathrm{rate}}(\be,\mu)
\begin{cases}
(\log(e+N))^{-\frac{\d-2\s}{2\s}}, & 0<\s<\d/2,\\[0.4em]
(e+N)^{-c_{\log}(\be,\mu)}, & \s=0.
\end{cases}
\end{align}
\end{enumerate}
\end{thm}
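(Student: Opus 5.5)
We start from the identity \eqref{eq:FNdef}, which gives
\[
-\be N\Fr_N=-\tfrac{\be}{2}W_N+\tfrac{\be}{N}\textstyle\sum_i h_\mu(x_i)-\be I_\mu .
\]
The strategy is to split the centered kernel into a smooth low-frequency part and a singular high-frequency remainder. We use the positive-definite truncation $\g=\g_\eta+(\g-\g_\eta)$, where $\g_\eta$ is a Fourier cutoff of $\hat\g$ at frequencies $|\xi|\le\eta^{-1}$. Equivalently, $\g_\eta=\g*\rho_\eta$ with $\rho_\eta$ a mollifier having nonnegative Fourier transform. Both pieces are then positive definite, and the high part $\g-\g_\eta$ has small $L^2(\mu^{\otimes2})$ norm. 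In the Riesz case this norm is of order $\eta^{(\d-2\s)/2}$, using $\mu\in L^\infty$ and $\s<\d/2$.

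By Hölder, $\K_{N,\be}\le \E[e^{-p\be W_N^{\mathrm{low}}/2}\cdots]^{1/p}\,\E[e^{-q\be W_N^{\mathrm{high}}/2}\cdots]^{1/q}$. The one-body term $\frac{\be}{N}\sum_i h_\mu(x_i)$ is handled as follows. In the Riesz case $h_\mu\ge0$ is bounded, since $\mu\in L^\infty\cap\P$. In the log case we use the exponential moment in \eqref{eq:logRateExtraAssumptions}, or \eqref{eq:logEnergyAssumption} for part (i), together with Jensen/Hölder over $N$ i.i.d. factors, each of size $1/N$.

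\textbf{Low frequency.} Here $\tl\g_\eta$ is a bounded, positive semidefinite kernel. Hence $W_N^{\mathrm{low}}$ is bounded below by $-\frac1N\sum_i\tl\g_\eta(x_i,x_i)$ up to the positive quadratic part. Indeed,
\[
\frac1N\sum_{i\ne j}\tl\g_\eta=\frac1N\Big\|\sum_i\tl\g_\eta(x_i,\cdot)\Big\|^2_{\mathrm{RKHS}}-\frac1N\sum_i\tl\g_\eta(x_i,x_i).
\]
The diagonal term is a bounded average, so $e^{-\be W^{\mathrm{low}}_N/2}$ is uniformly bounded. More precisely, its exponential moment converges to $\det_2(\mathrm{Id}+\be T^\eta_\mu)^{-1/2}$. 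To see this, diagonalize $\tl\g_\eta$ with Mercer's theorem, truncate to finite rank, and apply the multivariate CLT with uniform integrability. Uniform integrability comes from the above lower bound. Quantitatively, we would use Berry--Esseen-type bounds on smooth finite-rank statistics, with constants depending on $\|\g_\eta\|_{C^k}\lesssim\eta^{-\s-k}$. This is the source of the $\log N$ rate: we choose $\eta$ with $\eta^{-C}\sim N^{c}$, i.e. $\eta\sim$ a small power of $N$. A logarithmic loss then arises from the exponential dependence of the constants on $\|\g_\eta\|_\infty\sim\eta^{-\s}$. This forces $\eta^{-\s}\sim\log N$, which turns $\eta^{(\d-2\s)/2}$ into $(\log N)^{-(\d-2\s)/(2\s)}$, exactly matching the stated rate. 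In the log case, $\|\g_\eta\|_\infty\sim\log\eta^{-1}$, which yields a power rate.

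\textbf{High frequency (main obstacle).} We need $\sup_N\E[e^{-\lambda W_N^{\mathrm{high}}}]\le 1+C\lambda\|\tl\g-\tl\g_\eta\|_{L^2}$ for fixed $\lambda=q\be/2$. Positive definiteness gives a lower bound $W_N^{\mathrm{high}}\ge-\frac1N\sum_i(\g-\g_\eta)(0)$ only with a constant that diverges in the Riesz case. So we instead use exponential inequalities for canonical degree-two $U$-statistics. Decoupling (de la Peña–Montgomery-Smith) together with the Giné–Latała–Zinn / Houdré–Reynaud-Bouret exponential bound controls the tails of $W_N$ by
\[
\|\tl k\|_{L^2(\mu^{\otimes2})},\quad \|\tl k\|_{L^2\to L^2},\quad \sup_x\|\tl k(x,\cdot)\|_{L^2(\mu)},\quad \|\tl k\|_\infty/N .
\]
The last two terms are the problem, because $\g-\g_\eta$ is unbounded. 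I would handle this by a further dyadic decomposition of the high part into shells $2^{-k-1}\le|\xi|^{-1}\le 2^{-k}$. On each shell the kernel is bounded by $2^{k\s}$, and the sum over shells converges with Hölder exponents weighted geometrically. Only the negative lower tail matters, and it is mild due to positive definiteness: we can reinsert the diagonal correction shellwise, where it is bounded by $2^{k\s}/N$. This controls shells with $2^{k\s}\ll N$. For the ultra-high shells, the positive-definite lower bound costs only $e^{\be 2^{k\s}/N}$, which is harmless once we cap at $2^{k\s}\le N$.

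Combining the two parts gives (i). For (ii) we obtain the upper bound $\K_{N,\be}\le\K^\eta_\infty+$error, with error of order $\eta^{(\d-2\s)/2}$ plus the CLT error. The matching lower bound follows from the reverse Hölder inequality, or from Jensen applied to the high part. Finally, $|\det_2(\mathrm{Id}+\be T^\eta_\mu)-\det_2(\mathrm{Id}+\be T_\mu)|\lesssim\|T^\eta_\mu-T_\mu\|_{HS}$ by continuity of $\det_2$ in Hilbert–Schmidt norm, which completes the estimate.
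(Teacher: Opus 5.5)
Your high-frequency step has a genuine gap, and it is the crux of the theorem. Since $\g-\g_\eta$ carries the whole singularity, bounding $\sup_N\E[e^{-\lambda W_N^{\mathrm{high}}}]$ is no easier than \cref{thm:partitionFunctionBound}(i) itself, and the shell argument does not deliver it.

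\emph{The positive-definite bound is off by a factor of $N$.} For a frequency shell $g_k$, positive definiteness gives $-W_N^{(k)}\le\frac1N\sum_i\tl g_k(x_i,x_i)\le 4g_k(0)\lesssim 2^{k\s}$. This is an average of $N$ diagonal terms, not $2^{k\s}/N$. Each shell handled this way therefore costs $e^{C\lambda 2^{k\s}}$, and over $2^{k\s}\le N$ these factors multiply to $e^{C\lambda N}$.

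\emph{The GLZ route only reaches $2^{2k\s}\lesssim N$.} It needs the $L^\infty$ entry $(Nu/\|\tl g_k\|_{L^\infty})^{1/2}$ to dominate $u$ for all heights up to that cap. This holds only for $2^{2k\s}\lesssim N$, and worse once geometric H\"older weights inflate the temperatures.

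\emph{The top frequencies are not treated.} Frequencies above $N^{1/\s}$ are not handled at all. There the Fourier-truncated kernel has infinite diagonal and no pointwise sign, so neither tool applies.

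\emph{The missing idea.} The ultraviolet must be treated in physical space with a \emph{pointwise nonnegative} near-field kernel. For the remainder $\f_\delta\ge0$ of \cref{lem:HCRStruncations}, the pair sum enters $-\frac{\be}{2N}\sum_{i\ne j}\tl\f_\delta(x_i,x_j)$ with a favorable sign. Only the centering terms survive, costing at most $\be N\|\f_\delta*\mu\|_{L^\infty}\lesssim\be N\delta^{\d-\s}$. Even so, a single $N$-dependent scale does not suffice. Forcing $N\delta^{\d-\s}=O(1)$ leaves $\|\tl\g_\delta\|_{L^\infty}\sim\delta^{-\s}\sim N^{\s/(\d-\s)}$, and the GLZ balance then closes only for $\s<\d/3$.

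The paper therefore runs the layer-cake estimate \cref{prop:masterLayerCake} with the scale adapted to the height.
\begin{itemize}
\item The near-field scale is $\delta_{N,u}\sim(u/N)^{1/(\d-\s)}$, so the near field costs at most $u/8$.
\item A fixed frequency cutoff $K_{\be,\mu}$ makes the $L^2$, mixed, and operator norms of the high block small.
\item The low block is bounded deterministically by its diagonal, $\lesssim K_{\be,\mu}^{\s}$.
\item The positive-definite diagonal bound on the high block restricts the relevant heights to $u\lesssim\delta_{N,u}^{-\s}$, i.e.\ $u\lesssim N^{\s/\d}$.
\end{itemize}
This gives $\mathsf S_N\lesssim N^{2\s/\d}\ll N$ exactly when $\s<\d/2$.

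\emph{Logarithmic case of (i).} Separating the one-body term by H\"older throws away a cancellation that is needed. The diagonal $\tl\g_\eta(x,x)=\g_\eta(0)-2h_\mu^{(\eta)}(x)+2I_\mu^{(\eta)}$ is unbounded above when $\supp\mu$ is noncompact. Under \eqref{eq:logEnergyAssumption} alone, $e^{-ch_\mu}$ need not be $\mu$-integrable for any $c>0$, and $\tl\g$ need not lie in $L^2(\mu^{\otimes2})$. So your low-frequency bound is not uniform. Only the sum of the one-body term and the centered diagonal is bounded. The paper keeps them together: it pairs the one-body term with the low-frequency block at a finite infrared cutoff $T$ (\cref{lem:logLargeScaleCancellation}(ii)) and removes $T$ by Fatou.

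\emph{Part (ii).} Comparing $\E[e^{-\frac{\be}{2}W_N}]$ with its truncation, and any H\"older split with $p\downarrow1$, both require exponential moments of the truncated statistics at inverse temperatures above $\be$, uniformly in $\eta$. The paper gets these from \eqref{eq:centeredFamilyExpMoment} at $2\be$, via interpolation and Cauchy--Schwarz (Step~3 of the proof of \cref{prop:rieszLaplaceLogFallback}). So this step also depends on the missing ultraviolet control.

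Your rate bookkeeping ($\eta^{-\s}\sim\log N$, and a power rate for $\s=0$) does match the paper's. The paper obtains the $e^{C\eta^{-\s}}/N$ fixed-truncation error from the Hubbard--Stratonovich estimate \cref{prop:boundedFeatureQuadLaplace} rather than from finite-rank Berry--Esseen bounds.
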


The threshold sharpness is quantitative.  For every $\be>0$ and $\d/2\le\s<\d$, \cref{prop:partition-function-sharpness} gives constants $c,C>0$ such that, for all sufficiently large $N$,
\begin{align}\label{eq:introQuantitativeSharpness}
{
\begin{aligned}
\K_{N,\be}(\mu)&\ge C^{-1}(\log N)^c, && \s=\d/2,\\
\log\K_{N,\be}(\mu)&\ge c(\log N)^{2-\d/\s}-C, && \d/2<\s<\d.
\end{aligned}}
\end{align}
These explicit lower rates are not expected to be optimal; see \cref{rem:expectedSharpUltravioletGrowth}.

\begingroup
For $\be>0$, define the macroscopic free energy
\begin{align}\label{eq:macroscopicFreeEnergy}
\mathcal F_{\be,V}(\mu)
\coloneqq
\int_{\R^\d}V\,\dd\mu+I_\mu
+\frac1\be\int_{\R^\d}\log\mu\,\dd\mu .
\end{align}
If $\mu_\be$ satisfies the thermal Euler--Lagrange relation, then
\begin{align}
\int_{\R^\d}\log\mu_\be\,\dd\mu_\be
=-\log Z_{\mu_\be}
-\be\int_{\R^\d}V\,\dd\mu_\be
-2\be I_{\mu_\be},
\end{align}
so $-\be\mathcal F_{\be,V}(\mu_\be)=\log Z_{\mu_\be}+\be I_{\mu_\be}$.  Substitution into the exact reduction \eqref{eq:intro-Z-to-K-reduction}, followed by \cref{thm:partitionFunctionBound}(ii), gives the following equilibrium consequence.

\begin{cor}[Equilibrium free-energy correction]\label{cor:equilibriumFreeEnergyCorrection}
Fix $\be>0$, and let $\mu_\be$ be an absolutely continuous thermal equilibrium measure satisfying
\begin{align}
 \dd\mu_\be=Z_{\mu_\be}^{-1}e^{-\be(V+h_{\mu_\be})}\,\dd x
\end{align}
and the hypotheses of \cref{thm:partitionFunctionBound}(ii).  Then the canonical partition function satisfies the exact identity
\begin{align}\label{eq:equilibriumCanonicalSplitting}
 Z_{N,\be}
 =e^{-\be N\mathcal F_{\be,V}(\mu_\be)}
 \K_{N,\be}(\mu_\be),
\end{align}
and consequently
\begin{align}\label{eq:equilibriumFreeEnergyExpansion}
\log Z_{N,\be}
={}&-\be N\mathcal F_{\be,V}(\mu_\be)
+\be I_{\mu_\be}
-\frac12\log\det_2(\mathrm{Id}+\be T_{\mu_\be})
+O_{\be,\mu_\be}(r_{N,\s}),
\end{align}
where
\begin{align}
 r_{N,\s}
 \coloneqq
 \begin{cases}
 (\log(e+N))^{-\frac{\d-2\s}{2\s}},&0<\s<\d/2,\\[0.3em]
 (e+N)^{-c_{\log}(\be,\mu_\be)},&\s=0.
 \end{cases}
\end{align}
\end{cor}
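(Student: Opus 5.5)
The plan is to substitute the Euler--Lagrange relation into the exact reduction \eqref{eq:intro-Z-to-K-reduction} with $\mu=\mu_\be$, and then quote \cref{thm:partitionFunctionBound}(ii). The argument splits into an algebraic identity and a perturbative expansion of a logarithm.

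\textbf{Exact identity.} Take $U=-\log\mu_\be=\be(V+h_{\mu_\be})+\log Z_{\mu_\be}$, so that $\dd\mu_\be=Z_{\mu_\be}^{-1}e^{-U}\,\dd x$ with normalizing constant exactly $Z_{\mu_\be}$. The key point is that the one-body exponent in \eqref{eq:intro-Z-to-K-reduction} is identically zero, not merely constant. To see this, we go back to the derivation of that formula. By the splitting \eqref{eq:intro-modulated-splitting},
\[
e^{-\be\mathcal H_N}=e^{-\be N\Fr_N+\be NI_{\mu_\be}}\prod_{i}e^{-\be(V+h_{\mu_\be})(x_i)},
\]
and $e^{-\be(V+h_{\mu_\be})(x)}\,\dd x=Z_{\mu_\be}\,\dd\mu_\be(x)$. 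Integrating over $(\R^\d)^N$ therefore gives
\[
Z_{N,\be}=Z_{\mu_\be}^N e^{\be NI_{\mu_\be}}\K_{N,\be}(\mu_\be).
\]
The displayed identity $-\be\mathcal F_{\be,V}(\mu_\be)=\log Z_{\mu_\be}+\be I_{\mu_\be}$ then rewrites the prefactor as $e^{-\be N\mathcal F_{\be,V}(\mu_\be)}$, which is \eqref{eq:equilibriumCanonicalSplitting}.

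It remains to justify that identity. Take logs in the Euler--Lagrange relation, $\log\mu_\be=-\log Z_{\mu_\be}-\be V-\be h_{\mu_\be}$, and integrate against $\mu_\be$. Since $\int h_{\mu_\be}\,\dd\mu_\be=2I_{\mu_\be}$, this gives the stated formula for $\int\log\mu_\be\,\dd\mu_\be$. We must check that all terms are finite. Finiteness of $I_{\mu_\be}$ follows from $\mu_\be\in L^\infty$, from $\s<\d$, and, in the case $\s=0$, from \eqref{eq:logEnergyAssumption}. Integrability of $V$ against $\mu_\be$ then follows from the relation itself, because $\log\mu_\be\le\log\|\mu_\be\|_{L^\infty}$ bounds $V$ from below $\mu_\be$-a.e. up to an $h$ term. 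In the log case we also need $h_{\mu_\be}\in L^1(\mu_\be)$, which again follows from \eqref{eq:logEnergyAssumption}.

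\textbf{Expansion.} Take logarithms in the exact identity:
\[
\log Z_{N,\be}=-\be N\mathcal F_{\be,V}(\mu_\be)+\log\K_{N,\be}(\mu_\be).
\]
By \eqref{eq:KlowerJensen}, $\K_{N,\be}(\mu_\be)\ge e^{\be I_{\mu_\be}}$, and $\K_{\infty,\be}(\mu_\be)$ is finite and positive. Indeed $\det_2\in(0,1]$ in the positive-semidefinite case, since $\log(1+x)-x\le0$ and the sum converges because $\sum\lambda_j^2<\infty$. So both $\K_{N,\be}(\mu_\be)$ and $\K_{\infty,\be}(\mu_\be)$ lie in a fixed interval $[c_0,\infty)$ with $c_0=e^{\be I_{\mu_\be}}>0$ (note $I_{\mu_\be}$ may be negative for $\s=0$, but it is finite). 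The mean value theorem then gives
\[
|\log\K_N-\log\K_\infty|\le c_0^{-1}|\K_N-\K_\infty|\le c_0^{-1}C_{\mathrm{rate}}\,r_{N,\s},
\]
by \eqref{eq:partitionLogFallbackStatement}. Finally, substitute $\log\K_{\infty,\be}=\be I_{\mu_\be}-\frac12\log\det_2(\mathrm{Id}+\be T_{\mu_\be})$ from \eqref{eq:KinftyDef}; this yields \eqref{eq:equilibriumFreeEnergyExpansion}.

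\textbf{Main obstacle.} The only delicate step is checking that every term in $\mathcal F_{\be,V}(\mu_\be)$ is finite, so that the rearrangement is legitimate. This matters most in the logarithmic case, where $V$ and $h_{\mu_\be}$ may be individually unbounded. I would handle it using the integrability hypotheses of \cref{thm:partitionFunctionBound}(ii) together with the Euler--Lagrange relation, as indicated above. Everything else is bookkeeping.
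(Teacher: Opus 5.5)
Your proposal is correct and follows the paper's route: substitute the Euler--Lagrange relation into the exact reduction \eqref{eq:intro-Z-to-K-reduction} to obtain \eqref{eq:equilibriumCanonicalSplitting}, then take logarithms and apply \cref{thm:partitionFunctionBound}(ii). Your observation that $\K_{N,\be}(\mu_\be)$ and $\K_{\infty,\be}(\mu_\be)$ are both at least $e^{\be I_{\mu_\be}}$, by \eqref{eq:KlowerJensen} and $\det_2\le1$, makes explicit the Lipschitz step for the logarithm that the paper leaves implicit.

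There are two small corrections. First, the reference potential should be $U=\be(V+h_{\mu_\be})$, not $-\log\mu_\be$. This is harmless, since your direct computation does not use it.

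Second, your finiteness check is one-sided. The bound $\log\mu_\be\le\log\|\mu_\be\|_{L^\infty}$ only bounds $V$ from below. Finiteness of $\int V\,\dd\mu_\be$ is therefore equivalent to $\int\mu_\be\log\mu_\be>-\infty$.
\begin{itemize}
\item In the logarithmic case this does follow from $\int e^{-4\be h_{\mu_\be}}\,\dd\mu_\be<\infty$. That condition forces a finite moment of $\mu_\be$.
\item In the Riesz case it is not implied by the hypotheses of \cref{thm:partitionFunctionBound}(ii). You must either take it as part of $\mathcal F_{\be,V}(\mu_\be)$ being well defined, as the paper implicitly does, or read $\mathcal F_{\be,V}(\mu_\be)$ through the integrable combination $V+\be^{-1}\log\mu_\be=-\be^{-1}\log Z_{\mu_\be}-h_{\mu_\be}$.
\end{itemize}
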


The entropy rewriting \eqref{eq:intro-entropy-rewrite} also gives the exact variational identity
\begin{align}
\inf_{f_N\in\P((\R^\d)^N)}\Er_N(f_N,\mu)
=-\frac1{\be N}\log\K_{N,\be}(\mu),
\end{align}
with the infimum attained by $\Q_{N,\be}(\mu)$.  Thus, the determinant limit identifies the coefficient of the order-$N^{-1}$ modulated-free-energy floor appearing in the diffusive application.
\endgroup

\begin{remark}[On the convergence rate]\label{rem:rateNotOptimal}
The emphasis in \cref{thm:partitionFunctionBound}(ii) is on the convergence itself, valid up to the sharp threshold $\s<\d/2$; the rate in \eqref{eq:partitionLogFallbackStatement} is the one the method yields without further effort, and we have not sought to optimize it.  We expect the optimal rate to be of order $1/N$ in the logarithmic case ---attainable by a tailored version of Stein's method--- and to be $\s$-dependent in the range $0<\s<\d/2$.  This is the subject of ongoing work by the authors.
\end{remark}

\begin{remark}[On the exponential integrability of $h_\mu$]\label{rem:oneBodyExpIntegrability}
Since $\mu\in L^\infty$, the potential $h_\mu$ is always bounded above, so the first condition in \eqref{eq:logRateExtraAssumptions} constrains only its lower tail, that is, the decay of $\mu$.
\end{remark}

For later use in the repulsive logarithmic/Riesz case, fix an admissible uniform partition-function constant
\begin{align}\label{eq:CpfDef}
C_{\mathrm{pf}}(\be,\mu)\coloneqq \max\{e,C_{\mathrm{ub}}(\be,\mu)\}.
\end{align}
The dependence of admissible choices of $C_{\mathrm{ub}}$, $C_{\mathrm{pf}}$, the attractive constants $\be_0$ and $C_{\mathrm{ub}}^{\mathrm{att}}$, and the later commutator constants is summarized in \cref{subsec:quantitative-bookkeeping}.

The same fixed-temperature estimate also interpolates with the sharp pointwise lower bound for the modulated energy as the inverse temperature grows with $N$.  The precise statement, including the specialization to power-law temperature scales and the microscopic-endpoint comparison, is recorded in \cref{cor:partitionFunctionTemperatureInterpolation} and proved in \cref{subsec:partition-function-temperature-interpolation}.

Next, we turn to the attractive logarithmic case, corresponding to replacing $\g$ with $\g_{\mathrm{att}}\coloneqq\log|x|$.  We complete the partition-function discussion with a high-temperature, or small-inverse-temperature, uniform bound and a quantitative determinant limit, proved in \cref{sec:attractive-log-small-beta}; the superscript $\mathrm{att}$ denotes the corresponding quantities defined above with $\g$ replaced by $\g_{\mathrm{att}}$.  The restriction to the logarithmic kernel is essential.  For the attractive Riesz sign, i.e.\ the kernel $-\g$ with $0<\s<\d$, the modulated partition function is infinite for every $\be>0$ and every $N\ge2$ whenever $\mu$ has positive density on some open set, since the near-diagonal factor $e^{c|x-y|^{-\s}}$, $c>0$, is not locally integrable.  The attractive logarithmic weight is instead only polynomial, of order $|x-y|^{-c\be}$ near the diagonal, and this is precisely the integrability that the smallness $\be<\be_0$ exploits.

\begingroup
\begin{thm}[Attractive logarithmic case, small $\be$]\label{thm:attractive-log-small-beta-partition}
Let $\mu\in\P(\R^\d)$ be such that $\g_{\mathrm{att}}\in L^1(\mu^{\otimes2})$, and assume that there is a constant $C_{\mathrm{mom}}<\infty$ such that
\begin{align}\label{eq:attMomentAssumption}
\sup_{y\in\R^\d}\int_{\R^\d}|\tl{\g}_{\mathrm{att}}(x,y)|^p\,\dd\mu(x)
\le C_{\mathrm{mom}}^p\,p!
\qquad\text{for every }p\ge1.
\end{align}
Then there is a universal constant $C_\ast<\infty$, the constant of the moment bound \eqref{eq:attMomentBound} in the proof, such that, setting
\begin{align}\label{eq:attBeta0Def}
\be_0\coloneqq\frac{1}{C_\ast C_{\mathrm{mom}}},
\end{align}
for every fixed $0\le\be<\be_0$ with $e^{2\be h_\mu^{\mathrm{att}}}\in L^1(\mu)$, there is a finite constant $C_{\mathrm{ub}}^{\mathrm{att}}(\be,\mu)$ satisfying
\begin{align}\label{eq:attUniformBoundStatement}
\sup_{N\ge1}\K_{N,\be}^{\mathrm{att}}(\mu)\le C_{\mathrm{ub}}^{\mathrm{att}}(\be,\mu).
\end{align}

Moreover, assume $\mu\in L^\infty(\R^\d)$, let $T_\mu^{\mathrm{att}}$ be the Hilbert--Schmidt operator with kernel $\tl\g_{\mathrm{att}}$, and set $\K_{\infty,\be}^{\mathrm{att}}(\mu)\coloneqq e^{\be I_\mu^{\mathrm{att}}}\det_2\pa*{\mathrm{Id}+\be T_\mu^{\mathrm{att}}}^{-1/2}$.  There is a threshold
\begin{align}\label{eq:attQuantitativeThreshold}
0<\be_{\mathrm q}=\be_{\mathrm q}(\d,\|\mu\|_{L^\infty},C_{\mathrm{mom}})\le\frac{\be_0}{2}
\end{align}
such that, for every fixed $0<\be<\be_{\mathrm q}$ with $e^{4\be h_\mu^{\mathrm{att}}}\in L^1(\mu)$, there are constants $C_{\mathrm{rate}}^{\mathrm{att}}(\be,\mu)<\infty$ and $c_{\mathrm{att}}(\be,\mu)>0$ satisfying
\begin{align}\label{eq:attQuantitativeDeterminantRate}
\left|
\K_{N,\be}^{\mathrm{att}}(\mu)
-\K_{\infty,\be}^{\mathrm{att}}(\mu)
\right|
\le C_{\mathrm{rate}}^{\mathrm{att}}(\be,\mu)(e+N)^{-c_{\mathrm{att}}(\be,\mu)}
\end{align}
for every $N\ge1$.  The determinant defining $\K_{\infty,\be}^{\mathrm{att}}(\mu)$ is positive.
\end{thm}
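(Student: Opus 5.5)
The plan is to use the exact decomposition \eqref{eq:FNdef} to write
\begin{align*}
\K^{\mathrm{att}}_{N,\be}(\mu)=e^{\be I^{\mathrm{att}}_\mu}\,\E_{\mu^{\otimes N}}\Big[\exp\Big\{-\tfrac{\be}{2}W_N+\tfrac{\be}{N}\textstyle\sum_i h^{\mathrm{att}}_\mu(x_i)\Big\}\Big],
\end{align*}
with $W_N$ the canonical $U$-statistic \eqref{eq:intro-U-statistic} built from $\tl\g_{\mathrm{att}}$. Since there is no positive definiteness here, the strategy is to control both factors by Hölder's inequality and to bound the $U$-statistic exponential moment directly through its moments. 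For the one-body factor, independence gives $\E[\exp\{\tfrac{2\be}{N}\sum_i h_\mu^{\mathrm{att}}(x_i)\}]=(\int e^{2\be h_\mu^{\mathrm{att}}/N}\dd\mu)^N$. I would bound this uniformly in $N$ by a Taylor expansion. Jensen's inequality in the variable $1/N$ gives $\int e^{2\be h/N}\dd\mu\le 1+\tfrac{1}{N}\int(e^{2\be h}-1)\dd\mu$, so the $N$-th power is at most $\exp(\int e^{2\be h}\dd\mu)$, which is finite by the hypothesis $e^{2\be h_\mu^{\mathrm{att}}}\in L^1(\mu)$. Cauchy--Schwarz then reduces \eqref{eq:attUniformBoundStatement} to showing $\sup_N\E[e^{\be|W_N|}]<\infty$.

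\textbf{Moment bound.} This step is the key one and fixes $C_\ast$. I would prove $\|W_N\|_{L^p}\le C_\ast C_{\mathrm{mom}}\,p$ for all $p\ge1$, uniformly in $N$; summing the exponential series then gives finiteness for $\be<\be_0=1/(C_\ast C_{\mathrm{mom}})$, with a constant depending only on $\be C_\ast C_{\mathrm{mom}}$. To get this, I would decouple ($W_N$ is canonical, so de la Peña--Montgomery-Smith decoupling applies) and then use the Giné--Latała--Zinn moment inequalities for degenerate degree-two $U$-statistics. These bound $\|W_N\|_p$ by the sum of four terms:
\begin{enumerate}[(a)]
\item a Hilbert--Schmidt term $p\,\|\tl\g\|_{L^2(\mu^{\otimes2})}$;
\item an operator-norm term $p\,\|T_\mu\|$, or more precisely $p$ times the weak-type operator norm, which is the correct scaling for Gaussian chaos;
\item a conditional-variance term $p^{3/2}N^{-1/2}\|\sup_y(\int\tl\g^2\dd\mu)^{1/2}\|$;
\item a diagonal term $p^2N^{-1}\sup_y\|\tl\g(\cdot,y)\|_{L^p(\mu)}$.
\end{enumerate}
Assumption \eqref{eq:attMomentAssumption} gives $\sup_y\|\tl\g(\cdot,y)\|_{L^p}\lesssim C_{\mathrm{mom}}p$. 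Terms (c) and (d) are therefore harmless once $p\lesssim N$. For $p\gtrsim N$, I would instead bound $|W_N|\le N^{-1}\sum_{i\ne j}|\tl\g|$ crudely and apply the same $p!$ moment bound to each summand. I expect this to be the main obstacle: the decomposition must be arranged so that every regime gives linear growth in $p$ with a universal constant, rather than some worse power of $p$ that would destroy the exponential moment.

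For the quantitative part, I would truncate spectrally. Since $\mu\in L^\infty$ and $\log$ is locally $L^2$, $T_\mu^{\mathrm{att}}$ is Hilbert--Schmidt. Split $\tl\g=\tl\g_{\le M}+\tl\g_{>M}$, where the first piece is the rank-$M$ spectral projection. For the finite-rank piece, $W_N^{\le M}=\sum_{k\le M}\lambda_k(G_{k,N}^2-\tfrac1N\sum_i\phi_k(x_i)^2)$ with $G_{k,N}=N^{-1/2}\sum_i\phi_k(x_i)$. A quantitative multivariate CLT, made uniform in exponential moments, compares this to $\sum\lambda_k(Z_k^2-1)$ with a polynomial rate $C(M)N^{-c}$. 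The identity \eqref{eq:intro-det2-heuristic} identifies the Gaussian limit; positivity of the determinant holds because $\be\|T_\mu\|<1$ for $\be<\be_{\mathrm q}$, which forces $1+\be\lambda_j>0$. The remainder piece is handled through $|e^a-e^b|\le|a-b|(e^a+e^b)$ together with Hölder's inequality. This costs $\|W_N^{>M}\|_{L^2}\lesssim(\sum_{k>M}\lambda_k^2)^{1/2}$ times exponential moments at doubled $\be$, which is why $\be_{\mathrm q}\le\be_0/2$ and why $e^{4\be h}\in L^1$ is assumed. The one-body factor $e^{\be N^{-1}\sum h}$ tends to $e^{\be\int h\,\dd\mu}=e^{2\be I_\mu}$ at rate $N^{-1/2}$. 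Combined with the prefactor $e^{\be I_\mu}$ and the diagonal correction, this reproduces $\K_\infty^{\mathrm{att}}$; I would verify this bookkeeping by the same computation in the Gaussian limit. Finally, the HS tail of the log kernel decays polynomially in $M$ under $\mu\in L^\infty$ (for instance via a mollification at scale $\epsilon$, which gives $\|\tl\g-\tl\g_\epsilon\|_{L^2}\lesssim\epsilon^{\d/2}|\log\epsilon|$). Taking $M=N^{\kappa}$, or $\epsilon=N^{-\kappa}$, with $\kappa$ small balances the two errors and yields \eqref{eq:attQuantitativeDeterminantRate}.
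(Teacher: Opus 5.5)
Your reduction to an $N$-uniform bound on $\E[e^{\be|W_N|}]$ is sound. The convexity-in-$1/N$ bound on the one-body factor is a valid substitute for the paper's Jensen step. One correction: the prefactor in your first display should be $e^{-\be I_\mu^{\mathrm{att}}}$, and you will need the right sign when matching $\K_{\infty,\be}^{\mathrm{att}}$.

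The genuine gap is the moment bound, which you rightly call the crux but do not close.
\begin{itemize}
\item Your diagonal term (d) is of size $C_{\mathrm{mom}}p^3/N$. This is $O(p)$ only for $p\lesssim N^{1/2}$, not for $p\lesssim N$ as you claim.
\item For the logarithmic kernel, $\|\tl\g_{\mathrm{att}}(\xi,\xi')\|_{L^p}$ genuinely grows linearly in $p$. So no inequality whose diagonal term is $p^2\|\max_{i,j}|h_{ij}|\|_{L^p}$ can do better.
\item The crude fallback is fatal, not merely lossy. It gives $\|W_N\|_{L^p}\le (N-1)C_{\mathrm{mom}}(p!)^{1/p}$, so the tail of your exponential series is $\sum_p(\be(N-1)C_{\mathrm{mom}})^p$. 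This diverges as soon as $N>1+(\be C_{\mathrm{mom}})^{-1}$.
\end{itemize}
An $N$-uniform exponential moment forces $\|W_N\|_{L^p}\lesssim p$ for every $p$, uniformly in $N$. So the regime $p\gtrsim N^{1/2}$ cannot be sidestepped. As written, neither the uniform bound nor the doubled-$\be$ exponential moments used in your quantitative part are established.

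The missing idea is to use the factorial hypothesis combinatorially rather than through a tail or moment inequality of GLZ type. The paper proceeds as follows.
\begin{itemize}
\item After the moment decoupling of \cref{rem:decoupMoment}, expand $\E|\sum_{i\ne j}\tl\g_{\mathrm{att}}(\xi_i,\xi'_j)|^{2k}$ directly.
\item Canonicity kills every multi-index in which some active $i$- or $j$-letter occurs only once.
\item For the survivors, condition on the primed variables and apply H\"older in each active $\xi_m$ against the frozen $\xi'_{j_\ell}$. This is exactly where the supremum outside the integral in \eqref{eq:attMomentAssumption} is used, and it yields $C_{\mathrm{mom}}^{2k}\prod_m a_m!$.
\item The multinomial count converts $\sum\prod_m a_m!$ into $(2k)!$ times the number of surviving multiplicity vectors ($\le C^kN^k/k^k$) times the number of surviving $j$-words ($\le C^kN^kk^k$), when $N\ge4k$; smaller $N$ is handled by crude counting.
\end{itemize}
This gives \eqref{eq:attMomentBound} for all $k$ and $N$ at once, with no regime splitting.

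Your quantitative part has a second, related gap. A rank-$M$ spectral projection of $\tl\g_{\mathrm{att}}$ need not inherit the supremum-outside factorial bound, because its eigenfunctions need not be bounded. So the exponential moments of $W_N^{\le M}$ at doubled $\be$, uniform in $M$, on which your H\"older comparison rests, are unavailable. The $M$-dependence of a quantitative multivariate CLT with unbounded features is also unspecified, so the final balancing $M=N^\kappa$ is unjustified.

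The paper avoids both problems as follows.
\begin{itemize}
\item It truncates in physical space, $\g_\eta=\g-\f_\eta$, and checks $\sup_y\int|\tl\f_\eta(x,y)|^p\,\dd\mu(x)\le C^pp!$ uniformly in $\eta$. The same moment argument then controls the whole interpolation $\tl\g-(1-t)\tl\f_\eta$.
\item It proves the fixed-$\eta$ limit by the Hubbard--Stratonovich estimate of \cref{prop:positiveSignFeatureLaplace}. That estimate needs $2\be\|T\|_{\mathrm{op}}<1$ and an exponential moment of the feature diagonal $D_\eta\le C(1+\log(1/\eta))+2h_\mu^{\mathrm{att}}$.
\item This is where $e^{4\be h_\mu^{\mathrm{att}}}\in L^1(\mu)$ enters, in addition to the one-body restoration. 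It produces the constant polynomial in $\eta^{-1}$ that makes a power rate possible.
\end{itemize}
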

For a periodic uniform background, the one-body term vanishes and the additional exponential-integrability hypothesis in the quantitative assertion is automatic.
\endgroup

\begingroup
\begin{remark}[On the perturbative threshold and periodic uniform backgrounds]\label{rem:attractiveSharpPeriodicRegime}
The sufficient threshold $\be_0$ furnished by the preceding factorial-moment argument is not claimed to be sharp and should not be identified with the global-minimization, stability, or collision thresholds of the attractive logarithmic gas.  For the periodic interaction with uniform background, \cref{subsec:periodic-attractive-phase-diagram} records the exact one-dimensional formula, a dimension-dependent conjectural phase diagram, and self-contained critical and supercritical lower bounds.  In forthcoming work \cite{DGRPhaseTransitions2026}, the authors prove the sharp determinant limit throughout $0\le\be<2\d$ in dimensions $2\le\d\le6$, using the sharp zero-defect log-HLS/Beckner--Onofri inequality established by Lei and the third author \cite{LeiRosenzweigSharpLogHLS2026}; the latter work also proves that, in dimension eleven, the uniform phase loses global minimality strictly before its stability threshold.  The full-subcritical argument is nonperturbative: it separates a finite-cell macroscopic contribution, controlled by the sharp entropy--energy inequality, from a conditionally centered microscopic remainder controlled by canonical-array and occupancy estimates.
\end{remark}
\endgroup

The moment hypothesis \eqref{eq:attMomentAssumption} keeps the supremum over the second variable outside the $x$-integration.  It is therefore weaker than
\begin{equation}
\int_{\R^\d}\pa*{\sup_{y\in\R^\d}|\tl\g_{\mathrm{att}}(x,y)|}^p\,\dd\mu(x)\le C^p p!,
\end{equation}
which takes the pointwise supremum before integrating and is the norm considered by Jabin--Wang \cite{JabinWang2018}; that stronger norm excludes the Keller--Segel kernel.

The exponential-integrability condition mirrors \cref{rem:oneBodyExpIntegrability}, with the opposite sign.  Since the attractive potential grows at infinity, it constrains the upper tail of $h_\mu^{\mathrm{att}}$, and hence the decay of $\mu$.  Indeed, $\g_{\mathrm{att}}\in L^1(\mu^{\otimes2})$ implies $\int_{\R^\d}\log(1+|y|)\,\dd\mu(y)<\infty$, while
\begin{align}
h_\mu^{\mathrm{att}}(x)
\le \log(1+|x|)+\int_{\R^\d}\log(1+|y|)\,\dd\mu(y).
\end{align}
Consequently, $e^{2\be h_\mu^{\mathrm{att}}}\in L^1(\mu)$ whenever $\int_{\R^\d}|x|^{2\be}\,\dd\mu(x)<\infty$.  The local singularity, which may drive $h_\mu^{\mathrm{att}}$ to $-\infty$, enters with the favorable sign.

The proof of the uniform bound and determinant limit in \cref{thm:partitionFunctionBound} begins with the canonical statistic \eqref{eq:intro-U-statistic}.  Positive-definite truncation and a low/high-frequency decomposition reduce the singular kernel to data controlled by the master layer-cake estimate \cref{prop:masterLayerCake}; the high-frequency $L^2$, mixed, and operator norms are small precisely when $\s<\d/2$.  The logarithmic case uses the same mechanism after the infrared part has been paired with the one-body term through the cancellation in \cref{lem:logLargeScaleCancellation}.  The resulting uniform exponential moments are proved in \cref{prop:uniformBoundRepulsive}.

The determinant limit then follows by Hilbert--Schmidt approximation and the bounded-feature Laplace estimate \cref{prop:boundedFeatureQuadLaplace}, while \cref{lem:oneBodyReduction} shows that the centered one-body average contributes a lower-order error.  Beyond the Hilbert--Schmidt threshold, \cref{lem:quantitativeDyadicCompression} replaces the centered kernel by its conditional expectation on a dyadic spatial partition whose scale is coupled to $N$.  Conditional Jensen reduces the original partition function to the resulting finite-rank coarse-grained kernel, whose Hilbert--Schmidt norm diverges; the quantitative finite-particle estimate in \cref{prop:boundedFeatureQuadLaplace} then gives the explicit lower rates in \cref{prop:partition-function-sharpness}.

The attractive logarithmic theorem follows a genuinely different perturbative route, since positivity of the interaction is no longer available.  After the one-body factor is separated, centering forces the surviving moments of the pair statistic to involve repeated indices; decoupling by \cref{prop:decoup} and the factorial-moment hypothesis \eqref{eq:attMomentAssumption} make the resulting exponential series summable for $\be<\be_0$.  A feature-space expansion for the positive quadratic exponential, combined with logarithmic truncation, gives the quantitative determinant convergence in \eqref{eq:attQuantitativeDeterminantRate}.

\subsection{Entropic commutators and diffusive mean-field estimates}
\label{subsec:commutator-estimates}
Our second set of results concerns so-called commutator estimates for the variations of the modulated energy along transport fields. Let us briefly explain the context and motivation for these estimates.

The modulated energy approach to mean-field convergence consists of establishing a Gronwall relation for $\Fr_N(\XN^t,\mu^t)$, where $\XN^t$ is a solution of the microscopic system and $\mu^t$ is a solution of the limiting mean-field equation.  An essential point in this approach is to control the derivative of $\Fr_N$ along a transport $v:\R^\d\rightarrow\R^\d$.  In the modulated-free-energy method, this same derivative is the transport commutator left after the relative Fisher-information term has been put in favorable sign.

For applications to the mean-field limit, $v$ is the velocity field of the limiting mean-field dynamics.  For applications to central limit theorems (CLTs) for the fluctuations (the next-order description), $v$ is the gradient of a test function evolved along the adjoint linearized mean-field flow, and these inequalities are at the core of the ``transport'' approach to fluctuations of canonical Gibbs ensembles \cite{LebleSerfaty2018,Serfaty2023,peilen_local_2025}.  The same quantity also appears in the loop, or Dyson--Schwinger, equations in the random matrix theory literature \cite{BorotGuionnet2013OneCut,BorotGuionnet2024MultiCut,BorotGuionnetKozlowski2015}.  These identities are obtained by differentiating the Gibbs integral under infinitesimal changes of variables and thus admit a transport interpretation.

The higher-order commutators below are the corresponding higher variations of the modulated energy.  As in the first-order case, deterministic commutator estimates seek to control these quantities by $C_v(\Fr_N(\XN,\mu)+C_\mu N^{-\alpha})$ \cite{RS2024comm,HCRS2025}.  Second-order estimates are important for the aforementioned transport method for studying the fluctuations of Coulomb/Riesz gases.  Estimates beyond second order are also useful, allowing one to obtain finer estimates on the fluctuations of Riesz gases to treat a broader class of interactions \cite{peilen_local_2025} and also to compute the asymptotics of arbitrary-order cumulants \cite{rosenzweig_cumulants_nodate}.

We now use the partition-function control to prove the averaged, entropic counterpart of these deterministic commutator bounds when the point configuration is distributed according to the modulated Gibbs measure.  The main point is to prove a uniform unnormalized exponential-moment bound under $\mu^{\otimes N}$, then convert it by Donsker--Varadhan into an entropic estimate for arbitrary $N$-particle laws.

For a Lipschitz transport $v:\R^\d\rightarrow\R^\d$ and an integer $n\ge1$, define
\begin{align}\label{eq:Andef}
\mathsf{A}_n(v,\XN,\mu) \coloneqq \int_{(\R^\d)^2\setminus\triangle}\nab^{\otimes n}\g(x-y):(v(x)-v(y))^{\otimes n}\,\dd(\mu_N-\mu)^{\otimes 2}(x,y).
\end{align}
Since $\mathsf{A}_n(-v,\XN,\mu)=(-1)^n\mathsf{A}_n(v,\XN,\mu)$, a sign can be absorbed into $v$ only when $n$ is odd.  We therefore keep the explicit sign parameter $\sigma\in\{\pm 1\}$ in the all-order estimate.

\begin{thm}\label{thm:entropicCommutator}
Fix an integer $n\ge1$. Assume $0\le\s<\d/2$, $\mu\in L^\infty\cap\P(\R^\d)$, and $v:\R^\d\to\R^\d$ is globally Lipschitz; if $\s=0$, assume in addition \eqref{eq:logEnergyAssumption}.  Then there exist a threshold $\be_{\mathrm{loc}}=\be_{\mathrm{loc}}(\mu,v,n)>0$ and, for every $\be>0$, constants $\vep_{\be,n}>0$ and $C_{\mathrm{em}}(\be,\mu,v,n)<\infty$, made explicit in \cref{rem:entropicCommutatorConstants}, such that, for every $\sigma\in\{\pm 1\}$,
\begin{align}\label{eq:QNexpMomentAn}
\sup_{N\ge1}\E_{\XN\sim\Q_{N,\be}(\mu)}\Big[e^{\sigma\vep_{\be,n} N\mathsf{A}_n(v,\XN,\mu)}\Big]\le C_{\mathrm{em}}(\be,\mu,v,n).
\end{align}
Consequently, for every $f_N\in\P((\R^\d)^N)$,
\begin{align}\label{eq:entropicCommutatorEstimate}
\Big|\E_{\XN\sim f_N}\Big[N\mathsf{A}_n(v,\XN,\mu)\Big]\Big|\le C_{\be,n,v}N\Er_N(f_N,\mu)+C_{\be,\mu,v,n},
\end{align}
where $\Er_N$ is the modulated free energy \eqref{eq:ENdef} and
\begin{align}\label{eq:CnvExplicit}
C_{\be,n,v}\coloneqq\frac{\be}{\vep_{\be,n}}
\qquad\text{and}\qquad
C_{\be,\mu,v,n}\coloneqq\frac{1}{\vep_{\be,n}}
\pa*{\log C_{\mathrm{pf}}(\be,\mu)+\log C_{\mathrm{em}}(\be,\mu,v,n)}.
\end{align}
\end{thm}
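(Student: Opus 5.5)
The plan is to first prove the exponential-moment bound \eqref{eq:QNexpMomentAn} and then derive \eqref{eq:entropicCommutatorEstimate} by Donsker--Varadhan. Write $\E_{\Q_{N,\be}(\mu)}[e^{\sigma\vep N\mathsf A_n}]=\K_{N,\be}(\mu)^{-1}\E_{\mu^{\otimes N}}[e^{-\be N\Fr_N+\sigma\vep N\mathsf A_n}]$. By Jensen \eqref{eq:KlowerJensen}, $\K_{N,\be}\ge e^{\be I_\mu}$, so it suffices to bound the unnormalized product expectation uniformly in $N$. Expanding $(\mu_N-\mu)^{\otimes2}$ off the diagonal puts $N\mathsf A_n$ in the same Hoeffding form as $N\Fr_N$. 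The pair term is $\frac1N\sum_{i\ne j}\tl{\k}_n(x_i,x_j)$ with kernel $\k_n(x,y)=\nab^{\otimes n}\g(x-y):(v(x)-v(y))^{\otimes n}$. The one-body terms are $-\frac{2}{N}\sum_i \int \k_n(x_i,y)\,\dd\mu(y)$ plus a constant; here $\int\k_n(x,y)\dd\mu(y)$ is bounded because $|\k_n(x,y)|\lesssim\|\nab v\|_{L^\infty}^n|x-y|^{-\s}$ (or $\lesssim 1$ when $\s=0$) and $\mu\in L^\infty$. The key structural observation is that $|\k_n|\le C_{v,n}|x-y|^{-\s}$, so $\k_n$ has exactly the singularity of $\g$ and $\tl\k_n\in L^2(\mu^{\otimes2})$ precisely when $\s<\d/2$.

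For the joint exponential we use H\"older. If $\be$ is small, below a threshold $\be_{\mathrm{loc}}$, we split $\E[e^{-\be N\Fr_N}e^{\sigma\vep N\mathsf A_n}]\le \E[e^{-p\be N\Fr_N}]^{1/p}\E[e^{p'\sigma\vep N\mathsf A_n}]^{1/p'}$. The first factor is $\K_{N,p\be}^{1/p}$, which is bounded by \cref{thm:partitionFunctionBound}(i). The second factor is a Laplace transform of a canonical degree-two $U$-statistic with a Hilbert--Schmidt, but non-positive-definite, kernel. We control it as in \cref{prop:uniformBoundRepulsive}: truncate at a small scale, where the low-frequency part is bounded and fed into \cref{prop:masterLayerCake}, while the high-frequency part has small $L^2$, mixed and operator norms since $\s<\d/2$. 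This gives a uniform bound once $\vep$ is small relative to those norms.

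For general $\be$ we cannot afford to inflate $\be$, so instead we dominate the sign-indefinite kernel by the positive one. The plan is to show that the high-frequency part of $\sigma\vep\tl\k_n$ is bounded, as a quadratic form on centered measures, by $\frac\be2$ times the high-frequency part of $\tl\g$. This should follow by writing $\k_n$ via a Fourier/commutator representation and using the positive definiteness of truncations of $\g$. With that domination, the singular part is absorbed into $e^{-\be N\Fr_N}$ with a slightly reduced inverse temperature $\be/2$, and the smooth part is handled by the bounded-feature estimates of \cref{prop:masterLayerCake}. Combined with Cauchy--Schwarz, this gives constants $\vep_{\be,n}$ and $C_{\mathrm{em}}$. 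In the logarithmic case, the infrared part is first paired with the one-body term through \cref{lem:logLargeScaleCancellation}, exactly as in the proof of \cref{prop:uniformBoundRepulsive}.

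The conclusion then follows from Donsker--Varadhan. For any $f_N$,
\[
\E_{f_N}[\sigma\vep N\mathsf A_n]\le \Hr(f_N\vert\Q_{N,\be}(\mu))+\log C_{\mathrm{em}}.
\]
By \eqref{eq:intro-entropy-rewrite}, $\Hr(f_N\vert\Q_{N,\be})=\be N\Er_N+\log\K_{N,\be}\le\be N\Er_N+\log C_{\mathrm{pf}}$. Dividing by $\vep$ and taking both signs $\sigma=\pm1$ yields \eqref{eq:entropicCommutatorEstimate} with the constants in \eqref{eq:CnvExplicit}. We expect the main obstacle to be the large-$\be$ domination step: the quadratic-form comparison between the high-frequency parts of $\tl\k_n$ and $\tl\g$ for higher $n$.
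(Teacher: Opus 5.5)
Your proposal has a genuine gap at the decoupling step. After your H\"older split $\E_{\mu^{\otimes N}}[e^{-\be N\Fr_N+\sigma\vep N\mathsf A_n}]\le\K_{N,p\be}(\mu)^{1/p}\,\E_{\mu^{\otimes N}}[e^{p'\sigma\vep N\mathsf A_n}]^{1/p'}$, the second factor is $+\infty$ in the Riesz range $0<\s<\d/2$. This holds for every $\vep>0$, every $N\ge2$, and at least one sign $\sigma$, so no truncation or layer-cake argument can bound it.

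To see this, take $n=1$ and $v(x)=x$. Since $\nab\g(z)\cdot z=-\s\g(z)$, one has $\mathsf A_1(v,\XN,\mu)=-2\s\Fr_N(\XN,\mu)$. For $\sigma=-1$ your factor is therefore $\K_{N,-2\s p'\vep}(\mu)$, an attractive Riesz partition function. It diverges because $e^{c|x-y|^{-\s}}$ is not locally integrable; see the discussion preceding \cref{thm:attractive-log-small-beta-partition}.

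Square integrability of $\tl\k_n$ gives no exponential moments. What made \cref{prop:uniformBoundRepulsive} work was sign information: $\f_\eta\ge0$ enters with a minus sign in (M1), and positive definiteness supplies the deterministic cap (M2). The near-field commutator $\sigma\k_{n,\f_\eta}$ has neither.

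The same example gives $\E_{\Q_{N,\be}(\mu)}[e^{-\vep N\mathsf A_1}]=\K_{N,\be-2\s\vep}(\mu)/\K_{N,\be}(\mu)=+\infty$ whenever $2\s\vep>\be$. So $\vep_{\be,n}$ is forced to be $O(\be)$, whereas your small-$\be$ scheme produces an $\vep$ independent of $\be$. For the same reason, your remark that for large $\be$ one ``cannot afford to inflate $\be$'' misplaces the obstruction. $\K_{N,p\be}(\mu)$ is bounded for every fixed $\be$ by \cref{thm:partitionFunctionBound}(i); it is the decoupled commutator factor that fails at every $\be$. For $\s=0$, $\k_{n,\g}$ is bounded by homogeneity, so your decoupled factor is harmless there; the failure is specific to $\s>0$.

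The missing idea is that the singular part of the commutator must be absorbed by the singular part of the energy inside the same exponential, pair by pair in physical space, with $\vep$ tied to the inverse temperature.

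The paper splits $\g=\g_\eta+\f_\eta$ in the commutator and $\g=\g_{c_n\eta}+\f_{c_n\eta}$ in the energy. It uses the pointwise bound $|\k_{n,\f_\eta}(x,y)|\le C_n\|\nab v\|_{L^\infty}^n\f_{c_n\eta}(x-y)$ from \eqref{eq:knPointwise} to get \eqref{eq:detCommShortRange}. With $\vep=\gamma/(4C_{\mathrm{sr},n}\|\nab v\|_{L^\infty}^n)$, its main term is at most half of the near-field energy term $\frac{\gamma}{2N}\sum_{i\ne j}\f_{c_n\eta}(x_i-x_j)$. The singular commutator is therefore absorbed up to an error $O(\gamma N\eta^{\d-\s})$, as in \eqref{eq:shortAbsorb}. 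Because this comparison is pairwise, the excluded diagonal causes no difficulty, and no frequency-space quadratic-form comparison between $\tl\k_n$ and $\tl\g$ is needed for any $n$. That comparison was the step you flagged as the main obstacle.

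The remaining kernel $\sigma\vep\tl\k_{n,\g_\eta}-\frac\gamma2\tl\g_{c_n\eta}$ is the centering of a kernel dominated by a multiple of $\gamma\g_\eta$. Hence its $L^2(\mu^{\otimes2})$, mixed, and operator norms are $O(\gamma)$ (using $\s<\d/2$), and only its $L^\infty$ norm depends on the scale. \cref{prop:masterLayerCake} then applies with the adaptive scale $\eta(u,N)$ and the crude cap $\mathsf m_{N,u}=\frac N2\|\tl q_{N,u}\|_{L^\infty}$, since positive definiteness is unavailable, provided $\gamma\le\be_{\mathrm{loc}}$. This is \cref{lem:localNumeratorComm}.

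Large $\be$ is reduced to this local estimate by Cauchy--Schwarz applied to the split \eqref{eq:holderDecomposition}. That split keeps $\be_{\mathrm{loc}}N\Fr_N$ attached to the commutator and leaves only $\K_{N,2\be-\be_{\mathrm{loc}}}(\mu)^{1/2}$ as a separate factor. Your Donsker--Varadhan deduction of \eqref{eq:entropicCommutatorEstimate} from \eqref{eq:QNexpMomentAn} is correct and matches the paper.
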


Although this dependence is suppressed in the notation, $\be_{\mathrm{loc}}$, $\vep_{\be,n}$, and hence $C_{\be,n,v}$ depend on the background $\mu$ as well as on $\be$, $v$, and $n$.  We write $C_{\be,n,v}(\mu)$ when this dependence must be displayed; the detailed dependence of the exponential-moment and entropic constants is recorded in \cref{subsec:quantitative-bookkeeping}.

\medskip\noindent\textbf{Diffusive mean-field application.}
\label{subsec:application-diffusive-mean-field-convergence}
We now turn to the application to mean-field convergence and propagation of chaos for interacting systems of diffusive gradient flows.  Consider the overdamped $N$-particle diffusion
\begin{align}\label{eq:diffusiveNParticleSystem}
\dd x_i^t
=
-\frac1N\sum_{\substack{1\le j\le N\\ j\ne i}}\nabla \g(x_i^t-x_j^t)\,\dd t
+\sqrt{\frac2\be}\,\dd B_i^t,
\qquad i\in[N],
\end{align}
where $B_1,\ldots,B_N$ are independent Brownian motions.  

Establishing the mean-field limit means proving convergence, as $N\to\infty$, of the empirical measure
$\mu_N^t\coloneqq N^{-1}\sum_{i=1}^N\delta_{x_i^t}$ associated to a solution of \eqref{eq:diffusiveNParticleSystem}.  Assuming the initial empirical measures converge to a sufficiently regular density $\mu^0$, a formal It\^o calculation leads one to expect that, for $t>0$, $\mu_N^t$ converges to the solution of the McKean--Vlasov equation
\begin{align}\label{eq:diffusiveMeanFieldEquation}
\partial_t\mu^t
=
\frac1\be\Delta\mu^t+\div\pa*{\mu^t\nabla h_{\mu^t}},
\qquad h_{\mu^t}=\g\ast\mu^t,
\end{align}
which is completely deterministic, the noise having been averaged out to become diffusion.

Convergence of the empirical measure is qualitatively equivalent to propagation of molecular chaos; we refer the reader to \cite{Sznitman1991,HaurayMischler2014} for classical treatments and \cite{ChaintronDiez2022a,ChaintronDiez2022b} for modern reviews.  This latter notion means that, if the initial law $f_N^0$ is $\mu^0$-chaotic, in the sense that the $k$-point marginals $f_{N;k}^0\rightharpoonup(\mu^0)^{\otimes k}$ as $N\to\infty$ for every fixed $k$, then the evolved law $f_N^t$ is $\mu^t$-chaotic.

For positive-temperature gradient dynamics, the modulated energy combines naturally with relative entropy in the form of the modulated free energy\footnote{In the sub-Coulomb Riesz regime, the mean-field limit can also be treated by the modulated energy alone, as shown in \cite{RS2021}.  We use the modulated-free-energy formulation here because it is the setting in which the partition-function and entropic-commutator estimates enter.} \cite{BreschJabinWang2019,BreschJabinWang2019PKS,BreschJabinWang2023,RS2023lsi,CdCRS2025}.  More generally, the modulated-free-energy method combines two complementary controls: relative entropy controls the marginals, while the modulated energy controls the empirical measure.  Differentiating $\Er_N(f_N^t,\mu^t)$ along the $N$-particle flow and along \eqref{eq:diffusiveMeanFieldEquation} produces a relative Fisher-information term with favorable sign and a transport commutator.  For the singular logarithmic and Riesz interactions considered here, the principal analytic input is the control of this commutator furnished by \cref{thm:entropicCommutator}; the corresponding deterministic first-variation estimates are developed in \cite{HCRS2025,Rosenzweig2026Commutators}.  The Fisher-information term is discarded below, while the commutator is exactly the $n=1$ case of \cref{thm:entropicCommutator}.  For the repulsive logarithmic interaction, the first two authors showed that the commutator contribution in the modulated-free-energy differential inequality is only $O(N^{-1})$ after time integration, eliminating the extra factor of $\log N$ present in cruder deterministic estimates \cite{DelgadinoGvalani2025}.  The present theorem proves the corresponding conclusion in the locally square-integrable repulsive log/Riesz regime using the partition-function and entropic-commutator estimates developed in this paper.

\begin{thm}[Diffusive mean-field convergence]\label{thm:diffusiveMeanFieldConvergence}
Fix $T_\ast\in(0,\infty]$ and $\be>0$.  Let $(\mu^t)_{0\le t<T_\ast}$ solve \eqref{eq:diffusiveMeanFieldEquation}, and assume that this solution admits a globally\footnote{For sufficiently regular solutions, this velocity is
\(
u^t=-\nabla h_{\mu^t}-\be^{-1}\nabla\log\mu^t\).} Lipschitz velocity field $u^t$ satisfying
\begin{align}\label{eq:diffusiveMeanFieldContinuityVelocity}
\partial_t\mu^t+\div(\mu^t u^t)=0.
\end{align}
Let $(f_N^t)_{0\le t<T_\ast}$ be a curve of $N$-particle probability laws and write
\begin{align}\label{eq:ENtDef}
\Er_N^t\coloneqq \Er_N(f_N^t,\mu^t).
\end{align}
Assume that $t\mapsto\Er_N^t$ is absolutely continuous and that the pair $(f_N^t,\mu^t)$ satisfies the modulated-free-energy dissipation inequality
\begin{align}\label{eq:abstractMFEdiss}
\frac{\dd}{\dd t}\Er_N^t
\le \frac1N\Big|\E_{\XN\sim f_N^t}\Big[N\mathsf{A}_1(u^t,\XN,\mu^t)\Big]\Big|, \qquad \text{a.e. } t\in(0,T_\ast).
\end{align}
Assume also that, for a.e. $t\in(0,T_\ast)$, the hypotheses of \cref{thm:entropicCommutator} are satisfied with $n=1$, $\mu=\mu^t$, and $v=u^t$, at the fixed inverse temperature $\be$.

Set
\begin{align}\label{eq:MFEGronwallFactors}
\mathcal A^t
&\coloneqq \exp\pa*{\int_0^t C_{\be,1,u^\tau}(\mu^\tau)\,\dd\tau},\\
\mathcal B^t
&\coloneqq \int_0^t \exp\pa*{\int_\tau^t C_{\be,1,u^r}(\mu^r)\,\dd r}
C_{\be,\mu^\tau,u^\tau,1}\,\dd\tau,
\end{align}
where the constants are those of \cref{thm:entropicCommutator}.  If these quantities are finite for every $t<T_\ast$, then
\begin{align}\label{eq:MFEdiffusiveGronwall}
\Er_N(f_N^t,\mu^t)
\le \mathcal A^t\Er_N(f_N^0,\mu^0)+\frac{\mathcal B^t}{N},
\qquad 0\le t<T_\ast.
\end{align}
\end{thm}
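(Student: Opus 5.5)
The plan is a direct Gronwall argument in which \cref{thm:entropicCommutator} with $n=1$ does all the analytic work. Fix $N$ and a time $t\in(0,T_\ast)$ at which \eqref{eq:abstractMFEdiss} holds and the hypotheses of \cref{thm:entropicCommutator} are satisfied for $\mu=\mu^t$ and $v=u^t$; this holds for a.e.\ $t$. The entropic commutator estimate \eqref{eq:entropicCommutatorEstimate}, applied to $f_N=f_N^t$, gives
\begin{align*}
\Big|\E_{\XN\sim f_N^t}\big[N\mathsf{A}_1(u^t,\XN,\mu^t)\big]\Big|
\le C_{\be,1,u^t}(\mu^t)\,N\Er_N^t + C_{\be,\mu^t,u^t,1}.
\end{align*}
Dividing by $N$ and inserting this into \eqref{eq:abstractMFEdiss} yields the linear differential inequality
\begin{align*}
\frac{\dd}{\dd t}\Er_N^t \le a(t)\,\Er_N^t + \frac{b(t)}{N},
\qquad a(t)\coloneqq C_{\be,1,u^t}(\mu^t),\quad b(t)\coloneqq C_{\be,\mu^t,u^t,1},
\end{align*}
valid for a.e.\ $t$.

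Next I integrate with the factor $e^{-\int_0^t a}$. Since $t\mapsto\Er_N^t$ is absolutely continuous, $t\mapsto e^{-\int_0^t a}\Er_N^t$ is absolutely continuous on $[0,t]$ whenever $a\in L^1_{\mathrm{loc}}$. Its derivative is at most $e^{-\int_0^t a}\,b(t)/N$ a.e. Integrating from $0$ to $t$ and multiplying back by $\mathcal A^t=e^{\int_0^t a}$ gives exactly \eqref{eq:MFEdiffusiveGronwall}, with $\mathcal B^t$ as defined in \eqref{eq:MFEGronwallFactors}.

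The sign of $\Er_N^t$ needs no special handling. The inequality is linear, so the integrating-factor argument works for either sign, and the bound $a\ge 0$ is never used.

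The only technical point, which I expect to be the main (mild) obstacle, is measurability and local integrability of $a$ and $b$ in time, so that the integrating factor is absolutely continuous and the product rule applies. I would handle this by reading the standing assumption that $\mathcal A^t,\mathcal B^t$ are finite as containing it. The definitions in \eqref{eq:MFEGronwallFactors} implicitly require $\tau\mapsto a(\tau)$ and $\tau\mapsto b(\tau)$ to be measurable with $\int_0^t a<\infty$ and $\int_0^t e^{\int_\tau^t a}\,b(\tau)\,\dd\tau<\infty$. If needed, one can instead replace $a$ and $b$ by measurable majorants (the explicit constants of \cref{rem:entropicCommutatorConstants} are monotone in $\|\nabla u^t\|_{L^\infty}$, $\|\mu^t\|_{L^\infty}$ and the partition-function constants), and the same argument goes through. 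Since \cref{thm:entropicCommutator} already carries all the singular-interaction difficulty, the remainder is the standard Gronwall lemma.
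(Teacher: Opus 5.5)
Your proposal is correct and follows essentially the same route as the paper: insert the $n=1$ case of \cref{thm:entropicCommutator} at a.e.\ time into \eqref{eq:abstractMFEdiss} to get a linear differential inequality, then integrate with the factor $\exp\bigl(-\int_0^t C_{\be,1,u^\tau}(\mu^\tau)\,\dd\tau\bigr)$. Your remark on measurability and local integrability of the time-dependent constants spells out a point the paper leaves implicit in its finiteness assumption on $\mathcal A^t$ and $\mathcal B^t$.
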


The $O(1/N)$ scale in the estimate \eqref{eq:MFEdiffusiveGronwall} is the iid-chaotic scale.  The diagonal-exclusion correction in the modulated energy is already of order $N^{-1}$, and one should not expect a generally smaller additive floor in this topology.  This does not contradict the sharpness statement in the work of Chodron de Courcel, Serfaty, and the third author \cite{CdCRS2025}.  Their $N^{\s/\d-1}$ scale is sharp when the temperature is allowed to vanish with $N$, or when the initial randomization already produces a modulated free energy of order $N^{\s/\d-1}$, which is the case in the usual log/Riesz gas temperature scaling or the corresponding low-temperature regime; see, e.g., \cite[Section~3.2, especially (3.2.4)]{SerfatyLN}.  In the genuinely diffusive regime considered here, with fixed positive noise and iid-scale initial modulated free energy, the additive error is of the same order as the initial error.

\begin{remark}[Uniform-in-time propagation and generation]\label{rem:uniform-generation-chaos}
Evidently, the same estimate is uniform in time if the coefficients generated by the full velocity $u^t=-\nabla h_{\mu^t}-\be^{-1}\nabla\log\mu^t$ are integrable strongly enough that $\sup_{t\ge0}\mathcal A^t<\infty$ and $\sup_{t\ge0}\mathcal B^t<\infty$.  This is the abstract form of the usual uniform-in-time propagation-of-chaos mechanism in the modulated-free-energy method.  If, in addition, the nonpositive Fisher-information term in the dissipation identity is retained and a uniform modulated logarithmic Sobolev inequality for $\Q_{N,\be}(\mu^t)$ is available, then the same argument gives generation of chaos down to the $O(1/N)$ diffusive floor; this is the mechanism emphasized in \cite{RS2023lsi}.
\end{remark}

\begingroup

The proof of \cref{thm:entropicCommutator} uses the same exponential-moment mechanism as the partition-function theorem with the transport variation inserted.  The local numerator estimate \cref{lem:localNumeratorComm} absorbs the short-range commutator into the modulated energy, and the H\"older bootstrap in \cref{subsec:removal-smallness-assumption} removes the small-inverse-temperature restriction by invoking the partition-function bound at a larger inverse temperature.  The Donsker--Varadhan formula then gives the entropic inequality.  Inserting its first-order case into the modulated-free-energy dissipation inequality and applying Gronwall's lemma proves \cref{thm:diffusiveMeanFieldConvergence}.  The corresponding interpolation between the fixed-temperature entropic estimate and the deterministic pointwise estimate is stated and proved in \cref{cor:entropicCommutatorTemperatureInterpolation}.

\subsection{Static and dynamical fluctuations}
\label{subsec:static-dynamic-fluctuations}

We first record the static fluctuation consequence of the partition-function convergence.  For a real-valued test function $\phi\in L^2(\mu)$, set
\begin{align}\label{eq:linearEmpiricalFluctuationDef}
S_N(\phi)
\coloneqq \sqrt N\pa*{\int_{\R^\d}\phi\,\dd\mu_N-\int_{\R^\d}\phi\,\dd\mu},
\qquad
\bar\phi\coloneqq\phi-\int_{\R^\d}\phi\,\dd\mu,
\end{align}
and define
\begin{align}\label{eq:staticFluctuationCovariance}
\Sigma_{\be,\mu}(\phi,\psi)
\coloneqq
\left\langle \bar\phi,
(\mathrm{Id}+\be T_\mu)^{-1}\bar\psi
\right\rangle_{L^2(\mu)}.
\end{align}
Since $T_\mu$ is bounded, nonnegative, and self-adjoint on $L_0^2(\mu)$, the covariance form $\Sigma_{\be,\mu}$ is well defined and continuous on $L^2(\mu)\times L^2(\mu)$.

\begin{prop}[Static linear-statistics CLT]\label{prop:linearEmpiricalFluctuationCLT}
Assume the hypotheses of \cref{thm:partitionFunctionBound}, including \eqref{eq:logRateExtraAssumptions} when $\s=0$.  Fix $\be\ge0$ and real-valued functions $\phi_1,\ldots,\phi_k\in L^2(\mu)$.
\begingroup

Then, for every $\theta=(\theta_1,\ldots,\theta_k)\in\R^k$,
\begin{align}\label{eq:linearEmpiricalFluctuationCF}
\lim_{N\to\infty}
\E_{\XN\sim\Q_{N,\be}(\mu)}
\brak*{\exp\pa*{\iu\sum_{i=1}^k\theta_iS_N(\phi_i)}}
=
\exp\pa*{-\frac12\sum_{i,j=1}^k\theta_i\theta_j
\Sigma_{\be,\mu}(\phi_i,\phi_j)}.
\end{align}
Consequently, $(S_N(\phi_1),\ldots,S_N(\phi_k))$ converges in law under $\Q_{N,\be}(\mu)$ to a centered Gaussian vector with covariance matrix $(\Sigma_{\be,\mu}(\phi_i,\phi_j))_{i,j=1}^k$.
\endgroup

If, in addition, $\phi_1,\ldots,\phi_k\in L^\infty(\mu)$, then, for every $\theta\in\R^k$,
\begin{align}\label{eq:linearEmpiricalFluctuationMGF}
\lim_{N\to\infty}
\E_{\XN\sim\Q_{N,\be}(\mu)}
\brak*{\exp\pa*{\sum_{i=1}^k\theta_iS_N(\phi_i)}}
=
\exp\pa*{\frac12\sum_{i,j=1}^k\theta_i\theta_j
\Sigma_{\be,\mu}(\phi_i,\phi_j)}.
\end{align}
\end{prop}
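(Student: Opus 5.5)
Our approach is to treat the exponential moment of the linear statistic as a ratio of two modulated partition functions. We first prove the MGF statement \eqref{eq:linearEmpiricalFluctuationMGF} for bounded test functions, and then pass to the characteristic function for general $\phi_i\in L^2(\mu)$ by approximation.

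Set $\phi=\sum_i\theta_i\bar\phi_i\in L^\infty(\mu)$, which is centered. Then
\begin{align}
\E_{\Q_{N,\be}(\mu)}\brak*{e^{S_N(\phi)}}=\frac{\E_{\mu^{\otimes N}}\brak*{e^{-\be N\Fr_N(\XN,\mu)+N^{-1/2}\sum_i\phi(x_i)}}}{\K_{N,\be}(\mu)}.
\end{align}
We absorb the tilt into the reference law. Let $\dd\mu^{(N)}\coloneqq Z_N^{-1}e^{N^{-1/2}\phi}\,\dd\mu$. Since $\phi$ is bounded and centered, $Z_N^N=\big(\int e^{N^{-1/2}\phi}\,\dd\mu\big)^N\to e^{\frac12\|\phi\|_{L^2(\mu)}^2}$. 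We then expand $\Fr_N(\cdot,\mu)$ in terms of the $\mu^{(N)}$-centered quantities. The difference $\mu^{(N)}-\mu=N^{-1/2}\phi\mu+O(N^{-1})$ generates three corrections: a shift of the one-body term by $N^{-1/2}\sum_i T_\mu\phi(x_i)$ (up to lower order), a deterministic quadratic term $\frac{\be}{2}\langle\phi,T_\mu\phi\rangle$, and errors that vanish.

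The cleaner route, which we intend to carry out, is a Gaussian-chaos computation at the level of the canonical statistic $W_N$. The key step is the joint convergence, under $\mu^{\otimes N}$ with the $\exp(-\frac\be2 W_N)$ weight, of the pair $(W_N,\,N^{-1/2}\sum_i\phi(x_i))$ to $\big(\sum_j\lambda_j(Z_j^2-1),\ \sum_j\langle\phi,e_j\rangle Z_j+Z_0\|\phi^\perp\|\big)$. This follows from the multivariate CLT for finite-rank truncations together with spectral approximation, exactly as in the bounded-feature Laplace estimate \cref{prop:boundedFeatureQuadLaplace}. The Gaussian integral then gives
\begin{align}
\E\brak*{e^{-\frac\be2\sum_j\lambda_j(Z_j^2-1)+\sum_j a_jZ_j}}=\det_2(\mathrm{Id}+\be T_\mu)^{-1/2}\exp\pa*{\tfrac12\langle\phi,(\mathrm{Id}+\be T_\mu)^{-1}\phi\rangle}.
\end{align}
Dividing by the limit $\K_{\infty,\be}(\mu)$ from \cref{thm:partitionFunctionBound}(ii) yields \eqref{eq:linearEmpiricalFluctuationMGF}. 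The one-body corrections in $\Fr_N$ (the $h_\mu$ average and $I_\mu/N$) are handled as in the theorem's proof via \cref{lem:oneBodyReduction}.

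We expect the main obstacle to be justifying the passage to the limit for the singular kernel, not the Gaussian algebra. We will repeat the proof of \cref{thm:partitionFunctionBound} with the bounded linear tilt inserted. The positive-definite truncation and the low/high-frequency split are unaffected, since the tilt is a bounded one-body factor. The uniform exponential-moment bounds of \cref{prop:uniformBoundRepulsive} survive by Hölder's inequality, using $|e^{N^{-1/2}\sum\phi}|\le e^{\sqrt N\|\phi\|_\infty}$ only in combination with the centering; more precisely, $\E e^{pN^{-1/2}\sum\phi(x_i)}$ is uniformly bounded. This gives uniform integrability, and the high-frequency remainder is controlled as before. If a direct tilted version proves awkward, the fallback is a Hölder splitting: bound the $\K$-type factor at a slightly larger $\be$, use the uniform bound, and handle the finite-rank low-frequency part by the CLT.

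For general $\phi_i\in L^2(\mu)$ and the characteristic function \eqref{eq:linearEmpiricalFluctuationCF}, we proceed in two steps. First, the imaginary-exponent analog of the above argument works for bounded $\phi$, since $|e^{\iu S_N}|=1$ and identical reasoning applies. Second, we approximate $\phi_i$ by bounded $\phi_i^{(M)}$. Under $\Q_{N,\be}(\mu)$, Cauchy--Schwarz and the uniform bound give
\begin{align}
\E_{\Q}\abs*{S_N(\phi-\phi^{(M)})}\le \K_{N,\be}^{-1/2}\,\E_{\mu^{\otimes N}}\brak*{e^{-2\be N\Fr_N}}^{1/2}\,\|\phi-\phi^{(M)}\|_{L^2(\mu)}.
\end{align}
Here $\K_{N,\be}\ge e^{\be I_\mu}$ by \eqref{eq:KlowerJensen}, and the middle factor equals $\K_{N,2\be}(\mu)^{1/2}$, which is bounded by \cref{thm:partitionFunctionBound}(i). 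Since $\Sigma_{\be,\mu}$ is continuous on $L^2(\mu)$, the limit passes. Finally, Lévy's continuity theorem gives the convergence in law.
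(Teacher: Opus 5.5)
Your proposal is correct and follows essentially the paper's route: joint convergence of $(S_N(\phi_\theta),W_N)$ under the unweighted product law $\mu^{\otimes N}$, uniform integrability of the exponential weights by H\"older's inequality against a partition function at a larger inverse temperature, the Gaussian tilt computation divided by $\K_{\infty,\be}(\mu)$, and an $L^2(\mu)$ truncation controlled by Cauchy--Schwarz against $\K_{N,2\be}(\mu)$. The one slip is that the exact prefactor in your Cauchy--Schwarz bound is $\K_{N,\be}(\mu)^{-1}$ rather than $\K_{N,\be}(\mu)^{-1/2}$, which is harmless given the Jensen lower bound \eqref{eq:KlowerJensen}.
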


\begin{remark}[Equilibrium and dynamical interpretation]\label{rem:staticCLTInterpretation}
When $\mu=\mu_\be$ is the thermal equilibrium law, $\Q_{N,\be}(\mu_\be)=\mathbb P_{N,\be}$, so \cref{prop:linearEmpiricalFluctuationCLT} is a central limit theorem for the original canonical ensemble.  \begingroup
The $L^2(\mu)$ test class is the natural and essentially optimal class for a general $\sqrt N$-scale Gaussian theorem with the covariance above.  Indeed, since $T_\mu$ is nonnegative,
\begin{align}
\frac{1}{1+\be\|T_\mu\|_{\mathrm{op}}}
\|\bar\phi\|_{L^2(\mu)}^2
\le
\Sigma_{\be,\mu}(\phi,\phi)
\le
\|\bar\phi\|_{L^2(\mu)}^2.
\end{align}
Thus, the covariance norm is equivalent to the centered $L^2(\mu)$ norm.  The boundedness assumption in the final assertion is used only to justify convergence of the two-sided moment generating functions.  The proposition also identifies the initial Gibbs characteristic function entering the dynamical central limit theorem below.
\endgroup
\end{remark}

We now turn to the fluctuation process associated with \eqref{eq:diffusiveNParticleSystem}--\eqref{eq:diffusiveMeanFieldEquation}.  For a bounded test $\phi$, write
\begin{align}\label{eq:dynamicFluctuationStatistic}
S_N^t(\phi)
\coloneqq
\sqrt N\pa*{\int_{\R^\d}\phi\,\dd\mu_N^t-\int_{\R^\d}\phi\,\dd\mu^t}.
\end{align}
For $\nu\in\P(\R^\d)$, define the adjoint linearized operator
\begin{align}\label{eq:adjointLinearizedOperator}
(\mathcal L_{\be,\nu}\phi)(x)
\coloneqq
\frac1\be\Delta\phi(x)
-\int_{\R^\d}\nabla\g(x-y)\cdot
\pa*{\nabla\phi(x)-\nabla\phi(y)}\,\dd\nu(y).
\end{align}

\begin{thm}[Dynamical linear-statistics CLT]\label{thm:dynamicLinearStatisticsCLT}\label{def:dynamicCLTAdmissibility}
Let $0\le\s<\d/2$, $\be>0$, and $T>0$.  Let $(\mu^t)_{0\le t\le T}$ be a mean-field trajectory and $(f_N^t)_{0\le t\le T}$ the corresponding particle laws.  Assume the following.
\begin{enumerate}[(i)]
\item The particle system and mean-field equation are well posed in a class for which the time-dependent It\^o formulas used below are valid; the Brownian motions are independent of the initial configuration; $f_N^0=\Q_{N,\be}(\mu^0)$; and the estimate \eqref{eq:MFEdiffusiveGronwall} holds.  Moreover,
\begin{align}\label{eq:dynamicUniformBackgroundControls}
\sup_{0\le t\le T}
\pa*{\|\mu^t\|_{L^\infty}+\mathcal A^t+\mathcal B^t}
<\infty.
\end{align}
If $\s=0$, assume in addition that
\begin{align}\label{eq:dynamicUniformLogEnergy}
\inf_{0\le t\le T} I_{\mu^t}>-\infty.
\end{align}
\item There are constants $0<c_{\be,T}\le C_{\be,T}<\infty$ and $C_{2\be,T}<\infty$ such that
\begin{align}\label{eq:dynamicUniformPartitionBounds}
&\inf_{N\ge1}\inf_{0\le t\le T}\K_{N,\be}(\mu^t)\ge c_{\be,T},
\qquad
\sup_{N\ge1}\sup_{0\le t\le T}\K_{N,\be}(\mu^t)\le C_{\be,T},\\
&\sup_{N\ge1}\sup_{0\le t\le T}\K_{N,2\be}(\mu^t)\le C_{2\be,T},
\end{align}
and $\mu^0$ satisfies the hypotheses of \cref{prop:linearEmpiricalFluctuationCLT}.
\item There is a real linear test space $\mathscr D_T\subset C_b^2(\R^\d)$ such that, for every $t\in[0,T]$ and $\phi\in\mathscr D_T$, the terminal-value problem
\begin{align}\label{eq:backwardAdjointEquation}
-\partial_r f^r=\mathcal L_{\be,\mu^r}f^r,
\qquad 0\le r\le t,
\qquad f^t=\phi,
\end{align}
has a unique real-valued classical solution, denoted by $f^r=\Uprop^{r,t}\phi$, for which $f$, $\nabla f$, $\nabla^{\otimes2}f$, and $\partial_r f$ are jointly continuous and
\begin{align}\label{eq:dynamicAdjointRegularity}
\sup_{0\le r\le t}
\pa*{
\|f^r\|_{L^\infty}
+\|\nabla f^r\|_{L^\infty}
+\|\nabla^{\otimes2}f^r\|_{L^\infty}
+\|\partial_r f^r\|_{L^\infty}}
<\infty.
\end{align}
The bound in \eqref{eq:dynamicAdjointRegularity} is uniform over each fixed finite family of terminal pairs $(t,\phi)$.
\end{enumerate}
Fix $k\ge1$, times $t_1,\ldots,t_k\in[0,T]$, and tests $\phi_1,\ldots,\phi_k\in\mathscr D_T$.  Then
\begin{align}
\pa*{S_N^{t_1}(\phi_1),\ldots,S_N^{t_k}(\phi_k)}
\Longrightarrow (G_1,\ldots,G_k),
\end{align}
where the vector on the right is centered Gaussian with covariance
\begin{align}\label{eq:dynamicFluctuationCovariance}
\E[G_aG_b]
={}&\Sigma_{\be,\mu^0}\pa*{\Uprop^{0,t_a}\phi_a,\Uprop^{0,t_b}\phi_b}\\
&+\frac2\be\int_0^{t_a\wedge t_b}
\int_{\R^\d}\nabla\Uprop^{r,t_a}\phi_a\cdot
\nabla\Uprop^{r,t_b}\phi_b\,\dd\mu^r\,\dd r.
\end{align}
\end{thm}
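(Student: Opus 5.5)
The plan is to work with the characteristic function of the joint vector and to transport each time-$t_a$ statistic back to time $0$ along the backward adjoint flow. Fix $\theta\in\R^k$ and set $f_a^r\coloneqq\Uprop^{r,t_a}\phi_a$ for $r\le t_a$. Apply It\^o's formula to $\sum_i f_a^r(x_i^r)$ on $[0,t_a]$. The drift then splits into two pieces. The first is the linearized part, which cancels against $\int f_a^r\,\dd\mu^r$ by \eqref{eq:backwardAdjointEquation} together with \eqref{eq:diffusiveMeanFieldEquation}. The second is a quadratic remainder equal, up to sign, to $\frac{\sqrt N}{2}\mathsf{A}_1(\nabla f_a^r,\XN^r,\mu^r)$ (symmetrized), where the diagonal term vanishes because $\nabla\g$ is odd. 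This yields
\begin{align}
S_N^{t_a}(\phi_a)=S_N^0(f_a^0)+M_a^N-\frac{1}{2\sqrt N}\int_0^{t_a}N\mathsf{A}_1(\nabla f_a^r,\XN^r,\mu^r)\,\dd r,
\end{align}
with martingale part $M_a^N=\sqrt{2/\be}\,N^{-1/2}\sum_i\int_0^{t_a}\nabla f_a^r(x_i^r)\cdot\dd B_i^r$.

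The first step is to show that the remainder tends to zero in $L^1(\P)$. \cref{thm:entropicCommutator} with $n=1$, $v=\nabla f_a^r$ (which is Lipschitz by \eqref{eq:dynamicAdjointRegularity}) and $\mu=\mu^r$ gives $\E|N\mathsf{A}_1|\lesssim N\Er_N(f_N^r,\mu^r)+C$. Strictly, \eqref{eq:entropicCommutatorEstimate} controls $|\E\,\cdot|$; to get $\E|\cdot|$ I would apply it to the two sign parts, or use Donsker--Varadhan directly with \eqref{eq:QNexpMomentAn}. By \eqref{eq:MFEdiffusiveGronwall}, $N\Er_N^r\le\mathcal A^r N\Er_N^0+\mathcal B^r$. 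Also $N\Er_N^0=-\be^{-1}\log\K_{N,\be}(\mu^0)\le\be^{-1}|\log c_{\be,T}|$, by \eqref{eq:intro-entropy-rewrite} with $f_N^0=\Q_{N,\be}(\mu^0)$. Hence the remainder is $O(N^{-1/2})$, uniformly in $r$ thanks to \eqref{eq:dynamicUniformBackgroundControls}. The uniformity of the constants in $\mu^r$ is the delicate point. For this I would invoke the bookkeeping in \cref{subsec:quantitative-bookkeeping}, using the uniform bounds \eqref{eq:dynamicUniformPartitionBounds} (including $\K_{N,2\be}$, which is what the H\"older bootstrap needs) and \eqref{eq:dynamicUniformLogEnergy} when $\s=0$.

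The second step is a joint CLT for the initial term and the martingales. Conditionally on the initial configuration, the martingales $M_a^N$ have quadratic covariation
\begin{align}
\frac2\be\int_0^{t_a\wedge t_b}\int\nabla f_a^r\cdot\nabla f_b^r\,\dd\mu_N^r\,\dd r.
\end{align}
This converges in probability to the deterministic integral in \eqref{eq:dynamicFluctuationCovariance}, since $\mu_N^r\to\mu^r$ weakly in probability. That convergence follows from $\Er_N^r=O(1/N)$, which controls the modulated energy, combined with the coercivity of $\Fr_N$ for $\s<\d/2$, or alternatively from the entropy subadditivity bound on marginals. The integrand is $C_b^1$ and jointly continuous in $r$. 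The martingale CLT (bounded increments, deterministic limiting bracket) then gives a Gaussian limit for the martingale vector. To decouple it from $\mathcal F_0$, I would use the conditional characteristic function $\E[e^{\iu\sum\theta_aM_a^N}\mid\mathcal F_0]$, or equivalently stable convergence. The initial term $\sum_a\theta_aS_N^0(f_a^0)$ is a linear statistic under $\Q_{N,\be}(\mu^0)$, so \cref{prop:linearEmpiricalFluctuationCLT} gives its Gaussian limit with covariance $\Sigma_{\be,\mu^0}$. Because the Brownian motions are independent of $\XN^0$, the limit characteristic function factorizes, and the covariances add, giving \eqref{eq:dynamicFluctuationCovariance}. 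Slutsky's lemma removes the $L^1$-small remainders.

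I expect the main obstacle to be the first step, namely making the commutator constants uniform along the trajectory and converting the signed entropic bound into an absolute-value $L^1$ bound. A secondary issue is the law-of-large-numbers step for the bracket. This step must not use anything beyond $\Er_N=O(1/N)$, so the route through entropy and marginals is the safest choice.
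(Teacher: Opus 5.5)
Your proposal is correct and follows the paper's proof in all essentials: the same finite-$N$ adjoint-duality identity, the same Donsker--Varadhan use of the two-sided exponential commutator moments under $\Q_{N,\be}(\mu^r)$ to make the remainder $O(N^{-1/2})$ in $L^1$ (with $r$-uniform constants from \cref{rem:entropicCommutatorConstants} and the $\K_{N,2\be}$ bound), and the static CLT for the initial term. The differences are inessential:
\begin{itemize}
\item You decouple the martingale from $\mathcal F_0$ through the conditional characteristic function, which is legitimate because the bracket is deterministically bounded, so $\exp\{\iu M+\frac12\langle M\rangle\}$ with $M=\sum_a\theta_aM_a^N$ is a bounded martingale with conditional mean one. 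The paper instead applies It\^o's formula to the joint characteristic function and closes a Volterra--Gronwall comparison.
\item You obtain the bracket law of large numbers from marginal entropy subadditivity, whereas the paper uses Donsker--Varadhan with Hoeffding's lemma. Subadditivity needs $\Hr(f_N^r\vert(\mu^r)^{\otimes N})=o(N)$. This does not follow from $\Er_N^r=O(1/N)$ alone, but from $\Hr(f_N^r\vert(\mu^r)^{\otimes N})\le 2\be N\Er_N^r+\log\K_{N,2\be}(\mu^r)$.
\end{itemize}
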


The uniform background controls in hypothesis~(i), the partition-function bounds in hypothesis~(ii), and the Hessian bound in hypothesis~(iii) imply the family-uniform exponential commutator estimate \eqref{eq:dynamicUniformCommutatorBounds} used below; see \cref{rem:entropicCommutatorConstants,lem:dynamicEntropyTransfers}.

\begin{remark}[Backward adjoint regularity]\label{rem:HRSAdjointRegularity}
The backward-adjoint regularity required in hypothesis~(iii) of \cref{thm:dynamicLinearStatisticsCLT} is proved in the forthcoming work of Huang, Serfaty, and the third author \cite{HuangRosenzweigSerfaty2026}.
\end{remark}

\begin{remark}[Toward a path-space central limit theorem]\label{rem:pathSpaceCLT}
\Cref{thm:dynamicLinearStatisticsCLT} identifies the finite-dimensional distributions of any possible path-space limit.  Consequently, if the fluctuation fields are tight in a separable continuous-path space in which the admissible tests form a measure-determining class, and if the Gaussian process characterized by \eqref{eq:dynamicFluctuationCovariance} has trajectories in that space, then every subsequential limit has the same law.  Thus, the remaining step toward a path-space central limit theorem is the corresponding tightness and trajectory-regularity estimate.
\end{remark}

\begingroup
For the static theorem, the linear tilt is first combined with the joint Gaussian/degenerate-chaos limit and the determinant normalization to prove the bounded-test Laplace-transform formula.  A truncation argument, using a uniform $L^2(\mu^{\otimes N})$ bound on the density of $\Q_{N,\be}(\mu)$ relative to $\mu^{\otimes N}$, then extends the weak central limit theorem to arbitrary $L^2(\mu)$ tests; see \cref{subsec:static-linear-statistics-proof}.  \endgroup For the dynamical theorem, the backward adjoint equation cancels the linearized drift and the finite-$N$ identity \cref{prop:finiteNAdjointDuality} expresses each terminal statistic as an initial statistic, a martingale, and a transport-commutator remainder.  The static central limit theorem identifies the initial term, while \cref{lem:dynamicEntropyTransfers}, using the family-uniform form of the entropic commutator estimate, makes the remainder negligible.  A characteristic-function argument, together with convergence of the martingales' predictable quadratic covariations to their deterministic mean-field limits, gives the time-integrated part of the covariance in \eqref{eq:dynamicFluctuationCovariance}.
\endgroup

\subsection{Relation to previous work}
\label{subsec:intro-related-work}

There is a large literature on full Gibbs partition functions for logarithmic, Coulomb, and Riesz gases.  At the macroscopic level, first-order free-energy asymptotics and empirical-measure large deviations identify the mean-field variational problem and its equilibrium measures; at the next orders, microscopic screening, renormalized energies, empirical-field large deviations, local laws, rigidity, and Coulomb/Riesz free-energy expansions enter the description.  For a review of this body of results, see Serfaty's lecture notes \cite[Chapters~3, 5, and~8--13]{SerfatyLN}.  A further distinction is the temperature scale.  In many of the microscopic equilibrium results, the relevant inverse-temperature scale is the one at which nearest-neighbor Riesz costs enter the Gibbs weight; in the present normalization this means $\be=\be_N$ of order $N^{1-\s/\d}$, and $\be_N\sim N$ in the logarithmic case.  The present paper instead works in the fixed-temperature diffusive regime $\be_N=O(1)$, which is a high-temperature regime from that perspective.  Those results are substantially more refined at the microscopic level than the present work.  They concern the full confined ensemble and its equilibrium structure.  On the other hand, these results do not imply those of the present paper because of the differences in temperature regimes.  In the present paper, the singularity of the interaction is not controlled through the screening and renormalized-energy mechanisms that describe microscopic equilibrium structure.  The residual normalization is instead a high-temperature fluctuation problem for the centered two-body statistic, and its sharp behavior is governed by the Hilbert--Schmidt threshold and the Carleman--Fredholm determinant.

\begingroup
Central limit theorems for equilibrium linear statistics of logarithmic,
Coulomb, and Riesz gases have been obtained in several complementary
temperature and spatial regimes.  At the diffusive, or high-temperature,
scale considered here, one-dimensional logarithmic ensembles were treated
for circular and real beta ensembles in
\cite{HardyLambert2021,NakanoTrinh2018,
DworaczekGueraMemin2024,MazzucaMemin2024}, generally for smooth or
polynomial test functions and, in some cases, with quantitative
normal-approximation estimates.  At stronger, non-diffusive interaction
scales, quantitative or mesoscopic central limit theorems for
one-dimensional beta ensembles were proved in
\cite{BekermanLebleSerfaty2018,LambertLedouxWebb2019,
BekermanLodhia2018,Lambert2021,BourgadeModyPain2022}, while
two-dimensional Coulomb fluctuations at the conventional interaction
scale were treated at macroscopic and mesoscopic scales in
\cite{LebleSerfaty2018,BauerschmidtBourgadeNikulaYau2019}.
Serfaty's fluctuation and free-energy theory includes broad, possibly
particle-number-dependent temperature regimes, including the diffusive
regime in dimension two; the corresponding higher-dimensional central
limit theorem is conditional on a no-phase-transition assumption and a
sufficiently accurate free-energy expansion \cite{Serfaty2023}.
Recent work of Peilen and Serfaty treats super-Coulombic Riesz gases in
arbitrary dimension, in a restricted range of inverse powers and inverse
temperatures, and obtains a central limit theorem at small mesoscopic
scales \cite{peilen_local_2025}.  Boursier's fixed-temperature circular
Riesz theorem is a particularly relevant low-regularity result, covering
interval counts and certain algebraically singular test functions
\cite{BoursierCircularRiesz}.  By contrast,
\cref{prop:linearEmpiricalFluctuationCLT} applies at the diffusive scale,
for arbitrary prescribed bounded backgrounds satisfying the
partition-function hypotheses, to any fixed finite family of \begingroup real-valued
square-integrable test functions throughout the logarithmic/Riesz
Hilbert--Schmidt range.  For bounded tests, it also gives convergence of
the two-sided moment generating functions for every real tilt.  \endgroup It does
not address particle-number-dependent mesoscopic tests, convergence of
the full spatial fluctuation field, or a quantitative normal-approximation
rate.
\endgroup

A separate literature in one dimension gives all-order expansions of log-gas and beta-ensemble partition functions by loop equations and topological recursion; see, for example, \cite{BorotGuionnet2013OneCut,BorotGuionnet2024MultiCut,BorotGuionnetKozlowski2015,BekermanLebleSerfaty2018,DworaczekGuera2024}.  We emphasize that these results use the special structure of the one-dimensional logarithmic potential, which does not carry over to higher dimensions or to Riesz interactions.

The dynamical motivation comes from the modulated-energy and modulated-free-energy approach to singular mean-field limits.  The method was developed for Coulomb/Riesz and related singular interactions in \cite{Serfaty2020,Duerinckx2016,NRS2021,RS2021,JabinWang2018,BreschJabinWang2019,BreschJabinWang2023}, and the modulated Gibbs viewpoint was used in \cite{RS2023lsi} to connect propagation and generation of chaos with uniform modulated logarithmic Sobolev inequalities.  In this framework, \eqref{eq:intro-entropy-rewrite} shows that $\log\K_{N,\be}(\mu)$ is the exact defect in changing the reference entropy from $\mu^{\otimes N}$ to the modulated Gibbs measure.  The present paper supplies the corresponding order-one partition-function estimate in the locally square-integrable repulsive log/Riesz regime.

\begingroup
Fluctuation-process central limit theorems for regular mean-field
particle systems are classical; see, for instance,
\cite{ShigaTanaka1985,FernandezMeleard1997}, and
\cite{DuerinckxGlauber2021} for a modern, quantitative perspective.  For
singular diffusions, Wang, Zhao, and Zhu prove convergence of the
fluctuation process to a generalized Ornstein--Uhlenbeck process, with
the two-dimensional Biot--Savart law as a principal application, while
Cecchin and Nikolaev obtain quantitative weak convergence in negative
Sobolev spaces and treat both viscous-vortex and repulsive-Coulomb
models \cite{WangZhaoZhu2023,CecchinNikolaev2025}.  Duerinckx and
Jabin establish correlation estimates and a fixed-time central limit
theorem for Langevin particle systems with square-integrable singular
interactions; their analysis is presented for underdamped dynamics,
while the corresponding correlation estimates extend, with minor
adaptations, to overdamped systems
\cite[Remark~1.2]{DuerinckxJabin2025Correlations}.  Their hierarchical
argument does not, however, identify time correlations along
trajectories.  In the special one-dimensional logarithmic setting,
global fluctuation results for Dyson dynamics were obtained by Bender
\cite{Bender2008} and Unterberger \cite{Unterberger2018}, while
Nakano, Trinh, and Trinh \cite{NakanoTrinhTrinh2023} prove
process-level central limit theorems for moment observables of
high-temperature beta-Dyson and beta-Laguerre systems.
\Cref{thm:dynamicLinearStatisticsCLT} differs in starting from a
correlated modulated Gibbs law and in treating the full
Hilbert--Schmidt logarithmic/Riesz class under the stated mean-field
and backward-adjoint assumptions.  It identifies joint linear
statistics at finitely many times, and hence their time correlations,
but does not provide a quantitative convergence rate or path-space
tightness.  Once the static fluctuation theorem and the entropic
commutator estimate are available, the final probabilistic stage of
the proof is comparatively direct and elementary: the backward
adjoint equation cancels the linearized drift, entropy control removes
the commutator remainder, and a finite-dimensional martingale
characteristic-function argument identifies the covariance.
\endgroup

Several works are closer at the level of partition functions.  Ben Arous and Brunaud treated the regular case of bounded continuous mean-field interactions by Laplace-method arguments, obtaining nondegenerate Gaussian fluctuations and qualitative limits involving the Carleman--Fredholm determinant for the modulated partition function \cite{BenArousBrunaud1990}.  The present theorem can be viewed as the singular log/Riesz analogue with the sharp Hilbert--Schmidt threshold made explicit.  An earlier logarithmic equilibrium predecessor is the work of Grotto and Romito \cite{GrottoRomito2020}, who treat the two-dimensional case on the flat torus with uniform background.  They prove convergence of the corresponding partition functions and Gaussian fluctuations.  Although their paper is written in the two-dimensional setting, the argument may be generalized, with some modifications, to higher-dimensional logarithmic kernels.  Their work does not, however, provide the commutator estimates or the modulated-free-energy mean-field application developed here.  The first two authors recently proved the corresponding arbitrary-background logarithmic result: for repulsive logarithmic kernels, on both the torus and the whole space, they obtain an $N$-uniform modulated partition-function bound for arbitrary reference laws with bounded density \cite{DelgadinoGvalani2025}.  The proof is Nelson/$\varphi^4_2$-inspired \cite{Nelson1966Quartic}: a Donsker--Varadhan reduction is combined with heat regularization, layer-cake estimates, Besov control, and high-moment correlation inequalities.  The present argument uses a different $U$-statistic route, with positive-definite truncations and GLZ tail bounds, and in addition identifies the Carleman--Fredholm determinant limit and the sharp log/Riesz Hilbert--Schmidt threshold.

During the preparation of the present manuscript, Duerinckx and Jabin obtained closely related high-temperature estimates \cite{DuerinckxJabin2026}.  Their main analysis is presented on the torus but is extendable to the whole space as explained in Appendix~A of their work.  In the homogeneous torus setting, they prove the uniform square-integrable bound, the qualitative determinant limit describing the normalization of equilibrium quadratic fluctuations, and qualitative divergence outside the square-integrable class.  Their main result concerns small fluctuations of the Newtonian dynamics around Gibbs equilibrium and proves convergence to the linearized Vlasov equation, reducing weighted $L^p$ estimates on the kinetic Gibbs equilibrium to spatial partition-function bounds through the Maxwellian factorization described in \cref{rem:torus-kinetic-variants}.  The proof is math-physics inspired, relying on a cluster/Mayer expansion \cite{Brydges1984Cluster}.  Duerinckx and Jabin note that their main result extends to arbitrary inverse temperature $\be<\infty$ as soon as the corresponding spatial partition functions remain uniformly bounded; see \cite[Remark~1.2]{DuerinckxJabin2026}.  The present paper supplies the needed partition-function estimates at every fixed $\be<\infty$ for the repulsive logarithmic/Riesz interactions in the Hilbert--Schmidt regime.  More generally, it addresses the modulated partition function around an arbitrary bounded reference density $\mu$ in the whole-space setting, with the torus variant described in \cref{rem:torus-kinetic-variants}, and proves quantitative convergence to the Carleman--Fredholm determinant associated with $T_\mu$.  We also prove sharpness of the Hilbert--Schmidt threshold, with explicit nonoptimal divergence rates at and above it.  The proof uses canonical $U$-statistics and decoupling inequalities rather than a cluster expansion.

\begingroup
Zhenfu Wang has communicated to the first author that he has extended the partition-function estimates of Duerinckx and Jabin to all fixed inverse temperatures in their homogeneous setting, by refining the same cluster/Mayer-expansion proof with a related decomposition of the interaction \cite{WangPrivate2026}. The communicated work does not contain analogues of the entropic commutator estimates here and consequently does not yield the corresponding order-$N^{-1}$ modulated-free-energy estimate or the static and dynamical central limit theorems for joint linear statistics.
\endgroup

\begingroup
On the attractive side, Grotto recently considered the two-dimensional periodic logarithmic interaction with uniform background, in the equivalent negative-temperature point-vortex formulation.  For sufficiently small attractive inverse temperature, he proves qualitative convergence of the partition function to the corresponding Carleman--Fredholm determinant and convergence of the fluctuation field to the associated Gaussian energy--enstrophy ensemble \cite{Grotto2026NegativeTemperature}.  In contrast, the quantitative conclusion of \cref{thm:attractive-log-small-beta-partition} gives a power rate of determinant convergence for arbitrary bounded backgrounds satisfying the stated factorial-moment and exponential-integrability hypotheses.  His proof is perturbative and based on a Mayer/cluster expansion.  The work \cite{Grotto2026NegativeTemperature} does not address the entropic commutator estimates developed here or their applications to quantitative dynamical mean-field limits and dynamical central limit theorems for joint linear statistics at finitely many times.
\endgroup

Finally, we note that the proof belongs to the probabilistic theory of degenerate $U$-statistics.  After Hoeffding centering, \eqref{eq:intro-U-statistic} is a canonical degree-two statistic.  Finite-rank kernels reduce to the classical Gaussian-chaos limit for degenerate quadratic forms \cite{Serfling1980,deJong1990}.  The singular infinite-dimensional problem is controlled by decoupling and exponential inequalities for canonical $U$-statistics, in particular the estimates of de la Pe\~{n}a and Montgomery-Smith and those of Gin\'{e}, Lata\l{}a, and Zinn \cite{dPmS1995,GLZ2000}.  This explains both the appearance of the determinant \eqref{eq:intro-det2-heuristic} and the sharpness of the $L^2(\mu^{\otimes2})$ threshold.

\subsection{Proof strategy and logical structure}
\label{subsec:intro-proof-architecture}

This subsection records the logical dependence among the main results.  More detailed roadmaps of the individual arguments are given at the beginning of the corresponding proof sections.

The main Hilbert--Schmidt branch of the argument is organized around a common exponential-moment input.  The master layer-cake estimate of \cref{sec:probabilistic-truncation-preliminaries}, together with the deterministic truncation package in \cref{sec:truncation-estimates}, yields uniform product-space exponential moments for the centered interaction.  Hilbert--Schmidt approximation and the bounded-feature Laplace estimate then give the regularized-determinant limit.  Applying the same mechanism to a transport variation gives the exponential commutator estimate; Donsker--Varadhan converts it into \cref{thm:entropicCommutator}, and the first-order case closes the Gronwall inequality in \cref{thm:diffusiveMeanFieldConvergence}.

The fluctuation results form the second downstream branch.  The determinant limit first identifies the static tilted Laplace transforms for bounded tests and hence the covariance in \cref{prop:linearEmpiricalFluctuationCLT}.  A uniform $L^2(\mu^{\otimes N})$ bound on the Gibbs density, followed by truncation, extends the weak central limit theorem to arbitrary $L^2(\mu)$ tests.  For the dynamical theorem, the backward adjoint equation cancels the linearized drift and expresses each terminal linear statistic as the sum of an initial statistic, a martingale, and a transport-commutator remainder.  The static central limit theorem identifies the initial term, while entropy transfer and the exponential commutator estimate make the remainder negligible.  A characteristic-function argument, together with convergence of the martingales' predictable quadratic covariations to their deterministic mean-field limits, produces the time-integrated part of the limiting covariance.  The hypotheses in \cref{thm:dynamicLinearStatisticsCLT} provide the time-dependent It\^o identities and backward adjoint evolution, while the uniform density and logarithmic-energy controls, together with the adjoint Hessian bound, allow \cref{thm:entropicCommutator} to be applied uniformly along the mean-field trajectory.

Two arguments are logically separate from this chain.  The attractive logarithmic theorem follows a perturbative route, since positivity of the interaction is no longer available.  After the one-body factor is separated, centering forces the surviving moments of the pair statistic to involve repeated indices; decoupling and the factorial-moment hypothesis make the resulting exponential series summable at sufficiently small inverse temperature.  A feature-space expansion for the positive quadratic exponential, combined with logarithmic truncation, gives the quantitative determinant convergence in \eqref{eq:attQuantitativeDeterminantRate}.  Beyond the Hilbert--Schmidt threshold, the centered kernel is replaced by its conditional expectation on a dyadic spatial partition whose scale is coupled to $N$.  Conditional Jensen reduces the original partition function to the resulting finite-rank coarse-grained kernel, whose Hilbert--Schmidt norm diverges; the bounded-feature Laplace estimate in \cref{prop:boundedFeatureQuadLaplace} then yields the quantitative divergence rates.  In the logarithmic Hilbert--Schmidt argument, the further non-generic ingredient is the infrared cancellation in \cref{lem:logLargeScaleCancellation}.

\subsection{Organization of the paper}

The remainder of the paper is organized as follows.  \Cref{sec:probabilistic-truncation-preliminaries} develops the master layer-cake principle for degenerate $U$-statistics, and \cref{sec:truncation-estimates} collects the deterministic truncation estimates.  \Cref{sec:partition-function-estimate-proof} proves the repulsive logarithmic/Riesz partition-function estimates, their temperature interpolation, and the quantitative sharpness of the Hilbert--Schmidt threshold.  \Cref{sec:attractive-log-small-beta} proves the attractive logarithmic uniform bound and quantitative determinant limit, and then treats the periodic uniform-background phase diagram.  \Cref{sec:entropic-commutator-estimate-proof} proves the entropic commutator estimate and the diffusive mean-field application, while \cref{sec:static-dynamic-clt-proofs} proves the static and dynamical fluctuation theorems.  \Cref{sec:further-applications-future-directions} records consequences and open problems, and \cref{subsec:quantitative-bookkeeping} gives the detailed dependence of constants.

\subsection{Statement on AI use}

The authors used generative AI tools (OpenAI's ChatGPT and Codex, and Anthropic's Claude) during the development and preparation of this paper to identify potentially relevant literature, explore and test mathematical arguments, improve the exposition, and check internal consistency and cross-references.  The authors treated all AI-generated output as provisional material requiring independent verification and did not rely on it as an authority, checking literature suggestions against the relevant sources and independently working through mathematical suggestions before deciding whether to incorporate them into the paper.  The authors made all final decisions concerning the manuscript and take full responsibility for its contents.

\endgroup

\section{A master layer-cake estimate for degenerate \texorpdfstring{$U$}{U}-statistics}
\label{sec:probabilistic-truncation-preliminaries}
This section develops the probabilistic engine of the paper.  The decoupling and GLZ tail bounds for degenerate $U$-statistics, recalled in \cref{subsec:decoupling-glz-tail-estimates}, are packaged in \cref{subsec:master-exponential-moment} into a master layer-cake estimate for exponential moments, which is the result applied in \Cref{sec:partition-function-estimate-proof,sec:entropic-commutator-estimate-proof}.

\subsection{Decoupling and GLZ tail bounds}
\label{subsec:decoupling-glz-tail-estimates}

We recall from Gin\'{e}, Lata\l{}a, and Zinn \cite[Theorem 3.3]{GLZ2000} the following tail bound for $U$-statistics.
\begin{prop}\label{prop:UstatTail}
There exists a universal constant $L>0$ with the following property.  Let $\{h_{ij}\}_{1\le i,j\le n}$ be bounded, completely degenerate kernels, let $\{\xi_{i}^{(1)},\xi_{j}^{(2)}\}_{1\le i,j\le n}$ be independent random variables in a measurable space $(S,\mathscr{S})$, and define
\begin{align}
A &\coloneqq \max_{1\le i,j\le n} \|h_{ij}\|_{L^\infty},
\qquad\qquad
C^2 \coloneqq  \sum_{1\le i,j\le n} \E\brak[\big]{h_{ij}^2(\xi_i^{(1)},\xi_j^{(2)})},\\
B^2 &\coloneqq \max_{1\le i,j\le n}\ \max\brac[\bigg]{\Big\|\sum_{i'=1}^n \E\brak[\big]{h_{i'j}^2(\xi_{i'}^{(1)},\cdot)}\Big\|_{L^\infty},\ \Big\|\sum_{j'=1}^n \E\brak[\big]{h_{ij'}^2(\cdot,\xi_{j'}^{(2)})}\Big\|_{L^\infty}},\\
D &\coloneqq \sup\ \E\brak[\Bigg]{\sum_{1\le i,j\le n}h_{ij}(\xi_i^{(1)},\xi_j^{(2)})\, f_i(\xi_i^{(1)})\, g_j(\xi_j^{(2)})},
\end{align}
where the supremum in $D$ runs over all $f_i,g_j:S\to\R$ with $\E[\sum_i f_i^2(\xi_i^{(1)})]\le 1$ and $\E[\sum_j g_j^2(\xi_j^{(2)})]\le 1$.  Then, for every $t>0$,
\begin{align}\label{eq:UstatTailBound}
\mathbb{P}\Bigg(\Big|\sum_{1\le i,j\le n}h_{ij}(\xi_i^{(1)},\xi_j^{(2)})\Big| \ge t\Bigg)
\le L\exp\pa*{-\frac{1}{L}\min\brac*{\pa*{\frac{t}{C}}^{2},\ \frac{t}{D},\ \pa*{\frac{t}{B}}^{2/3},\ \pa*{\frac{t}{A}}^{1/2}}}.
\end{align}
\end{prop}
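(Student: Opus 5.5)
This is the decoupled Giné--Latała--Zinn tail inequality, quoted from \cite[Theorem~3.3]{GLZ2000}. The plan is to reproduce the structure of their argument rather than invent a new one. The strategy is to bound all even moments $\E|\sum_{i,j}h_{ij}(\xi_i^{(1)},\xi_j^{(2)})|^{p}$ by
\begin{align}
K\pa*{p^{1/2}C+pD+p^{3/2}B+p^{2}A}
\end{align}
with a universal $K$. Chebyshev's inequality with the optimal choice of $p$ then gives \eqref{eq:UstatTailBound}. Indeed, choosing $p$ as the minimum of $(t/C)^2$, $t/D$, $(t/B)^{2/3}$, $(t/A)^{1/2}$ (up to constants) makes the moment bound at most $t/e$, which yields the exponential tail.

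The moment bound is obtained by conditioning on the second sample. Conditionally on $(\xi^{(2)}_j)_j$, the statistic $\sum_i\bigl(\sum_j h_{ij}(\xi_i^{(1)},\xi^{(2)}_j)\bigr)$ is a sum of independent centered variables, by complete degeneracy. Equivalently, it is the supremum of an empirical process indexed by the unit ball of $\ell^2$ dual to the vector $(\sum_j h_{ij}(\cdot,\xi_j^{(2)}))_i$. Talagrand's concentration inequality for empirical processes, or its moment form due to Boucheron--Bousquet--Lugosi--Massart, bounds the conditional $p$-th moment by four contributions:
\begin{itemize}
\item the conditional mean, controlled by the conditional variance;
\item $\sqrt p$ times the weak variance;
\item $p$ times an $\ell^\infty$-type maximum over $i$;
\item the corresponding conditional sup-norm.
\end{itemize}

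Each of these conditional quantities is itself a function of the independent sample $(\xi_j^{(2)})$, namely a sum or supremum of independent nonnegative terms. I then integrate in $\xi^{(2)}$ using Rosenthal-type and Talagrand moment inequalities for sums of nonnegative variables. These identify the expectations with $C$ (the full $L^2$ norm), $D$ (the operator norm, arising from the weak variance), $B$ (the one-sided marginal sums, arising from the maxima in one index), and $A$ (the sup-norm). The main obstacle is the weak-variance term: showing that its $p$-th moment is bounded by $D+\sqrt p\,B+p\,A$ rather than by the crude $C$. For this I would reapply the empirical-process moment inequality in the second variable to the supremum over $f$ in the unit ball, with each summand bounded via $B$ and $A$. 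Tracking the powers of $p$ there produces precisely the exponents $1$, $3/2$ and $2$ multiplying $D$, $B$ and $A$.

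Since this is a published result, in the paper I would simply cite \cite{GLZ2000}. I would also note that the undecoupled version used later follows from the de la Peña--Montgomery-Smith decoupling \cite{dPmS1995}, at the cost of changing $L$.
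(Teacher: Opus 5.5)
The paper does not prove this proposition. It quotes \cite[Theorem~3.3]{GLZ2000} and then combines it with \cref{prop:decoup} to obtain \eqref{eq:Ustattail}. So your decision to cite GLZ and decouple afterwards is exactly the paper's route. Your outer skeleton is also right: a bound $\|S\|_{L^p}\le K(p^{1/2}C+pD+p^{3/2}B+p^{2}A)$ for $p\ge2$, then Markov at the optimized $p$.

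The reconstruction in between, however, is mis-assembled, and one step in it is false. Read literally, you attribute $D$ to a ``weak-variance'' contribution of size $\sqrt p\,(D+\sqrt p\,B+p\,A)$. That produces $\sqrt p\,D$, which contradicts your own final count of exponents, and no such bound can hold. Take $h_{ij}(x,y)=n^{-1}\phi(x)\phi(y)$ with $\phi$ bounded, $\mu$-centered and $L^2(\mu)$-normalized, and all variables iid with law $\mu$. Then $C=D=1$, $B=\|\phi\|_{L^\infty}n^{-1/2}$ and $A=\|\phi\|_{L^\infty}^2n^{-1}$. Meanwhile $S=(n^{-1/2}\sum_i\phi(\xi^{(1)}_i))(n^{-1/2}\sum_j\phi(\xi^{(2)}_j))\to GG'$ with convergent moments, and $\|GG'\|_{L^p}=\|G\|_{L^p}^2\asymp p$. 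So $D$ must carry the factor $p$.

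The correct assembly runs as follows. Write $X_i=\xi_i^{(1)}$, $Y_j=\xi_j^{(2)}$ and $Z_i=\sum_jh_{ij}(X_i,Y_j)$.

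\emph{Conditional step.} Conditionally on $Y$, $S=\sum_iZ_i$ is a real sum of independent centered variables, not an empirical-process supremum. Rosenthal's inequality with sharp $p$-dependence gives $\|S\|_{L^p(X)}\le K(\sqrt p\,\sigma(Y)+p(\sum_i\E_X|Z_i|^p)^{1/p})$, where $\sigma(Y)^2=\sum_i\E_XZ_i^2$.

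\emph{Where the supremum structure lives.} It is in $\sigma(Y)=\|\sum_jV_j\|_{\mathcal H}$, with $V_j=(h_{ij}(\cdot,Y_j))_i\in\mathcal H=\bigoplus_iL^2(\mathrm{law}(X_i))$ independent and centered. In particular $\sigma(Y)^2$ is not a sum of independent nonnegative terms; it has cross terms.

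\emph{Origin of $C$, $D$, $B$.} The Talagrand-type moment inequality for Hilbert-valued sums gives $\|\sigma\|_{L^p(Y)}\le K(C+\sqrt p\,D+p\,B)$. Indeed, $\E\|\sum_jV_j\|_{\mathcal H}\le C$. The weak variance $\sup_{\|f\|_{\mathcal H}\le1}(\sum_j\E\langle f,V_j\rangle_{\mathcal H}^2)^{1/2}$ is exactly $D$. And $\|V_j\|_{\mathcal H}\le B$. Multiplying by $\sqrt p$ yields $p^{1/2}C+pD+p^{3/2}B$.

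\emph{Origin of $A$.} It enters only through the large-summand term. Apply Rosenthal in $Y$ to each $Z_i$ at fixed $X_i$, and use $\sum_i\E_X[(\sum_j\E_Yh_{ij}(X_i,Y_j)^2)^{p/2}]\le B^{p-2}C^2$, $\sum_{i,j}\E|h_{ij}(X_i,Y_j)|^p\le A^{p-2}C^2$ and $x^{1-2/p}y^{2/p}\le\max\{x,y\}$. This gives $p(\sum_i\E|Z_i|^p)^{1/p}\le K(p^{1/2}C+p^{3/2}B+p^{2}A)$, which completes the moment bound.
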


\begin{remark}\label{rem:Ustattail}
In all applications below, the variables are iid with law $\mu$ and a single symmetric kernel is repeated over the pairs of indices: $h_{ij}=h$ for $i\ne j$, and $h_{ii}=0$.  Exploiting this repetition, the four quantities of \cref{prop:UstatTail} reduce to $N$-weighted norms of $h$:
\begin{align}
A = \|h\|_{L^\infty},\qquad
B^2 = (N-1)\|h\|_{L^\infty L^2(\mu)}^2,\qquad
C^2 =N(N-1)\|h\|_{L^2(\mu^{\otimes2})}^2,\qquad
D \le N\|\Tc_h\|_{\mathrm{op}},
\end{align}
where $\|h\|_{L^\infty L^2(\mu)}\coloneqq\sup_x\pa[\big]{\int_{S} h(x,y)^2\,\dd\mu(y)}^{1/2}$ and $\Tc_hf\coloneqq\int_{S} h(\cdot,y)f(y)\,\dd\mu(y)$, with operator norm $\|\Tc_h\|_{\mathrm{op}}$ on $L^2(\mu)$.  These are the norms entering the master layer-cake estimate below (\cref{prop:masterLayerCake}), which consumes \cref{prop:UstatTail} in the decoupled form \eqref{eq:Ustattail}.
\end{remark}

We recall from de la Pe\~{n}a and Montgomery-Smith \cite[Theorem 1]{dPmS1995} the following decoupling inequality.

\begin{prop}\label{prop:decoup}
For every $k\ge2$ there exists a constant $C_k>0$, depending only on $k$, with the following property.  Let $\{\xi_i\}_{i}$ be a sequence of independent random variables in a measurable space $(S,\mathscr{S})$, let $\{\xi_i^{(j)}\}_{i}$, $1\le j\le k$, be independent copies of $\{\xi_i\}$, and let $f_{i_1\cdots i_k}:S^{k}\to E$ be measurable functions with values in a Banach space $(E,\|\cdot\|)$.  Then, for every $n\ge k$ and every $t>0$,
\begin{multline}\label{eq:decoupTail}
\mathbb{P}\Bigg(\Big\|\sum_{1\le i_1\ne \cdots\ne i_k\le n} f_{i_1\cdots i_k}(\xi_{i_1}^{(1)},\ldots,\xi_{i_k}^{(1)})\Big\| \ge t \Bigg) \\
\le C_k\,\mathbb{P}\Bigg(C_k\Big\|\sum_{1\le i_1\ne \cdots\ne i_k\le n} f_{i_1\cdots i_k}(\xi_{i_1}^{(1)},\ldots,\xi_{i_k}^{(k)})\Big\| \ge t \Bigg),
\end{multline}
the summations ranging over all tuples $(i_1,\ldots,i_k)\in[n]^k$ with distinct entries.
\end{prop}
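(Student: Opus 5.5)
Since this is the decoupling theorem of de la Pe\~na and Montgomery-Smith \cite[Theorem~1]{dPmS1995}, the plan reconstructs its standard proof. It rests on three ingredients: random Bernoulli selectors, conditional averaging, and a tail-comparison principle for Rademacher chaos obtained from hypercontractivity.

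\textbf{Step 1: reduction to $k=2$.} I would argue by induction on $k$, freezing all but two blocks of variables at each stage. The natural induction is on the number of blocks already decoupled, and every stage costs only a constant depending on $k$. The core case is therefore $k=2$: one has to compare the tails of $X=\sum_{i\ne j}f_{ij}(\xi_i,\xi_j)$ and $Y=\sum_{i\ne j}f_{ij}(\xi^{(1)}_i,\xi^{(2)}_j)$.

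\textbf{Step 2: the randomized sum.} Take iid fair Bernoulli selectors $\delta_i\in\{0,1\}$, independent of two copies $\{\xi_i^{(1)}\}$ and $\{\xi_i^{(2)}\}$. Set $\zeta_i=\xi^{(1)}_i$ if $\delta_i=1$ and $\zeta_i=\xi^{(2)}_i$ otherwise, and let $\bar\zeta_i$ be the complementary choice. Consider
\begin{align}
Z\coloneqq\sum_{i\ne j}f_{ij}(\zeta_i,\bar\zeta_j).
\end{align}
The point is that $Z$ has two complementary readings.
\begin{itemize}
\item Conditionally on the pool $\{\xi^{(1)},\xi^{(2)}\}$, $Z$ is a Rademacher chaos of order at most two in $\epsilon_i=2\delta_i-1$. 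Its $\epsilon$-average is $\tfrac14\sum_{i\ne j}[f_{ij}(\xi^{(1)}_i,\xi^{(2)}_j)+f_{ij}(\xi^{(2)}_i,\xi^{(1)}_j)+f_{ij}(\xi^{(1)}_i,\xi^{(1)}_j)+f_{ij}(\xi^{(2)}_i,\xi^{(2)}_j)]$. This combination contains the undecoupled sums $X^{(1)}$ and $X^{(2)}$ together with decoupled ones.
\item Unconditionally, $(\zeta,\bar\zeta)$ are two independent iid copies of $\xi$. Hence $Z$ has the same law as the decoupled sum $Y$.
\end{itemize}

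\textbf{Step 3: tail comparison, the main obstacle.} The difficulty is to pass from tails of the full chaos to tails of its conditional mean, using only tail (not moment) information, in an arbitrary Banach space. I would use the hypercontractive Paley--Zygmund principle for Banach-valued Rademacher chaos of bounded degree: if $S=\sum_A x_A\epsilon_A$ has degree at most $2$, then $\mathbb P_\epsilon(\|S\|\ge c\|x_\emptyset\|)\ge c$ for a universal $c>0$. This follows from Bonami's hypercontractivity, which gives $\mathbb E\|S\|^4\le C(\mathbb E\|S\|^2)^2$ for the relevant real projections, combined with Hahn--Banach to pick a norming functional for $x_\emptyset$. Applied conditionally on the pool, it yields
\begin{align}
\mathbb P\bigl(\|\mathbb E_\epsilon Z\|\ge t\bigr)\le c^{-1}\,\mathbb P\bigl(\|Z\|\ge ct\bigr)=c^{-1}\,\mathbb P\bigl(\|Y\|\ge ct\bigr).
\end{align}

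\textbf{Step 4: isolating the undecoupled sum.} It remains to extract $X^{(1)}$ from $\mathbb E_\epsilon Z$. I would repeat the construction with biased selectors, $\mathbb P(\delta_i=1)=p$, and vary $p$ over three values. This produces a polynomial in $p$ whose coefficients are the diagonal-type sums $X^{(1)}+X^{(2)}$ and the mixed sums. Lagrange interpolation in $p$ expresses $X^{(1)}+X^{(2)}$ as a fixed linear combination of the three averages, with coefficients depending only on $k$. Each mixed sum equals $Y$ in law, so its tail is bounded by that of $Y$.

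A symmetrization step then separates $X^{(1)}$ from $X^{(2)}$. Since these are iid, $\mathbb P(\|X^{(1)}\|\ge t)^2\le\mathbb P(\|X^{(1)}\|\ge t,\|X^{(2)}\|\ge t)$. The usual Montgomery-Smith trick for independent copies, which uses a median or an extra selector to remove the cross-term, converts this into a linear tail bound.

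Combining the finitely many tail inequalities through the union bound, with constants depending only on $k$, gives \eqref{eq:decoupTail}. Only these constants propagate through the induction of Step 1.
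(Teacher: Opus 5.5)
The paper does not prove this proposition. It quotes it from \cite[Theorem~1]{dPmS1995} and uses only the case $k=2$, $E=\R$ (in \eqref{eq:Ustattail}).

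Your $k=2$ skeleton is sound up to its last step. The swapped sum $Z$ has the law of the decoupled sum $Y$, and its $\epsilon$-mean is $\tfrac14(X^{(1)}+X^{(2)}+Y^{(12)}+Y^{(21)})$. The bound $\mathbb P_\epsilon(\|Z\|\ge\|\E_\epsilon Z\|)\ge c$ holds, provided Bonami's inequality is applied to the \emph{centered} real chaos $g=\varphi(Z)-\|\E_\epsilon Z\|$, with $\varphi$ a norming functional, to get $\mathbb P_\epsilon(g\ge0)\ge c$. However, the proposal does not prove the statement for $k\ge3$, and its separation step uses the wrong mechanism.

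\textbf{General $k$.} ``Freezing all but two blocks'' is not a reduction to $k=2$. At the start there is a single block, and splitting one coordinate off a coupled block of size $\ge2$ is not an instance of the $k=2$ statement. Whatever selector construction you use (two copies assigned blockwise, or $k$ copies permuted at random independently in $i$), for $k\ge3$ the conditional mean averages $\sum f_{i_1\cdots i_k}(\xi^{(j_1)}_{i_1},\ldots,\xi^{(j_k)}_{i_k})$ over \emph{all} patterns $(j_1,\ldots,j_k)$. The partially mixed patterns are neither undecoupled nor equal in law to $Y$, so your claim that ``each mixed sum equals $Y$ in law'' fails.

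Interpolation in $p$ does not remove these patterns. With coordinate $1$ driven by $\zeta$ and the others by $\bar\zeta$, the undecoupled pattern carries the same coefficient $p(1-p)^{k-1}$ as some mixed patterns.

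The missing idea is a strong induction on $k$. In a pattern using at least two copies, every block $\{l:j_l=j\}$ has size $<k$. Condition on the other copies, absorbing their indices into index-dependent kernels (which the statement allows). Then apply the lower-order inequality block by block; this bounds the pattern's tail by $C\,\mathbb P(C\|Y\|\ge t)$. Only after this can the diagonal part be isolated by a union bound.

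\textbf{Separating $X^{(1)}$ from $X^{(1)}+X^{(2)}$.} The squared-probability route cannot give a linear tail bound. Moreover, the event $\{\|X^{(1)}\|\ge t,\|X^{(2)}\|\ge t\}$ does not force $\|X^{(1)}+X^{(2)}\|$ to be large. Use a third iid copy instead: the identity $X^{(1)}=\tfrac12[(X^{(1)}+X^{(2)})+(X^{(1)}+X^{(3)})-(X^{(2)}+X^{(3)})]$ gives $\mathbb P(\|X^{(1)}\|\ge t)\le3\,\mathbb P(\|X^{(1)}+X^{(2)}\|\ge 2t/3)$. For a sum of $m$ copies, use that $e_1=\tfrac1m\sum_{a=1}^{m+1}\indic_{[m+1]\setminus\{a\}}-\indic_{[m+1]\setminus\{1\}}$.

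Also, for $k=2$ the biased selectors and Lagrange interpolation are unnecessary. Since $Y^{(12)}$ and $Y^{(21)}$ each have the law of $Y$, a union bound with $p=1/2$ gives $\mathbb P(\|X^{(1)}+X^{(2)}\|\ge12t)\le(c^{-1}+2)\,\mathbb P(\|Y\|\ge t)$ directly.
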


\begin{remark}[Moment decoupling with an explicit constant]\label{rem:decoupMoment}
For $k=2$ and canonical kernels, the companion moment comparison holds with the explicit constant $4$: if $f_{ij}:S^2\to E$, $1\le i\ne j\le n$, are such that $f_{ij}(\xi_i,\xi_j)$ and $f_{ij}(\xi^{(1)}_i,\xi^{(2)}_j)$ are Bochner integrable and
\begin{align}\label{eq:decoupCanonical}
\E\brak[\big]{f_{ij}(\xi_i,y)}=0
\quad\text{for a.e.\ }y,
\qquad
\E\brak[\big]{f_{ij}(x,\xi_j)}=0
\quad\text{for a.e.\ }x,
\end{align}
then, for every nondecreasing convex $\Phi:[0,\infty)\to[0,\infty)$,
\begin{align}\label{eq:decoupBound}
\E\,\Phi\pa[\Bigg]{\Big\|\sum_{1\le i\ne j\le n}f_{ij}(\xi_i,\xi_j)\Big\|}
\le
\E\,\Phi\pa[\Bigg]{4\,\Big\|\sum_{1\le i\ne j\le n}f_{ij}(\xi_i^{(1)},\xi_j^{(2)})\Big\|}.
\end{align}
This form, with $\Phi(x)=x^{2k}$, is used in the moment computation of \cref{sec:attractive-log-small-beta}.
\end{remark}

\subsection{The master layer-cake estimate}
\label{subsec:master-exponential-moment}

The partition-function bounds of \cref{sec:partition-function-estimate-proof} and the local numerator estimate of \cref{sec:entropic-commutator-estimate-proof} both rest on the same mechanism: a layer-cake decomposition in which, at height $u$, the singular and infrared parts of the interaction are removed by deterministic estimates, and the tail of the remaining centered $U$-statistic is controlled by the following decoupled form of the GLZ bound.  Applying \cref{prop:decoup} with $k=2$ and $E=\R$ and then \cref{prop:UstatTail}, we obtain that if $\xi_1,\ldots,\xi_n$ are iid, then for all $t>0$,
\begin{align}
\mathbb{P}\Bigg(\Big|\sum_{1\le i_1\ne i_2\le n} h(\xi_{i_1},\xi_{i_2})\Big| \ge t\Bigg)
&\le C_2L\exp\pa*{-\frac{1}{L}\min\brac*{\pa*{\frac{t}{C_2C}}^{2},\ \frac{t}{C_2D},\ \pa*{\frac{t}{C_2B}}^{2/3},\ \pa*{\frac{t}{C_2A}}^{1/2}}}  \nn\\
&\le L'\exp\Bigg(-\frac{1}{L'}\min\Bigg\{\pa*{\frac{t}{N\|h\|_{L^2(\mu^{\otimes2})}}}^{2},\ \frac{t}{N\|\Tc_h\|_{\mathrm{op}}},\nn\\
&\qquad\qquad\qquad\qquad\qquad\quad \pa*{\frac{t}{N^{1/2}\|h\|_{L^\infty L^2(\mu)}}}^{2/3},\ \pa*{\frac{t}{\|h\|_{L^\infty}}}^{1/2}\Bigg\}\Bigg), \label{eq:Ustattail}
\end{align}
for a universal constant $L'>0$, where the final line is by \cref{rem:Ustattail} and a relabeling of constants.  Hereafter, we recycle notation and write $L$ in lieu of $L'$.

We state and prove the layer-cake mechanism once; in the statement, $\tl q$ denotes the $\mu$-centering \eqref{eq:tildedef} of a kernel $q$, and all norms of kernels are computed on $S=\supp\mu$.

\begin{prop}[Master layer-cake estimate]\label{prop:masterLayerCake}
Let $\mu\in\P(\R^\d)$, and let $L\ge1$ be the universal constant in \eqref{eq:Ustattail}.  Let $g:(\R^\d)^2\rightarrow\R$ be a symmetric, measurable two-body kernel and $w:\R^\d\to\R$ a measurable one-body function, and consider the statistics
\begin{align}\label{eq:masterXiDef}
\Xi_N(\XN)\coloneqq -\frac{1}{2N}\sum_{1\le i\ne j\le N}\tl g(x_i,x_j)+\frac1N\sum_{i=1}^N w(x_i),
\qquad N\ge1.
\end{align}
Let $U_0\ge1$ be a threshold, and suppose that for every $N\ge1$ and every $u\ge U_0$ there exist a bounded symmetric kernel $q_{N,u}$ and a number $\mathsf m_{N,u}>0$ such that the following hold.
\begin{enumerate}[(M1)]
\item\emph{(Event reduction)}  For $\mu^{\otimes N}$-a.e.\ configuration $\XN$,
\begin{align}\label{eq:masterEventReduction}
\Xi_N(\XN)\ge u
\quad\Longrightarrow\quad
-\frac{1}{2N}\sum_{1\le i\ne j\le N}\tl q_{N,u}(x_i,x_j)\ge u.
\end{align}
\item\emph{(Deterministic diagonal bound)}  For every configuration $\XN$,
\begin{align}\label{eq:masterDiagonal}
-\frac{1}{2N}\sum_{1\le i\ne j\le N}\tl q_{N,u}(x_i,x_j)\le \mathsf m_{N,u}.
\end{align}
\item\emph{(Norm bounds)}
\begin{align}\label{eq:masterNorms}
\|\tl q_{N,u}\|_{L^\infty L^2(\mu)}^2\le \frac1{L},
\qquad
\|\tl q_{N,u}\|_{L^2(\mu^{\otimes2})}^2\le \frac1{L},
\qquad
\|\Tc_{\tl q_{N,u}}\|_{\mathrm{op}}\le\frac1{L}.
\end{align}
\end{enumerate}
Suppose, finally, that there exists $N_0\ge1$ such that
\begin{align}\label{eq:masterLayerCondition}
2L^2\,\sup_{u\ge U_0}\ \min\brac[\big]{u,\ \mathsf m_{N,u}}\,\pa[\big]{1+\|\tl q_{N,u}\|_{L^\infty}}\le N
\qquad\text{for every }N\ge N_0,
\end{align}
the supremum being finite also for $N<N_0$.  Then, writing $\mathsf S_N$ for the supremum in \eqref{eq:masterLayerCondition},
\begin{align}\label{eq:masterConclusion}
\sup_{N\ge1}\ \E_{\mu^{\otimes N}}\brak[\big]{e^{\Xi_N}}
\le \max\brac[\Big]{\max_{1\le N<N_0}e^{U_0+\mathsf S_N},\ e^{U_0}+L},
\end{align}
the inner maximum being absent when $N_0=1$.
\end{prop}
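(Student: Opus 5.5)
We use the layer-cake identity for a nonnegative random variable:
\begin{align}
\E_{\mu^{\otimes N}}\brak[\big]{e^{\Xi_N}} \le e^{U_0}+\int_{U_0}^\infty e^{u}\,\mu^{\otimes N}(\Xi_N\ge u)\,\dd u .
\end{align}
This follows from $\E[e^{X}]\le e^{U_0}+\int_{U_0}^\infty e^u\,\mathbb P(X\ge u)\,\dd u$, which holds for any real $X$. For fixed $N$ and $u\ge U_0$, hypothesis (M1) bounds the probability by
\begin{align}
\mu^{\otimes N}\Big(\Big|\sum_{i\ne j}\tl q_{N,u}(x_i,x_j)\Big|\ge 2Nu\Big).
\end{align}
Hypothesis (M2) shows this event is empty when $u>\mathsf m_{N,u}$. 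Hence only the range $u\le \mathsf m_{N,u}$ contributes, and there $u=\min\{u,\mathsf m_{N,u}\}$. The kernel $\tl q_{N,u}$ is bounded, symmetric, and completely degenerate (Hoeffding-centered), so the decoupled GLZ bound \eqref{eq:Ustattail} applies with $t=2Nu$.

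\step{Small $N$.} For $N<N_0$ we argue deterministically. By (M1)–(M2), $\Xi_N\ge u$ forces $u\le\mathsf m_{N,u}$, and hence $u\le \mathsf S_N$. Indeed, $\min\{u,\mathsf m_{N,u}\}=u$ on that event, and $1+\|\tl q\|_{L^\infty}\ge1$. So either $\Xi_N<U_0$ or $\Xi_N\le\mathsf S_N$ almost surely, which gives $e^{\Xi_N}\le e^{U_0+\mathsf S_N}$. (If no $u\ge U_0$ is attained, we get $\Xi_N<U_0$.)

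\step{Large $N$.} For $N\ge N_0$, we plug the norm bounds (M3) into the four terms of the minimum in \eqref{eq:Ustattail} with $t=2Nu$:
\begin{itemize}
\item the Gaussian term is $\ge 4Lu^2$;
\item the operator term is $\ge 2Lu$;
\item the mixed term is $\ge (2L^{1/2}N^{1/2}u)^{2/3}$;
\item the sup term is $(2Nu/\|\tl q\|_{L^\infty})^{1/2}$.
\end{itemize}
Using \eqref{eq:masterLayerCondition} with $u=\min\{u,\mathsf m_{N,u}\}$, we have $N\ge 2L^2u(1+\|\tl q\|_{L^\infty})$. Then:
\begin{itemize}
\item the sup term is $\ge (4L^2u^2)^{1/2}=2Lu$;
\item the mixed term is $\ge (4L^3u^3)^{1/3}\ge 2Lu$, using $N\ge 2L^2u$ and $L\ge1$;
\item the Gaussian term is $\ge 4Lu^2\ge 2Lu$, since $u\ge U_0\ge1$.
\end{itemize}
Dividing by $L$, the exponent is at least $2u$, so $\mu^{\otimes N}(\Xi_N\ge u)\le Le^{-2u}$. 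Therefore
\begin{align}
\int_{U_0}^\infty e^{u}\,L e^{-2u}\,\dd u\le L e^{-U_0}\le L,
\end{align}
and $\E[e^{\Xi_N}]\le e^{U_0}+L$. Combining the two ranges of $N$ gives \eqref{eq:masterConclusion}.

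\step{Main obstacle.} The delicate step is the bookkeeping that turns the $N$-weighted GLZ terms into an exponent beating $e^{u}$ uniformly in $u$. This is exactly why the condition \eqref{eq:masterLayerCondition} couples $u$, $\mathsf m_{N,u}$, and $\|\tl q\|_{L^\infty}$. The exponents $2/3$ and $1/2$ must be handled via $u\le \mathsf m_{N,u}$, and we must check that the constant factors (the $2$ in $t=2Nu$, and $L\ge1$) suffice. If the mixed term falls short, we would adjust by using $N\ge 2L^2u$ directly to absorb it.
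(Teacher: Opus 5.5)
Your proposal is correct and follows the paper's proof essentially step for step. It uses the same observation that (M1)--(M2) make $\{\Xi_N\ge u\}$ null whenever $u>\mathsf m_{N,u}$, the same deterministic bound $\Xi_N\le\max\{U_0,\mathsf S_N\}$ for $N<N_0$, and the same check that each of the four GLZ entries at $t=2Nu$ is at least $2Lu$, which gives $\mu^{\otimes N}(\Xi_N\ge u)\le Le^{-2u}$ and the layer-cake bound $e^{U_0}+L$.

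There is one arithmetic slip to fix. The mixed entry is $(2L^{1/2}N^{1/2}u)^{2/3}=(4LNu^2)^{1/3}\ge(8L^3u^3)^{1/3}=2Lu$ by $N\ge2L^2u$; your intermediate $(4L^3u^3)^{1/3}$ is actually strictly smaller than $2Lu$. A minor imprecision as well: (M2) excludes only the one-sided event $\{-\frac1{2N}\sum_{i\ne j}\tl q_{N,u}(x_i,x_j)\ge u\}$, not the two-sided event $\{|\sum_{i\ne j}\tl q_{N,u}(x_i,x_j)|\ge2Nu\}$. The one-sided exclusion is all you need, since it already makes $\{\Xi_N\ge u\}$ null.
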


\begin{remark}[Uniformity in families]\label{rem:masterUniform}
The bound \eqref{eq:masterConclusion} depends on the data only through $L$, $U_0$, and the suprema $\mathsf S_N$ in \eqref{eq:masterLayerCondition}, monotonically.  Consequently, if $\{\Xi_N^{(\lambda)}\}_{\lambda\in\Lambda}$ satisfy (M1)--(M3) for each $\lambda\in\Lambda$ with a common threshold $U_0$, and if there are $\lambda$-independent majorants $\mathsf m^{(\lambda)}_{N,u}\le\mathsf m_{N,u}$ and $\|\tl q^{(\lambda)}_{N,u}\|_{L^\infty}\le\mathsf a_{N,u}$ with, for some $N_0\ge1$,
\begin{align}\label{eq:masterUniformBound}
2L^2\,\sup_{u\ge U_0}\ \min\brac[\big]{u,\ \mathsf m_{N,u}}\,\pa[\big]{1+\mathsf a_{N,u}}\le N
\qquad\text{for every }N\ge N_0,
\end{align}
and if, for each $N<N_0$, the corresponding supremum $\mathsf S_N^{(\lambda)}$ is bounded uniformly in $\lambda\in\Lambda$, then $\sup_{\lambda\in\Lambda}\sup_{N\ge1}\E_{\mu^{\otimes N}}\brak[\big]{e^{\Xi_N^{(\lambda)}}}<\infty$.  In \cref{sec:partition-function-estimate-proof}, $\lambda$ is the near-field truncation scale $\eta\in[0,\eta_0]$ in the Riesz case and the infrared cutoff $T\ge2$ in the logarithmic case.
\end{remark}

\begin{proof}[\textup{\textbf{Proof of \cref{prop:masterLayerCake}}}]
Fix $N_0$ as in the statement.  We will use repeatedly the following observation: for every $u\ge U_0$ with $u>\mathsf m_{N,u}$, the event $\{\Xi_N\ge u\}$ is $\mu^{\otimes N}$-null, by (M1) and (M2).

\textbf{The case $N<N_0$ ($L^\infty$ bound).}  By the observation, $\Xi_N$ can reach a height $u\ge U_0$ only if $u\le\mathsf m_{N,u}$, and then $u=\min\{u,\mathsf m_{N,u}\}\le\mathsf S_N$ by the definition of $\mathsf S_N$.  Hence $\Xi_N\le\max\{U_0,\mathsf S_N\}\le U_0+\mathsf S_N$ $\mu^{\otimes N}$-a.e., and
\begin{align}
\E_{\mu^{\otimes N}}\brak[\big]{e^{\Xi_N}}\le e^{U_0+\mathsf S_N},
\qquad N<N_0.
\end{align}

\textbf{The case $N\ge N_0$ (layer-cake bound).}  We claim that
\begin{align}\label{eq:masterTailClaim}
\mu^{\otimes N}\pa{\Xi_N\ge u}\le L\,e^{-2u}
\qquad\text{for every }u\ge U_0.
\end{align}
By the observation, we may assume $u\le\mathsf m_{N,u}$, in which case \eqref{eq:masterLayerCondition} gives the lower bound
\begin{align}\label{eq:masterNLowerBound}
 2L^2\,u\,\pa[\big]{1+\|\tl q_{N,u}\|_{L^\infty}}\le2L^2\,\mathsf S_N\le N.
\end{align}
By (M1) and the decoupled GLZ tail estimate \eqref{eq:Ustattail}, applied to the bounded canonical kernel $\tl q_{N,u}$ at level $t=2Nu$,
\begin{align}
\mu^{\otimes N}\pa{\Xi_N\ge u}
&\le \mu^{\otimes N}\Bigg(\Big|\sum_{1\le i\ne j\le N}\tl q_{N,u}(x_i,x_j)\Big|\ge 2Nu\Bigg)\nn\\
&\le L\exp\pa*{-\frac{1}{L}\min\brac*{
\frac{4u^2}{\|\tl q\|_{L^2(\mu^{\otimes2})}^2},\
\frac{2u}{\|\Tc_{\tl q}\|_{\mathrm{op}}},\
\frac{N^{1/3}(2u)^{2/3}}{\|\tl q\|_{L^\infty L^2(\mu)}^{2/3}},\
\pa*{\frac{2Nu}{\|\tl q\|_{L^\infty}}}^{1/2}}},
\end{align}
where, within this proof, we abbreviate $\tl q=\tl q_{N,u}$.  We check that each of the four entries is at least $2Lu$, so that
\begin{align}\label{eq:masterMinBound}
2Lu\le
\min\brac*{
\frac{4u^2}{\|\tl q\|_{L^2(\mu^{\otimes2})}^2},\
\frac{2u}{\|\Tc_{\tl q}\|_{\mathrm{op}}},\
\frac{N^{1/3}(2u)^{2/3}}{\|\tl q\|_{L^\infty L^2(\mu)}^{2/3}},\
\pa*{\frac{2Nu}{\|\tl q\|_{L^\infty}}}^{1/2}} .
\end{align}
The bounds for the first two entries follow from \eqref{eq:masterNorms} and $1\le U_0\le u$.  The third entry satisfies $2Lu\le N^{1/3}(2u)^{2/3}\|\tl q\|_{L^\infty L^2(\mu)}^{-2/3}$ if and only if $2L^3\|\tl q\|_{L^\infty L^2(\mu)}^2u\le N$, which follows by \eqref{eq:masterNorms} and \eqref{eq:masterNLowerBound}.  The fourth entry satisfies $2Lu\le(2Nu/\|\tl q\|_{L^\infty})^{1/2}$ if and only if $2L^2\|\tl q\|_{L^\infty}u\le N$, which again holds by \eqref{eq:masterNLowerBound}.  This proves \eqref{eq:masterMinBound}, hence the claim \eqref{eq:masterTailClaim}, and the layer-cake formula gives
\begin{align}
\E_{\mu^{\otimes N}}\brak[\big]{e^{\Xi_N}}
\le e^{U_0}+\int_{U_0}^{\infty}e^u\,\mu^{\otimes N}\pa{\Xi_N\ge u}\,\dd u
\le e^{U_0}+L\int_{U_0}^{\infty}e^{-u}\,\dd u
\le e^{U_0}+L,
\qquad N\ge N_0.
\end{align}

Taking the maximum of the two bounds gives \eqref{eq:masterConclusion}.
\end{proof}

\section{Truncation estimates for the log/Riesz kernels}
\label{sec:truncation-estimates}

This section collects the deterministic truncation estimates for the interaction kernels, used in \Cref{sec:partition-function-estimate-proof,sec:entropic-commutator-estimate-proof}: the positive-definite physical-space truncations imported from \cite{HCRS2025} (\cref{subsec:positive-definite-truncations-riesz-kernel}), and the Fourier truncations and deterministic diagonal bounds built on them (\cref{subsec:fourier-truncations-deterministic-diagonal-bounds}).

\subsection{Positive-definite truncations}
\label{subsec:positive-definite-truncations-riesz-kernel}

The following positive-definite truncation package is extracted from the third author's work with Hess-Childs and Serfaty \cite[Subsection 2.1, Lemma 2.2, Proposition 2.9]{HCRS2025}. The lemma is valid for the full range $0<\s<\d$; the stricter condition $\s<\d/2$ imposed in this paper enters only when the singular remainder or the centered kernel is estimated in $L^2$.

In the sequel, every free truncation parameter $\eta$ is taken in $0<\eta\le\eta_0$, where $\eta_0>0$ denotes a fixed small truncation scale, unless explicitly stated otherwise.

\begin{lemma}[Potential truncation]\label{lem:HCRStruncations}
Let $\phi\in\mathcal S(\R^\d)$ be nonnegative, radial, decreasing, and satisfy $\wh{\phi}\ge0$,\footnote{A normalized Gaussian is an admissible choice.} and set $\phi_t(x)\coloneqq t^{-\d}\phi(x/t)$.

For $0<\s<\d$, set
\begin{align}
\mathsf{c}_{\phi,\d,\s}^{-1}
&= \s \int_0^\infty r^\s \phi(r)\,\frac{\dd r}{r},\\
\g(x)
&=\mathsf{c}_{\phi,\d,\s}\int_0^\infty t^{\d-\s}\phi_t(x)\,\frac{\dd t}{t}, \qquad x\ne0.
\end{align}
For $\eta>0$, define the scale-$\eta$ truncated potential and potential difference
\begin{align}\label{eq:HCRStruncDef}
\g_\eta(x)\coloneqq \mathsf{c}_{\phi,\d,\s}\int_\eta^\infty t^{\d-\s}\phi_t(x)\,\frac{\dd t}{t}, \qquad
\f_\eta(x)\coloneqq \g(x)-\g_\eta(x)=\mathsf{c}_{\phi,\d,\s}\int_0^\eta t^{\d-\s}\phi_t(x)\,\frac{\dd t}{t}.
\end{align}
Then, by radial monotonicity,
\begin{align}\label{eq:HCRSdiagPD}
\|\g_\eta\|_{L^\infty}=\g_\eta(0)=C_{\phi,\d,\s}\eta^{-\s}, \qquad \g_\eta\ge0, \qquad \wh{\g_\eta}\ge0, \qquad \wh{\f_\eta}\ge0,
\end{align}
with the Fourier inequalities understood in the sense of tempered distributions.  Since $\g=\g_\eta+\f_\eta$, this implies
\begin{align}\label{eq:HCRSFourierOrder}
0\le \wh{\g_\eta}\le \wh{\g}.
\end{align}
Moreover, the potential difference satisfies
\begin{align}\label{eq:HCRSfetaDecay}
0\le \f_\eta(x)&\le C_\gamma \frac{\eta^{\gamma-\s}}{|x|^\gamma}, \qquad \gamma>\s,\nn\\
\f_\eta(x)&\ge c\g(x),\qquad |x|\le\eta.
\end{align}
In particular, if $\mu\in L^\infty$, then
\begin{align}\label{eq:HCRSconvBound}
\|\f_\eta\ast \mu\|_{L^\infty}
\le \|\mu\|_{L^\infty}\int_{\R^\d}\f_\eta(x)\,\dd x
\le C_\mu \eta^{\d-\s}.
\end{align}

For $\s=0$, let
\begin{align}\label{eq:logCphiTDef}
\mathsf{c}_{\phi,\d,0}\coloneqq \phi(0)^{-1},\qquad
C_{\phi,T}\coloneqq \phi(0)\log T-\int_0^\infty \log r\,\phi'(r)\,\dd r,
\end{align}
set, for $0<\eta<T$,
\begin{align}\label{eq:logFiniteTTruncDef}
\g_{\eta,T}(x)\coloneqq \mathsf{c}_{\phi,\d,0}\pa*{\int_\eta^T t^\d\phi_t(x)\,\frac{\dd t}{t}-C_{\phi,T}},\qquad
\f_\eta(x)\coloneqq \g(x)-\g_\eta(x)=\mathsf{c}_{\phi,\d,0}\int_0^\eta t^\d\phi_t(x)\,\frac{\dd t}{t},
\end{align}
where $\g_\eta$ is the limit of $\g_{\eta,T}$ as $T\to\infty$ in the sense of tempered distributions.  Then
\begin{align}\label{eq:logTruncPropsBasic}
\g_\eta(0)=-\log\eta+O(1),\qquad \wh{\g_\eta}\ge0\quad\hbox{on zero-mean test functions},\qquad \wh{\f_\eta}\ge0,
\end{align}
\begin{align}\label{eq:logfetaDecay}
0\le \f_\eta(x)\le C_\gamma\min\Bigg\{1+\pa*{-\log\frac{|x|}{\eta}}_+,\frac{\eta^\gamma}{|x|^\gamma}\Bigg\},\qquad \gamma>0,
\end{align}
and consequently, if $\mu\in L^\infty$,
\begin{align}\label{eq:logfetaConv}
\|\f_\eta\ast\mu\|_{L^\infty}\le C_\mu\eta^\d.
\end{align}
For every fixed $n\ge1$, it also holds that
\begin{align}\label{eq:logfetaDerivative}
|x|^n|\nab^{\otimes n}\f_\eta(x)|\le C_n\f_{c_n\eta}(x)
\end{align}
for a dimensional constant $c_n\ge2$.
\end{lemma}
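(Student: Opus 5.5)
The plan is to derive everything from the scale-mixture representation and the substitution $r=|x|/t$, which turns each quantity into an explicit profile in $|x|/\eta$.

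\textbf{Step 1: the subordination formula and the $L^\infty$ value.} For $x\ne0$ and $\phi$ radial, substitute $r=|x|/t$ in $\int_0^\infty t^{\d-\s}\phi_t(x)\,\frac{\dd t}{t}=\int_0^\infty t^{-\s}\phi(x/t)\,\frac{\dd t}{t}$. This gives $|x|^{-\s}\int_0^\infty r^{\s}\phi(r)\,\frac{\dd r}{r}$, so the choice of $\mathsf c_{\phi,\d,\s}$ reproduces $\g=\s^{-1}|x|^{-\s}$. The same substitution restricted to $t\ge\eta$, respectively $t\le\eta$, gives the scaling forms
\begin{align}
\g_\eta(x)=|x|^{-\s}G(|x|/\eta),\qquad \f_\eta(x)=|x|^{-\s}F(|x|/\eta),
\end{align}
where
\begin{align}
G(\rho)=\mathsf c\int_0^{\rho}r^{\s}\phi(r)\,\frac{\dd r}{r},\qquad F(\rho)=\mathsf c\int_\rho^\infty r^{\s}\phi(r)\,\frac{\dd r}{r}.
\end{align}
Radial monotonicity gives $\phi_t(x)\le\phi_t(0)$ pointwise in $t$. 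Hence $\g_\eta\le\g_\eta(0)=\mathsf c\,\phi(0)\int_\eta^\infty t^{-\s}\,\frac{\dd t}{t}=C\eta^{-\s}$, and nonnegativity is immediate.

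\textbf{Step 2: Fourier positivity.} We have $\wh{\phi_t}(\xi)=\wh\phi(t\xi)\ge0$. Pair $\g_\eta$ or $\f_\eta$ with a nonnegative Schwartz function $\psi$ and exchange the $t$-integral with the pairing by Tonelli. For $\g_\eta$, the $t$-integral of $t^{\d-\s}\wh\phi(t\xi)$ is $O(|\xi|^{\s-\d})$, which is locally integrable, so the exchange is justified. This gives $\langle\wh{\g_\eta},\psi\rangle\ge0$ and $\langle\wh{\f_\eta},\psi\rangle\ge0$, and \eqref{eq:HCRSFourierOrder} follows from $\wh\g=\wh{\g_\eta}+\wh{\f_\eta}$.

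\textbf{Step 3: decay of $\f_\eta$ and the convolution bound.} Since $\phi$ is Schwartz, $\phi(y)\le C_\gamma|y|^{-\gamma}$, so $\phi_t(x)\le C_\gamma t^{\gamma-\d}|x|^{-\gamma}$. Integrating against $t^{\d-\s}$ over $t\le\eta$ gives $C_\gamma\eta^{\gamma-\s}|x|^{-\gamma}$ when $\gamma>\s$. For the lower bound, $F$ is decreasing, so for $|x|\le\eta$ we get $\f_\eta(x)\ge|x|^{-\s}F(1)=\frac{F(1)}{F(0)}\,\g(x)$, with $F(1)>0$ because $\phi>0$ near $1$ (this is automatic for the Gaussian). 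Integrating $\f_\eta$ in $x$ before $t$ gives $\int\f_\eta=\mathsf c\,\|\phi\|_{L^1}\int_0^\eta t^{\d-\s}\,\frac{\dd t}{t}=C\eta^{\d-\s}$, and Young's inequality then yields \eqref{eq:HCRSconvBound}.

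\textbf{Step 4: the logarithmic case.} Here $\f_\eta(x)=F(|x|/\eta)$ with $F(\rho)=\phi(0)^{-1}\int_\rho^\infty\phi(r)\,\frac{\dd r}{r}$. This profile is $\lesssim1+\log_+(1/\rho)$ near $0$ and $\lesssim\rho^{-\gamma}$ at infinity, which is \eqref{eq:logfetaDecay}. The convolution bound again follows from $\int\f_\eta\sim\eta^\d$.

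For $\g_{\eta,T}$, one integration by parts in $r$ shows two things. First, $\g_{\eta,T}(0)=\phi(0)^{-1}\big(\phi(0)\log(T/\eta)-C_{\phi,T}\big)=-\log\eta+O(1)$. Second, the constant $C_{\phi,T}$ is exactly what makes $\g_{0,T}$ converge to $-\log|x|$ as $T\to\infty$. In the Fourier picture, the subtracted constant is a multiple of $\delta_0$, which is invisible on zero-mean tests. The positivity argument of Step 2 then applies to the remaining $t$-integrals, giving the statement on zero-mean test functions in \eqref{eq:logTruncPropsBasic}.

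\textbf{Step 5 and main obstacle: the derivative bound \eqref{eq:logfetaDerivative}.} Differentiating under the integral gives $|x|^n|\nab^{\otimes n}\phi_t(x)|=t^{-\d}\,\big(|y|^n|\nab^{\otimes n}\phi(y)|\big)_{y=x/t}$. The bound therefore needs the pointwise domination $|y|^n|\nab^{\otimes n}\phi(y)|\le C_n\phi(y/c_n)$. This does not follow from the abstract hypotheses on $\phi$. I would fix $\phi$ to be the Gaussian, as the footnote allows, for which the domination holds: the left side is a polynomial times $e^{-|y|^2/2}$ and is absorbed by $e^{-|y|^2/(2c_n^2)}$ once $c_n\ge2$. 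The rescaling $t\mapsto c_nt$ then converts the resulting $t$-integral into $C_n\f_{c_n\eta}(x)$.
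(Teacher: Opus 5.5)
Your proposal is correct. The paper gives no proof of this lemma and instead imports it from \cite{HCRS2025}; your scale-mixture computation (the substitution $r=|x|/t$, with Step~1 being exactly the ``radial monotonicity'' the statement alludes to) is a natural self-contained verification. The two caveats you flag are genuine. The bound $F(1)>0$ fails if $\phi$ is supported in the closed unit ball, and the domination $|y|^n|\nabla^{\otimes n}\phi(y)|\le C_n\phi(y/c_n)$ needs a concrete profile; fixing the Gaussian, as the footnote permits, disposes of both.
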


\subsection{Fourier truncations and positive-definite diagonal bounds}
\label{subsec:fourier-truncations-deterministic-diagonal-bounds}

We use here the truncation facts already recorded in
\eqref{eq:HCRSdiagPD}-\eqref{eq:HCRSconvBound}: positive definiteness and
Fourier order, the diagonal and $L^\infty$ bounds, and the short-range convolution estimate.
For the Riesz kernel, with our normalization of Fourier transform,
\begin{align}
\wh{\g}(\xi)=c_{\d,\s}|\xi|^{-(\d-\s)}, \qquad \xi\in \R^\d\setminus \brac{0},
\end{align}
for a positive constant $c_{\d,\s}$.  Given $K\ge1$, define the low- and high-frequency pieces of $\g_\eta$ by
\begin{align}\label{eq:FourierLowHighDef}
\wh{\g_{\eta,\le K}}(\xi) \coloneqq \indic_{|\xi|\le K}\wh{\g_\eta}(\xi), \qquad \wh{\g_{\eta,>K}}(\xi) \coloneqq \indic_{|\xi|>K}\wh{\g_\eta}(\xi).
\end{align}
By \eqref{eq:HCRSdiagPD}, both kernels are positive definite.

We record here the elementary deterministic estimate which will later give the improved diagonal bound.
\begin{lemma}[Positive-definite diagonal bound]\label{lem:PDdeterministicDiagonalBound}
Let $q$ be a bounded positive-definite kernel, and let $\tl{q}$ denote its $\mu$-centering with respect to a probability measure $\mu$.  Then
\begin{align}\label{eq:PDcenteredLinfty}
\|\tl q\|_{L^\infty}\le 4\sup_x q(x,x),
\end{align}
and, for every $N\ge1$ and every configuration $\XN=(x_1,\ldots,x_N)$,
\begin{align}
\sum_{1\le i,j\le N}\tl{q}(x_i,x_j)\ge0.
\end{align}
Consequently,
\begin{align}\label{eq:PDdeterministicDiagonalBound}
-\frac{1}{N}\sum_{1\le i\ne j\le N}\tl{q}(x_i,x_j)
\le \frac{1}{N}\sum_{i=1}^N \tl{q}(x_i,x_i)
\le \sup_x \tl{q}(x,x)
\le 4\sup_x q(x,x).
\end{align}
\end{lemma}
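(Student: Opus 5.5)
The plan is to write the centering explicitly and use positive definiteness through the feature representation of $q$. Set $m(x)\coloneqq\int q(x,z)\,\dd\mu(z)$ and $\bar m\coloneqq\iint q\,\dd\mu^{\otimes2}$. Then
\begin{align}
\tl q(x,y)=q(x,y)-m(x)-m(y)+\bar m .
\end{align}
Since $q$ is bounded and positive definite, it is a reproducing kernel. So I would write $q(x,y)=\langle \Phi(x),\Phi(y)\rangle_{\mathcal H}$ for some feature map $\Phi$ into a Hilbert space $\mathcal H$. For translation-invariant $q$ with $\wh q\ge0$ one can take $\mathcal H=L^2(\wh q\,\dd\xi)$ and $\Phi(x)=e^{\iu x\cdot\xi}$ (complex, but the argument is identical with Hermitian inner products). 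Put $\bar\Phi\coloneqq\int\Phi\,\dd\mu$, which is a Bochner integral, well defined since $\|\Phi(x)\|^2=q(x,x)$ is bounded. The key identity is
\begin{align}
\tl q(x,y)=\langle \Phi(x)-\bar\Phi,\Phi(y)-\bar\Phi\rangle_{\mathcal H},
\end{align}
so $\tl q$ is again a positive-definite kernel.

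From this, both claims follow directly.

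\textbf{Sum bound.} For any configuration,
\begin{align}
\sum_{i,j}\tl q(x_i,x_j)=\Big\|\sum_i(\Phi(x_i)-\bar\Phi)\Big\|^2\ge0 .
\end{align}

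\textbf{$L^\infty$ bound.} By Cauchy--Schwarz, $|\tl q(x,y)|\le\|\Phi(x)-\bar\Phi\|\,\|\Phi(y)-\bar\Phi\|$. Each factor satisfies $\|\Phi(x)-\bar\Phi\|\le\|\Phi(x)\|+\|\bar\Phi\|\le 2\sup_z q(z,z)^{1/2}$. Hence $\|\tl q\|_{L^\infty}\le 4\sup_x q(x,x)$. In particular $\tl q(x,x)\le 4\sup q(\cdot,\cdot)$; the slightly sharper $\tl q(x,x)=\|\Phi(x)-\bar\Phi\|^2$ is not needed.

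\textbf{Consequence \eqref{eq:PDdeterministicDiagonalBound}.} Split the nonnegative full sum into its diagonal and off-diagonal parts:
\begin{align}
-\sum_{i\ne j}\tl q(x_i,x_j)\le\sum_i\tl q(x_i,x_i).
\end{align}
Dividing by $N$ and bounding each diagonal term by $\sup_x\tl q(x,x)\le 4\sup_x q(x,x)$ gives the chain.

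\textbf{Main obstacle.} The only delicate point is justifying the feature representation for a merely bounded measurable positive-definite kernel. There are two ways to handle it. One is to use the RKHS (Moore--Aronszajn) construction directly. The other is to avoid features entirely: apply the defining inequality $\iint q\,\dd\nu\,\dd\nu\ge0$ to the signed measure $\nu=\sum_i\delta_{x_i}-N\mu$. Expanding this gives exactly $\sum_{i,j}\tl q(x_i,x_j)\ge0$, which requires only positive definiteness on finite signed measures, extended by approximation to $\mu$ using boundedness of $q$. The $L^\infty$ bound follows similarly from the $2\times2$ positivity (Cauchy--Schwarz for the positive-definite kernel $\tl q$), $|\tl q(x,y)|^2\le\tl q(x,x)\tl q(y,y)$, together with $\tl q(x,x)\le q(x,x)-2m(x)+\bar m\le 4\sup q$, using $|m|,|\bar m|\le\sup q(\cdot,\cdot)$ from $|q(x,y)|\le\sqrt{q(x,x)q(y,y)}$.
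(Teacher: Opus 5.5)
Your proof is correct, but your main route differs from the paper's. The paper builds no features. It gets $|q(x,y)|\le M\coloneqq\sup_x q(x,x)$ from nonnegativity of the $2\times2$ Gram determinant and bounds each of the four terms of the centering \eqref{eq:tildedef} by $M$. It then obtains the sum bound from $\sum_{i,j}\tl q(x_i,x_j)=N^2\iint q\,\dd(\mu_N-\mu)^{\otimes2}\ge0$, which is essentially the fallback you sketch at the end.

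Your feature argument instead exhibits $\tl q$ as the positive-definite kernel with features $\Phi-\bar\Phi$. This gives both claims at once, and it is exactly the representation the paper later uses in \eqref{eq:truncatedFeatureCentering}. The cost is constructing $\bar\Phi$, and the delicate point is this integral rather than the representation itself, which Moore--Aronszajn gives unconditionally. Boundedness of $\|\Phi\|$ alone does not make $\int\Phi\,\dd\mu$ a Bochner integral: strong measurability is also needed, and for a general bounded measurable positive-definite kernel the RKHS can be nonseparable (e.g.\ $q(x,y)=\indic_{x=y}$). It is cleaner to define $\bar\Phi$ weakly by $\langle\bar\Phi,h\rangle=\int h\,\dd\mu$. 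This is a bounded functional, because RKHS functions are pointwise limits of measurable functions and satisfy $|h(x)|\le\|h\|M^{1/2}$. It yields $\langle\Phi(x),\bar\Phi\rangle=m(x)$ and $\|\bar\Phi\|^2=\bar m\le M$, which is all you use. For the Fourier-positive translation-invariant kernels to which the lemma is actually applied, $L^2(\wh q\,\dd\xi)$ is separable and $x\mapsto e^{\iu x\cdot\xi}$ is continuous, so your Bochner version is fine there.

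Conversely, the paper's measure-based argument for the sum bound uses only positivity of the quadratic form on the zero-mass measure $\mu_N-\mu$. That is why the same reasoning also serves in \eqref{eq:logCancellationAlgebra}, for the conditionally positive-definite low-frequency logarithmic kernel, which need not admit a feature map of its own.
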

\begin{proof}
Since the two-point Gram matrix associated to $q$ is positive semidefinite, the nonnegativity of its determinant gives $|q(x,y)|^2 \le q(x,x)q(y,y)$, so $\|q\|_{L^\infty}\le M\coloneqq\sup_x q(x,x)$.  Expanding the centering identity \eqref{eq:tildedef} then bounds each of the four terms of $\tl q(x,y)$ by $M$, which proves \eqref{eq:PDcenteredLinfty} and the last inequality in \eqref{eq:PDdeterministicDiagonalBound}.

Next, by \eqref{eq:tildedef} and the definition \eqref{eq:muNdef} of $\mu_N$,
\begin{align}
\sum_{1\le i,j\le N}\tl{q}(x_i,x_j)
=N^2\iint_{(\R^\d)^2} q(x,y)\,\dd(\mu_N-\mu)(x)\,\dd(\mu_N-\mu)(y),
\end{align}
which is nonnegative by positive definiteness of $q$.  This gives
\begin{align}
-\frac1N\sum_{1\le i\ne j\le N}\tl{q}(x_i,x_j)
\le \frac1N\sum_{i=1}^N\tl{q}(x_i,x_i)
\le \sup_x\tl{q}(x,x).
\end{align}
\end{proof}

For the logarithmic proofs below, we also need a finite-$T$ frequency decomposition.  The point is to keep the one-body term in $-\be N\Fr_N$ until after the infrared part of the interaction has been estimated.  Constants may depend on $\d$, on the fixed truncation profile $\phi$, and on the displayed bounds for $\mu$, but never on $N$, $u$, $\eta$, or the auxiliary infrared cutoff $T$.

Fix once and for all a radial, radially nonincreasing function $\chi\in C_c^\infty(\R^\d)$ such that $0\le\chi\le1$, $\chi=1$ on $\{|\xi|\le1\}$, and $\chi=0$ on $\{|\xi|\ge2\}$, and set $\chi_K(\xi)\coloneqq\chi(\xi/K)$.  For $0<\eta<T$, we assign the renormalizing constant in \eqref{eq:logFiniteTTruncDef} to the low-frequency part and define
\begin{align}\label{eq:logFreqDef}
\wh{\g}_{\eta,T,\le K}
&\coloneqq \chi_K(\xi)\,\mathsf{c}_{\phi,\d,0}\int_\eta^T t^\d\wh\phi(t\xi)\,\frac{\dd t}{t}-\mathsf{c}_{\phi,\d,0}C_{\phi,T}\delta_0,\\
\wh{\g}_{\eta,T,>K}
&\coloneqq (1-\chi_K(\xi))\,\mathsf{c}_{\phi,\d,0}\int_\eta^T t^\d\wh\phi(t\xi)\,\frac{\dd t}{t}.
\label{eq:logFreqDefHigh}
\end{align}
Thus, $\g_{\eta,T}=\g_{\eta,T,\le K}+\g_{\eta,T,>K}$.  Since the Dirac mass at the origin is invisible to zero-average test functions, both pieces are positive definite in every centered quadratic form in which they are used below.  We also write
\begin{align}
\g_T\coloneqq \f_\eta+\g_{\eta,T},\qquad h_{\mu,T}\coloneqq \g_T\ast\mu,
\end{align}
noting that $\g_T$ is independent of the auxiliary choice of $\eta$.

\begin{lemma}[Frequency-truncation estimates]\label{lem:logLargeScaleCancellation}
Let $K\ge1$, and recall the $L^\infty L^2(\mu)$ and integral-operator norms of \cref{rem:Ustattail}.
\begin{enumerate}[(i)]
\item\emph{(Riesz case $0<\s<\d/2$.)}  The centered kernels of the splitting \eqref{eq:FourierLowHighDef} satisfy, uniformly in $0<\eta\le\eta_0$,
\begin{align}
\|\tl\g_{\eta,>K}\|_{L^2(\mu^{\otimes2})}^2 &\le C_\mu \int_{|\xi|>K}\wh{\g}(\xi)^2\,\dd\xi \le C_\mu K^{2\s-\d},\label{eq:CtailFourier}\\
\|\tl\g_{\eta,>K}\|_{L^\infty L^2(\mu)}^2 &\le C_\mu \int_{|\xi|>K}\wh{\g}(\xi)^2\,\dd\xi \le C_\mu K^{2\s-\d},\label{eq:BtailFourier}\\
\|\Tc_{\tl\g_{\eta,>K}}\|_{\mathrm{op}} &\le C_\mu \sup_{|\xi|>K}\wh{\g}(\xi) \le C_\mu K^{-(\d-\s)},\label{eq:DtailFourier}
\end{align}
together with the low-mode diagonal bound
\begin{align}\label{eq:lowModeDiagBound}
\sup_x \tl{\g}_{\eta,\le K}(x,x) \le 4\int_{|\xi|\le K}\wh{\g}(\xi)\,\dd\xi \le C_1 K^\s,
\end{align}
for a constant $C_1=C_1(\d,\s)$.
\item\emph{(Logarithmic case $\s=0$.)}  Assume \eqref{eq:logEnergyAssumption}.  There is a constant $C_{\mu,K}<\infty$ such that, for every $N\ge1$, every $T\ge2$, every $0<\eta\le\eta_0$, and every configuration $\XN$,
\begin{align}\label{eq:logLargeScaleCancellation}
\frac1N\sum_{i=1}^N h_{\mu,T}(x_i)
-\frac1{2N}\sum_{1\le i\ne j\le N}\tl\g_{\eta,T,\le K}(x_i,x_j)
\le C_{\mu,K}.
\end{align}
Moreover,
\begin{align}\label{eq:logHighFreqNorms}
\|\tl\g_{\eta,T,>K}\|_{L^2(\mu^{\otimes2})}^2+\|\tl\g_{\eta,T,>K}\|_{L^\infty L^2(\mu)}^2&\le C_\mu K^{-\d},
\qquad
\|\Tc_{\tl\g_{\eta,T,>K}}\|_{\mathrm{op}}\le C_\mu K^{-\d},\nn\\
\|\tl\g_{\eta,T,>K}\|_{L^\infty}&\le C\pa[\big]{1+\log(1/\eta)}.
\end{align}
\end{enumerate}
\end{lemma}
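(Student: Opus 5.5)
For the Riesz part (i), the plan is to write every norm of a centered translation-invariant kernel through Fourier/Plancherel and to use $\mu\in L^\infty$ to pass from $L^2(\mu)$ to $L^2(\dd x)$.

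\textbf{Step 1 (removing the centering).} For a kernel $k(x-y)$, the Hoeffding centering $\tl k$ is the composition $P k P$ with $P=\mathrm{Id}-\Pi_\mu$, where $\Pi_\mu$ is the projection onto constants in $L^2(\mu)$. Hence $\|\Tc_{\tl k}\|_{\mathrm{op}}\le\|\Tc_k\|_{\mathrm{op}}$ and $\|\tl k\|_{L^2(\mu^{\otimes2})}\le\|k\|_{L^2(\mu^{\otimes2})}$.

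\textbf{Step 2 (the $L^2$ and mixed norms).} For $k=\g_{\eta,>K}$ we bound
\[
\|k\|_{L^2(\mu^{\otimes 2})}^2\le\|\mu\|_{L^\infty}\,\|k\|_{L^2(\dd x)}^2 = C\int_{|\xi|>K}\wh{\g_\eta}^2 .
\]
By \eqref{eq:HCRSFourierOrder}, $0\le\wh{\g_\eta}\le\wh\g$, so this is at most $C\int_{|\xi|>K}|\xi|^{-2(\d-\s)}\,\dd\xi$. The integral converges precisely because $\s<\d/2$, and it equals $CK^{2\s-\d}$.

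For the mixed norm we expand
\[
\tl k(x,\cdot)=k(x-\cdot)-(k\ast\mu)(x)-(k\ast\mu)(\cdot)+\textstyle\iint k\,\dd\mu^{\otimes2}.
\]
The translate satisfies $\int k(x-y)^2\,\dd\mu(y)\le\|\mu\|_{L^\infty}\|k\|_{L^2}^2$. The averaged terms are controlled by Cauchy--Schwarz: $|k\ast\mu|\le(\int k^2\,\dd\mu)^{1/2}$ pointwise, and its $L^2(\mu)$ norm is bounded the same way. This gives \eqref{eq:BtailFourier}.

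\textbf{Step 3 (operator norm).} For $f\in L^2(\mu)$, $\Tc_kf=k\ast(f\mu)$. Since $f\mu\in L^2(\dd x)$ with $\|f\mu\|_{L^2}^2\le\|\mu\|_{L^\infty}\|f\|_{L^2(\mu)}^2$, Plancherel gives $\|k\ast(f\mu)\|_{L^2(\dd x)}\le\sup|\wh k|\,\|f\mu\|_{L^2}$. Converting back to $L^2(\mu)$ costs another factor $\|\mu\|_{L^\infty}^{1/2}$, so the total constant is $\|\mu\|_{L^\infty}$. Since $\sup_{|\xi|>K}\wh\g\lesssim K^{-(\d-\s)}$, this gives \eqref{eq:DtailFourier}.

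\textbf{Step 4 (low-mode diagonal).} The low-mode diagonal bound follows from \cref{lem:PDdeterministicDiagonalBound}, because
\[
q(x,x)=\g_{\eta,\le K}(0)=\int_{|\xi|\le K}\wh{\g_\eta}\le\int_{|\xi|\le K}c|\xi|^{\s-\d}\,\dd\xi=CK^\s .
\]

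For the logarithmic case (ii), the high-frequency bounds repeat Steps 1--3 with $\wh{\g_{\eta,T}}\lesssim|\xi|^{-\d}$ on $|\xi|\ge K\ge1$, uniformly in $\eta,T$; here the multiplier $1-\chi_K$ kills the infrared region. This yields $K^{-\d}$. For the $L^\infty$ bound we use $\|k\|_{L^\infty}\le\int_{|\xi|>K}\wh{\g_{\eta,T}}$. Since $\wh\phi(t\xi)$ decays rapidly once $t|\xi|\gtrsim1$, the only contribution comes from $\int_\eta^{T}\int_{|\xi|\lesssim1/t}t^\d\,\dd\xi\,\dd t/t\lesssim\log(1/\eta)$, and then \eqref{eq:PDcenteredLinfty}-type centering gives a factor $4$.

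The cancellation \eqref{eq:logLargeScaleCancellation} is the main obstacle. Write $\ell=\g_{\eta,T,\le K}$. We use the identity
\[
-\frac1{2N}\sum_{i\ne j}\tl\ell(x_i,x_j)=-\frac N2\iint\ell\,\dd(\mu_N-\mu)^{\otimes2}+\frac1{2N}\sum_i\tl\ell(x_i,x_i).
\]
The first term is $\le0$, since $\ell$ is positive definite on zero-mean measures; the $\delta_0$ piece is invisible there. In the diagonal term the renormalizing constant $-C_{\phi,T}$ cancels in the centering, so
\[
\tl\ell(x,x)=\ell_0(0)-2\,\ell_0\ast\mu(x)+\textstyle\iint\ell_0\,\dd\mu^{\otimes2},
\]
where $\ell_0$ is the kernel without the constant. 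The danger is the term $-\ell\ast\mu(x_i)\approx\log|x_i|$ for large $|x_i|$, which is unbounded. The fix is to pair it with the one-body term $\frac1N\sum h_{\mu,T}(x_i)$. We have $h_{\mu,T}=\g_T\ast\mu$, and $\g_T-\ell=\f_\eta+\g_{\eta,T,>K}$ has bounded convolution against $\mu$ by \eqref{eq:logfetaConv} and the high-frequency bounds. So $h_{\mu,T}$ and $\ell\ast\mu$ have the same large-scale part and cancel up to $O(1)$. What remains is $h_{\mu,T}(x)-\ell\ast\mu(x)$, which is bounded. We also need $\ell_0(0)\lesssim\log K+\log T$ and $\iint\ell_0\,\dd\mu^{\otimes2}$ to combine so that the $\log T$ cancels; the latter is finite by \eqref{eq:logEnergyAssumption}. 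We expect verifying exactly this $T$-independence, i.e.\ that the constant $C_{\phi,T}$ is correctly assigned so that all $\log T$ contributions cancel, to be the delicate bookkeeping. It should reduce to the explicit formula \eqref{eq:logCphiTDef}.
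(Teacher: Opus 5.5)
Your part (i) and the high-frequency estimates in part (ii) are correct and follow the paper's route: Plancherel, the Fourier bounds $0\le\wh{\g_\eta}\le\wh\g$ and $\wh\g_{\eta,T,>K}\le C|\xi|^{-\d}\indic_{|\xi|\ge K}$, and $\mu\in L^\infty$. Using \cref{lem:PDdeterministicDiagonalBound} for the low-mode diagonal is equivalent to the paper's centered-character representation.

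In the cancellation, your bound on $(\f_\eta+\g_{\eta,T,>K})\ast\mu$ via the $L^2$ high-frequency estimate also works. Cauchy--Schwarz gives $\|\g_{\eta,T,>K}\ast\mu\|_{L^\infty}\le\|\g_{\eta,T,>K}\|_{L^2(\dd x)}\|\mu\|_{L^2}\le CK^{-\d/2}\|\mu\|_{L^\infty}^{1/2}$. This is simpler than the paper's dyadic kernel majorant.

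One small correction to the $L^\infty$ bound. The integral $\int_\eta^T\int_{|\xi|\lesssim1/t}t^\d\,\dd\xi\,\dd t/t$ you wrote is of size $\log(T/\eta)$, not $\log(1/\eta)$. It is the support condition $|\xi|\ge K$ that restricts the integral to $t\lesssim1/K$, and the Schwartz tail of $\wh\phi$ controls larger $t$.

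The genuine gap is the uniformity in $T$ of \eqref{eq:logLargeScaleCancellation}. That uniformity is the entire content of the estimate, and you leave it as ``delicate bookkeeping''. The ingredients you list do not close it:
\begin{itemize}
\item Finiteness of $\iint\ell_0\,\dd\mu^{\otimes2}$ for each fixed $T$ gives no uniform control.
\item A bound $\ell_0(0)\lesssim\log K+\log T$ with an unspecified factor in front of $\log T$ cannot cancel against $\mathsf c_{\phi,\d,0}C_{\phi,T}=\log T+O(1)$. The coefficient of $\log T$ must be exactly one.
\end{itemize}
Keep the constant inside $\ell$ throughout. Then the quantity to control is
\[
h_{\mu,T}(x)+\tfrac12\tl\ell(x,x)=(\f_\eta+\g_{\eta,T,>K})\ast\mu(x)+\tfrac12\ell(0)+\tfrac12\iint\ell(z-w)\,\dd\mu^{\otimes2}(z,w),
\]
and two facts finish it.

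First, the nonconstant part of $\ell$ is a real positive-definite function. Hence $\ell(x)\le\ell(0)$ for all $x$, so $\iint\ell\,\dd\mu^{\otimes2}\le\ell(0)$, and no asymptotics of the double integral are needed.

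Second, Fourier inversion gives
\[
\ell(0)=\mathsf c_{\phi,\d,0}\Big(\int_\eta^T F_K(t)\,\frac{\dd t}{t}-\phi(0)\log T+\int_0^\infty\log r\,\phi'(r)\,\dd r\Big),
\qquad
F_K(t)\coloneqq\int_{\R^\d}\chi_K(z/t)\wh\phi(z)\,\dd z.
\]
Here $0\le F_K\le\phi(0)$, and $F_K(t)\le C_Kt^\d$ for $t\le1$. Therefore $\int_\eta^TF_K\,\dd t/t\le C_K+\phi(0)\log T$, and the $\log T$ cancels exactly because $\mathsf c_{\phi,\d,0}=\phi(0)^{-1}$. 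This gives $\ell(0)\le C_K$ uniformly in $T\ge2$ and $0<\eta\le\eta_0$, which is the step your sketch is missing.
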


\begin{proof}
\emph{(i)}  The bounds \eqref{eq:CtailFourier}--\eqref{eq:DtailFourier} follow from Plancherel, the Fourier order \eqref{eq:HCRSFourierOrder}, and $\mu\in L^\infty$; the tail integrals are finite exactly because $2\s<\d$.  For \eqref{eq:lowModeDiagBound}: since $|\phi_{\xi,\mu}|\le2$ for the centered characters $\phi_{\xi,\mu}(x)\coloneqq e^{2\pi\iu x\cdot\xi}-\int_{\R^\d} e^{2\pi\iu y\cdot\xi}\,\dd\mu(y)$, the representation
\begin{align}
\tl{\g}_{\eta,\le K}(x,y)=\int_{|\xi|\le K}\wh{\g_{\eta}}(\xi)\,\phi_{\xi,\mu}(x)\ol{\phi_{\xi,\mu}(y)}\,\dd\xi
\end{align}
gives $\sup_x\tl\g_{\eta,\le K}(x,x)\le4\int_{|\xi|\le K}\wh{\g}(\xi)\,\dd\xi\le C_1K^{\s}$, using \eqref{eq:HCRSFourierOrder} again.

\emph{(ii)}  Let $q\coloneqq\g_{\eta,T,\le K}$.  Since $q$ is conditionally positive definite,

\begin{align}\label{eq:logCancellationAlgebra}
\frac1N\sum_{i=1}^N h_{\mu,T}(x_i)-\frac1{2N}\sum_{1\le i\ne j\le N}\tl q(x_i,x_j)
\le \frac1N\sum_{i=1}^N\Big(h_{\mu,T}(x_i)+\frac12\tl q(x_i,x_i)\Big).
\end{align}
Expanding the centered diagonal gives
\begin{align}\label{eq:logDiagonalExpand}
h_{\mu,T}(x)+\frac12\tl q(x,x)
=(\g_T-q)\ast\mu(x)+\frac12q(0)+\frac12\int_{(\R^\d)^2}q(z-w)\,\dd\mu^{\otimes2}(z,w).
\end{align}
\begingroup
Here, $\g_T-q=\f_\eta+\g_{\eta,T,>K}$.  The contribution of $\f_\eta$ is bounded in $L^\infty$ by \eqref{eq:logfetaConv}.  For the high-frequency term, the multiplier in \eqref{eq:logFreqDefHigh} is smooth at $\xi=0$ and satisfies
\begin{align}\label{eq:logHighSymbolBounds}
|\partial_\xi^\alpha \wh\g_{\eta,T,>K}(\xi)|
\le C_{\alpha,K}|\xi|^{-\d-|\alpha|},
\qquad \xi\ne0,
\end{align}
for $|\alpha|\le\d+2$, uniformly in $T$ and $\eta$.  A smooth dyadic decomposition of $\{|\xi|\ge K\}$, followed by rescaling and repeated integration by parts on each annulus, therefore gives the absolute kernel bound
\begin{align}\label{eq:logHighKernelMajorant}
|\g_{\eta,T,>K}(x)|
\le C_K\Big(1+(-\log|x|)_+\Big)\indic_{|x|\le1}
+C_K\langle x\rangle^{-\d-2}.
\end{align}
Since $\mu\in L^1\cap L^\infty$, this implies, uniformly in $T\ge2$ and $0<\eta\le\eta_0$, that
\begin{align}
\|\g_{\eta,T,>K}\ast\mu\|_{L^\infty}\le C_{\mu,K}.
\end{align}

For the scalar terms in \eqref{eq:logDiagonalExpand}, the nonconstant Fourier multiplier defining $q$ is real, radial, and nonnegative.  Consequently, the nonconstant part of $q$ is a real translation-invariant positive-definite kernel and is therefore maximized at the origin.  Since subtracting the constant term in \eqref{eq:logFreqDef} does not affect this comparison, $q(x)\le q(0)$ for every $x\in\R^\d$, and hence
\begin{align}
\int_{(\R^\d)^2}q(z-w)\,\dd\mu^{\otimes2}(z,w)\le q(0).
\end{align}
Set
\begin{align}
F_K(t)\coloneqq\int_{\R^\d}\chi_K(\xi)t^\d\wh\phi(t\xi)\,\dd\xi
=\int_{\R^\d}\chi_K(z/t)\wh\phi(z)\,\dd z.
\end{align}
Fourier inversion and \eqref{eq:logCphiTDef} give
\begin{align}
q(0)
=\mathsf c_{\phi,\d,0}\left(
\int_\eta^T F_K(t)\,\frac{\dd t}{t}
-\phi(0)\log T+\int_0^\infty\log r\,\phi'(r)\,\dd r
\right).
\end{align}
Here $0\le F_K(t)\le\phi(0)$ for every $t>0$, while $F_K(t)\le C_Kt^\d$ for $0<t\le1$.  Splitting at $t=1$ if $\eta\le1$, and using $\int_\eta^T F_K(t)\,\dd t/t\le\phi(0)\log(T/\eta)$ if $\eta>1$, gives $q(0)\le C_K$ uniformly in $T$ and $\eta$.  The preceding pointwise bound therefore yields
\begin{align}
\frac12q(0)
+\frac12\int_{(\R^\d)^2}q(z-w)\,\dd\mu^{\otimes2}(z,w)
\le C_K
\end{align}
uniformly in $T$ and $\eta$.
\endgroup
Combining these bounds with \eqref{eq:logCancellationAlgebra}--\eqref{eq:logDiagonalExpand} proves \eqref{eq:logLargeScaleCancellation}.

It remains to record the $U$-statistic norms.  From \eqref{eq:logFreqDef}, the bounds $0\le1-\chi_K\le1$, the support condition $1-\chi_K=0$ on $\{|\xi|\le K\}$, and the change of variables $\zeta=t\xi$, we have
\begin{align}\label{eq:logMultiplierOrder}
0\le \wh\g_{\eta,T,>K}(\xi)\le C|\xi|^{-\d}\indic_{|\xi|\ge K}.
\end{align}
Plancherel and $\mu\in L^\infty$ give
\begin{align}
\|\tl\g_{\eta,T,>K}\|_{L^2(\mu^{\otimes2})}^2+\|\tl\g_{\eta,T,>K}\|_{L^\infty L^2(\mu)}^2
\le C_\mu\int_{|\xi|\ge K}|\xi|^{-2\d}\,\dd\xi\le C_\mu K^{-\d},
\end{align}
and the operator norm is bounded by the same Fourier multiplier estimate,
\begin{align}
\|\Tc_{\tl\g_{\eta,T,>K}}\|_{\mathrm{op}}\le C_\mu\sup_{|\xi|\ge K}|\xi|^{-\d}\le C_\mu K^{-\d}.
\end{align}
Finally, by positive definiteness and \eqref{eq:logTruncPropsBasic},
\begin{align}
\|\tl\g_{\eta,T,>K}\|_{L^\infty}\le C\g_\eta(0)\le C(1+\log(1/\eta)).
\end{align}
This completes the proof.
\end{proof}

\section{Repulsive partition-function estimates}
\label{sec:partition-function-estimate-proof}

This section proves \cref{thm:partitionFunctionBound} and \cref{cor:partitionFunctionTemperatureInterpolation}.  The proof separates into three independent estimates, which we state now as a roadmap.  For $\xi_1,\ldots,\xi_N$ iid with law $\mu$, define the centered two-body statistic and the one-body average
\begin{align}\label{eq:WNdefRate}
W_N\coloneqq \frac1N\sum_{1\le i\ne j\le N}\tl\g(\xi_i,\xi_j),
\qquad
R_N\coloneqq\frac1N\sum_{i=1}^N\psi_\mu(\xi_i),
\qquad
\psi_\mu\coloneqq h_\mu-2I_\mu.
\end{align}
The exact identity
\begin{align}\label{eq:roadmapIdentity}
\K_{N,\be}(\mu)
=e^{\be I_\mu}\,\E_{\mu^{\otimes N}}\Big[e^{-\frac{\be}{2}W_N+\be R_N}\Big]
\end{align}
splits the modulated partition function into a centered two-body part and a one-body remainder.  The proof of \cref{thm:partitionFunctionBound} then consists of:
\begin{enumerate}[(a)]
\item a \emph{uniform exponential-moment bound} for the centered two-body statistics, proved in \cref{subsec:uniform-bound-log-riesz} by verifying, separately for the Riesz and logarithmic kernels, the hypotheses of the master layer-cake estimate of \cref{subsec:master-exponential-moment} (\cref{prop:masterLayerCake});
\item a \emph{regularized-determinant limit with rate} for $\E[e^{-\frac\be2 W_N}]$, proved in \cref{subsec:regularized-determinant-limit} by Hilbert--Schmidt truncation, with the uniform bound of part (a) as the key input;
\item a \emph{one-body reduction} showing that $R_N$ contributes an $O(N^{-1/2})$ error, proved in \cref{subsec:one-body-reduction}.
\end{enumerate}
These are assembled into the proof of \cref{thm:partitionFunctionBound} at the end of \cref{subsec:one-body-reduction}.  The temperature-interpolation corollary advertised in the introduction is proved in \cref{subsec:partition-function-temperature-interpolation}, and the sharpness of the threshold $\s<\d/2$ in \cref{subsec:partition-function-sharpness}.  The attractive logarithmic case is proved in \cref{sec:attractive-log-small-beta}, immediately following this section.

\subsection{Uniform bound for the repulsive log/Riesz kernels}
\label{subsec:uniform-bound-log-riesz}
\label{subsec:uniform-bound-positive-riesz}
\label{subsec:uniform-bound-logarithmic}

We now verify the hypotheses of \cref{prop:masterLayerCake} for the repulsive kernels \eqref{eq:gDef}.  Throughout this subsection, $\be>0$ is fixed (the case $\be=0$ of the statements below being trivial), $\mu\in L^\infty\cap\P(\R^\d)$ under the standing density convention, and all constants may depend on $\be$, $\d$, $\s$, and the relevant bounds for $\mu$, but never on $N$, $u$, $\eta$, $t$, or the infrared cutoff $T$.  For $0\le\eta\le\eta_0$, let $W_{N,\eta}$ denote the statistic \eqref{eq:WNdefRate} with $\g$ replaced by the truncation $\g_\eta$ of \cref{lem:HCRStruncations}, with the convention $\g_0\coloneqq\g$, so that $W_{N,0}=W_N$.

\begin{prop}[Uniform exponential-moment bounds]\label{prop:uniformBoundRepulsive}
Assume $0\le\s<\d/2$, $\mu\in L^\infty\cap\P(\R^\d)$, and $\be\ge0$ is fixed.  If $\s=0$, assume in addition \eqref{eq:logEnergyAssumption}.  Then
\begin{align}\label{eq:CubDef}
\sup_{N\ge1}\K_{N,\be}(\mu)\le C_{\mathrm{ub}}(\be,\mu)<\infty.
\end{align}
If moreover, in the case $\s=0$, $\int_{\R^\d}e^{-2\be h_\mu}\,\dd\mu<\infty$, then also
\begin{align}\label{eq:centeredFamilyExpMoment}
\sup_{N\ge1}\ \sup_{0\le\eta\le\eta_0}\ \E_{\mu^{\otimes N}}\brak*{e^{-\frac{\be}{2}W_{N,\eta}}}\le C_{\mathrm{cf}}(\be,\mu)<\infty.
\end{align}
\end{prop}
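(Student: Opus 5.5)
Both bounds will follow from the master layer-cake estimate \cref{prop:masterLayerCake}. We apply it with $g=\be\g_\eta$ (Riesz) or $g=\be\g$ (log), and one-body term $w=\be N R_N/N$-type, i.e.\ $w=\be\psi_\mu$ in the logarithmic case and $w=0$ for the centered family. By the identity \eqref{eq:roadmapIdentity}, $\K_{N,\be}(\mu)=e^{\be I_\mu}\E[e^{\Xi_N}]$ with $\Xi_N=-\frac\be2W_N+\be R_N$.

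\textbf{Riesz case.} Here $h_\mu\le C_\mu$ because $\mu\in L^\infty$ and $\s<\d$, so $\be R_N\le C$ deterministically. It therefore suffices to treat $-\frac\be2 W_{N,\eta}$ uniformly in $\eta\in[0,\eta_0]$, via \cref{rem:masterUniform}.

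At height $u$, split $\g_\eta=\f_{\eta'}$-type pieces as follows. First take the positive-definite truncation $\g_\eta=\g_{\max(\eta,\epsilon)}+(\text{p.d.\ remainder } \f)$ at a scale $\epsilon=\epsilon(u)$. The remainder's centered pair sum satisfies
\[
-\sum_{i\neq j}\tl\f\le\sum_i\tl\f(x_i,x_i)\le N\cdot 2\|\f\ast\mu\|_\infty+\dots,
\]
and its diagonal contribution is controlled by \eqref{eq:HCRSconvBound} (positive-definite diagonal trick, \cref{lem:PDdeterministicDiagonalBound}, but with $\f(0)=+\infty$ removed by noting only $-\tl\f$ off-diagonal appears). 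More simply, positive definiteness of $\wh{\f}$ lets us drop the near-field: $-\sum_{i\ne j}\tl\f\le\sum_i(2\f\ast\mu(x_i)-\iint\f\,d\mu^2)\le CN\epsilon^{\d-\s}$.

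Then split the truncated kernel into low and high frequencies at $K=K(u)$. By \eqref{eq:lowModeDiagBound}, the low part contributes at most $\frac\be2C_1K^\s$ deterministically. Choosing $K^\s\approx u/(2\be C_1)$, the event $\{\Xi_N\ge u\}$ forces the high-frequency statistic to exceed about $u/2$. This gives (M1) with $q_{N,u}=2\be\,\g_{\max(\eta,\epsilon),>K}$ (after rescaling $u$).

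(M3) follows from \eqref{eq:CtailFourier}--\eqref{eq:DtailFourier}, since $K^{2\s-\d}\to0$ because $\s<\d/2$; this is where the threshold enters. Taking $U_0$ large makes all three norms $\le 1/L$.

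(M2) holds with $\mathsf m_{N,u}\le C\epsilon^{-\s}$ by \cref{lem:PDdeterministicDiagonalBound}, and $\|\tl q\|_\infty\le C\epsilon^{-\s}$ as well.

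The main obstacle is the condition \eqref{eq:masterLayerCondition}: we need $\min(u,\mathsf m)(1+\|\tl q\|_\infty)\lesssim u\,\epsilon^{-\s}\le N/(2L^2)$ uniformly in $u$. We handle it by choosing $\epsilon$ depending on $N$ rather than $u$, say $\epsilon^{-\s}=\sqrt N$. For $u\le\sqrt N$ this gives $u\epsilon^{-\s}\le N/(2L^2)$ after adjusting constants. For $u>\sqrt N\ge\mathsf m$, the event is null, since $\min(u,\mathsf m)=\mathsf m\le C\sqrt N$ and the product is $\lesssim N$. The near-field error $N\epsilon^{\d-\s}\cdot\frac1N=N^{-(\d-\s)/(2\s)}$ is bounded (the normalization $\frac1{2N}$ gives an $O(\epsilon^{\d-\s})$ error), hence absorbed into $U_0$. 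Uniformity in $\eta$ holds because all majorants are $\eta$-free.

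\textbf{Log case.} Here we keep the one-body term. Write $\g=\f_\eta+\g_{\eta,T}$ with $T\to\infty$ at the end (Fatou, using \eqref{eq:logEnergyAssumption} so that $W_N$ and $R_N$ are limits of their $T$-versions). Split $\g_{\eta,T}$ at frequency $K$. The combination of $\be R_N$ with the low-frequency pair part is bounded by $\be C_{\mu,K}$ via \eqref{eq:logLargeScaleCancellation}, which is exactly the reason for keeping $R_N$. The near field $\f_\eta$ is handled as above with \eqref{eq:logfetaConv}. The high-frequency part has norms $\le C_\mu K^{-\d}$, so we fix $K$ large (independent of $u$) to satisfy (M3), and it has $\|\tl q\|_\infty\le C(1+\log(1/\eta))$.

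With $\log(1/\eta)\approx \log N$, condition \eqref{eq:masterLayerCondition} holds, since $\mathsf m\lesssim \log N$ and $(\log N)^2\ll N$; uniformity in $T$ follows by \cref{rem:masterUniform}. For \eqref{eq:centeredFamilyExpMoment} without $R_N$ in the log case, we use Cauchy--Schwarz: bound $\E[e^{-\frac\be2W_{N,\eta}}]$ by the square root of $\E[e^{-\be W_{N,\eta}+2\be R_N}]$ times $\E[e^{-2\be R_N}]$. The second factor is $\le(\int e^{-2\be\psi_\mu/N}\,d\mu)^N\le C$ by Jensen/Hölder and $e^{-2\be h_\mu}\in L^1(\mu)$; the first is the previous bound at doubled inverse temperature.
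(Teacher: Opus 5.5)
There is a genuine gap in the Riesz case: the near-field error is miscounted, and once it is counted correctly your height-independent truncation scale fails for $\d/3<\s<\d/2$.

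\textbf{The miscount.} Since $\f\coloneqq\f_{\max\{\eta,\epsilon\}}-\f_\eta\ge0$ pointwise, the centering identity gives $-\tl\f(x,y)\le\f\ast\mu(x)+\f\ast\mu(y)$ for each of the $N(N-1)$ ordered pairs. Your intermediate inequality drops a factor $N-1$. The correct deterministic bound is
\[
-\frac{\be}{2N}\sum_{i\ne j}\tl\f(x_i,x_j)\le\be N\|\f\ast\mu\|_{L^\infty}\le C\be N\epsilon^{\d-\s},
\]
not $O(\epsilon^{\d-\s})$. This order cannot be improved deterministically: for well-separated configurations the pair terms are negligible, while the one-body terms still sum to order $\be N\epsilon^{\d-\s}$.

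\textbf{Why the scale choice fails.} With your choice $\epsilon^{-\s}=\sqrt N$, this error is $C\be N^{(3\s-\d)/(2\s)}$, which diverges when $\d/3<\s<\d/2$. It cannot be absorbed into a fixed $U_0$, so (M1) fails at all heights below that size. Letting $U_0$ grow with $N$ destroys the uniform conclusion $e^{U_0}+L$. No other height-independent scale helps either. (M1) with bounded $U_0$ needs $N\epsilon^{\d-\s}\lesssim1$. Condition \eqref{eq:masterLayerCondition}, with $\mathsf m_{N,u},\|\tl q_{N,u}\|_{L^\infty}\sim\epsilon^{-\s}$, needs $\epsilon^{-2\s}\lesssim N$. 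These two are compatible only when $\s\le\d/3$. A symptom of the problem is that in your scheme the threshold $\s<\d/2$ enters only through (M3).

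\textbf{The missing idea.} Make the physical truncation scale depend on the layer height:
\[
\delta_{N,u}=\min\{\eta_0,(u/(8C_0\be N))^{1/(\d-\s)}\},\qquad \bar\delta=\max\{\delta_{N,u},\eta\}.
\]
Then the near field costs at most $u/8$ at every height. Take $q_{N,u}=2\be\g_{\bar\delta,>K}$ with cap $\mathsf m_{N,u}=C\be\delta_{N,u}^{-\s}$, which is $\sim(N/u)^{\s/(\d-\s)}$ when $\delta_{N,u}<\eta_0$.
\begin{itemize}
\item Heights with $u\le2\mathsf m_{N,u}$ then satisfy $u\lesssim N^{\s/\d}$. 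Hence $u\,\mathsf m_{N,u}\lesssim u^{(\d-2\s)/(\d-\s)}N^{\s/(\d-\s)}\lesssim N^{2\s/\d}$.
\item Heights with $u>2\mathsf m_{N,u}$ give $\mathsf m_{N,u}^2\lesssim N^{2\s/\d}$.
\end{itemize}
So the growth condition holds precisely because $2\s/\d<1$. This is the second place where the Hilbert--Schmidt threshold is used. The rest of your Riesz argument goes through with this modification: the positive-definite low-mode bound, the Fourier tails for (M3), and the $\eta$-free majorants. Letting $K$ grow with $u$ is harmless.

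\textbf{Logarithmic case.} Here an $N$-only scale does suffice, because $\|\tl q_{N,u}\|_{L^\infty}$ grows only like $\log(1/\eta)$. However, with the corrected near-field cost $C\be N\eta^{\d}$ you must take $\eta\lesssim N^{-1/\d}$, not just any $\eta$ with $\log(1/\eta)\approx\log N$.

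\textbf{Centered family.} In your Cauchy--Schwarz step for \eqref{eq:centeredFamilyExpMoment}, the first factor needs the bound for $W_{N,\eta}$ at $2\be$ uniformly in $\eta$. You proved only the case $\eta=0$. The uniform version requires rerunning the logarithmic layer-cake with kernel $\g_\eta$ and scale $\max\{\delta_{N,u},\eta\}$. \Cref{lem:logLargeScaleCancellation}(ii) applies verbatim at that scale.
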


The proof occupies the remainder of this subsection: the case $0<\s<\d/2$ is treated first, then the case $\s=0$.  In each case we specify the data $(g,w)$ in \eqref{eq:masterXiDef}, the threshold $U_0$, and the kernels $q_{N,u}$, verify (M1)--(M3), and check the growth condition \eqref{eq:masterLayerCondition}; throughout, we write $\mathsf S_N$ for the supremum appearing there.  In both cases the cap $\mathsf m_{N,u}$ will be an explicit number, at least $1$, that also bounds the $L^\infty$ norm, $\|\tl q_{N,u}\|_{L^\infty}\le\mathsf m_{N,u}$; hence $1+\|\tl q_{N,u}\|_{L^\infty}\le2\mathsf m_{N,u}$ and $\mathsf S_N\le2\sup_{u\ge U_0}\min\{u,2\mathsf m_{N,u}\}\,\mathsf m_{N,u}$.

\subsubsection{The case \texorpdfstring{$0<\s<\d/2$}{0<s<d/2}}

\begin{proof}[Proof of \cref{prop:uniformBoundRepulsive}, case $0<\s<\d/2$: \eqref{eq:CubDef} and \eqref{eq:centeredFamilyExpMoment}]
Throughout the proof we assume, as we may, $\eta_0\le1$.

\step{Step 1: Reduction to the centered statistics.}  Both \eqref{eq:CubDef} and \eqref{eq:centeredFamilyExpMoment} follow from the family bound
\begin{align}\label{eq:fullSingularExpGoal}
\sup_{N\ge1}\ \sup_{0\le\eta\le\eta_0}\ \E_{\XN\sim\mu^{\otimes N}}\brak*{e^{\Xi_N^{(\eta)}}}\le C_{\be,\mu},
\qquad
\Xi_N^{(\eta)}\coloneqq -\frac{\be}{2}W_{N,\eta}
= -\frac{\be}{2N}\sum_{1\le i\ne j\le N}\tl\g_\eta(x_i,x_j).
\end{align}

\par\smallskip\noindent  Since $\mu\in L^\infty$ and $0<\s<\d$, splitting the integral at unit distance gives $h_\mu=\g\ast\mu\in L^\infty$ with $\|h_\mu\|_{L^\infty}\le C(1+\|\mu\|_{L^\infty})$.  By \eqref{eq:FNdef},
\begin{align}
-\be N\Fr_N(\XN,\mu)
= -\frac{\be}{2N}\sum_{1\le i\ne j\le N}\tl\g(x_i,x_j)+\frac{\be}{N}\sum_{i=1}^Nh_\mu(x_i)-\be I_\mu
\le -\frac{\be}{2}W_N+\be\|h_\mu\|_{L^\infty},
\end{align}
using $I_\mu\ge0$; hence $\K_{N,\be}(\mu)\le e^{\be\|h_\mu\|_{L^\infty}}\E[e^{-\frac\be2W_N}]$, so the case $\eta=0$ of \eqref{eq:fullSingularExpGoal} gives \eqref{eq:CubDef}, while \eqref{eq:fullSingularExpGoal} is precisely \eqref{eq:centeredFamilyExpMoment}.

\step{Step 2: Choice of the data.}
With $C_0$ the near-field constant in \eqref{eq:HCRSconvBound}, $C_\mu$ and $C_1$ the constants in \eqref{eq:CtailFourier}--\eqref{eq:DtailFourier} and \eqref{eq:lowModeDiagBound} of \cref{lem:logLargeScaleCancellation}(i), $C_2$ the diagonal constant in \eqref{eq:HCRSdiagPD}, and $C_4\coloneqq\max\brac[\big]{8C_2,\ \be^{-1}}$, define the frequency cutoff and the threshold
\begin{align}
K_{\be,\mu}&\coloneqq\max\brac[\Big]{\pa[\big]{4L\,\be^2C_\mu}^{\frac1{\d-2\s}},\ \pa[\big]{4L\,\be C_\mu}^{\frac1{\d-\s}},\ 1},\label{eq:chooseKbeta}\\
U_0&\coloneqq \frac{3\be}{2}\,C_1K_{\be,\mu}^{\s}+1,\label{eq:UbetaRieszDef}
\end{align}
and, for $N\ge1$, $u\ge U_0$, and $\eta\in[0,\eta_0]$, the adaptive physical scales, kernels, and caps
\begin{align}
\delta_{N,u}&\coloneqq \min\Bigg\{\eta_0,\ \pa*{\frac{u}{8C_0\be N}}^{\frac{1}{\d-\s}}\Bigg\},
\qquad
\bar\delta\coloneqq\max\{\delta_{N,u},\eta\},\label{eq:etaNuNoSmallBeta}\\
q_{N,u}&\coloneqq 2\be\,\g_{\bar\delta,>K_{\be,\mu}},
\qquad
\mathsf m_{N,u}\coloneqq C_4\,\be\,\delta_{N,u}^{-\s},\label{eq:rieszKernelDef}
\end{align}
the hypotheses of \cref{prop:masterLayerCake} hold for each fixed $\eta\in[0,\eta_0]$, with $g=\be\g_\eta$ and $w=0$ (so that $\Xi_N=\Xi_N^{(\eta)}$), and the majorant condition \eqref{eq:masterUniformBound} of \cref{rem:masterUniform} holds uniformly in $\eta\in[0,\eta_0]$.

\par\smallskip\noindent  Fix $\eta\in[0,\eta_0]$.  We use the three-scale decomposition, valid since $\bar\delta\ge\eta$,
\begin{align}\label{eq:threeScaleDecomposition}
\g_\eta=\pa[\big]{\f_{\bar\delta}-\f_\eta}+\g_{\bar\delta,\le K_{\be,\mu}}+\g_{\bar\delta,>K_{\be,\mu}},
\end{align}
where $\f_{\bar\delta}-\f_\eta=\g_\eta-\g_{\bar\delta}\ge0$ by \eqref{eq:HCRStruncDef} (and $\f_0\coloneqq0$).

\textbf{(M1).}  For the near-field part, we distinguish the two cases of $\bar\delta=\max\{\delta_{N,u},\eta\}$.  If $\eta\ge\delta_{N,u}$, then $\bar\delta=\eta$, so $\f_{\bar\delta}-\f_\eta=0$ and the near-field part vanishes.  If instead $\eta<\delta_{N,u}$, then $\bar\delta=\delta_{N,u}$, and the nonnegativity of $\f_{\delta_{N,u}}-\f_\eta$, the centering identity \eqref{eq:tildedef}, and \eqref{eq:HCRSconvBound} at scale $\delta_{N,u}$ give, for all $x,y$,
\begin{align}
-\pa[\big]{\tl\f_{\delta_{N,u}}-\tl\f_\eta}(x,y)
\le \pa[\big]{\f_{\delta_{N,u}}-\f_\eta}\ast\mu(x)+\pa[\big]{\f_{\delta_{N,u}}-\f_\eta}\ast\mu(y)
\le 2C_0\delta_{N,u}^{\d-\s}.
\end{align}
In both cases, for every configuration,
\begin{align}\label{eq:nearFieldAdaptiveNoSmallBeta}
-\frac{\be}{2N}\sum_{1\le i\ne j\le N}\pa[\big]{\tl\f_{\bar\delta}-\tl\f_\eta}(x_i,x_j)
\le C_0\be N\delta_{N,u}^{\d-\s}\le \frac{u}{8},
\end{align}
the last inequality by the definition \eqref{eq:etaNuNoSmallBeta} of $\delta_{N,u}$.
For the low-frequency part, $\g_{\bar\delta,\le K}$ is positive definite by \eqref{eq:FourierLowHighDef}, so \cref{lem:PDdeterministicDiagonalBound} and \eqref{eq:lowModeDiagBound} give, for every configuration,
\begin{align}\label{eq:lowModeDeterministic}
-\frac{\be}{2N}\sum_{1\le i\ne j\le N}\tl{\g}_{\bar\delta,\le K_{\be,\mu}}(x_i,x_j) \le \frac{\be}{2}C_1 K_{\be,\mu}^\s\le \frac{u}{3},
\end{align}
the last inequality by \eqref{eq:UbetaRieszDef} and $u\ge U_0$.  Combining \eqref{eq:threeScaleDecomposition}--\eqref{eq:lowModeDeterministic}, on the event $\{\Xi_N^{(\eta)}\ge u\}$,
\begin{align}
-\frac{1}{2N}\sum_{1\le i\ne j\le N}\tl q_{N,u}(x_i,x_j)
\ge 2\pa*{u-\frac u8-\frac u3}=\frac{13u}{12}\ge u,
\end{align}
which is (M1).

\textbf{(M2) and (M3).}  By \eqref{eq:CtailFourier}--\eqref{eq:DtailFourier}, the norms of $\tl q_{N,u}=2\be\tl\g_{\bar\delta,>K_{\be,\mu}}$ satisfy, using the choice \eqref{eq:chooseKbeta} of $K_{\be,\mu}$,
\begin{align}
\|\tl q_{N,u}\|_{L^\infty L^2(\mu)}^2,\ \|\tl q_{N,u}\|_{L^2(\mu^{\otimes2})}^2\le 4\be^2C_\mu K_{\be,\mu}^{2\s-\d}\le\frac1{L},
\qquad
\|\Tc_{\tl q_{N,u}}\|_{\mathrm{op}}\le 2\be C_\mu K_{\be,\mu}^{-(\d-\s)}\le\frac1{2L}.
\end{align}
This is (M3).  Moreover, since $\g_{\bar\delta,>K}$ is positive definite with $\g_{\bar\delta,>K}(0)\le\g_{\bar\delta}(0)\le C_2\bar\delta^{-\s}\le C_2\delta_{N,u}^{-\s}$, the $L^\infty$ bound \eqref{eq:PDcenteredLinfty} gives $\|\tl\g_{\bar\delta,>K}\|_{L^\infty}\le4\,\g_{\bar\delta,>K}(0)$, hence
\begin{align}\label{eq:diagonalEtaConstraint}
\|\tl q_{N,u}\|_{L^\infty}\le 8\be\,\g_{\bar\delta,>K}(0)\le 8C_2\,\be\,\delta_{N,u}^{-\s}\le\mathsf m_{N,u},
\end{align}
since $C_4\ge8C_2$; note also that $\mathsf m_{N,u}\ge C_4\be\ge1$, since $\delta_{N,u}\le\eta_0\le1$.  Since $q_{N,u}$ is positive definite, \cref{lem:PDdeterministicDiagonalBound} gives $-\frac{1}{2N}\sum_{1\le i\ne j\le N}\tl q_{N,u}(x_i,x_j)\le\frac12\|\tl q_{N,u}\|_{L^\infty}\le\mathsf m_{N,u}$ for every configuration, which is (M2).  Consequently, only layer heights $u\le \mathsf m_{N,u}=C_4\be\,\delta_{N,u}^{-\s}$ contribute.  Combined with the definition \eqref{eq:etaNuNoSmallBeta} of $\delta_{N,u}$, this constraint gives, after adjusting $C_{\be,\mu}$,
\begin{align}\label{eq:diagonalNConstraint}
U_0\le u\le 2\mathsf m_{N,u}
\quad\Longrightarrow\quad
u\le C_{\be,\mu}\,N^{\s/\d}.
\end{align}
Indeed, if $\delta_{N,u}=\eta_0$ this is immediate.  If instead the second entry of the minimum in \eqref{eq:etaNuNoSmallBeta} is active, then $u\le 2C_4\be(8C_0\be N/u)^{\s/(\d-\s)}$, that is, $u^{\d/(\d-\s)}\le C_{\be,\mu}N^{\s/(\d-\s)}$.

\textbf{(Sublinear growth \eqref{eq:masterLayerCondition}).}  This is the second place where $\s<\d/2$ enters.  We claim that
\begin{align}\label{eq:allTermsBeatNoSmallBeta}
\mathsf S_N\le 2\sup_{u\ge U_0}\ \min\brac[\big]{u,2\mathsf m_{N,u}}\,\mathsf m_{N,u}\le C_{\be,\mu}\,N^{2\s/\d},
\qquad\text{uniformly in }\eta\in[0,\eta_0].
\end{align}
Fix $u\ge U_0$.  If $\delta_{N,u}=\eta_0$, then $\mathsf m_{N,u}=C_4\be\,\eta_0^{-\s}=O_{\be,\mu}(1)$, so $\min\{u,2\mathsf m_{N,u}\}\,\mathsf m_{N,u}\le2\mathsf m_{N,u}^2\le C_{\be,\mu}$.  Suppose instead $\delta_{N,u}=(u/8C_0\be N)^{1/(\d-\s)}$.  If $u\le2\mathsf m_{N,u}$, then, using \eqref{eq:diagonalEtaConstraint} and then \eqref{eq:diagonalNConstraint},
\begin{align}
\min\brac[\big]{u,2\mathsf m_{N,u}}\,\mathsf m_{N,u}
\le u\,\mathsf m_{N,u}
&= C_4\be\,u\pa*{\frac{8C_0\be N}{u}}^{\frac{\s}{\d-\s}}
= C_{\be,\mu}\,u^{\frac{\d-2\s}{\d-\s}}\,N^{\frac{\s}{\d-\s}}\nn\\
&\le C_{\be,\mu}\pa[\big]{N^{\s/\d}}^{\frac{\d-2\s}{\d-\s}}\,N^{\frac{\s}{\d-\s}}
= C_{\be,\mu}\,N^{2\s/\d} .
\end{align}
If instead $u>2\mathsf m_{N,u}$, then the algebra of \eqref{eq:diagonalNConstraint}, run in reverse, gives $u\ge c_{\be,\mu}\,N^{\s/\d}$, so
\begin{align}
\min\brac[\big]{u,2\mathsf m_{N,u}}\,\mathsf m_{N,u}
\le 2\mathsf m_{N,u}^2
= C_{\be,\mu}\pa*{\frac{N}{u}}^{\frac{2\s}{\d-\s}}
\le C_{\be,\mu}\pa[\big]{N^{1-\s/\d}}^{\frac{2\s}{\d-\s}}
= C_{\be,\mu}\,N^{2\s/\d}.
\end{align}
This proves \eqref{eq:allTermsBeatNoSmallBeta}; since $2\s/\d<1$ and neither $\mathsf m_{N,u}$ nor \eqref{eq:allTermsBeatNoSmallBeta} depends on $\eta$, the majorant condition \eqref{eq:masterUniformBound} holds uniformly in $\eta\in[0,\eta_0]$.  Step 2 is proved.

\step{Step 3: Conclusion.}
By Step 2 and \cref{rem:masterUniform},
\begin{align}
\sup_{0\le\eta\le\eta_0}\ \sup_{N\ge1}\ \E\brak[\big]{e^{\Xi_N^{(\eta)}}}<\infty.
\end{align}
This proves \eqref{eq:fullSingularExpGoal}, hence, by Step 1, both \eqref{eq:CubDef} and \eqref{eq:centeredFamilyExpMoment}.
\end{proof}

\subsubsection{The case \texorpdfstring{$\s=0$}{s=0}}

\begin{proof}[Proof of \cref{prop:uniformBoundRepulsive}, logarithmic case]
\leavevmode\par
\step{Step 1: Finite-$T$ reduction.}  Let $\Fr_{N,T}$ denote the modulated energy obtained from $\Fr_N$ by replacing $\g$, $h_\mu$, $I_\mu$ with the infrared truncations $\g_T$, $h_{\mu,T}$, $I_{\mu,T}$ introduced in \cref{subsec:fourier-truncations-deterministic-diagonal-bounds}; it satisfies the identity
\begin{align}\label{eq:logFiniteTExponent}
-\be N\Fr_{N,T}(\XN,\mu)
=\frac{\be}{N}\sum_{i=1}^N h_{\mu,T}(x_i)-\frac{\be}{2N}\sum_{1\le i\ne j\le N}\tl\g_T(x_i,x_j)-\be I_{\mu,T}.
\end{align}
Then \eqref{eq:CubDef} follows from
\begin{align}\label{eq:logFiniteTPartitionGoal}
\sup_{T\ge2}\ \sup_{N\ge1}\ \E_{\mu^{\otimes N}}\Big[e^{-\be N\Fr_{N,T}(\XN,\mu)}\Big]\le C_{\be,\mu} .
\end{align}

\par\smallskip\noindent  By \eqref{eq:logEnergyAssumption}, $\Fr_{N,T}(\XN,\mu)\to\Fr_N(\XN,\mu)$ for $\mu^{\otimes N}$-a.e.\ $\XN$ as $T\to\infty$, so \eqref{eq:logFiniteTPartitionGoal} implies \eqref{eq:CubDef} by Fatou's lemma.

\step{Step 2: Choice of the data.}
Set $C_I(\mu)\coloneqq\sup_{T\ge2}|I_{\mu,T}|$, finite by \eqref{eq:logEnergyAssumption}.  With $C_0'$ the near-field constant in \eqref{eq:logfetaConv}, $C_\mu$ and $C$ the constants in the high-frequency estimates \eqref{eq:logHighFreqNorms}, $C_{\mu,K}$ the constant of the large-scale cancellation estimate \eqref{eq:logLargeScaleCancellation}, and $C_3\coloneqq\max\brac[\big]{2C,\ \be^{-1}}$, define the frequency cutoff and the threshold
\begin{align}
K_{\be,\mu}&\coloneqq\max\brac[\Big]{\pa[\big]{4L\,\be^2C_\mu}^{1/\d},\ \pa[\big]{4L\,\be C_\mu}^{1/\d},\ 1},\label{eq:logChooseK}\\
U_0&\coloneqq 3\be\pa[\big]{C_{\mu,K_{\be,\mu}}+C_I(\mu)}+1,\label{eq:UbetaLogDef}
\end{align}
and, for $N\ge1$, $u\ge U_0$, and $T\ge2$, the adaptive physical scales, kernels, and caps
\begin{align}\label{eq:logEtaNu}
\delta_{N,u}&\coloneqq \min\Bigg\{\eta_0,\ \pa*{\frac{u}{8C_0'\be N}}^{1/\d}\Bigg\},\nn\\
q_{N,u}&\coloneqq 2\be\,\g_{\delta_{N,u},T,>K_{\be,\mu}},
\qquad
\mathsf m_{N,u}\coloneqq \be\,C_3\pa[\big]{1+\log(1/\delta_{N,u})},
\end{align}
the hypotheses of \cref{prop:masterLayerCake} hold for each fixed $T\ge2$, with $g=\be\g_T$ and $w=\be(h_{\mu,T}-I_{\mu,T})$ (so that $\Xi_{N}=-\be N\Fr_{N,T}$ by \eqref{eq:logFiniteTExponent}), and the majorant condition \eqref{eq:masterUniformBound} of \cref{rem:masterUniform} holds uniformly in $T\ge2$.

\par\smallskip\noindent  Fix $T\ge2$ and decompose $\g_T=\f_{\delta_{N,u}}+\g_{\delta_{N,u},T,\le K_{\be,\mu}}+\g_{\delta_{N,u},T,>K_{\be,\mu}}$.

\textbf{(M1).}  By \cref{lem:logLargeScaleCancellation}(ii), the one-body term pairs with the low-frequency block:
\begin{align}\label{eq:logLowBoundPartition}
\frac{\be}{N}\sum_{i=1}^N h_{\mu,T}(x_i)-\frac{\be}{2N}\sum_{1\le i\ne j\le N}\tl\g_{\delta_{N,u},T,\le K_{\be,\mu}}(x_i,x_j)
\le \be\,C_{\mu,K_{\be,\mu}} .
\end{align}
This pairing, rather than a bound on the two terms separately, is what allows the infrared cutoff $T$ to be removed at the end; neither term is bounded uniformly in $T$ on its own.  By \eqref{eq:logfetaConv} and the centering argument used in \eqref{eq:nearFieldAdaptiveNoSmallBeta},
\begin{align}\label{eq:logShortPartition}
-\frac{\be}{2N}\sum_{1\le i\ne j\le N}\tl\f_{\delta_{N,u}}(x_i,x_j)
\le C_0'\be N\delta_{N,u}^\d\le\frac u8 .
\end{align}
Combining \eqref{eq:logFiniteTExponent}--\eqref{eq:logShortPartition} with $\be|I_{\mu,T}|\le\be C_I(\mu)$ and \eqref{eq:UbetaLogDef}, for $u\ge U_0$,
\begin{align}\label{eq:logEventReductionPartition}
\brac[\big]{-\be N\Fr_{N,T}\ge u}
\subset
\brac*{-\frac{\be}{2N}\sum_{1\le i\ne j\le N}\tl\g_{\delta_{N,u},T,>K_{\be,\mu}}(x_i,x_j)\ge u-\frac u8-\frac u3\ge \frac{u}{2}},
\end{align}
on which $-\frac{1}{2N}\sum_{1\le i\ne j\le N}\tl q_{N,u}(x_i,x_j)\ge u$; this is (M1).

\textbf{(M2) and (M3).}  By \eqref{eq:logHighFreqNorms} and the choice \eqref{eq:logChooseK} of $K_{\be,\mu}$,
\begin{align}
\|\tl q_{N,u}\|_{L^\infty L^2(\mu)}^2,\ \|\tl q_{N,u}\|_{L^2(\mu^{\otimes2})}^2\le 4\be^2C_\mu K_{\be,\mu}^{-\d}\le\frac1{L},
\qquad
\|\Tc_{\tl q_{N,u}}\|_{\mathrm{op}}\le 2\be C_\mu K_{\be,\mu}^{-\d}\le\frac1{2L},
\end{align}
which is (M3); moreover, uniformly in $T$, $\|\tl q_{N,u}\|_{L^\infty}\le2\be\,C\pa[\big]{1+\log(1/\delta_{N,u})}\le\mathsf m_{N,u}$ by \eqref{eq:logHighFreqNorms} and $C_3\ge2C$, and $\mathsf m_{N,u}\ge\be C_3\ge1$.  Since $\g_{\delta,T,>K}$ is positive definite in centered quadratic forms, \cref{lem:PDdeterministicDiagonalBound} again gives $-\frac{1}{2N}\sum_{1\le i\ne j\le N}\tl q_{N,u}(x_i,x_j)\le\frac12\|\tl q_{N,u}\|_{L^\infty}\le\mathsf m_{N,u}$ for every configuration, which is (M2).  Since $u\ge1$ forces $\delta_{N,u}\ge\min\{\eta_0,(8C_0'\be N)^{-1/\d}\}$, uniformly in $u\ge U_0$ and $T\ge2$,
\begin{align}\label{eq:logLayerThresholdPartition}
\mathsf m_{N,u}=\be C_3\pa[\big]{1+\log(1/\delta_{N,u})}\le C_{\be,\mu}\log(e+N).
\end{align}

\textbf{(Sublinear growth \eqref{eq:masterLayerCondition}).}  Since $1+\|\tl q_{N,u}\|_{L^\infty}\le2\mathsf m_{N,u}$, the bound \eqref{eq:logLayerThresholdPartition} gives
\begin{align}
\mathsf S_N\le \sup_{u\ge U_0}\ 4\mathsf m_{N,u}^2\le C_{\be,\mu}\pa[\big]{\log(e+N)}^{2},
\qquad\text{uniformly in }T\ge2,
\end{align}
so, since $\mathsf m_{N,u}$ does not depend on $T$, the majorant condition \eqref{eq:masterUniformBound} holds uniformly in $T\ge2$.  Step 2 is proved.

\step{Step 3: Conclusion for \eqref{eq:CubDef}.}
By Step 2 and \cref{rem:masterUniform},
\begin{align}
\sup_{T\ge2}\ \sup_{N\ge1}\ \E_{\mu^{\otimes N}}\Big[e^{-\be N\Fr_{N,T}}\Big]<\infty,
\end{align}
which is \eqref{eq:logFiniteTPartitionGoal}; Step 1 then gives \eqref{eq:CubDef}.

\step{Step 4: The centered family \eqref{eq:centeredFamilyExpMoment}.}
Assume now $\int_{\R^\d}e^{-2\be h_\mu}\,\dd\mu<\infty$.  Fix $\eta\in[0,\eta_0]$ and let $h_\mu^{(\eta)}\coloneqq\g_\eta\ast\mu=h_\mu-\f_\eta\ast\mu$ and $I_\mu^{(\eta)}\coloneqq\frac12\iint_{(\R^\d)^2}\g_\eta\,\dd\mu^{\otimes2}$ denote the potential and energy of the truncated kernel.  Applying the splitting identity \eqref{eq:FNdef} to the kernel $\g_\eta$,
\begin{align}\label{eq:WNetaModulatedIdentity}
-\frac{\be}{2}W_{N,\eta}
=-\be N\Fr_N^{(\eta)}(\XN,\mu)-\frac{\be}{N}\sum_{i=1}^N h_\mu^{(\eta)}(x_i)+\be I_\mu^{(\eta)},
\end{align}
where $\Fr_N^{(\eta)}$ is the modulated energy with kernel $\g_\eta$.  By \eqref{eq:logEnergyAssumption} and \eqref{eq:logfetaConv}, $h_\mu^{(\eta)}\ge h_\mu-C_0'\eta_0^\d$ and $|I_\mu^{(\eta)}|\le C_\mu$, both uniformly in $\eta$.  Since the one-body sum in \eqref{eq:WNetaModulatedIdentity} is an empirical average of iid samples, two applications of Jensen's inequality give
\begin{align}\label{eq:oneBodyJensenFamily}
\E_{\mu^{\otimes N}}\brak[\Big]{e^{-\frac{2\be}{N}\sum_{i=1}^N h_\mu^{(\eta)}(x_i)}}
=\E_\mu\brak[\big]{e^{-\frac{2\be}N h_\mu^{(\eta)}}}^N
\le \E_\mu\brak[\big]{e^{-2\be h_\mu^{(\eta)}}}
\le e^{2\be C_0'\eta_0^\d}\,\E_\mu\brak[\big]{e^{-2\be h_\mu}},
\end{align}
which is finite by \eqref{eq:logRateExtraAssumptions}, uniformly in $\eta$ and $N$.  Hence, by \eqref{eq:WNetaModulatedIdentity}, Cauchy--Schwarz, and \eqref{eq:oneBodyJensenFamily},
\begin{align}\label{eq:familyCauchySchwarz}
\E_{\mu^{\otimes N}}\brak[\big]{e^{-\frac\be2 W_{N,\eta}}}
\le e^{\be C_\mu+\be C_0'\eta_0^\d}\,
\E_{\mu^{\otimes N}}\brak[\big]{e^{-2\be N\Fr_N^{(\eta)}}}^{1/2}
\,\E_\mu\brak[\big]{e^{-2\be h_\mu}}^{1/2},
\end{align}
and it therefore suffices to bound $\E[e^{-2\be N\Fr_N^{(\eta)}}]$ uniformly in $N$ and $\eta$.

This follows by repeating Steps 1--3 with the kernel $\g_{\eta}$ in place of $\g$ and $2\be$ in place of $\be$; we indicate the only changes.  At finite $T$, one applies \cref{prop:masterLayerCake} with $g=2\be\g_{\eta,T}$, where $\g_{\eta,T}=\g_T-\f_\eta$, and $w=2\be(h_{\mu,T}^{(\eta)}-I_{\mu,T}^{(\eta)})$, with one-body potential $h_{\mu,T}^{(\eta)}=h_{\mu,T}-\f_\eta\ast\mu$; a.e.\ convergence $\Fr_{N,T}^{(\eta)}\to\Fr_N^{(\eta)}$ again follows from \eqref{eq:logEnergyAssumption} since $0\le\f_\eta\le\f_{\eta_0}$.  In the proof of Step 2, one uses the effective scale $\bar\delta\coloneqq\max\{\delta_{N,u},\eta\}$ and the decomposition
\begin{align}
\g_{\eta,T}=\pa[\big]{\f_{\bar\delta}-\f_\eta}+\g_{\bar\delta,T,\le K_{2\be,\mu}}+\g_{\bar\delta,T,>K_{2\be,\mu}} .
\end{align}
The near-field part obeys \eqref{eq:logShortPartition} verbatim (it vanishes if $\eta\ge\delta_{N,u}$).  The low-frequency pairing \eqref{eq:logLowBoundPartition} is applied with $h_{\mu,T}$ and costs the additional bounded term $\frac{2\be}{N}\sum_i\f_\eta\ast\mu(x_i)\le2\be C_0'\eta_0^\d$, which is absorbed into the constant $C_{\mu,K_{2\be,\mu}}$ in \eqref{eq:UbetaLogDef}.  The high-frequency verification of (M2)--(M3) and of the sublinear growth \eqref{eq:masterLayerCondition} is unchanged, with $\bar\delta\ge\delta_{N,u}$ giving $1+\log(1/\bar\delta)\le1+\log(1/\delta_{N,u})$.  \Cref{prop:masterLayerCake}, in the uniform form of \cref{rem:masterUniform}, and Fatou's lemma then give \eqref{eq:centeredFamilyExpMoment}.
\end{proof}

\begin{remark}[Role of the frequency truncation]\label{rem:frequencyTruncationRole}
The role of the frequency cutoff $K_{\be,\mu}$ is to replace a high-temperature smallness condition on $\be$ by the choice of a $(\be,\mu)$-dependent parameter: the low frequencies cost a finite deterministic offset, reflected in the thresholds \eqref{eq:UbetaRieszDef} and \eqref{eq:UbetaLogDef}, while the high frequencies have the mixed $L^\infty L^2(\mu)$ norm, the $L^2(\mu^{\otimes2})$ norm, and the integral-operator norm small enough for \eqref{eq:masterNorms} exactly because $\wh\g\in L^2(|\xi|>1)$ in the range $\s<\d/2$.  The remaining $L^\infty$ norm is controlled by the adaptive physical truncation together with the deterministic diagonal bound of \cref{lem:PDdeterministicDiagonalBound}, which is available before any Fourier cutoff is introduced.  No smallness assumption on $\be$ is used.  Note also that the restriction $\s<\d/2$ enters the verification exactly twice: in the finiteness of the tail integrals \eqref{eq:CtailFourier}--\eqref{eq:BtailFourier}, and in the exponent $2\s/\d<1$ in \eqref{eq:allTermsBeatNoSmallBeta}.
\end{remark}

We conclude this subsection with the interpolated exponential-moment bound needed for the determinant limit.  For $0<\eta\le\eta_0$ and $t\in[0,1]$, set $\g_{\eta,t}\coloneqq\g_\eta+t\f_\eta$ and let $W_{N,\eta,t}$ denote the statistic \eqref{eq:WNdefRate} with kernel $\g_{\eta,t}$.

\begin{lemma}[Interpolated kernels]\label{lem:interpolatedExpMoment}
Under the hypotheses of \cref{prop:uniformBoundRepulsive}, including \eqref{eq:logRateExtraAssumptions} when $\s=0$, for every fixed $\be\ge0$,
\begin{align}\label{eq:interpolatedExpMomentBound}
\sup_{N\ge1}\ \sup_{0<\eta\le\eta_0}\ \sup_{0\le t\le1}\
\E_{\mu^{\otimes N}}\Big[e^{-\be W_{N,\eta,t}}\Big]\le C_{\mathrm{cf}}(2\be,\mu).
\end{align}
\end{lemma}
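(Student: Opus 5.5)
Since $\g_{\eta,t}=\g_\eta+t\f_\eta=(1-t)\g_\eta+t\g$ interpolates between the truncated kernel $\g_\eta$ and the full kernel $\g=\g_0$, the statistic satisfies $W_{N,\eta,t}=(1-t)W_{N,\eta}+tW_{N,0}$, because Hoeffding centering is linear in the kernel. The plan is to use convexity of the exponential, i.e.\ H\"older's inequality with exponents $1/(1-t)$ and $1/t$:
\begin{align}
\E_{\mu^{\otimes N}}\brak*{e^{-\be W_{N,\eta,t}}}
=\E_{\mu^{\otimes N}}\brak*{e^{-(1-t)\be W_{N,\eta}}e^{-t\be W_{N,0}}}
\le \E_{\mu^{\otimes N}}\brak*{e^{-\be W_{N,\eta}}}^{1-t}\,\E_{\mu^{\otimes N}}\brak*{e^{-\be W_{N,0}}}^{t}.
\end{align}
The endpoints $t=0,1$ are trivial cases of the same bound. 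Writing $-\be W=-\frac{2\be}{2}W$, each factor is exactly the centered-family exponential moment \eqref{eq:centeredFamilyExpMoment} at inverse temperature $2\be$. The parameters $\eta\in(0,\eta_0]$ and $\eta=0$ both lie in the family covered there. So each factor is at most $C_{\mathrm{cf}}(2\be,\mu)$, and the product is at most $C_{\mathrm{cf}}(2\be,\mu)^{(1-t)+t}=C_{\mathrm{cf}}(2\be,\mu)$, uniformly in $N$, $\eta$ and $t$.

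The only thing to check is that \cref{prop:uniformBoundRepulsive} applies at inverse temperature $2\be$ in place of $\be$. For $0<\s<\d/2$ the proposition holds for every fixed inverse temperature with no extra hypothesis, so this is immediate.

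For $\s=0$, the family bound at inverse temperature $2\be$ requires $\int e^{-4\be h_\mu}\,\dd\mu<\infty$. This is exactly the first condition in \eqref{eq:logRateExtraAssumptions}, which the lemma assumes. The logarithmic energy hypothesis \eqref{eq:logEnergyAssumption} does not depend on $\be$.

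With the hypotheses matched, there is no further obstacle: the lemma is a direct H\"older interpolation between the two endpoints of the family bound.
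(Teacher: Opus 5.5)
Your proposal is correct and follows essentially the same route as the paper: you write $\g_{\eta,t}=(1-t)\g_\eta+t\g$ and use linearity of centering to get $W_{N,\eta,t}=(1-t)W_{N,\eta}+tW_N$. You then apply H\"older with exponents $\frac1{1-t},\frac1t$ and invoke \eqref{eq:centeredFamilyExpMoment} at inverse temperature $2\be$. You also correctly observe that, in the logarithmic case, this needs exactly the first condition in \eqref{eq:logRateExtraAssumptions}.
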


\begin{proof}
Since $\f_\eta=\g-\g_\eta$, the interpolated kernel is the convex combination $\g_{\eta,t}=(1-t)\g_\eta+t\g$, and the Hoeffding centering \eqref{eq:tildedef} is linear in the kernel.  Hence
\begin{align}
W_{N,\eta,t}=(1-t)W_{N,\eta}+tW_N
\end{align}
pointwise, and H\"older's inequality with exponents $\frac1{1-t}$ and $\frac1t$ gives
\begin{align}
\E\Big[e^{-\be W_{N,\eta,t}}\Big]
\le \E\Big[e^{-\be W_{N,\eta}}\Big]^{1-t}\,\E\Big[e^{-\be W_{N}}\Big]^{t}
\le C_{\mathrm{cf}}(2\be,\mu),
\end{align}
by \eqref{eq:centeredFamilyExpMoment} applied at inverse temperature $2\be$.  In the logarithmic case, this requires precisely the first condition in \eqref{eq:logRateExtraAssumptions}.
\end{proof}
\subsection{Regularized determinant limit}
\label{subsec:regularized-determinant-limit}
\label{subsec:completion-quantitative-partition-function}

Having proved uniform integrability of the exponential weights, we now identify the limiting normalization constant for the centered two-body statistic.  If $\be=0$, then $\E_{\mu^{\otimes N}}[1]=1=\det_2(\mathrm{Id})^{-1/2}$, so the remaining argument assumes $\be>0$.

The expansion at a fixed truncation scale rests on the following bounded-feature quadratic Laplace estimate, which is applied after truncating the kernel.

\subsubsection{Feature-space quadratic Laplace estimate}
\label{subsec:bounded-feature-quadratic-laplace-estimate}

We first record the fixed-kernel estimate used after truncating the interaction.  The formulation allows the squared feature norm $x\mapsto\|k(x)\|_{\mathcal H}^2$ to be unbounded, provided it satisfies the exponential-moment condition \eqref{eq:expFeatureMomentDef}; this is needed for logarithmic truncations against noncompact backgrounds.

\begingroup

\begin{prop}[Feature-space quadratic Laplace estimate]\label{prop:boundedFeatureQuadLaplace}
Assume that $\tl\g$ admits a Hilbert-space feature representation: there are a separable real Hilbert space $\mathcal H$ and a strongly measurable map $k:\R^\d\to\mathcal H$ such that
\begin{align}\label{eq:boundedFeatureHyp}
\tl\g(x,y)=\langle k(x),k(y)\rangle_{\mathcal H}
\quad\text{for $\mu^{\otimes2}$-a.e. }(x,y),
\qquad
\int_{\R^\d}k\,\dd\mu=0.
\end{align}
Set
\begin{align}\label{eq:expFeatureMomentDef}
D(x)\coloneqq\|k(x)\|_{\mathcal H}^2,
\qquad
\mathcal M_\be(k)\coloneqq
\int_{\R^\d}e^{\be D(x)/2}\pa*{1+D(x)^4}\,\dd\mu(x).
\end{align}
If $\mathcal M_\be(k)<\infty$, then there is a constant $C_\be<\infty$, depending only on $\be$, such that, for every $N\ge1$,
\begin{align}\label{eq:boundedFeatureQuadLaplaceRate}
\Big|\E_{\mu^{\otimes N}}\Big[e^{-\frac{\be}{2}W_N}\Big]
-\det_2(\mathrm{Id}+\be T_\mu)^{-1/2}\Big|
\le \frac{C_\be}{N}\pa*{1+\mathcal M_\be(k)}^3.
\end{align}
In particular, if $k\in L^\infty(\mu;\mathcal H)$, then
\begin{align}\label{eq:boundedFeatureSpecialization}
\Big|\E_{\mu^{\otimes N}}\Big[e^{-\frac{\be}{2}W_N}\Big]
-\det_2(\mathrm{Id}+\be T_\mu)^{-1/2}\Big|
\le C_\be\exp\pa*{C_\be\be\|k\|_{L^\infty(\mu;\mathcal H)}^2}N^{-1}.
\end{align}
\end{prop}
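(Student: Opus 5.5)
The plan is a Hubbard--Stratonovich (Gaussian linearization) argument in feature space, followed by a one-particle Taylor expansion. Write $S_N\coloneqq N^{-1/2}\sum_{i=1}^N k(x_i)\in\mathcal H$. The representation \eqref{eq:boundedFeatureHyp} gives
\begin{align}
W_N=\|S_N\|_{\mathcal H}^2-\frac1N\sum_{i=1}^N D(x_i).
\end{align}
Let $G$ be the isonormal Gaussian process on $\mathcal H$, and set $Z(x)\coloneqq\langle G,k(x)\rangle$, a centered Gaussian field with covariance $\tl\g$. This field is well defined in $L^2(\Omega\times\mu)$ because $\int D\,\dd\mu<\infty$. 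Then $e^{-\frac\be2\|S_N\|^2}=\E_G[e^{\iu\sqrt\be\langle G,S_N\rangle}]$. By Fubini, which is justified since the oscillatory factor has modulus one and the $D$-factor is integrable by $\mathcal M_\be(k)<\infty$ and Jensen,
\begin{align}
\E_{\mu^{\otimes N}}\Big[e^{-\frac\be2W_N}\Big]=\E_G\big[a_G^N\big],
\qquad
a_G\coloneqq\int_{\R^\d}e^{\iu\sqrt{\be/N}\,Z(x)+\frac{\be}{2N}D(x)}\,\dd\mu(x).
\end{align}
Jensen also gives the a priori domination
\begin{align}
|a_G|^N\le\pa*{\int e^{\frac{\be}{2N}D}\,\dd\mu}^N\le\int e^{\be D/2}\,\dd\mu\le\mathcal M_\be(k),
\end{align}
uniformly in $G$ and $N$. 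This is the only place the exponential weight in $\mathcal M_\be$ is needed.

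Next I Taylor-expand $a_G$ to second order in $N^{-1/2}$. Since $\int k\,\dd\mu=0$, the field satisfies $\int Z\,\dd\mu=0$, and so
\begin{align}
a_G=1+\frac{m_G}{N}+\frac{\iu c_G}{N^{3/2}}+r_G,
\qquad
m_G\coloneqq-\frac\be2\int(Z^2-D)\,\dd\mu,
\qquad
c_G\coloneqq\int\Big(-\frac{\be^{3/2}}6Z^3+\frac{\be^{3/2}}2ZD\Big)\dd\mu.
\end{align}
The remainder satisfies $|r_G|\lesssim N^{-2}\int(Z^4+D^2+\dots)e^{\be D/2N}\,\dd\mu$. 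Its $G$-moments are controlled by Gaussian hypercontractivity ($\E_G|Z(x)|^p\le C_pD(x)^{p/2}$) together with the $D^4$ weight in $\mathcal M_\be$. Diagonalizing $T_\mu$ gives $\int(Z^2-D)\,\dd\mu=\sum_j\lambda_j(\zeta_j^2-1)$ with $\zeta_j$ iid standard Gaussians. By \eqref{eq:intro-det2-heuristic}, $\E_G[e^{m_G}]=\det_2(\mathrm{Id}+\be T_\mu)^{-1/2}$, and $e^{m_G}\le e^{\frac\be2\int D\,\dd\mu}$ is bounded. So the target reduces to showing $|\E_G[a_G^N]-\E_G[b_G^N]|\le C_\be N^{-1}(1+\mathcal M_\be)^3$, where $b_G\coloneqq e^{m_G/N}$.

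For this comparison I use the telescoping identity $a^N-b^N=\sum_{k=0}^{N-1}a^k(a-b)b^{N-1-k}$. The terms $|a|^k|b|^{N-1-k}$ are dominated by $\max(|a|^N,|b|^N)^{(N-1)/N}$, which the bound above controls. The second-order discrepancy $a-b-\iu c_G N^{-3/2}$ is $O(N^{-2})$ in $L^p(G)$, and summing $N$ of them gives $O(N^{-1})$.

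The main obstacle is the cubic term $\iu c_G N^{-3/2}$, since a naive estimate yields only $O(N^{-1/2})$. To recover $1/N$ I would exploit the symmetry $G\mapsto-G$. Under this map $a_{-G}=\ol{a_G}$, $b_{-G}=b_G$, and $c_{-G}=-c_G$, so only the imaginary part of $a_G^k$ pairs with the odd quantity $c_G$. Since $\Im a_G^k$ is itself $O(kN^{-3/2}\cdot|c_G|)$ times the dominating weight, each summand contributes $O(N^{-3/2}\cdot N^{-1/2})=O(N^{-2})$, and the sum over $k$ is again $O(N^{-1})$. Alternatively, I would fold $\iu c_G N^{-3/2}$ into $b_G$ and note that the resulting correction to $\E_G[b_G^N]$ is odd at first order and hence $O(N^{-1})$. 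Collecting Hölder factors gives the cubic power $(1+\mathcal M_\be(k))^3$. The bounded case \eqref{eq:boundedFeatureSpecialization} follows from $\mathcal M_\be(k)\le e^{\be\|k\|_\infty^2/2}(1+\|k\|_\infty^8)$.
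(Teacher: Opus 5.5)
Your proposal is correct and follows essentially the paper's proof. The shared steps are the Hubbard--Stratonovich identity $\E_{\mu^{\otimes N}}[e^{-\frac{\be}{2}W_N}]=\E_G[a_G^N]$, the third-order expansion of $a_G$ with $m_G$ and $c_G$ (the paper's $B$ and $R$), the deterministic Jensen bound $|a_G|,\,|b_G|\le\int e^{\be D/(2N)}\,\dd\mu$ controlling the telescoped weights, and parity under $G\mapsto-G$ to dispose of the cubic term.

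The paper implements that last step exactly as your alternative: it compares $a_G^N$ with $e^{m_G+\iu c_G/\sqrt{N}}$ and uses $\E_G[e^{m_G}c_G]=0$. Your primary termwise route also works, provided you bound $|\operatorname{Im}(a_G^k)|\le k|a_G|^{k-1}|\operatorname{Im}a_G|$ and then use $|\operatorname{Im}a_G|\lesssim N^{-3/2}\int(|Z|^3+|Z|D)e^{\be D/(2N)}\,\dd\mu$, which follows from $\int Z\,\dd\mu=0$. Note that this bound is not literally in terms of $|c_G|$, as your sketch suggests.
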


\begin{proof}
The case $\be=0$ is immediate, so assume $\be>0$.  Let $\mathsf Z$ be an isonormal Gaussian process over $\mathcal H$, independent of the $\xi_i$'s, and write $\mathsf Z_x\coloneqq\mathsf Z(k(x))$.  Thus, $\mathsf Z_x$ is centered Gaussian with variance $D(x)$.  Define
\begin{align}
\bar D&\coloneqq\int_{\R^\d}D\,\dd\mu,
&Q(\mathsf Z)&\coloneqq\int_{\R^\d}\mathsf Z_x^2\,\dd\mu(x),\\
B(\mathsf Z)&\coloneqq\frac\be2\pa*{\bar D-Q(\mathsf Z)},
&R(\mathsf Z)&\coloneqq\be^{3/2}\int_{\R^\d}
\pa*{\frac12D(x)\mathsf Z_x-\frac16\mathsf Z_x^3}\,\dd\mu(x).
\end{align}
The moment assumption implies $D\in L^p(\mu)$ for every $1\le p\le4$, so all of these quantities are well-defined.  Moreover,
\begin{align}\label{eq:expFeatureMomentBounds}
\E_{\mathsf Z}\brak[\big]{B^2+R^2}
\le C_\be\pa*{1+\mathcal M_\be(k)}^2.
\end{align}
Indeed, Wick's formula gives
\begin{align}
\operatorname{Var}_{\mathsf Z}(Q)
=2\iint_{(\R^\d)^2}\langle k(x),k(y)\rangle_{\mathcal H}^2\,\dd\mu^{\otimes2}(x,y)
\le2\bar D^2,
\end{align}
while Jensen's inequality in $x$ and the Gaussian sixth-moment identity give $\E_{\mathsf Z}[R^2]\le C_\be\int_{\R^\d} D^3\,\dd\mu$.

\step{Step 1: Oscillatory Hubbard--Stratonovich transformation.}
With $S_N\coloneqq N^{-1/2}\sum_{i=1}^Nk(\xi_i)$, the feature representation gives
\begin{align}
W_N=\|S_N\|_{\mathcal H}^2-\frac1N\sum_{i=1}^ND(\xi_i).
\end{align}
Consequently,
\begin{align}\label{eq:boundedFeatureFourierRep}
\E_{\mu^{\otimes N}}\Big[e^{-\frac\be2W_N}\Big]
=\E_{\mathsf Z}\brak[\big]{A_N(\mathsf Z)^N},
\qquad
A_N(\mathsf Z)
\coloneqq\int_{\R^\d}
\exp\pa*{\iu\sqrt{\frac\be N}\mathsf Z_x+\frac\be{2N}D(x)}\,\dd\mu(x).
\end{align}
To justify Fubini, set
\begin{align}
a_N\coloneqq\int_{\R^\d} e^{\be D/(2N)}\,\dd\mu,
\qquad
A_\be\coloneqq\int_{\R^\d} e^{\be D/2}\,\dd\mu.
\end{align}
Since $r\mapsto r^{1/N}$ is concave,
\begin{align}\label{eq:expFeatureANBound}
|A_N(\mathsf Z)|\le a_N,
\qquad
a_N^N\le A_\be,
\qquad
e^{\be\bar D/2}\le A_\be.
\end{align}
The same bounds justify every interchange below.  Equivalently, one may first truncate to the sets $\{D\le m\}$ and then pass to the limit by dominated convergence using \eqref{eq:expFeatureMomentDef}.

\step{Step 2: Expansion of $A_N$.}
Set
\begin{align}
f(\tau)\coloneqq\int_{\R^\d}
\exp\pa*{\iu\tau\sqrt\be\,\mathsf Z_x+\frac{\tau^2\be}{2}D(x)}\,\dd\mu(x).
\end{align}
Then $A_N=f(N^{-1/2})$, $f(0)=1$, and the centering of $k$ gives $f'(0)=0$.  Direct differentiation yields $f''(0)/2=B$ and $f'''(0)/6=\iu R$.  Hence
\begin{align}\label{eq:boundedFeatureANExpansion}
A_N=1+\frac BN+\iu\frac R{N^{3/2}}+\frac{E_N}{N^2},
\end{align}
where $B$ is even and $R$ is odd under $\mathsf Z\mapsto-\mathsf Z$, and
\begin{align}\label{eq:expFeatureRemainderBound}
\sup_{N\ge1}\E_{\mathsf Z}|E_N|
\le C_\be\mathcal M_\be(k).
\end{align}
Indeed, if $g_x(\tau)=\iu\tau\sqrt\be\,\mathsf Z_x+\tau^2\be D(x)/2$, then
\begin{align}
(e^{g_x})''''=e^{g_x}\pa*{(g_x')^4+6(g_x')^2g_x''+3(g_x'')^2},
\end{align}
with $\operatorname{Re}g_x(\tau)\le\be D(x)/2$, $|g_x'(\tau)|\le\sqrt\be|\mathsf Z_x|+\be D(x)$, and $g_x''=\be D(x)$ for $0\le\tau\le1$.  Taking the Gaussian expectation and using $\E|\mathsf Z_x|^{2p}=C_pD(x)^p$ proves \eqref{eq:expFeatureRemainderBound}.

\step{Step 3: Passage to the Gaussian limit.}
Put $u_N=N^{-1}(B+\iu R/\sqrt N)$.  Then
\begin{align}
A_N^N-e^B
=\pa*{A_N^N-e^{Nu_N}}
+e^B\pa*{e^{\iu R/\sqrt N}-1-\iu R/\sqrt N}
+\frac\iu{\sqrt N}e^BR.
\end{align}
The last term has zero expectation by parity and is integrable by \eqref{eq:expFeatureANBound} and \eqref{eq:expFeatureMomentBounds}.  The middle term is bounded in expectation by $A_\be\E[R^2]/(2N)$.  For the first term, the telescoping identity gives, on $\{|u_N|\le1\}$,
\begin{align}
|A_N^N-e^{Nu_N}|
\le \frac{A_\be}{N}\pa*{|E_N|+e(|B|+|R|)^2}.
\end{align}
On $\{|u_N|>1\}$, one has $|B|+|R|>N$, while $|A_N|^N+|e^{Nu_N}|\le2A_\be$; hence the contribution of this event is at most
\begin{align}
2A_\be N^{-2}\E\brak[\big]{(|B|+|R|)^2}.
\end{align}
Combining these estimates yields
\begin{align}\label{eq:boundedFeatureLaplaceExpansion}
\Big|\E_{\mathsf Z}[A_N^N]-\E_{\mathsf Z}[e^B]\Big|
\le\frac{C_\be}{N}\pa*{1+\mathcal M_\be(k)}^3.
\end{align}

\step{Step 4: Identification of the leading term.}
The covariance operator
\begin{align}
\Sigma f\coloneqq\int_{\R^\d}\langle k(x),f\rangle_{\mathcal H}k(x)\,\dd\mu(x)
\end{align}
is positive and trace class, with $\operatorname{Tr}\Sigma=\bar D$, and has the same nonzero eigenvalues $(\lambda_j)_{j\ge1}$ as $T_\mu$.  Diagonalizing $\Sigma$ therefore gives
\begin{align}\label{eq:boundedFeatureGaussianDet}
\E_{\mathsf Z}[e^B]
=\prod_{j\ge1}e^{\be\lambda_j/2}(1+\be\lambda_j)^{-1/2}
=\det_2(\mathrm{Id}+\be T_\mu)^{-1/2}.
\end{align}
Together with \eqref{eq:boundedFeatureLaplaceExpansion}, this proves \eqref{eq:boundedFeatureQuadLaplaceRate}.  If $\|k\|_{L^\infty}\le M$, then $\mathcal M_\be(k)\le e^{\be M^2/2}(1+M^8)$, and \eqref{eq:boundedFeatureSpecialization} follows after increasing $C_\be$.
\end{proof}
\endgroup

\begin{prop}\label{prop:rieszLaplaceLogFallback}
Assume $0\le\s<\d/2$, $\mu\in L^\infty\cap\P(\R^\d)$, and $\be>0$ is fixed.  If $\s=0$, assume in addition \eqref{eq:logEnergyAssumption} and \eqref{eq:logRateExtraAssumptions}.  Then there are constants $C_{\be,\mu}<\infty$ and, in the case $\s=0$, $c_{\be,\mu}>0$ such that, for all $N\ge1$,
\begin{align}\label{eq:rieszLaplaceLogFallback}
\Big|\E_{\mu^{\otimes N}}\Big[e^{-\frac{\be}{2}W_N}\Big]
-\det_2(\mathrm{Id}+\be T_\mu)^{-1/2}\Big|
\le C_{\be,\mu}
\begin{cases}
(\log(e+N))^{-\frac{\d-2\s}{2\s}}, & 0<\s<\d/2,\\[0.4em]
(e+N)^{-c_{\be,\mu}}, & \s=0.
\end{cases}
\end{align}
\end{prop}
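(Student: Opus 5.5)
We argue by truncation at a scale $\eta\in(0,\eta_0]$ to be optimized against $N$, and compare three quantities:
\begin{align}
\E\big[e^{-\frac\be2W_N}\big],\qquad
\E\big[e^{-\frac\be2W_{N,\eta}}\big],\qquad
\det_2(\mathrm{Id}+\be T_{\mu,\eta})^{-1/2},
\end{align}
where $T_{\mu,\eta}$ has kernel $\tl\g_\eta$. The final term to control is the deterministic difference $\det_2(\mathrm{Id}+\be T_{\mu,\eta})^{-1/2}-\det_2(\mathrm{Id}+\be T_\mu)^{-1/2}$.

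\textbf{First difference (interpolation in $t$).} Write $W_N-W_{N,\eta}=F_{N,\eta}$, the canonical statistic with kernel $\tl\f_\eta$. Differentiate $t\mapsto\E[e^{-\frac\be2W_{N,\eta,t}}]$ along the interpolated kernels $\g_{\eta,t}=\g_\eta+t\f_\eta$. By Cauchy--Schwarz and \cref{lem:interpolatedExpMoment}, the derivative is bounded by
\begin{align}
\tfrac\be2\,C_{\mathrm{cf}}(2\be,\mu)^{1/2}\,\E[F_{N,\eta}^2]^{1/2}.
\end{align}
Since $F_{N,\eta}$ is canonical of degree two, $\E[F_{N,\eta}^2]=\tfrac{2(N-1)}{N}\|\tl\f_\eta\|^2_{L^2(\mu^{\otimes2})}\le 2\|\tl\f_\eta\|_{L^2(\mu^{\otimes 2})}^2$. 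With $\mu\in L^\infty$ and \eqref{eq:HCRSfetaDecay}, this gives $\|\f_\eta\|_{L^2}^2\lesssim\eta^{\d-2\s}$ for $\s>0$. For $\s=0$, \eqref{eq:logfetaDecay} gives $\lesssim\eta^\d$. Hence the first difference is $O(\eta^{(\d-2\s)/2})$, uniformly in $N$.

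\textbf{Third difference (determinant continuity).} The same bound controls the determinants. Since $\tl\f_\eta$ is a positive-definite centered kernel, both operators are nonnegative. Moreover $\log\det_2(\mathrm{Id}+\be\cdot)$ is locally Lipschitz in the Hilbert--Schmidt norm on nonnegative operators, with derivative bounded by $\be^2\|T\|_{HS}$. This yields a difference of order $\|\tl\f_\eta\|_{L^2(\mu^{\otimes2})}\lesssim\eta^{(\d-2\s)/2}$.

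\textbf{Middle step (truncated kernel).} Apply \cref{prop:boundedFeatureQuadLaplace} to $\tl\g_\eta$. It is positive definite with $\wh{\g_\eta}\ge0$, so the feature map
\begin{align}
k(x)=\wh{\g_\eta}(\xi)^{1/2}\,\phi_{\xi,\mu}(x)\in L^2(\dd\xi)
\end{align}
(real form via cosines and sines) represents $\tl\g_\eta$ and has $\mu$-mean zero. Its norm is $D(x)=\tl\g_\eta(x,x)\le4\g_\eta(0)\le C\eta^{-\s}$ in the Riesz case. Thus \eqref{eq:boundedFeatureSpecialization} gives an error $C_\be e^{C\eta^{-\s}}N^{-1}$. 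Choosing $\eta^{-\s}=c\log(e+N)$ with $c$ small makes this $N^{-1/2}$. The truncation errors then become
\begin{align}
\eta^{(\d-2\s)/2}=(\log(e+N))^{-\frac{\d-2\s}{2\s}},
\end{align}
which is the claimed rate.

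\textbf{Main obstacle: the logarithmic case.} Here $\g_\eta$ is only conditionally positive definite, and $D(x)=\tl\g_\eta(x,x)$ is unbounded on noncompact backgrounds. Expanding the centering gives
\begin{align}
D(x)=\g_\eta(0)-2h^{(\eta)}_\mu(x)+2I^{(\eta)}_\mu,
\end{align}
so $e^{\be D/2}\approx\eta^{-\be/2}e^{-\be h_\mu}$. We therefore use the general form \eqref{eq:boundedFeatureQuadLaplaceRate}. The quantity $\mathcal M_\be(k)$ is finite thanks to the first condition in \eqref{eq:logRateExtraAssumptions}; the weight $e^{-4\be h_\mu}$ absorbs the factor $1+D^4$ and the cubing. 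This gives $\mathcal M_\be(k)\lesssim\eta^{-C\be}$. Choosing $\eta=N^{-a}$ with $a$ small makes $N^{-1}\eta^{-3C\be}$ a negative power of $N$. The truncation errors are then $\eta^{\d/2}=N^{-a\d/2}$, which yields the power rate $c_{\be,\mu}$.

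The remaining technical point is justifying the feature representation for the log kernel: the $\delta_0$ part is invisible after centering. The second condition in \eqref{eq:logRateExtraAssumptions} ensures $T_\mu$ is Hilbert--Schmidt, so $\det_2(\mathrm{Id}+\be T_\mu)$ is defined and the continuity argument applies.
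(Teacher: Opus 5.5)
Your proposal is correct and follows essentially the same route as the paper. You interpolate along $\g_\eta+t\f_\eta$ using \cref{lem:interpolatedExpMoment} and the canonical $L^2$ identity, use Hilbert--Schmidt continuity of the regularized determinant on the nonnegative cone, and apply \cref{prop:boundedFeatureQuadLaplace} at a scale $\eta(N)$ with $\eta(N)^{-\s}\sim\log N$ in the Riesz case, or $\eta(N)=N^{-a}$ in the logarithmic case via the exponential moment of $D_\eta\le C\alpha_\eta-2h_\mu$.

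Two details are left implicit, and both are handled as in the paper: Hoeffding centering is bounded on $L^2(\mu^{\otimes2})$, so $\|\tl\f_\eta\|\lesssim\|\f_\eta\|$; and the finitely many $N$ with $\eta(N)>\eta_0$ are absorbed into the constant.
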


\begin{proof}
Throughout, $W_{N,\eta}$ and $T_{\mu,\eta}$ denote the analogues of \eqref{eq:WNdefRate} and \eqref{eq:TmuDef} with $\g$ replaced by $\g_\eta$, using the logarithmic truncation of \eqref{eq:logFiniteTTruncDef} when $\s=0$.  Set
\begin{align}
\alpha_\eta\coloneqq
\begin{cases}
\eta^{-\s}, & 0<\s<\d/2,\\
1+\log(1/\eta), & \s=0,
\end{cases}
\qquad\qquad
\rho_\eta=
\eta^{\d/2-\s}
\end{align}
and, with $C_{\mathrm L}=C_{\mathrm L}(\be,\mu)$ the constant furnished by Step 1 below, fix the truncation scale
\begin{align}\label{eq:etaNChoice}
\eta(N)\coloneqq
\begin{cases}
\pa[\big]{(2C_{\mathrm L})^{-1}\log N}^{-1/\s}, & 0<\s<\d/2,\\[0.3em]
N^{-a},\qquad a\coloneqq\min\brac[\big]{1,\ (2C_{\mathrm L})^{-1}}, & \s=0,
\end{cases}
\qquad\text{so that}\qquad
e^{C_{\mathrm L}\alpha_{\eta(N)}}\le e^{C_{\mathrm L}}N^{1/2}.
\end{align}

\step{Step 1: Laplace expansion at fixed scale.}  We claim that there is $C_{\mathrm L}=C_{\mathrm L}(\be,\mu)<\infty$ such that, for all $N\ge1$ and $0<\eta\le\eta_0$,
\begin{align}\label{eq:truncatedLaplaceRateEta}
\Big|\E_{\mu^{\otimes N}}\Big[e^{-\frac{\be}{2}W_{N,\eta}}\Big]
-\det_2(\mathrm{Id}+\be T_{\mu,\eta})^{-1/2}\Big|
\le C_{\mathrm L}\,e^{C_{\mathrm L}\alpha_\eta}\,N^{-1}.
\end{align}

\par\smallskip\noindent
\begingroup

Assume first $0<\s<\d/2$.  The centered truncated kernel has a bounded feature representation as in \cref{prop:boundedFeatureQuadLaplace}.  Indeed, if
\begin{align}
\g_\eta(x-y)=\langle\kappa_\eta(x),\kappa_\eta(y)\rangle_{\mathcal H_\eta},
\end{align}
then
\begin{align}\label{eq:truncatedFeatureCentering}
\tl\g_\eta(x,y)
=\Big\langle \kappa_\eta(x)-\int_{\R^\d}\kappa_\eta\,\dd\mu,
\kappa_\eta(y)-\int_{\R^\d}\kappa_\eta\,\dd\mu\Big\rangle_{\mathcal H_\eta},
\end{align}
so centering preserves positive definiteness and
\begin{align}
\sup_x\Big\|\kappa_\eta(x)-\int_{\R^\d}\kappa_\eta\,\dd\mu\Big\|_{\mathcal H_\eta}^2
\le4\g_\eta(0)\le C\eta^{-\s}=C\alpha_\eta.
\end{align}
The bounded specialization \eqref{eq:boundedFeatureSpecialization} gives \eqref{eq:truncatedLaplaceRateEta}.

Assume now $\s=0$.  The centered kernel $\tl\g_\eta$ is positive definite; let $k_\eta$ be its canonical feature map and set
\begin{align}\label{eq:logFeatureDiagonalIdentity}
D_\eta(x)
\coloneqq\|k_\eta(x)\|_{\mathcal H_\eta}^2
=\tl\g_\eta(x,x)
=\g_\eta(0)-2h_\mu^{(\eta)}(x)+2I_\mu^{(\eta)},
\end{align}
where $h_\mu^{(\eta)}=\g_\eta\ast\mu$ and $I_\mu^{(\eta)}=\frac12\iint_{(\R^\d)^2}\g_\eta\,\dd\mu^{\otimes2}$.  Since $D_\eta\in L^1(\mu)$ and $\tl\g_\eta$ is canonical, the feature map may be chosen with $\int_{\R^\d} k_\eta\,\dd\mu=0$.  By $h_\mu^{(\eta)}=h_\mu-\f_\eta\ast\mu$, \eqref{eq:logfetaConv}, the diagonal estimate \eqref{eq:logTruncPropsBasic}, and the uniform boundedness of $I_\mu^{(\eta)}$, there is $C_\mu<\infty$ such that
\begin{align}\label{eq:logFeatureDiagonalBound}
0\le D_\eta(x)\le C_\mu\alpha_\eta-2h_\mu(x)
\qquad\text{for $\mu$-a.e. }x.
\end{align}
Since $h_\mu$ is bounded above and $\int_{\R^\d} e^{-4\be h_\mu}\,\dd\mu<\infty$, the polynomial factor in \eqref{eq:expFeatureMomentDef} can be absorbed by this exponential moment, and
\begin{align}\label{eq:logFeatureExpMoment}
\mathcal M_\be(k_\eta)
=\int_{\R^\d} e^{\be D_\eta/2}\pa*{1+D_\eta^4}\,\dd\mu
\le C_{\be,\mu}e^{C_{\be,\mu}\alpha_\eta}.
\end{align}
Applying \cref{prop:boundedFeatureQuadLaplace} and enlarging the constant to absorb the fixed power in \eqref{eq:boundedFeatureQuadLaplaceRate} proves \eqref{eq:truncatedLaplaceRateEta}.
\endgroup

\step{Step 2: Determinant truncation error.}  It holds that
\begin{align}\label{eq:centeredFetaL2Rate}
\|\tl\f_\eta\|_{L^2(\mu^{\otimes2})}\le C_\mu\,\rho_\eta
\qquad\text{and}\qquad
\Big|\det_2(\mathrm{Id}+\be T_\mu)^{-1/2}
-\det_2(\mathrm{Id}+\be T_{\mu,\eta})^{-1/2}\Big|
\le C_{\be,\mu}\,\rho_\eta.
\end{align}

\par\smallskip\noindent  The short-range remainder satisfies
\begin{align}\label{eq:fetaL2Rate}
\|\f_\eta\|_{L^2(\mu^{\otimes2})}\le C_\mu\,\rho_\eta:
\end{align}
for $0<\s<\d/2$, this is the scaling in \eqref{eq:HCRStruncDef}, while for $\s=0$, one has $\f_\eta(x)=F(x/\eta)$ for a fixed $F\in L^2(\R^\d)$, using \eqref{eq:logfetaDecay}.  Since Hoeffding centering is bounded on $L^2(\mu^{\otimes2})$, the first bound in \eqref{eq:centeredFetaL2Rate} follows.  \begingroup

For the second, the continuity estimate of \cite[Theorem~9.2(c)]{Simon2005} shows that $A\mapsto\det_2(\mathrm{Id}+\be A)$ is Lipschitz on Hilbert--Schmidt balls.  On the cone of nonnegative self-adjoint Hilbert--Schmidt operators one also has
\begin{align}
\det_2(\mathrm{Id}+\be A)
\ge \exp\pa*{-\frac{\be^2}{2}\|A\|_{\mathrm{HS}}^2},
\end{align}
so composing with the inverse square root is locally Lipschitz there.  The asserted estimate now follows from
\begin{align}
\|T_\mu-T_{\mu,\eta}\|_{\mathrm{HS}}
=\|\tl\f_\eta\|_{L^2(\mu^{\otimes2})}.
\end{align}
\endgroup

\step{Step 3: Finite-$N$ truncation error.}  It holds that
\begin{align}\label{eq:finiteNTruncationRate}
\Big|\E_{\mu^{\otimes N}}\Big[e^{-\frac{\be}{2}W_N}\Big]
-\E_{\mu^{\otimes N}}\Big[e^{-\frac{\be}{2}W_{N,\eta}}\Big]\Big|
\le C_{\be,\mu}\,\rho_\eta .
\end{align}

\par\smallskip\noindent  With $\g_{\eta,t}=\g_\eta+t\f_\eta$ and $W_{N,\eta,t}$ as in \cref{lem:interpolatedExpMoment}, differentiating in $t$ and applying Cauchy--Schwarz gives
\begin{align}
\Big|\frac{\dd}{\dd t}\E_{\mu^{\otimes N}}\Big[e^{-\frac{\be}{2}W_{N,\eta,t}}\Big]\Big|
\le \frac{\be}{2}\,
\E_{\mu^{\otimes N}}\Big[\Big|\frac1N\sum_{i\ne j}\tl\f_\eta(\xi_i,\xi_j)\Big|^2\Big]^{1/2}
\E_{\mu^{\otimes N}}\Big[e^{-\be W_{N,\eta,t}}\Big]^{1/2},
\end{align}
and, since $\tl\f_\eta$ is canonical,
\begin{align}\label{eq:canonicalL2Identity}
\E_{\mu^{\otimes N}}\Big[\Big|\frac1N\sum_{i\ne j}\tl\f_\eta(\xi_i,\xi_j)\Big|^2\Big]
=\frac{2(N-1)}{N}\|\tl\f_\eta\|_{L^2(\mu^{\otimes2})}^2
\le 2\|\tl\f_\eta\|_{L^2(\mu^{\otimes2})}^2.
\end{align}
Combining \cref{lem:interpolatedExpMoment}, \eqref{eq:canonicalL2Identity}, and \eqref{eq:centeredFetaL2Rate}, and integrating $t$ from $0$ to $1$, proves \eqref{eq:finiteNTruncationRate}.

\step{Step 4: Conclusion.}  For $N=1$ and the finitely many $N$ with $\eta(N)>\eta_0$, the left-hand side of \eqref{eq:rieszLaplaceLogFallback} is bounded by a finite constant depending on $(\be,\mu)$, by \cref{lem:interpolatedExpMoment} and the estimate
\begin{align}
\det_2(\mathrm{Id}+\be T_\mu)^{-1/2}
\le \exp\pa*{\frac{\be^2}{4}\|T_\mu\|_{\mathrm{HS}}^2},
\end{align}
which follows from $x-\log(1+x)\le x^2/2$ for $x\ge0$, and is absorbed into $C_{\be,\mu}$, the right-hand side of \eqref{eq:rieszLaplaceLogFallback} being bounded below on finite sets.  For the remaining $N$, summing Steps 1--3 at the scale $\eta=\eta(N)$ and using \eqref{eq:etaNChoice},
\begin{align}
\Big|\E_{\mu^{\otimes N}}\Big[e^{-\frac{\be}{2}W_N}\Big]
-\det_2(\mathrm{Id}+\be T_\mu)^{-1/2}\Big|
\le C_{\be,\mu}\,\rho_{\eta(N)}
+C_{\mathrm L}e^{C_{\mathrm L}}\,N^{-1/2}.
\end{align}
If $0<\s<\d/2$, then $\rho_{\eta(N)}=\pa[\big]{(2C_{\mathrm L})^{-1}\log N}^{-\frac{\d-2\s}{2\s}}\le C_{\be,\mu}(\log(e+N))^{-\frac{\d-2\s}{2\s}}$, which dominates $N^{-1/2}$ and gives the first line of \eqref{eq:rieszLaplaceLogFallback}.  If $\s=0$, then $\rho_{\eta(N)}=N^{-a\d/2}\le C(e+N)^{-a\d/2}$, and the second line follows with $c_{\be,\mu}\coloneqq\min\brac[\big]{a\d/2,\ 1/2}$.
\end{proof}

\subsection{One-body reduction and proof of the main estimate}
\label{subsec:one-body-reduction}

\begin{lemma}[One-body reduction]\label{lem:oneBodyReduction}
Assume the hypotheses of \cref{prop:rieszLaplaceLogFallback}, and recall $\psi_\mu\coloneqq h_\mu-2I_\mu$ and $R_N\coloneqq\frac1N\sum_{i=1}^N\psi_\mu(\xi_i)$ from \eqref{eq:roadmapIdentity}.  Then, for all $N\ge1$,
\begin{multline}\label{eq:oneBodyReductionRate}
\Big|\K_{N,\be}(\mu)
-e^{\be I_\mu}\E_{\mu^{\otimes N}}\Big[e^{-\frac{\be}{2}W_N}\Big]\Big|\\
\le
\be\,\|\psi_\mu\|_{L^2(\mu)}
\pa[\Big]{C_{\mathrm{ub}}(2\be,\mu)^{1/2}
+C_{\mathrm{ub}}(4\be,\mu)^{1/4}\,\E_\mu\brak[\big]{e^{-4\be\psi_\mu}}^{1/4}}\,N^{-1/2}.
\end{multline}
\end{lemma}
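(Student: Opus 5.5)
We start from the exact identity \eqref{eq:roadmapIdentity}, which writes $\K_{N,\be}(\mu)=e^{\be I_\mu}\E[e^{-\frac\be2W_N+\be R_N}]$. The quantity to control is therefore $e^{\be I_\mu}\E[e^{-\frac\be2 W_N}(e^{\be R_N}-1)]$. The one-body average $R_N$ is an empirical mean of the $\mu$-centered function $\psi_\mu$. Indeed, $\int\psi_\mu\,\dd\mu=2I_\mu-2I_\mu=0$, so $\E[R_N^2]=N^{-1}\|\psi_\mu\|_{L^2(\mu)}^2$, and this gives the $N^{-1/2}$ rate.

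We linearize with the elementary inequality $|e^{x}-1|\le |x|(1+e^{x})$, valid for all real $x$. This gives
\begin{align}
\Big|\K_{N,\be}(\mu)-e^{\be I_\mu}\E\big[e^{-\frac\be2W_N}\big]\Big|
\le \be e^{\be I_\mu}\E\Big[|R_N|\,e^{-\frac\be2W_N}\Big]+\be e^{\be I_\mu}\E\Big[|R_N|\,e^{-\frac\be2W_N+\be R_N}\Big].
\end{align}

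For the first term we apply Cauchy--Schwarz, which produces $\E[R_N^2]^{1/2}\,\E[e^{-\be W_N}]^{1/2}$. To recover $C_{\mathrm{ub}}(2\be,\mu)$ rather than the centered constant, we rewrite $e^{2\be I_\mu}e^{-\be W_N}=\K$-type weight at inverse temperature $2\be$ times $e^{-2\be R_N}$, using \eqref{eq:roadmapIdentity} at $2\be$. In the Riesz case the factor $e^{-2\be R_N}$ is bounded, since $h_\mu$ is bounded and $I_\mu\ge0$, so the one-body sign is handled by absorbing it as in Step 1 of \cref{prop:uniformBoundRepulsive}. Tracking the precise prefactor $e^{\be I_\mu}$ versus $e^{2\be I_\mu}$, and matching the exact form of the constant in \eqref{eq:oneBodyReductionRate}, is bookkeeping. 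In particular, the first summand $C_{\mathrm{ub}}(2\be,\mu)^{1/2}$ should arise from pairing with the full modulated weight $e^{-\be N\Fr_N}$ at inverse temperature $2\be$ rather than with $W_N$ alone.

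For the second term we use H\"older with exponents $(2,4,4)$ applied to $|R_N|$, $e^{-\frac\be2W_N+\be R_N}$ and the leftover one-body factor. The middle weight raised to the fourth power equals $e^{-2\be W_N+4\be R_N}$, which is $e^{-4\be I_\mu}\times$ the modulated weight at inverse temperature $4\be$ by \eqref{eq:roadmapIdentity}. Its expectation is therefore bounded by $C_{\mathrm{ub}}(4\be,\mu)$, up to the $I_\mu$ factors. Any leftover one-body exponential is handled by Jensen exactly as in \eqref{eq:oneBodyJensenFamily}: $\E[e^{-\frac{4\be}N\sum\psi_\mu}]=\E_\mu[e^{-4\be\psi_\mu/N}]^N\le\E_\mu[e^{-4\be\psi_\mu}]$. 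This produces the factor $\E_\mu[e^{-4\be\psi_\mu}]^{1/4}$, and the remaining $L^2$ factor of $R_N$ again gives $N^{-1/2}\|\psi_\mu\|_{L^2(\mu)}$.

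The main obstacle is arranging the H\"older split so that each exponential weight is exactly a modulated partition function at a doubled or quadrupled temperature, to which \cref{prop:uniformBoundRepulsive} applies. This matters because in the logarithmic case neither $e^{-\frac\be2 W_N}$ nor $e^{\be R_N}$ is separately controlled uniformly without the infrared pairing. The extra assumption $\int e^{-4\be h_\mu}\,\dd\mu<\infty$ in \eqref{eq:logRateExtraAssumptions} is what makes the one-body factor with the unfavorable sign finite.
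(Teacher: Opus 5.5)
You have the right ingredients: the identity \eqref{eq:roadmapIdentity} at $2\be$ and $4\be$, $\E[R_N^2]=\|\psi_\mu\|_{L^2(\mu)}^2/N$, the linearization $|e^z-1|\le|z|(1+e^z)$, and Jensen for $e^{-4\be R_N}$. But you attach them to the wrong terms, so the one place that genuinely needs the hypothesis $\int e^{-4\be h_\mu}\,\dd\mu<\infty$ is never handled.

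The unfavorable one-body factor sits in the \emph{first} term, $\be e^{\be I_\mu}\E[|R_N|e^{-\frac\be2W_N}]$. To compare $e^{-\frac\be2W_N}$ with a modulated weight you must write $e^{-\frac\be2W_N}=e^{-\frac\be2W_N+\be R_N}\,e^{-\be R_N}$. In the logarithmic case $e^{-\be R_N}$ is in general unbounded: $h_\mu$ is bounded above, but $h_\mu(x)\to-\infty$ as $|x|\to\infty$. Hence $-\psi_\mu$ is unbounded on the support of any background with unbounded support, which is exactly why \eqref{eq:logRateExtraAssumptions} is imposed.

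Your treatment of this term, pulling out $\sup e^{-2\be R_N}$, therefore works only for $0<\s<\d/2$. Even there it costs a factor $e^{\be\sup(-\psi_\mu)}\ge1$ that does not appear in \eqref{eq:oneBodyReductionRate}. The logarithmic case of this term is the actual content of the lemma, not bookkeeping.

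Conversely, the \emph{second} term has no leftover factor. $|R_N|e^{-\frac\be2W_N+\be R_N}$ already carries the full modulated weight, so Cauchy--Schwarz and \eqref{eq:roadmapIdentity} at $2\be$ give directly $\E[e^{-\be W_N+2\be R_N}]^{1/2}=e^{-\be I_\mu}\K_{N,2\be}(\mu)^{1/2}\le e^{-\be I_\mu}C_{\mathrm{ub}}(2\be,\mu)^{1/2}$. So the summand $C_{\mathrm{ub}}(2\be,\mu)^{1/2}$ comes from the second term, not the first. Your $(2,4,4)$ bound there is valid, with the ``leftover'' equal to $1$, but that is not where the factor $\E_\mu[e^{-4\be\psi_\mu}]$ belongs.

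The repair is to swap your two treatments, which is what the paper does.

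For the first term, apply H\"older with exponents $(2,4,4)$ to $|R_N|$, $e^{-\frac\be2W_N+\be R_N}$, and the genuine leftover $e^{-\be R_N}$.
\begin{itemize}
\item The middle factor to the fourth power has expectation $e^{-4\be I_\mu}\K_{N,4\be}(\mu)$, by \eqref{eq:roadmapIdentity} at $4\be$.
\item Jensen gives $\E[e^{-4\be R_N}]\le\E_\mu[e^{-4\be\psi_\mu}]$, which is finite precisely by \eqref{eq:logRateExtraAssumptions}.
\item Together these produce $e^{-\be I_\mu}C_{\mathrm{ub}}(4\be,\mu)^{1/4}\E_\mu[e^{-4\be\psi_\mu}]^{1/4}$.
\end{itemize}
The paper writes this as Cauchy--Schwarz in the linearized bound, followed by a second Cauchy--Schwarz estimate of $\E[e^{-\be W_N}]$.

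For the second term, use Cauchy--Schwarz as above. In both terms the prefactor $e^{\be I_\mu}$ cancels exactly. Multiplying by $\be\E[R_N^2]^{1/2}=\be\|\psi_\mu\|_{L^2(\mu)}N^{-1/2}$ then gives \eqref{eq:oneBodyReductionRate} with the stated constant.
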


\begin{proof}
Note first that the hypotheses guarantee $\psi_\mu\in L^2(\mu)$ and $e^{-4\be\psi_\mu}\in L^1(\mu)$: for $0<\s<\d/2$ both follow from $h_\mu\in L^\infty$ as in Step~1 of the Riesz case, and for $\s=0$ from \eqref{eq:logRateExtraAssumptions}, since $h_\mu$ is bounded above (\cref{rem:oneBodyExpIntegrability}) and $x^2\le C_\be(1+e^{-4\be x})$ for $x$ in a half-line $(-\infty,C]$.  By \eqref{eq:FNdef} and the definition \eqref{eq:WNdefRate} of $W_N$, the identity \eqref{eq:roadmapIdentity} holds:
\begin{align}\label{eq:KExactOneBodyReduction}
\K_{N,\be}(\mu)
=e^{\be I_\mu}\E_{\mu^{\otimes N}}\Big[e^{-\frac{\be}{2}W_N+\be R_N}\Big].
\end{align}
Since $\E_\mu[\psi_\mu]=0$,
\begin{align}\label{eq:RNvarianceRate}
\E_{\mu^{\otimes N}}[R_N^2]=\frac{\|\psi_\mu\|_{L^2(\mu)}^2}{N},
\end{align}
and, since $R_N$ is an empirical average of iid samples, two applications of Jensen's inequality give
\begin{align}\label{eq:RNexpJensen}
\E_{\mu^{\otimes N}}\brak[\big]{e^{-4\be R_N}}
=\E_\mu\brak[\big]{e^{-\frac{4\be}N\psi_\mu}}^N
\le\E_\mu\brak[\big]{e^{-4\be\psi_\mu}}.
\end{align}
Moreover, by Cauchy--Schwarz, \eqref{eq:KExactOneBodyReduction} at inverse temperature $4\be$, \eqref{eq:RNexpJensen}, and the uniform bound \eqref{eq:CubDef},
\begin{align}\label{eq:expMinusBetaWBound}
\E_{\mu^{\otimes N}}\Big[e^{-\be W_N}\Big]
\le \E_{\mu^{\otimes N}}\Big[e^{-2\be W_N+4\be R_N}\Big]^{1/2}
\E_{\mu^{\otimes N}}\Big[e^{-4\be R_N}\Big]^{1/2}
\le e^{-2\be I_\mu}\,C_{\mathrm{ub}}(4\be,\mu)^{1/2}\,\E_\mu\brak[\big]{e^{-4\be\psi_\mu}}^{1/2}.
\end{align}
Therefore, by $|e^z-1|\le|z|\pa{1+e^{z}}$ with $z=\be R_N$, followed by Cauchy--Schwarz in each term,
\begin{align}
&\Big|\K_{N,\be}(\mu)
-e^{\be I_\mu}\E_{\mu^{\otimes N}}\Big[e^{-\frac{\be}{2}W_N}\Big]\Big|\nn\\
&\le e^{\be I_\mu}\,\be\,
\pa[\Big]{\E_{\mu^{\otimes N}}\Big[|R_N|e^{-\frac{\be}{2}W_N}\Big]
+\E_{\mu^{\otimes N}}\Big[|R_N|e^{-\frac{\be}{2}W_N+\be R_N}\Big]}\nn\\
&\le e^{\be I_\mu}\,\be\,\E_{\mu^{\otimes N}}[R_N^2]^{1/2}
\pa[\Big]{\E_{\mu^{\otimes N}}\Big[e^{-\be W_N}\Big]^{1/2}
+\E_{\mu^{\otimes N}}\Big[e^{-\be W_N+2\be R_N}\Big]^{1/2}}.
\end{align}
By \eqref{eq:KExactOneBodyReduction} at inverse temperature $2\be$ and \eqref{eq:CubDef}, $\E_{\mu^{\otimes N}}[e^{-\be W_N+2\be R_N}]\le e^{-2\be I_\mu}C_{\mathrm{ub}}(2\be,\mu)$.  Combining this with \eqref{eq:RNvarianceRate} and \eqref{eq:expMinusBetaWBound}, the prefactor $e^{\be I_\mu}$ cancels against the factors $e^{-\be I_\mu}$ from the two square roots, and \eqref{eq:oneBodyReductionRate} follows.
\end{proof}

\begin{proof}[Proof of \cref{thm:partitionFunctionBound}]
The uniform bound \eqref{eq:partitionUniformBoundStatement} is exactly \eqref{eq:CubDef} in \cref{prop:uniformBoundRepulsive}.  For the convergence rate, combining \cref{lem:oneBodyReduction}, the determinant estimate \eqref{eq:rieszLaplaceLogFallback}, and the definition \eqref{eq:KinftyDef} of $\K_{\infty,\be}(\mu)$,
\begin{align}
\Big|\K_{N,\be}(\mu)-\K_{\infty,\be}(\mu)\Big|
&\le C_{\be,\mu}N^{-1/2}
+C_{\be,\mu}
\begin{cases}
(\log(e+N))^{-\frac{\d-2\s}{2\s}}, & 0<\s<\d/2,\\[0.4em]
(e+N)^{-c_{\be,\mu}}, & \s=0
\end{cases}\nn\\
&\le C_{\mathrm{rate}}(\be,\mu)
\begin{cases}
(\log(e+N))^{-\frac{\d-2\s}{2\s}}, & 0<\s<\d/2,\\[0.4em]
(e+N)^{-c_{\log}(\be,\mu)}, & \s=0,
\end{cases}
\end{align}
which proves \eqref{eq:partitionLogFallbackStatement}.
\end{proof}

\subsection{Temperature interpolation corollary}
\label{subsec:partition-function-temperature-interpolation}

\begingroup
The following corollary records how the fixed-temperature estimate interpolates when the inverse temperature varies with $N$.  It uses the pointwise lower bound for modulated energies from \cite{HCRS2025}.

\begin{cor}[Temperature interpolation]\label{cor:partitionFunctionTemperatureInterpolation}
Assume $0\le\s<\d/2$ and $\mu\in L^\infty\cap\P(\R^\d)$; if $\s=0$, assume \eqref{eq:logEnergyAssumption}.  Set
\begin{align}\label{eq:temperatureInterpolationScale}
a_{N,\s}\coloneqq
\begin{cases}
\log(e+N), & \s=0,\\
N^{\s/\d}, & 0<\s<\d.
\end{cases}
\end{align}
For every $B>0$, there is a constant $C_B(\mu)<\infty$ such that
\begin{align}\label{eq:fixedTemperatureWindowBound}
\sup_{N\ge1}\ \sup_{0\le\be\le B}\ \K_{N,\be}(\mu)\le C_B(\mu)
\end{align}
and, for all $\be\ge0$ and $N\ge1$,
\begin{align}\label{eq:temperatureInterpolationBound}
\K_{N,\be}(\mu)
\le C_B(\mu)\exp\pa[\Big]{C_B(\mu)\,(\be-B)_+\,a_{N,\s}}.
\end{align}
In particular, for every $B_\ast>0$ and $\gamma\in\R$, there is a constant $C_{B_\ast,\mu}<\infty$ such that, along the temperature scaling $\be_N=\be_\ast N^\gamma$,
\begin{align}\label{eq:growingTemperatureSpecialization}
\sup_{0\le\be_\ast\le B_\ast}\ \log \K_{N,\be_\ast N^\gamma}(\mu)
\le
\begin{cases}
C_{B_\ast,\mu}, & \gamma\le0,\\
C_{B_\ast,\mu}\,N^\gamma\log(e+N), & \gamma>0,\ \s=0,\\
C_{B_\ast,\mu}\,N^{\gamma+\s/\d}, & \gamma>0,\ 0<\s<\d/2.
\end{cases}
\end{align}
\end{cor}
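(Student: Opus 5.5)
The plan is to derive both statements from two facts: log-convexity of $\be\mapsto\K_{N,\be}(\mu)$, and a deterministic pointwise lower bound
\begin{align}
-N\Fr_N(\XN,\mu)\le C_\mu\,a_{N,\s}\qquad\text{for every configuration }\XN,
\end{align}
which is the lower bound of \cite{HCRS2025}.

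\textbf{Step 1: the window bound \eqref{eq:fixedTemperatureWindowBound}.} The map $\be\mapsto\log\K_{N,\be}(\mu)=\log\E_{\mu^{\otimes N}}[e^{-\be N\Fr_N}]$ is a log-Laplace transform, hence convex in $\be$. It vanishes at $\be=0$. So for $0\le\be\le B$,
\begin{align}
\log\K_{N,\be}(\mu)\le \frac{\be}{B}\log\K_{N,B}(\mu)\le \pa[\big]{\log C_{\mathrm{ub}}(B,\mu)}_+ .
\end{align}
The last inequality is \cref{thm:partitionFunctionBound}(i), equivalently \eqref{eq:CubDef}, applied at the single temperature $B$. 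This gives \eqref{eq:fixedTemperatureWindowBound} with $C_B(\mu)=\max\{1,C_{\mathrm{ub}}(B,\mu)\}$, enlarged below if needed. No uniformity of the constants of \cref{prop:uniformBoundRepulsive} in $\be$ is required.

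\textbf{Step 2: the interpolation bound \eqref{eq:temperatureInterpolationBound}.} For $\be\le B$ this is Step 1. For $\be>B$, write
\begin{align}
e^{-\be N\Fr_N}=e^{-BN\Fr_N}\,e^{-(\be-B)N\Fr_N}.
\end{align}
Bound the second factor pointwise by $e^{(\be-B)C_\mu a_{N,\s}}$ and take expectations. This gives $\K_{N,\be}\le\K_{N,B}\,e^{C_\mu(\be-B)a_{N,\s}}$, and Step 1 finishes the proof.

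\textbf{Step 3: the scalings \eqref{eq:growingTemperatureSpecialization}.} If $\gamma\le0$, then $\be_\ast N^\gamma\le B_\ast$; take $B=B_\ast$ in Step 1. If $\gamma>0$, take $B=B_\ast$ and use $(\be_\ast N^\gamma-B_\ast)_+\le B_\ast N^\gamma$. Together with $a_{N,0}=\log(e+N)$ and $a_{N,\s}=N^{\s/\d}$, this gives the three cases.

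\textbf{Main obstacle: the pointwise lower bound.} I would first quote it from \cite{HCRS2025}, whose hypotheses are $\mu\in L^\infty$ and, in the log case, finite energy. If the whole-space logarithmic form there has a different bookkeeping of additive constants, I would reprove it directly with the tools of \cref{sec:truncation-estimates}.

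Fix $\eta=\min\{\eta_0,N^{-1/\d}\}$ and split $\g=\f_\eta+\g_\eta$.
\begin{enumerate}[(1)]
\item For the $\g_\eta$ part, \cref{lem:PDdeterministicDiagonalBound} gives
\begin{align}
-\frac1{2N}\sum_{i\ne j}\tl\g_\eta(x_i,x_j)\le 2\g_\eta(0).
\end{align}
This is $O(\eta^{-\s})=O(N^{\s/\d})$, respectively $O(\log N)$ in the log case.
\item For the $\f_\eta$ part, nonnegativity of $\f_\eta$ and \eqref{eq:HCRSconvBound}, \eqref{eq:logfetaConv} bound the negative part of $-\tl\f_\eta$ by $2\|\f_\eta\ast\mu\|_{L^\infty}$. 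This yields a contribution $O(N\eta^{\d-\s})=O(N^{\s/\d})$.
\item The one-body terms in \eqref{eq:FNdef}, namely $\frac1N\sum_i h_\mu(x_i)-I_\mu$, are bounded above. In the Riesz case this uses $h_\mu\in L^\infty$ and $I_\mu\ge0$.
\end{enumerate}

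In the log case there is a further difficulty: the infrared pieces $\g_{\eta,T}$ and $h_\mu$ are not separately bounded. There I would instead pair the one-body term with the low-frequency block exactly as in \cref{lem:logLargeScaleCancellation}(ii), which gives a bound uniform in $T$. The high-frequency block is handled by its diagonal bound $\|\tl\g_{\eta,T,>K}\|_{L^\infty}\le C(1+\log(1/\eta))$ from \eqref{eq:logHighFreqNorms}, so that $\log(1/\eta)\sim\d^{-1}\log N$. Passing $T\to\infty$ pointwise then gives $-N\Fr_N\le C_\mu\log(e+N)$.
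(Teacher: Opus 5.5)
Your proposal is correct and follows the paper's proof essentially verbatim. The paper uses the same three ingredients: the window bound via H\"older (your log-convexity with $\K_{N,0}(\mu)=1$) applied only at the single temperature $B$; the factorization $e^{-\be N\Fr_N}=e^{-BN\Fr_N}e^{-(\be-B)N\Fr_N}$ combined with the pointwise lower bound $\Fr_N\ge -C_{\mathrm{pt}}(\mu)a_{N,\s}/N$; and the same specialization to $\be_N=\be_\ast N^\gamma$.

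The only difference is that the paper simply quotes that pointwise bound from \cite{HCRS2025} rather than proving it. Your fallback derivation is also sound: it uses \cref{lem:PDdeterministicDiagonalBound}, the near-field convolution bounds, and, in the logarithmic case, the pairing of \cref{lem:logLargeScaleCancellation}(ii). The $T\to\infty$ limit there is needed only $\mu^{\otimes N}$-a.e., which suffices for this corollary.
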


\begin{remark}[Microscopic endpoint]\label{rem:LebleSerfatyComparison}
At the microscopic temperature scale, namely $\be_N\sim N$ in the logarithmic case and $\be_N\sim N^{1-\s/\d}$ in the positive Riesz case, \eqref{eq:growingTemperatureSpecialization} gives $\log\K_{N,\be_N}=O(N\log(e+N))$ for $\s=0$ and $O(N)$ for $0<\s<\d/2$.  These are the expected endpoint orders in the standard Coulomb/Riesz-gas normalization; see \cite[Chapters~8--13]{SerfatyLN}.  This comparison is only a consistency check: \cref{cor:partitionFunctionTemperatureInterpolation} is an upper bound for arbitrary bounded backgrounds and neither identifies the next-order constants nor establishes sharpness at intermediate temperature scales.
\end{remark}
\endgroup

\begin{proof}[Proof of \cref{cor:partitionFunctionTemperatureInterpolation}]
We combine the uniform bound \eqref{eq:CubDef} with the pointwise lower bound for modulated energies from \cite{HCRS2025}, which, applied under the assumption $\mu\in L^\infty$, gives a constant $C_{\mathrm{pt}}(\mu)<\infty$ such that, for every $N\ge1$ and every configuration $\XN$,
\begin{align}\label{eq:pointwiseLowerBoundForInterpolation}
\Fr_N(\XN,\mu)\ge -C_{\mathrm{pt}}(\mu)\frac{a_{N,\s}}{N}.
\end{align}
Set $G_N(\XN)\coloneqq -N\Fr_N(\XN,\mu)$.  By \eqref{eq:pointwiseLowerBoundForInterpolation},
\begin{align}\label{eq:GNPointwiseUpper}
G_N(\XN)\le C_{\mathrm{pt}}(\mu)\,a_{N,\s}.
\end{align}
If $0\le\be\le B$, then H\"older's inequality and \eqref{eq:CubDef} give
\begin{align}
\K_{N,\be}(\mu)
=\E_{\mu^{\otimes N}}\Big[e^{\be G_N}\Big]
\le \E_{\mu^{\otimes N}}\Big[e^{B G_N}\Big]^{\be/B}
=\K_{N,B}(\mu)^{\be/B}
\le C_B(\mu),
\end{align}
after increasing $C_B(\mu)$ if necessary.  This proves \eqref{eq:fixedTemperatureWindowBound}.  If $\be>B$, then \eqref{eq:GNPointwiseUpper} gives
\begin{align}
\K_{N,\be}(\mu)
&=\E_{\mu^{\otimes N}}\Big[e^{B G_N}e^{(\be-B)G_N}\Big]\nn\\
&\le \exp\Big(C_{\mathrm{pt}}(\mu)(\be-B)a_{N,\s}\Big)\K_{N,B}(\mu)
\le C_B(\mu)\exp\Big(C_{\mathrm{pt}}(\mu)(\be-B)a_{N,\s}\Big),
\end{align}
which is \eqref{eq:temperatureInterpolationBound} after increasing $C_B(\mu)$ once more to dominate $C_{\mathrm{pt}}(\mu)$.  Taking $\be=\be_\ast N^\gamma$ gives \eqref{eq:growingTemperatureSpecialization}: for $\gamma\le0$, use \eqref{eq:fixedTemperatureWindowBound} with $B=B_\ast$, while for $\gamma>0$, use \eqref{eq:temperatureInterpolationBound} with any fixed positive $B$ and absorb $B_\ast$ and the fixed-temperature constant into $C_{B_\ast,\mu}$.
\end{proof}

\begingroup

\subsection{Sharpness of the threshold}
\label{subsec:partition-function-sharpness}

This final subsection gives an explicit lower rate, though expected to be nonoptimal, for the divergence beyond the Hilbert--Schmidt threshold.  The proof applies conditional Jensen on a dyadic spatial partition whose resolution is coupled to $N$, and then uses the bounded-feature Laplace estimate from \cref{prop:boundedFeatureQuadLaplace}.

\begin{lemma}[Quantitative dyadic coarse-graining]\label{lem:quantitativeDyadicCompression}
Let $\d/2\le\s<\d$ and $\mu\in L^\infty\cap\P(\R^\d)$.  There exist constants $c_0,C_0,L_0>0$, a dyadic cube $Q_0$, and, for every sufficiently small scale $\delta$ in the dyadic subdivision of $Q_0$, a finite sub-$\sigma$-algebra $\mathcal G_\delta$ of the Borel $\sigma$-algebra such that, with
\begin{align}\label{eq:quantCompressionKernel}
h_\delta
\coloneqq
\E_{\mu^{\otimes2}}\brak*{\tl\g\mid\mathcal G_\delta\otimes\mathcal G_\delta},
\end{align}
the following properties hold:
\begin{enumerate}[(i)]
\item $h_\delta$ is bounded, symmetric, canonical, positive definite, and finite rank.
\item If $T_\delta$ denotes the integral operator with kernel $h_\delta$, then
\begin{align}\label{eq:quantCompressionOperatorBound}
0\le T_\delta\le L_0\,\mathrm{Id}
\end{align}
in the quadratic-form sense.
\item The canonical feature map of $h_\delta$ satisfies
\begin{align}\label{eq:quantCompressionDiagonalBound}
\sup_x h_\delta(x,x)\le C_0\delta^{-\s}.
\end{align}
\item Its Hilbert--Schmidt norm satisfies
\begin{align}\label{eq:quantCompressionHSLower}
\|h_\delta\|_{L^2(\mu^{\otimes2})}^2
\ge c_0
\begin{cases}
\log(1/\delta),&2\s=\d,\\[0.3em]
\delta^{\d-2\s},&2\s>\d.
\end{cases}
\end{align}
\end{enumerate}
\end{lemma}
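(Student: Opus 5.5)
The plan is to build $\mathcal G_\delta$ from a dyadic grid localized to a region where $\mu$ has density bounded below, and to put all remaining mass into one large atom.

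\textbf{Construction.} By Lebesgue differentiation applied to the density of $\mu$, we fix a dyadic cube $Q_0$ and $c_\ast>0$ with $\mu(Q_0)>0$, $\mu(Q_0^c)\ge m_0>0$, and $\mu\ge c_\ast$ on a subset of $Q_0$ of Lebesgue measure at least $\frac{9}{10}|Q_0|$. For dyadic $\delta$, call a subcube $A\subset Q_0$ of side $\delta$ \emph{good} if $\mu(A)\ge \tfrac{c_\ast}{2}\delta^\d$. The atoms of $\mathcal G_\delta$ are the good cubes, together with the single atom $E_\delta$ formed by the union of the bad cubes and $Q_0^c$. Then $\mu(E_\delta)\ge m_0$, and the good cubes cover a fixed positive fraction of $Q_0$, uniformly in small $\delta$. (If some atom happens to be $\mu$-null, we simply discard it.)

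\textbf{Properties (i)--(iii).} Since $\mu\in L^\infty$ and $\s<\d$, the potential $h_\mu$ is bounded and $\tl\g\in L^1(\mu^{\otimes2})$, so every block average of $\tl\g$ is finite.

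\emph{Finite rank and boundedness.} $h_\delta$ is constant on the finitely many products of atoms, hence bounded and of finite rank; symmetry is inherited from $\tl\g$.

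\emph{Canonicity.} Integrating in $y$ commutes with conditional expectation on the first factor, and $\int\tl\g(\cdot,y)\,\dd\mu(y)=0$. Hence $\int h_\delta(\cdot,y)\,\dd\mu(y)=0$.

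\emph{Positive definiteness and (ii).} Let $P$ denote conditional expectation onto $\mathcal G_\delta$, an orthogonal projection on $L^2(\mu)$. Then $T_\delta=PT_\mu P$, where $T_\mu$ is the (possibly non-Hilbert--Schmidt) centered operator. Positive definiteness follows from $\wh\g\ge0$: for any $f$, $\langle T_\delta f,f\rangle=\langle T_\mu Pf,Pf\rangle\ge0$. For the bound $T_\mu\le L_0$, we split $\g=\g\indic_{|x|<1}+\g\indic_{|x|\ge1}$. The first piece is in $L^1(\R^\d)$, so Young's inequality together with $\mu\in L^\infty$ bounds $f\mapsto(\g\indic_{|x|<1})\ast(f\mu)$ on $L^2(\mu)$. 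The second piece is bounded, so its quadratic form is at most $\|f\|_{L^1(\mu)}^2\le\|f\|_{L^2(\mu)}^2$. Finally, Hoeffding centering is a contraction-type operation, which gives \eqref{eq:quantCompressionOperatorBound}.

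\emph{Diagonal bound (iii).} For a good cube $A$, the $\mu^{\otimes2}$-average of $\g$ over $A\times A$ is at most
\begin{align}
\frac{\|\mu\|_{L^\infty}^2}{(c_\ast\delta^\d/2)^2}\int_{A\times A}\g(x-y)\,\dd x\,\dd y\le C\delta^{-\s},
\end{align}
and the one-body and constant terms of $\tl\g$ are bounded. For any atom $B$, the average of $\g$ over $B\times E_\delta$ is at most $\|h_\mu\|_{L^\infty}/m_0$, because $\sup_x\int_{E_\delta}\g(x-y)\,\dd\mu(y)\le\|h_\mu\|_{L^\infty}$. This gives \eqref{eq:quantCompressionDiagonalBound}.

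\textbf{Lower bound (iv) — the main step.} We bound the Hilbert--Schmidt norm from below by keeping only pairs of good cubes:
\begin{align}
\|h_\delta\|_{L^2(\mu^{\otimes2})}^2\ge\sum_{A,B\text{ good}}\mu(A)\mu(B)\,\bar{\tl\g}(A,B)^2,
\end{align}
where $\bar{\tl\g}(A,B)$ denotes the block average. For $A,B$ at distance $r\in[C\delta,r_0]$, we have $\bar{\tl\g}(A,B)\ge \g(2r)-C\ge\tfrac12\g(2r)$ once $r_0$ is small; here the constant $C$ absorbs $2\|h_\mu\|_{L^\infty}+2|I_\mu|$. Each good cube contributes mass of order $\delta^\d$.

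It remains to count: there should be $\gtrsim\delta^{-\d}\cdot(r/\delta)^{\d-1}$ such pairs per dyadic distance scale $r$. This yields
\begin{align}
\|h_\delta\|_{L^2(\mu^{\otimes2})}^2\gtrsim\int_{C\delta}^{r_0}r^{\d-1-2\s}\,\dd r,
\end{align}
which is of order $\log(1/\delta)$ when $2\s=\d$ and of order $\delta^{\d-2\s}$ when $2\s>\d$.

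I expect this counting to be the delicate point, because only a large fraction of the cubes are good, not all of them. I would handle it by a Fubini/averaging argument. Let $G_\delta$ be the union of good cubes; it has Lebesgue measure at least $c|Q_0|$. The number of good pairs at separation about $r$ is then comparable to $\delta^{-2\d}\,\big|\{(x,y)\in G_\delta^2: |x-y|\sim r\}\big|$. For $r\le r_0$ this set has measure $\gtrsim r^\d$, provided $Q_0$ was chosen so that $G_\delta$ has density at least $\frac{9}{10}$ in every subcube of side $r_0$. If that fails, I would first shrink $Q_0$, using the Lebesgue density theorem again, so that this density holds.
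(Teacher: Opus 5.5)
Your proposal is correct and follows essentially the paper's route: a density-point cube $Q_0$ of small $\mu$-mass, good $\delta$-subcells plus one remainder atom of mass bounded below, $T_\delta=P_\delta T_\mu P_\delta$ for positivity and the operator bound, the cell integral of $|u-v|^{-\s}$ for the diagonal, and a count of good--good pairs for the Hilbert--Schmidt lower bound. The paper's atoms are $E\cap Q$ rather than whole cubes, and it uses Schur's test instead of your Young-plus-projection argument; these differences are cosmetic.

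For the counting step you flagged, no subcube-by-subcube density condition is needed. As in the paper, fix a lattice displacement $k$ with $2\le|k|\le c\,\ell_0/\delta$. The map $Q\mapsto Q+k$ is injective, so each bad cube spoils at most two of the $\gtrsim(\ell_0/\delta)^{\d}$ pairs $(Q,Q+k)$ inside $Q_0$. The global bound $\#\{\text{bad}\}\le\frac15(\ell_0/\delta)^{\d}$ that your construction already gives therefore suffices, and summing over $k$ yields $\|h_\delta\|_{L^2(\mu^{\otimes2})}^2\gtrsim\delta^{\d-2\s}\sum_{2\le|k|\le c/\delta}(1+|k|)^{-2\s}$.
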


\begin{proof}
Write $f=\dd\mu/\dd x$.  Choose $a>0$ such that
\begin{align}
E\coloneqq\{x\in\R^\d:f(x)\ge a\}
\end{align}
has positive Lebesgue measure, and let $x_0$ be a Lebesgue density point of $E$.  Since $\s<\d$ and $\mu\in L^1\cap L^\infty$, the potential $h_\mu=\g\ast\mu$ is bounded.  Hence, after choosing a sufficiently small dyadic cube $Q_0$ containing $x_0$, we may arrange that
\begin{align}\label{eq:quantCompressionQ0}
|E\cap Q_0|\ge(1-\varepsilon)|Q_0|,
\qquad
\mu(Q_0)\le\frac12,
\end{align}
and, for $\mu^{\otimes2}$-a.e. $x,y\in Q_0$,
\begin{align}\label{eq:quantCompressionLocalPositive}
\tl\g(x,y)
=\g(x-y)-h_\mu(x)-h_\mu(y)+2I_\mu
\ge\frac12\g(x-y).
\end{align}
Here $\varepsilon>0$ is a sufficiently small dimensional constant.  Indeed, $I_\mu\ge0$, while $\g(z)\to+\infty$ as $z\to0$.

Partition $Q_0$ into dyadic cubes $Q$ of side length $\delta$.  Call $Q$ good if
\begin{align}
|E\cap Q|\ge\frac12|Q|,
\end{align}
and, for each good cube, set $A_Q\coloneqq E\cap Q$.  Since every bad cube contains at least half of its volume in $Q_0\setminus E$, \eqref{eq:quantCompressionQ0} gives
\begin{align}\label{eq:quantCompressionBadCells}
\#\{Q:Q\text{ is bad}\}\,\delta^\d
\le2\varepsilon|Q_0|.
\end{align}
Let $R$ be the complement in $\R^\d$ of the union of the good partition sets $A_Q$, and let $\mathcal G_\delta$ be generated by the finitely many sets $A_Q$ and $R$.  Since $\mu(Q_0)\le1/2$, one has $\mu(R)\ge1/2$.  Moreover, for every good cube,
\begin{align}\label{eq:quantCompressionAtomMass}
\frac a2\delta^\d
\le\mu(A_Q)
\le\|\mu\|_{L^\infty}\delta^\d.
\end{align}

Let $P_\delta$ denote conditional expectation from $L^2(\mu)$ onto the $\mathcal G_\delta$-measurable functions, and let $T_\mu$ be the integral operator with kernel $\tl\g$.  Then $h_\delta$ is the kernel of $P_\delta T_\mu P_\delta$.  It is therefore symmetric, canonical, positive definite, and finite rank.  Also,
\begin{align}
|\tl\g(x,y)|
\le\g(x-y)+h_\mu(x)+h_\mu(y)+2I_\mu,
\end{align}
so
\begin{align}
\sup_x\int_{\R^\d}|\tl\g(x,y)|\,\dd\mu(y)
\le2\|h_\mu\|_{L^\infty}+4I_\mu<\infty.
\end{align}
Schur's test (see, e.g., \cite[Theorem~5.2]{HalmosSunder1978}) yields $\|T_\mu\|_{\mathrm{op}}\le L_0$, and, since $P_\delta$ is an orthogonal projection, \eqref{eq:quantCompressionOperatorBound} follows.

For a good partition set $A_Q$, the definition of conditional expectation, \eqref{eq:quantCompressionAtomMass}, and $\s<\d$ give
\begin{align}
\forall x\in A_Q,\qquad
0\le h_\delta(x,x)
&=\frac{1}{\mu(A_Q)^2}
\iint_{A_Q\times A_Q}\tl\g(u,v)\,\dd\mu(u)\dd\mu(v)\notag\\
&\le C\delta^{-2\d}
\iint_{Q\times Q}|u-v|^{-\s}\,\dd u\,\dd v+C
\le C\delta^{-\s}.
\end{align}
On the remainder set,
\begin{align}
0\le h_\delta(x,x)
\le\mu(R)^{-2}\|\tl\g\|_{L^1(\mu^{\otimes2})}
\le4\|\tl\g\|_{L^1(\mu^{\otimes2})}.
\end{align}
This proves \eqref{eq:quantCompressionDiagonalBound}.

It remains to prove \eqref{eq:quantCompressionHSLower}.  Write $\ell_0$ for the side length of $Q_0$ and $M=\ell_0/\delta$.  Index its dyadic subcubes by the lattice box $\{0,\ldots,M-1\}^\d$.  For every displacement $k\in\Z^\d$ with
\begin{align}
2\le|k|\le cM,
\end{align}
the number of ordered pairs $(Q,Q+k)$ lying in $Q_0$ is at least $cM^\d$.  For fixed $k$, at most twice the number of bad cubes can occur as an endpoint of such a pair.  Thus, by \eqref{eq:quantCompressionBadCells}, after choosing $\varepsilon$ sufficiently small, the number of good--good pairs with displacement $k$ is at least $cM^\d=c\delta^{-\d}$.

If $Q$ and $Q+k$ are good, then \eqref{eq:quantCompressionLocalPositive} and the estimate $|u-v|\le C\delta(1+|k|)$ give
\begin{align}\label{eq:quantCompressionCellLower}
\frac{1}{\mu(A_Q)\mu(A_{Q+k})}
\iint_{A_Q\times A_{Q+k}}
\tl\g(u,v)\,\dd\mu(u)\dd\mu(v)
\ge c\delta^{-\s}(1+|k|)^{-\s}.
\end{align}
Consequently,
\begin{align}
\|h_\delta\|_{L^2(\mu^{\otimes2})}^2
\ge c\delta^{\d-2\s}\sum_{2\le|k|\le c/\delta}(1+|k|)^{-2\s}.
\end{align}
The lattice sum is bounded below by $c\log(1/\delta)$ when $2\s=\d$, and by a positive constant when $2\s>\d$.  This proves \eqref{eq:quantCompressionHSLower}.
\end{proof}

\begin{prop}[Quantitative sharpness of the partition-function threshold]
\label{prop:partition-function-sharpness}
Let $\be>0$, let $\d/2\le\s<\d$, and assume $\mu\in L^\infty\cap\P(\R^\d)$.  There exist constants $c,C>0$ and $N_0\ge3$, depending on $(\be,\mu,\d,\s)$, such that, for every $N\ge N_0$,
\begin{align}\label{eq:quantSharpnessBounds}
\K_{N,\be}(\mu)
\ge C^{-1}
\begin{cases}
(\log N)^c, & \s=\d/2,\\[0.3em]
\exp\left\{c(\log N)^{2-\d/\s}\right\}, & \d/2<\s<\d.
\end{cases}
\end{align}
In particular, $\K_{N,\be}(\mu)\to+\infty$ as $N\to\infty$ throughout $\d/2\le\s<\d$.
\end{prop}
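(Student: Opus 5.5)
The plan is to lower-bound $\K_{N,\be}(\mu)$ by the partition function of a finite-rank coarse-grained kernel via conditional Jensen, and then evaluate that partition function with \cref{prop:boundedFeatureQuadLaplace}. I start from the exact identity \eqref{eq:roadmapIdentity}, $\K_{N,\be}(\mu)=e^{\be I_\mu}\E_{\mu^{\otimes N}}[e^{-\frac\be2W_N+\be R_N}]$, which is pure algebra and so remains valid for $\d/2\le\s<\d$. Since $\s<\d$ and $\mu\in L^1\cap L^\infty$, both $h_\mu$ and $I_\mu$ are bounded, so $|\be R_N|\le C_{\be,\mu}$ deterministically. Hence
\begin{align}
\K_{N,\be}(\mu)\ge e^{-C_{\be,\mu}}\,\E_{\mu^{\otimes N}}\big[e^{-\frac\be2W_N}\big].
\end{align}

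Next I take $\delta$ small and $\mathcal G_\delta$ as in \cref{lem:quantitativeDyadicCompression}, and condition on the product $\sigma$-algebra $\mathcal G_\delta^{\otimes N}$, i.e.\ on the cells occupied by each particle. For $i\ne j$ the particles are independent, so
\begin{align}
\E\big[\tl\g(\xi_i,\xi_j)\mid\mathcal G_\delta^{\otimes N}\big]=h_\delta(\xi_i,\xi_j).
\end{align}
Integrability holds because $\tl\g\in L^1(\mu^{\otimes2})$ when $\s<\d$. Conditional Jensen for the convex map $x\mapsto e^x$ then gives $\E[e^{-\frac\be2W_N}]\ge\E[e^{-\frac\be2W_N^{(\delta)}}]$, where $W_N^{(\delta)}$ is the statistic \eqref{eq:WNdefRate} with kernel $h_\delta$.

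By \cref{lem:quantitativeDyadicCompression}(i),(iii), the kernel $h_\delta$ is canonical and positive definite with a bounded centered feature map satisfying $\|k\|_{L^\infty}^2\le C_0\delta^{-\s}$. The bounded specialization \eqref{eq:boundedFeatureSpecialization} therefore yields
\begin{align}
\E\big[e^{-\frac\be2W_N^{(\delta)}}\big]\ge\det_2(\mathrm{Id}+\be T_\delta)^{-1/2}-C_\be e^{C_\be\delta^{-\s}}N^{-1}.
\end{align}
For the main term, the eigenvalues of $T_\delta$ lie in $[0,L_0]$ by (ii). Since $x-\log(1+x)\ge c_{\be L_0}x^2$ on $[0,\be L_0]$,
\begin{align}
\det_2(\mathrm{Id}+\be T_\delta)^{-1/2}=\exp\Big(\tfrac12\sum_j\big(\be\lambda_j-\log(1+\be\lambda_j)\big)\Big)\ge\exp\big(c\,\|h_\delta\|_{L^2(\mu^{\otimes2})}^2\big).
\end{align}

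Finally I couple the scale to $N$ by choosing the dyadic $\delta$ with $\delta^{-\s}\asymp c'\log N$, with $c'$ small enough that $C_\be e^{C_\be\delta^{-\s}}N^{-1}\le N^{-1/2}$. The error term is then negligible against the main term, which is at least $1$, for $N\ge N_0$. Inserting \eqref{eq:quantCompressionHSLower} gives:
\begin{itemize}
\item for $2\s=\d$: $\|h_\delta\|^2\ge c\log(1/\delta)\ge c\log\log N$, so $\K_{N,\be}(\mu)\ge C^{-1}(\log N)^c$;
\item for $2\s>\d$: $\|h_\delta\|^2\ge c\delta^{\d-2\s}=c(c'\log N)^{(2\s-\d)/\s}$, i.e.\ a multiple of $(\log N)^{2-\d/\s}$, which gives the second line of \eqref{eq:quantSharpnessBounds}.
\end{itemize}

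The only delicate step is the balance between the exponential-in-$\delta^{-\s}$ error of \cref{prop:boundedFeatureQuadLaplace} and the growth of the Hilbert--Schmidt norm. This balance is exactly what forces the logarithmic coupling $\delta^{-\s}\sim\log N$ and hence the nonoptimal rates. The conditional-Jensen step is routine once one notes that diagonal terms are absent from $W_N$ and that the one-body term was removed beforehand by boundedness.
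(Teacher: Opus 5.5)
Your proposal is correct and follows the paper's proof essentially step for step. The steps match: the bounded one-body reduction, conditional Jensen onto the coarse-grained kernel $h_\delta$ via the cell-occupancy $\sigma$-algebra, the bounded-feature Laplace estimate with error $Ce^{C\delta^{-\s}}N^{-1}$, the quadratic lower bound on $\log\det_2(\mathrm{Id}+\be T_\delta)^{-1/2}$ from $0\le T_\delta\le L_0$, and the coupling $\delta_N^{-\s}\asymp\log N$. The only cosmetic difference is that you make the Laplace error at most $N^{-1/2}$ rather than at most $1/2$; the two choices are equivalent because the determinant term is at least $1$.
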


\begin{proof}
Let $\xi_1,\ldots,\xi_N$ be iid with law $\mu$, and set
\begin{align}
W_N
\coloneqq\frac1N\sum_{1\le i\ne j\le N}\tl\g(\xi_i,\xi_j),
\qquad
Z_N\coloneqq\E\brak*{e^{-\frac\be2W_N}}.
\end{align}
Since $h_\mu$ is bounded, the exact one-body decomposition \eqref{eq:roadmapIdentity} gives
\begin{align}\label{eq:quantSharpnessCenteredReduction}
\K_{N,\be}(\mu)
&=e^{\be I_\mu}
\E\left[e^{-\frac\be2W_N+\frac\be N\sum_{i=1}^N(h_\mu(\xi_i)-2I_\mu)}\right]\notag\\
&\ge c_{\be,\mu}Z_N,
\qquad
c_{\be,\mu}
\coloneqq
\exp\left\{\be I_\mu-\be\|h_\mu-2I_\mu\|_{L^\infty}\right\}>0.
\end{align}

Fix a sufficiently small dyadic scale $\delta$ from \cref{lem:quantitativeDyadicCompression}, and set
\begin{align}
W_{N,\delta}
\coloneqq\frac1N\sum_{1\le i\ne j\le N}h_\delta(\xi_i,\xi_j),
\qquad
Z_{N,\delta}
\coloneqq\E\brak*{e^{-\frac\be2W_{N,\delta}}}.
\end{align}
Let $\mathcal F_{N,\delta}$ be the $\sigma$-algebra generated by the events $\{\xi_i\in A_Q\}$ and $\{\xi_i\in R\}$, for $1\le i\le N$ and all good cubes $Q$; equivalently, it records which set in the finite partition $\{A_Q:Q\text{ is good}\}\cup\{R\}$ contains each $\xi_i$.  By \eqref{eq:quantCompressionKernel},
\begin{align}
\E\brak*{W_N\mid\mathcal F_{N,\delta}}
=W_{N,\delta}.
\end{align}
Conditional Jensen therefore gives
\begin{align}\label{eq:quantSharpnessConditionalJensen}
Z_N\ge Z_{N,\delta}.
\end{align}

Let $T_\delta$ be the operator with kernel $h_\delta$.  Since $h_\delta$ is positive definite, the Moore--Aronszajn construction gives a canonical feature map $k_\delta$ (see, e.g., \cite[Theorem~4.21]{SteinwartChristmann2008}); by \eqref{eq:quantCompressionDiagonalBound}, it is bounded and satisfies
\begin{align}
\|k_\delta\|_{L^\infty(\mu;\mathcal H_\delta)}^2
=\sup_x h_\delta(x,x)
\le C\delta^{-\s}.
\end{align}
The bounded specialization \eqref{eq:boundedFeatureSpecialization} of \cref{prop:boundedFeatureQuadLaplace} yields
\begin{align}\label{eq:quantSharpnessFiniteNDeterminant}
\left|Z_{N,\delta}
-\det{}_2(\mathrm{Id}+\be T_\delta)^{-1/2}\right|
\le\frac{C}{N}\exp\left\{C\delta^{-\s}\right\}.
\end{align}
Let $(\lambda_{\delta,j})_j$ denote the nonzero eigenvalues of $T_\delta$.  By \eqref{eq:quantCompressionOperatorBound}, they belong to $[0,L_0]$.  Since
\begin{align}
F_\be(t)\coloneqq\be t-\log(1+\be t)
\end{align}
satisfies
\begin{align}
\inf_{0\le t\le L_0}\frac{F_\be(t)}{t^2}>0,
\end{align}
with the quotient interpreted continuously at $t=0$, one has
\begin{align}\label{eq:quantSharpnessDeterminantLower}
\log\det{}_2(\mathrm{Id}+\be T_\delta)^{-1/2}
=\frac12\sum_jF_\be(\lambda_{\delta,j})
\ge c\sum_j\lambda_{\delta,j}^2
=c\|h_\delta\|_{L^2(\mu^{\otimes2})}^2.
\end{align}

Choose $a>0$ so small that $C2^\s a<1/2$, where $C$ is the constant in the exponential on the right-hand side of \eqref{eq:quantSharpnessFiniteNDeterminant}.  For every sufficiently large $N$, choose a scale $\delta_N$ in the dyadic subdivision of $Q_0$ such that
\begin{align}\label{eq:quantSharpnessScaleChoice}
a\log N
\le\delta_N^{-\s}
\le2^\s a\log N.
\end{align}
After increasing $N_0$ if necessary, \eqref{eq:quantSharpnessFiniteNDeterminant} and \eqref{eq:quantSharpnessScaleChoice} give
\begin{align}
\frac{C}{N}\exp\left\{C\delta_N^{-\s}\right\}
\le\frac12.
\end{align}
The determinant in \eqref{eq:quantSharpnessFiniteNDeterminant} is at least one, so
\begin{align}
Z_{N,\delta_N}
\ge\frac12\det{}_2(\mathrm{Id}+\be T_{\delta_N})^{-1/2}.
\end{align}
Combining this estimate with \eqref{eq:quantSharpnessCenteredReduction}, \eqref{eq:quantSharpnessConditionalJensen}, \eqref{eq:quantSharpnessDeterminantLower}, and \eqref{eq:quantCompressionHSLower} gives
\begin{align}
\log\K_{N,\be}(\mu)
&\ge c\log(1/\delta_N)-C
\ge c\log\log N-C,
&&2\s=\d,\\
\log\K_{N,\be}(\mu)
&\ge c\delta_N^{\d-2\s}-C
\ge c(\log N)^{2-\d/\s}-C,
&&2\s>\d.
\end{align}
Exponentiating proves \eqref{eq:quantSharpnessBounds}.
\end{proof}

\begin{remark}[Expected sharp ultraviolet growth]\label{rem:expectedSharpUltravioletGrowth}
The rates in \cref{prop:partition-function-sharpness} reflect the exponential dependence on the feature diagonal in \eqref{eq:quantSharpnessFiniteNDeterminant} and are not expected to be optimal.  If the density of $\mu$ is bounded below by a positive constant on some ball, the natural prediction is
\begin{align}\label{eq:expectedSharpUltravioletGrowth}
\log\K_{N,\be}(\mu)
\asymp
\begin{cases}
\log N,&\s=\d/2,\\[0.3em]
N^{2-\d/\s},&\d/2<\s<\d.
\end{cases}
\end{align}
The heuristic comes from the singular part of the second Mayer correction,
\begin{align}\label{eq:secondMayerCorrectionHeuristic}
N^2\iint_{(\R^\d)^2}
\left(
 e^{-\be\g(x-y)/N}-1+\frac\be N\g(x-y)
\right)
\,\dd\mu(x)\dd\mu(y).
\end{align}
At fixed inverse temperature, the quadratic Mayer approximation should therefore be cut off at the scale $r\asymp N^{-1/\s}$ at which the single-pair contribution $\be\g(r)/N$ to the Gibbs exponent becomes of order one and ceases to be perturbative.  At $\s=\d/2$, this produces $\int_{N^{-1/\s}}^1r^{-1}\,\dd r\asymp\log N$.  If $\s>\d/2$, the change of variables $x-y=N^{-1/\s}z$ gives the scale $N^{2-\d/\s}$.  The bounded terms introduced by Hoeffding centering do not affect these ultraviolet powers.  The problem of proving matching bounds, identifying the leading constants, and determining whether a renormalized limiting object exists is beyond the scope of the present paper.
\end{remark}
\endgroup

\section{The attractive logarithmic case}
\label{sec:attractive-log-small-beta}

This section proves both assertions of \cref{thm:attractive-log-small-beta-partition}.  The uniform-bound argument is carried out for a general interaction kernel $W$ with the right growth: it uses only the integrability $W\in L^1(\mu^{\otimes2})$ of the kernel, the exponential integrability $e^{2\be h_\mu}\in L^1(\mu)$ of its potential $h_\mu\coloneqq\int_{\R^\d} W(\cdot,y)\,\dd\mu(y)$, and the Gaussian moment growth \eqref{eq:attMomentAssumption} of the centered kernel, and therefore applies verbatim to every interaction satisfying these hypotheses.  The attractive logarithmic kernel $W(x,y)=\log|x-y|$ is the instance appearing in the theorem statement.  In the uniform-bound subsection, we write $h_\mu$, $I_\mu$, $\Fr_N$, $\K_{N,\be}$, and $\tl W$ without the superscript $\mathrm{att}$.  In the quantitative subsection, where the attractive target quantities and the repulsive logarithmic truncations occur simultaneously, we restore the superscript only on the genuinely attractive quantities.  The quantitative determinant argument in \cref{subsec:attractive-quantitative-determinant} uses the additional logarithmic truncation and bounded-density hypotheses.  The section concludes with the periodic uniform-background phase diagram, whose purpose is to illustrate, already in the simplest homogeneous setting, why an $N$-uniform partition-function bound cannot be expected for every inverse temperature.  Under the corresponding small-$\be$ and integrability hypotheses, the commutator, mean-field, and fluctuation consequences follow by the same Donsker--Varadhan, Gronwall, and Gaussian-tilting arguments as in the repulsive case; the commutator input may alternatively be obtained from the argument of Jabin and Wang \cite{JabinWang2018}.

\subsection{Uniform bound at small inverse temperature}
\label{subsec:attractive-uniform-bound}

The uniform-bound proof has two ingredients.  A one-body reduction bounds $\K_{N,\be}(\mu)$ by the centered partition function
\begin{align}\label{eq:attZdef}
Z_{N,\be}(\mu)\coloneqq\E_{\mu^{\otimes N}}\brak[\Big]{e^{-\frac{\be}{2N}S_N}},
\qquad
S_N\coloneqq \sum_{1\le i\ne j\le N}\tl W(\xi_i,\xi_j),
\qquad
\xi_1,\ldots,\xi_N\stackrel{\mathrm{iid}}{\sim}\mu.
\end{align}
A Taylor expansion then reduces the uniform bound on $Z_{N,\be}$ to Gaussian-factorial growth of the even moments of $S_N$, which is proved by decoupling and counting: after the decoupled expectation is expanded, only index patterns in which every active index is repeated survive the centering.

\begin{proof}[Proof of the uniform-bound assertion in \cref{thm:attractive-log-small-beta-partition}]

\step{Step 1: One-body reduction.}  For every $\be\ge0$ with $e^{2\be h_\mu}\in L^1(\mu)$,
\begin{align}\label{eq:KattToZatt}
\K_{N,\be}(\mu)
\le e^{\be|I_\mu|}\,
\E_\mu\brak[\big]{e^{2\be h_\mu}}^{1/2}\,
Z_{N,2\be}(\mu)^{1/2}.
\end{align}

\par\smallskip\noindent  From the definition of $\Fr_N$, with $|I_\mu|\le\frac12\|\g_{\mathrm{att}}\|_{L^1(\mu^{\otimes2})}<\infty$,
\begin{align}
-\be N\Fr_N(\XN,\mu)
=-\frac{\be}{2N}\sum_{i\ne j}\tl W(x_i,x_j)
+\frac{\be}{N}\sum_{i=1}^Nh_\mu(x_i)
-\be I_\mu.
\end{align}
Since the one-body term is an empirical average of iid samples, Jensen's inequality gives\linebreak[4]
$\E_{\mu^{\otimes N}}[e^{\frac{2\be}N\sum_ih_\mu(\xi_i)}]=\E_\mu[e^{\frac{2\be}Nh_\mu}]^N\le\E_\mu[e^{2\be h_\mu}]$.  Therefore, by Cauchy--Schwarz,
\begin{align}
\K_{N,\be}(\mu)
\le e^{\be|I_\mu|}\,
\E_{\mu^{\otimes N}}\brak[\Big]{e^{-\frac{\be}{N}S_N}}^{1/2}
\E_{\mu^{\otimes N}}\brak[\Big]{e^{\frac{2\be}{N}\sum_{i=1}^Nh_\mu(\xi_i)}}^{1/2}
\le e^{\be|I_\mu|}\,Z_{N,2\be}(\mu)^{1/2}\,\E_\mu\brak[\big]{e^{2\be h_\mu}}^{1/2}.
\end{align}

\step{Step 2: Gaussian-factorial moment bound.}  There is a universal constant $C_\ast<\infty$ such that, for all $k\ge1$ and $N\ge1$,
\begin{align}\label{eq:attMomentBound}
\E_{\mu^{\otimes N}}\big[|S_N|^{2k}\big]
\le (C_\ast C_{\mathrm{mom}})^{2k}N^{2k}(2k)!.
\end{align}

\par\smallskip\noindent  The case $N=1$ is trivial, since $S_1=0$; assume $N\ge2$.  The kernel $\tl W$ is canonical by \eqref{eq:tildedef} and Fubini, and integrable by the case $p=1$ of \eqref{eq:attMomentAssumption}, so the moment decoupling of \cref{rem:decoupMoment}, applied with $\Phi(x)=x^{2k}$, and the expansion of the $2k$-th power give
\begin{align}
\E_{\mu^{\otimes N}}\big[|S_N|^{2k}\big]
\le 4^{2k}\,
\E\Bigg[\Big|\sum_{i\ne j}\tl W(\xi_i,\xi'_j)\Big|^{2k}\Bigg]
\le 4^{2k}
\sum_{\substack{1\le i_\ell,j_\ell\le N\\ i_\ell\ne j_\ell,\ 1\le\ell\le2k}}
\Big|\E\brak*{\prod_{\ell=1}^{2k}\tl W(\xi_{i_\ell},\xi'_{j_\ell})}\Big|,
\end{align}
where $\xi'_1,\ldots,\xi'_N$ is an independent copy of $\xi_1,\ldots,\xi_N$.  Fix a multi-index and set $a_m\coloneqq\#\{\ell:i_\ell=m\}$ and $b_m\coloneqq\#\{\ell:j_\ell=m\}$.  By the same centering, the expectation vanishes unless every active $a_m$ and $b_m$ is at least $2$; call such multi-indices \emph{surviving}.  For a surviving multi-index, condition on $\xi'_1,\ldots,\xi'_N$: by the independence of the unprimed variables, the conditional expectation factorizes over the active indices $m$ into factors $\E_{\xi_m}[\prod_{\ell:i_\ell=m}\tl W(\xi_m,\xi'_{j_\ell})]$ with $a_m$ terms each.  H\"older's inequality in $\xi_m$ with the $a_m$ equal exponents $(a_m,\ldots,a_m)$, followed by the moment hypothesis \eqref{eq:attMomentAssumption} with $p=a_m$, applied uniformly in the frozen variables $\xi'_{j_\ell}$, gives
\begin{align}
\Big|\E_{\xi_m}\brak[\Big]{\prod_{\ell:i_\ell=m}\tl W(\xi_m,\xi'_{j_\ell})}\Big|
\le\prod_{\ell:i_\ell=m}
\pa[\Big]{\int_{\R^\d}|\tl W(x,\xi'_{j_\ell})|^{a_m}\,\dd\mu(x)}^{1/a_m}
\le C_{\mathrm{mom}}^{a_m}\,a_m!.
\end{align}
Multiplying over the active $m$ and using $\sum_ma_m=2k$,
\begin{align}
\Big|\E\brak*{\prod_{\ell=1}^{2k}\tl W(\xi_{i_\ell},\xi'_{j_\ell})}\Big|
\le C_{\mathrm{mom}}^{2k}\prod_{m=1}^N a_m!.
\end{align}
Since the $i$-words with multiplicity vector $\alpha\in\N_0^N$, $\sum_m\alpha_m=2k$, number $(2k)!/\prod_m\alpha_m!$ and each contributes $\prod_m\alpha_m!$, dropping the coupling $i_\ell\ne j_\ell$ factorizes the count:
\begin{align}\label{eq:attCountingFactorized}
\sum_{\text{surviving }(i,j)}\ \prod_{m=1}^N a_m!
\le (2k)!\,\cdot\,\#\{\text{surviving }\alpha\}\,\cdot\,\#\{\text{surviving }j\text{-words}\}.
\end{align}
If $N<4k$, we discard the survival constraints: there are at most $N^{2k}$ words and $\binom{2k+N-1}{N-1}\le2^{6k}$ multiplicity vectors, so \eqref{eq:attCountingFactorized} is at most $2^{6k}N^{2k}(2k)!$.  If $N\ge4k$, a surviving word or multiplicity vector has at most $k$ active letters, all repeated; choosing the $r\le k$ active letters and assigning positions crudely,
\begin{align}\label{eq:attCombinatorialBounds}
\#\{\text{surviving }j\text{-words}\}
&\le\sum_{r=1}^{k}\binom Nr\, r^{2k}
\le C^kN^kk^k,\nn\\
\#\{\text{surviving }\alpha\}
&\le\sum_{r=1}^{k}\binom Nr\binom{2k-r-1}{r-1}
\le C^k\frac{N^k}{k^k}.
\end{align}
Indeed, writing $r=\theta k$ and $N=xk$ with $0<\theta\le1$ and $x\ge4$, and using $\binom Nr\le(eN/r)^r$, $y^\theta\le y$ for $y\ge1$, and $\sup_{(0,1]}\theta^{-2\theta}<\infty$, the summands are bounded by $(ex/\theta)^{\theta k}k^{2k}\le C^kN^kk^k$ and by $(eN/r)^r(2ek/r)^r=(2e^2Nk/r^2)^r\le(Cx)^k=C^kN^k/k^k$, respectively, and the factors $k$ from the sums are absorbed into $C^k$.  The factors $k^{\pm k}$ cancel in \eqref{eq:attCountingFactorized}, which is again at most $C^kN^{2k}(2k)!$.  Taking $C_\ast\coloneqq4\max\brac[\big]{2^{3},\sqrt C}$ proves \eqref{eq:attMomentBound}.

\step{Step 3: Summation.}  Since $e^x\le e^x+e^{-x}=2\sum_{k\ge0}x^{2k}/(2k)!$, the moment bound \eqref{eq:attMomentBound} gives, for every $0\le\be<\be_0$, with $\be_0=1/(C_\ast C_{\mathrm{mom}})$ as in \eqref{eq:attBeta0Def} and $C_\ast$ the constant of Step 2, and uniformly in $N$,
\begin{align}\label{eq:ZattMomentExpansion}
Z_{N,2\be}(\mu)
\le
2\sum_{k=0}^{\infty}
\frac{\be^{2k}}{N^{2k}(2k)!}
\E_{\mu^{\otimes N}}\big[|S_N|^{2k}\big]
\le 2\sum_{k=0}^{\infty}\pa[\big]{\be C_\ast C_{\mathrm{mom}}}^{2k}
=\frac{2}{1-\pa[\big]{\be C_\ast C_{\mathrm{mom}}}^2},
\end{align}
which, combined with \eqref{eq:KattToZatt}, proves the theorem with
\begin{align}\label{eq:CubAttExplicit}
C_{\mathrm{ub}}^{\mathrm{att}}(\be,\mu)
=e^{\be|I_\mu|}\,
\E_\mu\brak[\big]{e^{2\be h_\mu}}^{1/2}
\pa*{\frac{2}{1-\pa[\big]{\be C_\ast C_{\mathrm{mom}}}^2}}^{1/2}.
\end{align}
\end{proof}

\begingroup
\subsection{Quantitative convergence to the determinant}
\label{subsec:attractive-quantitative-determinant}

To prove the quantitative attractive result, we need a fixed-kernel estimate for the positive quadratic exponential $\E\brak*{\exp\{\be\mathsf W_N(q)/2\}}$.  This is the counterpart of the bounded-feature estimate in \cref{prop:boundedFeatureQuadLaplace}, with $\mathrm{Id}+\be T_q$ replaced by $\mathrm{Id}-\be T_q$.  The proof requires the stronger subcritical operator bound $2\be\|T_q\|_{\mathrm{op}}<1$, which provides the second exponential moment used below.

Let $q\in L^2(\mu^{\otimes2})$ be symmetric, canonical, and nonnegative definite, and let
\begin{align}
q(x,y)&=\langle \mathsf k(x),\mathsf k(y)\rangle_{\mathcal H},
\qquad
\int_{\R^\d} \mathsf k(x)\,\dd\mu(x)=0,
\label{eq:positiveSignFeatureRepresentation}\\
D(x)&\coloneqq\|\mathsf k(x)\|_{\mathcal H}^2,
\qquad
\mathsf W_N(q)\coloneqq\frac1N\sum_{1\le i\ne j\le N}q(\xi_i,\xi_j),
\label{eq:positiveSignFeatureDefinitions}
\end{align}
be a centered feature representation on a separable real Hilbert space $\mathcal H$, where $\xi_1,\ldots,\xi_N$ are independent with law $\mu$.  Let $T_q$ denote the Hilbert--Schmidt integral operator with kernel $q$.

\begin{prop}[Feature-space Laplace estimate for a positive quadratic exponential]\label{prop:positiveSignFeatureLaplace}
Fix $\be>0$ and assume that
\begin{align}\label{eq:positiveSignFeatureAssumptions}
2\be\|T_q\|_{\mathrm{op}}<1,
\qquad
\sup_{M\ge1}\E\brak*{e^{\be\mathsf W_M(q)}}<\infty,
\qquad
\int_{\R^\d} e^{aD(x)}\,\dd\mu(x)<\infty
\end{align}
for some $a>0$.  Then there is a finite constant $C$ such that, for every $N\ge1$,
\begin{align}\label{eq:positiveSignFeatureRate}
\left|
\E\brak*{e^{\frac\be2\mathsf W_N(q)}}
-\det_2\pa*{\mathrm{Id}-\be T_q}^{-1/2}
\right|
\le C N^{-1/2}.
\end{align}
For fixed $\be$ and $a$, the constant may be chosen to depend polynomially on $1+\int_{\R^\d} e^{aD(x)}\,\dd\mu(x)$, with coefficients depending only on the subcritical margin $1-2\be\|T_q\|_{\mathrm{op}}$, $\|T_q\|_{\mathrm{HS}}$, and the uniform exponential moment in \eqref{eq:positiveSignFeatureAssumptions}.
\end{prop}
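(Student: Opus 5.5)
The plan is to run the real (non-oscillatory) version of the Hubbard--Stratonovich argument of \cref{prop:boundedFeatureQuadLaplace}. Let $\mathsf Z$ be an isonormal Gaussian process over $\mathcal H$ and set $\mathsf Z_x=\mathsf Z(\mathsf k(x))$. Since $\mathsf W_N(q)=\|S_N\|_{\mathcal H}^2-\frac1N\sum_iD(\xi_i)$ with $S_N=N^{-1/2}\sum_i\mathsf k(\xi_i)$, we get
\begin{align}
\E\brak*{e^{\frac\be2\mathsf W_N(q)}}=\E_{\mathsf Z}\brak*{A_N(\mathsf Z)^N},
\qquad
A_N(\mathsf Z)\coloneqq\int_{\R^\d}\exp\pa*{\sqrt{\tfrac\be N}\,\mathsf Z_x-\tfrac{\be}{2N}D(x)}\,\dd\mu(x)>0.
\end{align}
Fubini is justified by positivity. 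The difference from the repulsive case is that $A_N$ is no longer bounded by $a_N$, so every error term needs an independent integrability source.

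The key observation supplies this source, and it is where the second hypothesis in \eqref{eq:positiveSignFeatureAssumptions} enters. Comparing the representations, $A_N$ at $(N,\be)$ is exactly the integrand at $(2N,2\be)$: indeed $\sqrt{\be/N}=\sqrt{2\be/(2N)}$ and $\be/(2N)=2\be/(2\cdot2N)$. Hence
\begin{align}
\E_{\mathsf Z}\brak*{A_N^{2N}}=\E\brak*{e^{\be\mathsf W_{2N}(q)}}\le\sup_{M}\E\brak*{e^{\be\mathsf W_M(q)}}<\infty,
\end{align}
so $A_N^N$ is bounded in $L^2(\mathsf Z)$ uniformly in $N$. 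Similarly, with $Q=\int\mathsf Z_x^2\,\dd\mu$ and $B\coloneqq\frac\be2(Q-\bar D)$, diagonalizing the covariance operator $\Sigma$ gives
\begin{align}
\E_{\mathsf Z}[e^{2B}]=\det_2(\mathrm{Id}-2\be T_q)^{-1/2}<\infty,
\end{align}
which is where $2\be\|T_q\|_{\mathrm{op}}<1$ is used. The same diagonalization gives $\E_{\mathsf Z}[e^{B}]=\det_2(\mathrm{Id}-\be T_q)^{-1/2}$, identifying the limit.

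Next I expand as in Step~2 of \cref{prop:boundedFeatureQuadLaplace}:
\begin{align}
A_N=1+\frac BN+\frac R{N^{3/2}}+\frac{E_N}{N^2},
\end{align}
with $R$ odd and $B$ even in $\mathsf Z$. The remainder now carries the weight $\exp(\sqrt{\be}|\mathsf Z_x|+\cdots)$. Its Gaussian moments are of the form $\int e^{c\be D}\mathrm{poly}(D)\,\dd\mu$, which the hypothesis $\int e^{aD}\,\dd\mu<\infty$ controls after taking $N$ large enough that the constants $c\be/N$ fall below $a$; the finitely many small $N$ are absorbed into $C$ using the uniform moment bound. Then I write
\begin{align}
A_N^N-e^B=\pa*{e^{N\log A_N}-e^{B+R/\sqrt N}}+e^B\pa*{e^{R/\sqrt N}-1-R/\sqrt N}+\frac{1}{\sqrt N}e^BR.
\end{align}
The last term has zero mean by parity.

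I will then split on a good event $G=\{|B|+|R|+|E_N|\le N^{1/4}\}$. On $G$, a second-order Taylor expansion of $\log A_N$ gives $|N\log A_N-B-R/\sqrt N|\lesssim N^{-1/2}(1+B^2+R^2+|E_N|)$. I multiply this by the mean-value bound $A_N^N+e^{B+R/\sqrt N}$ and apply Cauchy--Schwarz in $\mathsf Z$, using the $L^2$ bounds on $A_N^N$ and $e^B$ from the second paragraph together with polynomial moments of $B,R,E_N$. The factor $e^{R/\sqrt N}$ and the middle term $e^B(e^{R/\sqrt N}-1-R/\sqrt N)$ are handled by Hölder, using Gaussian hypercontractivity of $R$ (a chaos of order at most three) together with the $e^{aD}$ moment. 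On $G^c$, I bound $A_N^N+e^B$ in $L^2$ and use Chebyshev with high moments of $B,R,E_N$ to make $\mathbb P(G^c)^{1/2}\lesssim N^{-1/2}$. Collecting terms gives the rate $N^{-1/2}$, with constants polynomial in $1+\int e^{aD}\,\dd\mu$ and depending on the margin through $\E e^{2B}$.

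The main obstacle is the first step: controlling $A_N^N$ without a modulus bound. The exact identity $\E_{\mathsf Z}[A_N^{2N}]=\E e^{\be\mathsf W_{2N}}$ is what I expect to resolve it. A secondary difficulty is controlling $e^{R/\sqrt N}$ on the bad event, where only Hölder with the $L^2$ bound on $A_N^N$, and not parity, is available. If this proves too lossy, I would first try truncating to $\{D\le m_N\}$ with $m_N\sim\log N$ and paying the tail via $\int e^{aD}\,\dd\mu$.
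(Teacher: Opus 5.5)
Your architecture coincides with the paper's proof in every essential step:
\begin{itemize}
\item the real Hubbard--Stratonovich identity $\E[e^{\frac\be2\mathsf W_N(q)}]=\E_{\mathsf Z}[A_N^N]$;
\item the identity $\E_{\mathsf Z}[A_N^{2N}]=\E[e^{\be\mathsf W_{2N}(q)}]$ as the replacement for the missing modulus bound;
\item $\E_{\mathsf Z}[e^{2B}]=\det_2(\mathrm{Id}-2\be T_q)^{-1/2}$ from the operator margin;
\item the fourth-order Taylor expansion, with moments controlled for large $N$ by $\int e^{aD}\,\dd\mu$;
\item a good/bad-event split closed by Cauchy--Schwarz.
\end{itemize}

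Your one departure contains a step that fails. You carry over the intermediate $e^{B+R/\sqrt N}$ and the parity cancellation from the oscillatory proof of \cref{prop:boundedFeatureQuadLaplace}. Here $R$ is $\frac{\be^{3/2}}{6}$ times the integral of the Wick cubes $\mathsf Z_x^3-3D(x)\mathsf Z_x$, so it lies in the third Wiener chaos. It is generically nonzero; for a single feature it is a multiple of $Z^3-3Z$. Such a variable has no exponential moments. Hypercontractivity gives $\E[e^{c|R|^{2/3}}]<\infty$, but $\E[e^{cR/\sqrt N}]=\infty$ for every $c\ne0$ and every $N$.

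Since $B\ge-\frac\be2\bar D$, $y\mapsto e^y-1-y$ is nonnegative, and $\E R=0$, the middle term satisfies $\E[e^B(e^{R/\sqrt N}-1-R/\sqrt N)]\ge e^{-\be\bar D/2}(\E[e^{R/\sqrt N}]-1)=\infty$. Your three-term splitting is therefore an $\infty-\infty$ decomposition of an integrable quantity. The middle term cannot be ``handled by H\"older and hypercontractivity,'' and $e^{R/\sqrt N}$ cannot be controlled on the bad event at all; this is impossible, not merely lossy. In the oscillatory setting the issue never arises because $|e^{\iu R/\sqrt N}|=1$.

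The fix is to never let $e^{R/\sqrt N}$ leave the good event. On $G$ you have $|R|\le N^{1/4}$, so $e^{R/\sqrt N}\le e$; on $G^c$, bound $|A_N^N-e^B|\le A_N^N+e^B$ directly without any decomposition.

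The parity step is then superfluous. Your $G$ is not invariant under $\mathsf Z\mapsto-\mathsf Z$ anyway, because of $|E_N|$. More to the point, at the claimed rate the crude bound $N^{-1/2}\E[e^B|R|]\le N^{-1/2}\|e^B\|_{L^2}\|R\|_{L^2}$ is already admissible.

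This is exactly why the paper skips the intermediate exponential. On $G_N=\{|A_N-1|\le1/2\}$ it uses $|N\log A_N-B|\le|R|/\sqrt N+|E_N|/N+2N|A_N-1|^2$, which is $O(N^{-1/2})$ in $L^2(\mathsf Z)$ by the fourth-moment bounds. It combines this with $|A_N^N-e^B|\le|N\log A_N-B|(A_N^N+e^B)$ and the two $L^2$ bounds. The complement is controlled by $\mathbb P(G_N^c)\le16\,\E|A_N-1|^4=O(N^{-4})$. No exponential of a third-chaos variable ever appears.
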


\begin{proof}
Let $\mathsf Z$ be an isonormal Gaussian process over $\mathcal H$, independent of $\xi_1,\ldots,\xi_N$, and write $\mathsf Z_x\coloneqq\mathsf Z(\mathsf k(x))$.  Set
\begin{align}\label{eq:positiveSignANdef}
A_N(\mathsf Z)
\coloneqq
\int_{\R^\d}
\exp\left\{
\sqrt{\frac\be N}\,\mathsf Z_x
-\frac\be{2N}D(x)
\right\}
\,\dd\mu(x).
\end{align}
Since
\begin{align}\label{eq:positiveSignQuadraticIdentity}
\mathsf W_N(q)
=
\left\|\frac1{\sqrt N}\sum_{i=1}^N\mathsf k(\xi_i)\right\|_{\mathcal H}^2
-\frac1N\sum_{i=1}^ND(\xi_i),
\end{align}
the Hubbard--Stratonovich transformation and Fubini's theorem give
\begin{align}\label{eq:positiveSignFeatureLinearization}
\E\brak*{e^{\frac\be2\mathsf W_N(q)}}
=\E_{\mathsf Z}\brak*{A_N(\mathsf Z)^N}.
\end{align}

For fixed $\mathsf Z$, define
\begin{align}\label{eq:positiveSignfDef}
f(\tau)
\coloneqq
\int_{\R^\d}
\exp\left\{
\tau\sqrt\be\,\mathsf Z_x
-\frac{\tau^2\be}{2}D(x)
\right\}
\,\dd\mu(x).
\end{align}
Then $A_N(\mathsf Z)=f(N^{-1/2})$, $f(0)=1$, and $f'(0)=0$.  Introduce
\begin{align}\label{eq:positiveSignBRdef}
B&\coloneqq\frac\be2\int_{\R^\d}\big(\mathsf Z_x^2-D(x)\big)\,\dd\mu(x),\nn\\
R&\coloneqq\frac{\be^{3/2}}6\int_{\R^\d}
\big(\mathsf Z_x^3-3D(x)\mathsf Z_x\big)\,\dd\mu(x).
\end{align}
Taylor's formula with integral remainder yields
\begin{align}\label{eq:positiveSignFeatureTaylor}
A_N(\mathsf Z)=1+\frac BN+\frac R{N^{3/2}}+\frac{E_N}{N^2},
\end{align}
where
\begin{align}\label{eq:positiveSignENdef}
E_N\coloneqq\frac16\int_0^1(1-u)^3 f^{(4)}(uN^{-1/2})\,\dd u.
\end{align}
The exponential moment of $D$ implies, after discarding finitely many $N$ and enlarging the final constant, that
\begin{align}\label{eq:positiveSignFeatureMoments}
\sup_N\E_{\mathsf Z}\brak*{|B|^4+|R|^4+|E_N|^4}<\infty.
\end{align}
Indeed, the bounds for $B$ and $R$ follow from Gaussian moment identities and Minkowski's inequality.  For the remainder, write
\begin{align}\label{eq:positiveSigngxDef}
g_x(\tau)\coloneqq\tau\sqrt\be\,\mathsf Z_x-\frac{\tau^2\be}{2}D(x).
\end{align}
Then
\begin{align}\label{eq:positiveSignFourthDerivative}
(e^{g_x})^{(4)}
=e^{g_x}\big((g_x')^4+6(g_x')^2g_x''+3(g_x'')^2\big),
\qquad
g_x'=\sqrt\be\,\mathsf Z_x-\tau\be D(x),
\qquad
g_x''=-\be D(x).
\end{align}
For all sufficiently large $N$, Gaussian moment estimates give
\begin{align}\label{eq:positiveSignFourthDerivativeBound}
\sup_{0\le\tau\le N^{-1/2}}
\E_{\mathsf Z}\brak*{\left|(e^{g_x})^{(4)}(\tau)\right|^4}
\le C_{\be,a}\,e^{aD(x)/2}\big(1+D(x)^{16}\big).
\end{align}
The right-hand side is integrable after decreasing $a$ if necessary and absorbing the polynomial factor into the exponential.  Jensen's inequality in $x$ then gives the asserted uniform fourth-moment bound for $E_N$.

Write $A_N=A_N(\mathsf Z)$ and set $G_N\coloneqq\{|A_N-1|\le1/2\}$.  Using $|\log(1+y)-y|\le2y^2$ for $|y|\le1/2$, together with \eqref{eq:positiveSignFeatureTaylor}, gives, on $G_N$,
\begin{align}\label{eq:positiveSignDeltaPointwise}
|N\log A_N-B|
\le\frac{|R|}{\sqrt N}+\frac{|E_N|}{N}+2N|A_N-1|^2.
\end{align}
Consequently, \eqref{eq:positiveSignFeatureMoments} implies
\begin{align}\label{eq:positiveSignFeatureDeltaBound}
\left\|(N\log A_N-B)\mathbf1_{G_N}\right\|_{L^2(\mathsf Z)}
\le C N^{-1/2}.
\end{align}
Moreover, Markov's inequality and \eqref{eq:positiveSignFeatureTaylor}--\eqref{eq:positiveSignFeatureMoments} yield
\begin{align}\label{eq:positiveSignFeatureGoodComplement}
\mathbb P_{\mathsf Z}(G_N^c)
\le16\E_{\mathsf Z}\brak*{|A_N-1|^4}
\le C N^{-4}.
\end{align}

We next prove the estimate
\begin{align}\label{eq:positiveSignFeatureApproximation}
\left|\E_{\mathsf Z}\brak*{A_N^N}-\E_{\mathsf Z}\brak*{e^B}\right|
\le C N^{-1/2}.
\end{align}
The two second moments used below are uniformly bounded:
\begin{align}
\E_{\mathsf Z}\brak*{A_N^{2N}}
&=\E\brak*{e^{\be\mathsf W_{2N}(q)}}\le C,
\label{eq:positiveSignFeatureL2Identity}\\
\E_{\mathsf Z}\brak*{e^{2B}}
&=\det_2\pa*{\mathrm{Id}-2\be T_q}^{-1/2}<\infty.
\label{eq:positiveSignBSecondMoment}
\end{align}
The first identity follows by introducing $2N$ independent samples in $A_N^{2N}$ and applying the Hubbard--Stratonovich transformation; the bound is \eqref{eq:positiveSignFeatureAssumptions}.  The second follows by diagonalizing the covariance operator of $\mathsf k(\xi)$.

On $G_N$, one has $|A_N^N-e^B|\le|N\log A_N-B|(A_N^N+e^B)$.  Hence, by Cauchy--Schwarz and the preceding estimates,
\begin{align}\label{eq:positiveSignDifferenceGoodSet}
\E_{\mathsf Z}\brak*{|A_N^N-e^B|\mathbf1_{G_N}}
&\le
\left\|(N\log A_N-B)\mathbf1_{G_N}\right\|_{L^2(\mathsf Z)}
\pa*{\E_{\mathsf Z}\brak*{A_N^{2N}}^{1/2}+\E_{\mathsf Z}\brak*{e^{2B}}^{1/2}}
\nn\\
&\le C N^{-1/2}.
\end{align}
On $G_N^c$, Cauchy--Schwarz and \eqref{eq:positiveSignFeatureGoodComplement} give
\begin{align}\label{eq:positiveSignDifferenceBadSet}
\E_{\mathsf Z}\brak*{(A_N^N+e^B)\mathbf1_{G_N^c}}
&\le
\pa*{\E_{\mathsf Z}\brak*{A_N^{2N}}^{1/2}+\E_{\mathsf Z}\brak*{e^{2B}}^{1/2}}
\mathbb P_{\mathsf Z}(G_N^c)^{1/2}
\nn\\
&\le C N^{-2}.
\end{align}
This proves \eqref{eq:positiveSignFeatureApproximation}.

Finally, if $(\lambda_j)_{j\ge1}$ are the nonzero eigenvalues of $T_q$, then
\begin{align}\label{eq:positiveSignBChaos}
B=\frac\be2\sum_{j\ge1}\lambda_j(Z_j^2-1)
\end{align}
in $L^2$, and $\be\|T_q\|_{\mathrm{op}}<1$ gives
\begin{align}\label{eq:positiveSignBDeterminant}
\E_{\mathsf Z}\brak*{e^B}
=\prod_{j\ge1}e^{-\be\lambda_j/2}(1-\be\lambda_j)^{-1/2}
=\det_2\pa*{\mathrm{Id}-\be T_q}^{-1/2}.
\end{align}
Combining \eqref{eq:positiveSignFeatureLinearization}, \eqref{eq:positiveSignFeatureApproximation}, and \eqref{eq:positiveSignBDeterminant} completes the proof.
\end{proof}

\begin{proof}[Proof of the quantitative-convergence assertion in \cref{thm:attractive-log-small-beta-partition}]
Write $\g=-\g_{\mathrm{att}}=-\log|\cdot|$ for the repulsive logarithmic kernel, so that
\begin{align}\label{eq:attractivePositiveKernelDef}
\tl\g=-\tl\g_{\mathrm{att}}
\end{align}
is canonical and nonnegative definite.  In this proof, the superscript $\mathrm{att}$ is restored on the target attractive quantities, while $\g$, $\g_\eta$, $\f_\eta$, $\g_{\eta,t}$, $W_N$, $W_{N,\eta}$, and $W_{N,\eta,t}$ retain the repulsive logarithmic notation established in \cref{sec:truncation-estimates,subsec:uniform-bound-log-riesz}.  Let $T_\mu$, $T_{\mu,\eta}$, and $T_{\mu,\eta,t}$ denote the operators with kernels $\tl\g$, $\tl\g_\eta$, and $\tl\g_{\eta,t}$, respectively.  By linearity of Hoeffding centering,
\begin{align}\label{eq:attractiveInterpolatedKernels}
\tl\g_{\eta,t}=\tl\g_\eta+t\tl\f_\eta
=\tl\g-(1-t)\tl\f_\eta,
\qquad 0\le t\le1.
\end{align}

\textbf{Uniform exponential moments along the interpolation.}
The logarithmic truncation satisfies
\begin{align}\label{eq:attractiveRemainderFactorial}
\sup_{0<\eta\le\eta_0}\sup_{y\in\R^\d}
\int_{\R^\d}|\tl\f_\eta(x,y)|^p\,\dd\mu(x)
\le C_0^p p!
\qquad(p\ge1).
\end{align}
Indeed, by the centering identity,
\begin{align}\label{eq:attractiveCenteredRemainderExpansion}
\tl\f_\eta(x,y)
=\f_\eta(x-y)
-(\f_\eta*\mu)(x)
-(\f_\eta*\mu)(y)
+\iint_{(\R^\d)^2}\f_\eta(z-w)\,\dd\mu(z)\dd\mu(w).
\end{align}
The bounded-density assumption and the logarithmic truncation estimate give
\begin{align}\label{eq:attractiveUncenteredRemainderMoment}
\sup_{y\in\R^\d}\int_{\R^\d}\f_\eta(x-y)^p\,\dd\mu(x)
\le\|\mu\|_{L^\infty}\int_{\R^\d}\f_\eta(z)^p\,\dd z
\le C^p p!.
\end{align}
For $|z|\le\eta$, the last estimate follows from
\begin{align}\label{eq:logFactorialIntegral}
\int_0^1(1+|\log r|)^p r^{\d-1}\,\dd r\le C_\d^p p!,
\end{align}
while the polynomial decay in \eqref{eq:logfetaDecay} gives the bound on $|z|>\eta$.  The convolution terms are uniformly bounded by $C\eta^\d$ by \eqref{eq:logfetaConv}, proving \eqref{eq:attractiveRemainderFactorial}.  Since \eqref{eq:attractiveInterpolatedKernels} gives $\tl\g_{\eta,t}=\tl\g-(1-t)\tl\f_\eta$, combining \eqref{eq:attractiveRemainderFactorial} with \eqref{eq:attMomentAssumption} yields
\begin{align}\label{eq:attractiveInterpolatedFactorial}
\sup_{0<\eta\le\eta_0}\sup_{0\le t\le1}\sup_{y\in\R^\d}
\int_{\R^\d}|\tl\g_{\eta,t}(x,y)|^p\,\dd\mu(x)
\le C_{\mathrm{tr}}^p p!
\qquad(p\ge1)
\end{align}
for a finite $C_{\mathrm{tr}}$ depending only on the displayed data.

The factorial-moment argument used for the uniform-bound assertion applies uniformly to the kernels $-\g_{\eta,t}$.  Decreasing the threshold $\be_{\mathrm q}$ in \eqref{eq:attQuantitativeThreshold}, if necessary, we may therefore arrange that, for every $0<\be<\be_{\mathrm q}$,
\begin{align}\label{eq:attractiveInterpolatedExponentialMoment}
\sup_{N\ge1}\sup_{0<\eta\le\eta_0}\sup_{0\le t\le1}
\E_{\mu^{\otimes N}}\brak*{e^{\be W_{N,\eta,t}}}<\infty.
\end{align}
The case $p=2$ of \eqref{eq:attractiveInterpolatedFactorial} also bounds the Hilbert--Schmidt, hence operator, norm of $T_{\mu,\eta,t}$ uniformly in $\eta$ and $t$.  Decreasing $\be_{\mathrm q}$ once more if needed gives the common subcritical operator-norm bound
\begin{align}\label{eq:attractiveUniformSpectralGap}
\sup_{0<\eta\le\eta_0}\sup_{0\le t\le1}
2\be\|T_{\mu,\eta,t}\|_{\mathrm{op}}<1.
\end{align}
These choices may be made with $\be_{\mathrm q}\le\be_0/2$, as required in \eqref{eq:attQuantitativeThreshold}.

\textbf{Finite-particle truncation error.}
Using the uniform exponential-moment bound \eqref{eq:attractiveInterpolatedExponentialMoment}, differentiation in $t$ and Cauchy--Schwarz give
\begin{align}\label{eq:attractiveFiniteParticleInterpolation}
\left|\frac{\dd}{\dd t}\E_{\mu^{\otimes N}}\brak*{e^{\frac\be2 W_{N,\eta,t}}}\right|
\le\frac\be2\E_{\mu^{\otimes N}}\brak*{|W_N-W_{N,\eta}|e^{\frac\be2 W_{N,\eta,t}}}
\le C\E_{\mu^{\otimes N}}\brak*{(W_N-W_{N,\eta})^2}^{1/2}.
\end{align}
Canonicality gives the exact identity
\begin{align}\label{eq:attractiveCanonicalVariance}
\E_{\mu^{\otimes N}}\brak*{(W_N-W_{N,\eta})^2}
=\frac{2(N-1)}N\|\tl\f_\eta\|_{L^2(\mu^{\otimes2})}^2,
\end{align}
and the logarithmic truncation estimate gives
\begin{align}\label{eq:attractiveL2Truncation}
\|\tl\f_\eta\|_{L^2(\mu^{\otimes2})}\le C\eta^{\d/2}.
\end{align}
Integrating \eqref{eq:attractiveFiniteParticleInterpolation} over $t\in[0,1]$ therefore yields
\begin{align}\label{eq:attractiveFiniteParticleTruncation}
\left|
\E_{\mu^{\otimes N}}\brak*{e^{\frac\be2 W_N}}
-\E_{\mu^{\otimes N}}\brak*{e^{\frac\be2 W_{N,\eta}}}
\right|
\le C\eta^{\d/2}.
\end{align}

\textbf{Determinant truncation error.}
The operators $T_\mu$ and $T_{\mu,\eta}$ are nonnegative and satisfy the common operator-norm bound \eqref{eq:attractiveUniformSpectralGap}.  For a nonnegative self-adjoint Hilbert--Schmidt operator $T$ satisfying this bound,
\begin{align}\label{eq:attractiveDeterminantExpansion}
-\frac12\log\det_2\pa*{\mathrm{Id}-\be T}
=\frac12\sum_{m=2}^\infty\frac{\be^m}{m}\operatorname{Tr}(T^m).
\end{align}
The telescoping identity for $T^m-S^m$, together with the uniform Hilbert--Schmidt bounds, shows that the left-hand side is locally Lipschitz in Hilbert--Schmidt norm on the set of nonnegative self-adjoint Hilbert--Schmidt operators satisfying the common operator-norm bound.  Exponentiating on the resulting bounded range gives
\begin{align}\label{eq:attractiveDeterminantTruncation}
\left|
\det_2\pa*{\mathrm{Id}-\be T_\mu}^{-1/2}
-\det_2\pa*{\mathrm{Id}-\be T_{\mu,\eta}}^{-1/2}
\right|
\le C\|T_\mu-T_{\mu,\eta}\|_{\mathrm{HS}}
\le C\eta^{\d/2}.
\end{align}

\textbf{Quantitative fixed-truncation limit.}
Let $\mathsf k_\eta$ be the canonical feature map of $\tl\g_\eta$ and set $D_\eta(x)\coloneqq\|\mathsf k_\eta(x)\|^2$.  The diagonal identity is
\begin{align}\label{eq:attractiveFeatureDiagonalIdentity}
D_\eta(x)
=\g_\eta(0)-2(\g_\eta*\mu)(x)+2I_\mu^{(\eta)},
\end{align}
where $I_\mu^{(\eta)}\coloneqq\frac12\iint_{(\R^\d)^2}\g_\eta(z-w)\,\dd\mu(z)\dd\mu(w)$.  Since $\g_\eta=\g-\f_\eta$ and $\g=-\g_{\mathrm{att}}$, the truncation bounds imply
\begin{align}\label{eq:attractiveFeatureDiagonalBound}
0\le D_\eta(x)
\le C\big(1+\log(1/\eta)\big)+2h_\mu^{\mathrm{att}}(x).
\end{align}
The assumption $e^{4\be h_\mu^{\mathrm{att}}}\in L^1(\mu)$ therefore supplies the feature-diagonal exponential moment required in \cref{prop:positiveSignFeatureLaplace}, with a bound $C\eta^{-A}$ for some finite $A=A(\be,\mu)$.  Applying \cref{prop:positiveSignFeatureLaplace} and using \eqref{eq:attractiveInterpolatedExponentialMoment} and \eqref{eq:attractiveUniformSpectralGap} gives
\begin{align}\label{eq:attractiveFixedTruncationRate}
\left|
\E_{\mu^{\otimes N}}\brak*{e^{\frac\be2 W_{N,\eta}}}
-\det_2\pa*{\mathrm{Id}-\be T_{\mu,\eta}}^{-1/2}
\right|
\le C\eta^{-A}N^{-1/2}.
\end{align}
Combining \eqref{eq:attractiveFiniteParticleTruncation}, \eqref{eq:attractiveDeterminantTruncation}, and \eqref{eq:attractiveFixedTruncationRate} gives
\begin{align}\label{eq:attractiveCenteredRate}
\left|
\E_{\mu^{\otimes N}}\brak*{e^{\frac\be2 W_N}}
-\det_2\pa*{\mathrm{Id}-\be T_\mu}^{-1/2}
\right|
\le C\left(\eta^{\d/2}+\eta^{-A}N^{-1/2}\right).
\end{align}
Choose $\eta=N^{-1/(2A+\d)}$ when this is at most $\eta_0$, and enlarge the constant to absorb the remaining finite set of $N$.  Then the right-hand side of \eqref{eq:attractiveCenteredRate} is bounded by $C(e+N)^{-c}$, where
\begin{align}\label{eq:attractivePowerExponent}
c\coloneqq\frac{\d}{2(2A+\d)}>0.
\end{align}

\textbf{Restoring the one-body term.}
Set
\begin{align}\label{eq:attractiveOneBodyFluctuation}
\psi_\mu^{\mathrm{att}}\coloneqq h_\mu^{\mathrm{att}}-2I_\mu^{\mathrm{att}},
\qquad
R_N\coloneqq\frac1N\sum_{i=1}^N\psi_\mu^{\mathrm{att}}(\xi_i).
\end{align}
The exact splitting identity is
\begin{align}\label{eq:attractiveExactCenteredSplitting}
\K_{N,\be}^{\mathrm{att}}(\mu)
=e^{\be I_\mu^{\mathrm{att}}}
\E_{\mu^{\otimes N}}\brak*{e^{\frac\be2 W_N+\be R_N}}.
\end{align}
The bounded-density assumption gives the uniform lower bound
\begin{align}\label{eq:attractivePotentialLowerBound}
h_\mu^{\mathrm{att}}(x)
\ge\|\mu\|_{L^\infty}\int_{|z|<1}\log|z|\,\dd z> -\infty,
\end{align}
because the contribution from $|x-y|\ge1$ is nonnegative.  Together with $e^{4\be h_\mu^{\mathrm{att}}}\in L^1(\mu)$, this implies $\psi_\mu^{\mathrm{att}}\in L^2(\mu)$ and
\begin{align}\label{eq:attractiveOneBodyVariance}
\E_{\mu^{\otimes N}}\brak*{R_N^2}
=\frac{\|\psi_\mu^{\mathrm{att}}\|_{L^2(\mu)}^2}{N}.
\end{align}
Using $|e^z-1|\le|z|(1+e^z)$ and Cauchy--Schwarz, we obtain
\begin{align}\label{eq:attractiveOneBodyError}
&\left|
\K_{N,\be}^{\mathrm{att}}(\mu)
-e^{\be I_\mu^{\mathrm{att}}}
\E_{\mu^{\otimes N}}\brak*{e^{\frac\be2 W_N}}
\right|\nn\\
&\qquad\le C\E_{\mu^{\otimes N}}\brak*{R_N^2}^{1/2}
\pa*{
\E_{\mu^{\otimes N}}\brak*{e^{\be W_N}}^{1/2}
+\E_{\mu^{\otimes N}}\brak*{e^{\be W_N+2\be R_N}}^{1/2}}
\le C N^{-1/2}.
\end{align}
The first exponential moment is controlled by \eqref{eq:attractiveInterpolatedExponentialMoment}.  Up to the deterministic factor $e^{-2\be I_\mu^{\mathrm{att}}}$, the second is the attractive modulated partition function at inverse temperature $2\be$, which is uniformly bounded by the first assertion of \cref{thm:attractive-log-small-beta-partition}, since $\be_{\mathrm q}\le\be_0/2$ and $e^{4\be h_\mu^{\mathrm{att}}}\in L^1(\mu)$ is exactly the required one-body integrability at that temperature.

Finally, $T_\mu=-T_\mu^{\mathrm{att}}$, and therefore
\begin{align}\label{eq:attractiveDeterminantSign}
\det_2\pa*{\mathrm{Id}-\be T_\mu}
=\det_2\pa*{\mathrm{Id}+\be T_\mu^{\mathrm{att}}}.
\end{align}
Combining \eqref{eq:attractiveCenteredRate}, \eqref{eq:attractiveOneBodyError}, and \eqref{eq:attractiveDeterminantSign} proves \eqref{eq:attQuantitativeDeterminantRate}.
\end{proof}

\begin{remark}\label{rem:attractiveQuantitativeRate}
The rate in \eqref{eq:attQuantitativeDeterminantRate} is deliberately stated as an unspecified power.  The proof tracks an explicit exponent through the exponential moment of the truncated feature diagonal, but that exponent is not expected to be sharp.  For a periodic uniform background, the one-body term vanishes and the hypotheses involving $h_\mu^{\mathrm{att}}$ are automatic.
\end{remark}
\endgroup

\begingroup
\subsection{Periodic uniform backgrounds: phase diagram and lower bounds}
\label{subsec:periodic-attractive-phase-diagram}

We now specialize to the periodic attractive logarithmic interaction with uniform background.  Let $\mu_{\mathrm{unif}}$ denote normalized Lebesgue measure on $\T^\d$, and let $\g_\d$ be the zero-average repulsive periodic logarithmic kernel normalized by
\begin{align}\label{eq:periodicLogNormalization}
(-\Delta)^{\d/2}\g_\d
=\mathsf c_\d(\delta_0-1),
\qquad
\mathsf c_\d\coloneqq
\frac{\Gamma(\d/2)(4\pi)^{\d/2}}{2},
\end{align}
so that
\begin{align}\label{eq:periodicLogFourier}
\widehat{\g_\d}(k)
=\frac{\mathsf c_\d}{(2\pi|k|)^\d},
\qquad k\in\Z^\d\setminus\{0\}.
\end{align}
The attractive interaction is $-\g_\d$, and its modulated partition function around $\mu_{\mathrm{unif}}$ is
\begin{align}\label{eq:periodicAttractivePartition}
\K_{N,\be}(\mu_{\mathrm{unif}})
\coloneqq
\int_{(\T^\d)^N}
\exp\Bigg\{\frac{\be}{2N}
\sum_{1\le i\ne j\le N}\g_\d(x_i-x_j)\Bigg\}
\,\dd\mu_{\mathrm{unif}}^{\otimes N}(\XN).
\end{align}
Let $T_\d$ be convolution by $\g_\d$ on the mean-zero subspace of $L^2(\mu_{\mathrm{unif}})$.  If a probability measure $\mu$ is absolutely continuous with respect to $\mu_{\mathrm{unif}}$, we continue the standing abuse of notation and also write $\mu$ for its density.  Set
\begin{align}\label{eq:periodicAttractiveFreeEnergy}
\mathcal J_{\be,\d}(\mu)
\coloneqq
\Hr(\mu\vert\mu_{\mathrm{unif}})
-\frac{\be}{2}
\left\langle \mu-1,T_\d(\mu-1)\right\rangle_{L^2(\mu_{\mathrm{unif}})}.
\end{align}
We denote by
\begin{align}\label{eq:betaGmDef}
\be_{\mathrm{gm}}(\d)
\coloneqq
\sup\Big\{\be\in[0,2\d):
\mathcal J_{\be,\d}(\mu)\ge0
\text{ for every probability measure }\mu\ll\mu_{\mathrm{unif}}\Big\}
\end{align}
the global-minimization threshold of the uniform phase.  The largest eigenvalue of $T_\d$ equals $(\be_{\mathrm{s}}(\d))^{-1}$, where
\begin{align}\label{eq:betaSDef}
\be_{\mathrm{s}}(\d)
\coloneqq
\frac{(2\pi)^\d}{\mathsf c_\d},
\end{align}
and its eigenspace is
\begin{align}\label{eq:lowestFourierEigenspace}
E_1
\coloneqq
\operatorname{span}\left\{\cos(2\pi x_j),\sin(2\pi x_j):1\le j\le\d\right\}.
\end{align}
Thus, $E_1$ is the $2\d$-dimensional lowest nonzero Fourier eigenspace, corresponding to the frequencies $k=\pm e_j$.  The value $\be_{\mathrm{s}}(\d)$ is the nonlinear stability threshold identified in \cite{chodrondecourcelAttractiveLogGas2025}; moreover, $\be_{\mathrm{s}}(\d)<2\d$ exactly in integer dimensions $\d\ge11$.

For the formulation of part (iii) below, decompose a density near the uniform state as $\mu-1=u+w$, with $u\in E_1$ and $w\in E_1^\perp$.  Solving the $E_1^\perp$ component of the Euler--Lagrange equation for $w=w(u)$ near the origin gives the finite-dimensional reduced free energy
\begin{align}\label{eq:periodicReducedFreeEnergy}
\mathcal J_{\be,\d}^{\mathrm{red}}(u)
\coloneqq
\mathcal J_{\be,\d}(1+u+w(u));
\end{align}
at $\be=\be_{\mathrm{s}}(\d)$, its first nonconstant term is quartic.

In dimension one, the circular Selberg--Dyson integral \cite{Dyson1962III} gives the exact formula
\begin{align}\label{eq:DysonExactAttractive}
\K_{N,\be}(\mu_{\mathrm{unif}})
=\frac{\Gamma(1-\be/2)}
{\Gamma(1-\be/(2N))^N},
\qquad 0\le\be<2.
\end{align}
Since $\widehat{\g_1}(k)=(2|k|)^{-1}$ for $k\ne0$, it follows that
\begin{align}\label{eq:DysonDeterminantLimit}
\K_{N,\be}(\mu_{\mathrm{unif}})
\longrightarrow
 e^{-\gamma_{\mathrm E}\be/2}\Gamma(1-\be/2)
=\det_2(\mathrm{Id}-\be T_1)^{-1/2}.
\end{align}
Here $\gamma_{\mathrm E}$ denotes Euler's constant.  At the opposite end of the presently understood phase diagram, Lei and the third author \cite{LeiRosenzweigSharpLogHLS2026} prove, by means of an explicit wrapped Cauchy competitor, the strict dimension-eleven bound
\begin{align}\label{eq:d11BetaGmBound}
\be_{\mathrm{gm}}(11)
<\frac{8653}{8673}\,\be_{\mathrm{s}}(11)
=\frac{8653}{8673}\frac{64\pi^5}{945}
<\be_{\mathrm{s}}(11).
\end{align}
Thus, in dimension eleven, the uniform phase loses global minimality strictly before it loses local stability.

\begin{conj}[Periodic attractive phase diagram]
\label{conj:periodicAttractivePhaseDiagram}
\begin{enumerate}[(i)]
\item If $1\le\d\le10$, then $\be_{\mathrm{gm}}(\d)=2\d$ and
\begin{align}\label{eq:conjLowDimDeterminant}
\K_{N,\be}(\mu_{\mathrm{unif}})
\longrightarrow
\det_2(\mathrm{Id}-\be T_\d)^{-1/2},
\qquad 0\le\be<2\d.
\end{align}
\item In dimension $\d=11$, the determinant limit holds for
$0\le\be<\be_{\mathrm{gm}}(11)$, while
\begin{align}\label{eq:conjD11Pressure}
\frac1N\log\K_{N,\be}(\mu_{\mathrm{unif}})
\longrightarrow
-\inf_{\mu\ll\mu_{\mathrm{unif}}}\mathcal J_{\be,11}(\mu)>0,
\qquad
\be_{\mathrm{gm}}(11)<\be<22.
\end{align}
No assertion is made here at the coexistence point
$\be=\be_{\mathrm{gm}}(11)$.
\item If $\d\ge12$, then
$\be_{\mathrm{gm}}(\d)=\be_{\mathrm{s}}(\d)$ and the transition at
$\be_{\mathrm{s}}(\d)$ is continuous.  The leading quartic term of $\mathcal J_{\be_{\mathrm{s}}(\d),\d}^{\mathrm{red}}$ is strictly positive away from the origin in $E_1$.  Moreover,
\begin{align}\label{eq:conjHighDimDeterminant}
\K_{N,\be}(\mu_{\mathrm{unif}})
&\longrightarrow
\det_2(\mathrm{Id}-\be T_\d)^{-1/2},
&&0\le\be<\be_{\mathrm{s}}(\d),\\
\K_{N,\be_{\mathrm{s}}(\d)}(\mu_{\mathrm{unif}})
&\sim C_\d N^{\d/2},
&&C_\d\in(0,\infty),\label{eq:conjHighDimCritical}\\
\frac1N\log\K_{N,\be}(\mu_{\mathrm{unif}})
&\longrightarrow
-\inf_{\mu\ll\mu_{\mathrm{unif}}}\mathcal J_{\be,\d}(\mu)>0,
&&\be_{\mathrm{s}}(\d)<\be<2\d.\label{eq:conjHighDimPressure}
\end{align}
\end{enumerate}
\end{conj}

The exact formula \eqref{eq:DysonExactAttractive} proves part (i) of the conjecture in dimension one.  The forthcoming transfer theorem \cite{DGRPhaseTransitions2026}, combined with the sharp zero-defect log-HLS/Beckner--Onofri inequality of Lei and the third author \cite{LeiRosenzweigSharpLogHLS2026}, proves \eqref{eq:conjLowDimDeterminant} in dimensions $2\le\d\le6$.  We next record elementary lower bounds supporting the conjectured critical and supercritical behavior in dimensions $\d\ge11$.

\begin{prop}[Critical and supercritical lower bounds]
\label{prop:periodicAttractiveLowerBounds}
Assume $\d\ge11$. Then:
\begin{enumerate}[(i)]
\item At the stability threshold,
\begin{align}\label{eq:periodicCriticalLowerBound}
\liminf_{N\to\infty}
N^{-\d/2}\K_{N,\be_{\mathrm{s}}(\d)}(\mu_{\mathrm{unif}})
\ge
\left(\frac{\sqrt\pi}{e}\right)^\d.
\end{align}
\item For every $\be_{\mathrm{s}}(\d)<\be<2\d$,
\begin{align}\label{eq:periodicExponentialLowerBound}
\liminf_{N\to\infty}
\frac1N\log\K_{N,\be}(\mu_{\mathrm{unif}})>0.
\end{align}
\item For every $\be\ge2\d$ and every $N\ge2$,
\begin{align}\label{eq:periodicInfinitePartition}
\K_{N,\be}(\mu_{\mathrm{unif}})=+\infty.
\end{align}
\end{enumerate}
\end{prop}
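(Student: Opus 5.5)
Parts (i) and (ii) will follow from one lower bound: restrict the $N$-fold integral to near-uniform densities perturbed along $E_1$ and apply Jensen's inequality there. Part (iii) is a purely local non-integrability statement.

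For (iii), fix $N\ge2$ and $\be\ge2\d$. Near the diagonal, $\g_\d(x)=-\log|x|+O(1)$. Bound the other pair interactions from below as follows: $\g_\d$ is bounded below on $\T^\d$ (periodic, smooth away from $0$, tending to $+\infty$ at $0$), so each remaining pair term is $\ge -C$. It then suffices that
\begin{align}
\int_{\T^\d}\int_{\T^\d} e^{\frac{\be}{N}\g_\d(x_1-x_2)}\,\dd x_1\,\dd x_2=+\infty.
\end{align}
The pair $(1,2)$ appears twice in the ordered sum, so the exponent is $\frac{\be}{N}\g_\d(x_1-x_2)$. This alone gives weight $|x|^{-\be/N}$, which is integrable for large $N$. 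So the correct mechanism is collapse of all $N$ particles. Near the total diagonal, the weight is $\prod_{i<j}|x_i-x_j|^{-\be/N}$, which is homogeneous of degree $-\frac{\be}{N}\binom N2=-\be(N-1)/2$. The relative coordinates span dimension $\d(N-1)$, so integrability fails when $\be(N-1)/2\ge\d(N-1)$, i.e.\ $\be\ge2\d$. I will integrate over the region $\{|x_i-x_1|\le r\ \forall i\}$ and scale dyadically. Each dyadic shell $r\sim2^{-m}$ contributes $\gtrsim 2^{-m\d(N-1)}2^{m\be(N-1)/2}e^{-C}\ge c$, so the sum diverges.

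For (i) and (ii), I use a Gibbs-variational lower bound. For any probability density $\nu$ on $\T^\d$, Jensen with respect to $\nu^{\otimes N}$ gives
\begin{align}
\log\K_{N,\be}\ge -N\Hr(\nu\vert\mu_{\mathrm{unif}})+\frac{\be(N-1)}{2}\iint\g_\d(x-y)\,\dd\nu^{\otimes2}.
\end{align}
Since $\g_\d$ has zero mean, the last integral is $\langle\nu-1,T_\d(\nu-1)\rangle$. This yields $\log\K_{N,\be}\ge -N\mathcal J_{\be,\d}(\nu)-\frac\be2\langle\nu-1,T_\d(\nu-1)\rangle$. For (ii), take $\nu=1+\epsilon u$ with $u\in E_1$ and $\epsilon$ small. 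Then $\mathcal J_{\be,\d}(\nu)=\frac{\epsilon^2}{2}\|u\|^2(1-\be/\be_{\mathrm s})+O(\epsilon^3)<0$, which gives a positive exponential rate.

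For (i), Jensen is too lossy, since it only gives $\K\ge$ const. Instead I will mix over tilts. Using the exact identity $\mathcal J$-type expansion, write $\K_{N,\be}=\E[e^{\frac{\be}{2}(N\langle\mu_N-1,T_\d(\mu_N-1)\rangle-\mathrm{diag})}]$. The diagonal term $\frac{\be}{2}\g_\d(0)$ is infinite, so I must use the honest ordered-pair sum. Then I lower-bound by keeping only the $E_1$ modes together with the high modes. The high modes are subcritical on $E_1^\perp$, since the eigenvalues there are at most $(2\pi)^\d/(\mathsf c_\d 2^{\d/2})$ times the top one. Centering and positivity (Jensen conditional on the $E_1$ Fourier coefficients) show their contribution is bounded below by a constant tending to $1$.

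This reduces the problem to the $2\d$ real variables $a_{j}=\sqrt{N}\int(\cos,\sin)(2\pi x_j)\,\dd(\mu_N-1)$ and the exact critical exponent $\E\exp\{\frac{1}{2}\sum a^2\cdot\frac{2}{\,\|\cos\|^2\cdot2}\}$, so that $\be_{\mathrm s}\lambda_1=1$ makes the Gaussian quadratic exponent exactly marginal. Up to the normalization $\|\cos\|_{L^2}^2=1/2$, each coordinate then contributes $\E e^{a^2/\ldots}$ at criticality. Here the CLT is insufficient, and I need a local large-deviation evaluation of $\E[e^{N F(\bar c)}]$ with $F$ quartically flat. For the one-dimensional marginal $\bar c=\frac1N\sum\cos(2\pi x_i)$, the Cramér rate is $\frac{c^2}{2\cdot(1/2)}\cdot\frac12$ and it cancels the quadratic term. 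A Stirling/Laplace computation over $c\in[-N^{-1/4},N^{-1/4}]$ (quartic window) should give a factor $\gtrsim\sqrt{N}\cdot\sqrt{\pi}/e$ per real mode, hence $N^{\d/2}(\sqrt\pi/e)^\d$ over $\d$ cos/sin pairs. The main obstacle is this critical step: obtaining a clean lower bound with the exact constant. I would try first to restrict to configurations where each $x_i$'s coordinates lie in prescribed half-circles, which makes the coefficient sums binomial-like, and then apply Stirling explicitly to extract $(\sqrt\pi/e)$.
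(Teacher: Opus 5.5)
Your arguments for (ii) and (iii) match the paper's. For (ii) you use Donsker--Varadhan with the product trial $\nu^{\otimes N}$, $\nu=1+\vep u$, $u\in E_1$. For (iii) you use the total-collision radial integral with exponent $(N-1)(\d-\be/2)$.

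The gap is in (i). The step that produces $N^{\d/2}(\sqrt\pi/e)^\d$ is not carried out, and the route you sketch would not deliver it. At $\be=\be_{\mathrm s}(\d)$ the $E_1$ part of the exponent is exactly $A=\sum_{l=1}^{\d}\big((C_l^2+S_l^2)/N-1\big)$, where $C_l=\sum_i\cos(2\pi x_{i,l})$ and $S_l=\sum_i\sin(2\pi x_{i,l})$. The factor $e^{-1}$ per coordinate direction is just the removed diagonal, and the $\d$ directions are independent under $\mu_{\mathrm{unif}}^{\otimes N}$.
\begin{itemize}
\item $C_l$ and $S_l$ are built from the same coordinates. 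So you cannot treat cosine and sine as separate real modes with one-dimensional Cram\'er rates; the pair must be handled jointly.
\item The correct outcome is a factor $e^{-1}\sqrt{\pi N}(1+o(1))$ per direction. Your ``$\sqrt N$ per real mode'' would give $N^{\d}$, not $N^{\d/2}$.
\item Restricting each coordinate to a prescribed half-circle costs a factor $2^{-N\d}$, which destroys any polynomial lower bound.
\item A sharp moderate-deviation local estimate at scale $N^{-1/4}$, with the exact quartic coefficient, is precisely the hard part you leave open.
\end{itemize}

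The missing idea is to linearize the critical quadratic form by Hubbard--Stratonovich rather than by large deviations. Let $G$ be a standard Gaussian in $\R^2$. Then $e^{(C^2+S^2)/N}=\E_G\big[e^{\sqrt{2/N}(G_1C+G_2S)}\big]$. Integrating out the i.i.d.\ uniform coordinates gives exactly
\[
\E\big[e^{(C_l^2+S_l^2)/N}\big]=J_N\coloneqq\int_0^\infty 2re^{-r^2}I_0(2r/\sqrt N)^N\,\dd r .
\]
Substitute $r=N^{1/4}u$ and use $\log I_0(z)=z^2/4-z^4/64+O(z^6)$. Fatou's lemma then gives $\liminf_N N^{-1/2}J_N\ge\int_0^\infty 2ue^{-u^4/4}\,\dd u=\sqrt\pi$, with no Stirling computation needed.

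Your removal of the $E_1^\perp$ modes also needs repair. Conditioning on the empirical Fourier coefficients under $\mu_{\mathrm{unif}}^{\otimes N}$ leaves neither a product law nor a mixture of product laws. So positive definiteness gives no sign for the conditional mean of the off-diagonal high-mode sum. Subcriticality of those modes is beside the point for a lower bound; in any case their eigenvalues are at most $2^{-\d/2}$ times the top one.

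The paper handles this as follows. It heat-regularizes $\g_\d$ and applies Hubbard--Stratonovich to the whole kernel. It then uses conditional Jensen in the independent Gaussian component $X_{t,\perp}$. Finally it removes the regularization by dominated convergence, using $\be_{\mathrm s}(\d)<2\d$ and integrability near collision strata.

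Alternatively, your particle-space decomposition can be rescued directly. Let $g_\perp$ be the $E_1^\perp$ part of $\g_\d$, which is bounded below. By finite-rank Hubbard--Stratonovich, the law proportional to $e^{A}\,\dd\mu_{\mathrm{unif}}^{\otimes N}$ is a mixture of product laws $\rho_X^{\otimes N}$. Jensen under this tilted law, together with $\iint g_\perp(x-y)\rho_X(x)\rho_X(y)\,\dd x\,\dd y\ge0$, gives $\K_{N,\be_{\mathrm s}(\d)}(\mu_{\mathrm{unif}})\ge\E[e^{A}]=(e^{-1}J_N)^{\d}$. This version needs no regularization or collision-stratum analysis.
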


\begin{proof}[Proof of \cref{prop:periodicAttractiveLowerBounds}]
\step{Part (i): Critical lower bound at the stability threshold.}
For $t>0$, let $\g_{\d,t}=e^{t\Delta}\g_\d$ be the heat regularization and let $\K_{N,\be}^{(t)}(\mu_{\mathrm{unif}})$ denote \eqref{eq:periodicAttractivePartition} with $\g_\d$ replaced by $\g_{\d,t}$.  The local comparison
\begin{align}\label{eq:heatLogComparison}
\g_{\d,t}(x)\le \g_\d(x)+C_\d,
\qquad 0<t\le1,
\end{align}
follows from $\g_\d(x)=-\log|x|+O(1)$ near the origin by splitting into $|x|\le2\sqrt t$ and its complement.  If a block of $k$ particles collapses on scale $R$, then the relative-coordinate volume element contributes $R^{\d(k-1)-1}\,\dd R$, while the attractive pair factor contributes at worst $R^{-\be k(k-1)/(2N)}$.  The corresponding radial integral is therefore bounded by
\begin{align}\label{eq:clusterRadialIntegrability}
\int_0^\delta
R^{(k-1)(\d-\be k/(2N))-1}\,\dd R,
\end{align}
which is finite for every $2\le k\le N$ when $\be<2\d$.  Decomposing a neighborhood of the collision set according to the finitely many set partitions of $\{1,\ldots,N\}$, and applying \eqref{eq:clusterRadialIntegrability} successively to each collapsing block, proves local integrability near every collision stratum.  Hence, since $\be_{\mathrm{s}}(\d)<2\d$, dominated convergence gives
\begin{align}\label{eq:regularizedPartitionConvergence}
\K_{N,\be_{\mathrm{s}}(\d)}^{(t)}(\mu_{\mathrm{unif}})
\longrightarrow
\K_{N,\be_{\mathrm{s}}(\d)}(\mu_{\mathrm{unif}})
\qquad(t\downarrow0)
\end{align}
for each fixed $N$.

Let $X_t$ be the centered Gaussian field with covariance $\g_{\d,t}$.  The Hubbard--Stratonovich transformation gives
\begin{align}\label{eq:periodicGaussianRepresentation}
\K_{N,\be}^{(t)}(\mu_{\mathrm{unif}})
=
\E\brak*{
\pa*{\int_{\T^\d}
\exp\Big\{\sqrt{\frac{\be}{N}}X_t(x)
-\frac{\be}{2N}\g_{\d,t}(0)\Big\}
\,\dd\mu_{\mathrm{unif}}(x)}^N}.
\end{align}
Decompose $X_t=X_{t,1}+X_{t,\perp}$ according to $E_1\oplus E_1^\perp$.  Conditional Jensen in $X_{t,\perp}$ removes the latter from \eqref{eq:periodicGaussianRepresentation}.  The eigenvalue on $E_1$ is
\begin{align}
\lambda_t
=\frac{e^{-4\pi^2t}}{\be_{\mathrm{s}}(\d)},
\end{align}
and one may write
\begin{align}
X_{t,1}(x)
=\sqrt{2\lambda_t}
\sum_{j=1}^{\d}
\big(A_j\cos(2\pi x_j)+B_j\sin(2\pi x_j)\big),
\end{align}
where the $A_j,B_j$ are independent standard Gaussians.  Passing to polar coordinates in each pair $(A_j,B_j)$ therefore yields
\begin{align}\label{eq:firstShellBesselRegularized}
\K_{N,\be}^{(t)}(\mu_{\mathrm{unif}})
\ge
 e^{-\be\d\lambda_t}
\left[
\int_0^\infty
2r e^{-r^2}
I_0\left(2\sqrt{\frac{\be\lambda_t}{N}}\,r\right)^N
\,\dd r
\right]^\d,
\end{align}
where
\begin{align}
I_0(z)
\coloneqq
\int_0^1e^{z\cos(2\pi x)}\,\dd x
\end{align}
is the modified Bessel function.  Taking $\be=\be_{\mathrm{s}}(\d)$ and then $t\downarrow0$ in \eqref{eq:firstShellBesselRegularized}, using \eqref{eq:regularizedPartitionConvergence}, gives
\begin{align}\label{eq:firstShellBessel}
\K_{N,\be_{\mathrm{s}}(\d)}(\mu_{\mathrm{unif}})
\ge e^{-\d}J_N^\d,
\qquad
J_N\coloneqq
\int_0^\infty
2r e^{-r^2}
I_0\left(\frac{2r}{\sqrt N}\right)^N\,\dd r.
\end{align}
The power series of $I_0$ gives
\begin{align}\label{eq:BesselLogExpansion}
\log I_0(z)
=\frac{z^2}{4}-\frac{z^4}{64}+O(z^6)
\qquad(z\to0).
\end{align}
After the change of variables $r=N^{1/4}u$,
\begin{align}
N^{-1/2}J_N
=
\int_0^\infty
2u\exp\Bigg\{-N^{1/2}u^2
+N\log I_0\left(\frac{2u}{N^{1/4}}\right)\Bigg\}\,\dd u.
\end{align}
The integrand converges pointwise to $2u e^{-u^4/4}$.  Fatou's lemma and \eqref{eq:firstShellBessel} imply
\begin{align}
\liminf_{N\to\infty}N^{-1/2}J_N
\ge
\int_0^\infty2u e^{-u^4/4}\,\dd u
=\sqrt\pi,
\end{align}
which proves \eqref{eq:periodicCriticalLowerBound}.

\step{Part (ii): Exponential lower bound above the stability threshold.}
Let $\mu\ll\mu_{\mathrm{unif}}$ be a probability measure.  The Donsker--Varadhan lemma, applied with the product trial $\mu^{\otimes N}$, gives
\begin{align}\label{eq:periodicProductTrial}
\log\K_{N,\be}(\mu_{\mathrm{unif}})
\ge
\frac{\be(N-1)}2
\left\langle\mu-1,T_\d(\mu-1)\right\rangle_{L^2(\mu_{\mathrm{unif}})}
-N\Hr(\mu\vert\mu_{\mathrm{unif}}).
\end{align}
For $|\vep|<1$, take
$\mu_\vep(x)=1+\vep\cos(2\pi x_1)$.  By \eqref{eq:periodicLogFourier} and \eqref{eq:betaSDef},
\begin{align}\label{eq:firstShellTrialEnergyEntropy}
\left\langle\mu_\vep-1,T_\d(\mu_\vep-1)\right\rangle_{L^2(\mu_{\mathrm{unif}})}
=\frac{\vep^2}{2\be_{\mathrm{s}}(\d)},
\qquad
\Hr(\mu_\vep\vert\mu_{\mathrm{unif}})
=\frac{\vep^2}{4}+O(\vep^4).
\end{align}
Consequently, for every $\be>\be_{\mathrm{s}}(\d)$ and every sufficiently small nonzero $\vep$,
\begin{align}
\frac\be2
\left\langle\mu_\vep-1,T_\d(\mu_\vep-1)\right\rangle_{L^2(\mu_{\mathrm{unif}})}
-\Hr(\mu_\vep\vert\mu_{\mathrm{unif}})>0.
\end{align}
Divide \eqref{eq:periodicProductTrial} by $N$ and let $N\to\infty$ to obtain \eqref{eq:periodicExponentialLowerBound}.

\step{Part (iii): Divergence at and above the collision threshold.}
Fix a small coordinate cube $Q\subset\T^\d$ and restrict the integral in \eqref{eq:periodicAttractivePartition} to configurations with $x_1\in Q$ and $x_i=x_1+y_i$, $2\le i\le N$, where the relative vector $y=(y_2,\ldots,y_N)\in\R^{\d(N-1)}$ has sufficiently small norm $R=|y|$.  Since $\g_\d(z)=-\log|z|+O(1)$ near the origin and every pair distance is at most $2R$ on this set, the integrand is bounded from below by $C_{N,\be,\d}R^{-\be(N-1)/2}$.  The corresponding radial integral is bounded from below by
\begin{align}\label{eq:totalCollisionRadialIntegral}
C_{N,\be,\d}
\int_0^\delta
R^{(N-1)(\d-\be/2)-1}\,\dd R.
\end{align}
The integral diverges logarithmically when $\be=2\d$ and by a power when $\be>2\d$, proving \eqref{eq:periodicInfinitePartition}.
\end{proof}

\begin{remark}[The dimension-eleven critical point]
\label{rem:d11CriticalExponential}
The lower bound \eqref{eq:periodicCriticalLowerBound} holds in dimension eleven but is not rate-sharp there.  Indeed, the strict inequality \eqref{eq:d11BetaGmBound} and the product trial \eqref{eq:periodicProductTrial} imply
\begin{align}
\liminf_{N\to\infty}
\frac1N\log\K_{N,\be_{\mathrm{s}}(11)}(\mu_{\mathrm{unif}})>0.
\end{align}
Thus the conjectured sharp role of the polynomial factor $N^{\d/2}$ begins in dimensions $\d\ge12$.
\end{remark}
\endgroup

\section{Entropic commutators and diffusive mean-field estimates}
\label{sec:entropic-commutator-estimate-proof}

We now turn to the proof of \cref{thm:entropicCommutator}, which has three steps.  The Donsker--Varadhan reduction in \cref{subsec:donsker-varadhan-reduction} reduces the desired entropic inequality to an exponential moment under $\Q_{N,\be}(\mu)$, the local numerator estimate in \cref{subsec:local-numerator-estimate} proves this moment for small inverse temperature, and the H\"older bootstrap in \cref{subsec:removal-smallness-assumption} removes the smallness restriction.  After proving the theorem, \cref{subsec:commutator-temperature-interpolation} proves \cref{cor:entropicCommutatorTemperatureInterpolation}, which combines the fixed-temperature entropic estimate with the deterministic pointwise commutator bound and is adapted to inverse temperatures growing with $N$.  The section concludes with the proof of the diffusive mean-field application, \cref{thm:diffusiveMeanFieldConvergence}, in \cref{sec:diffusive-mean-field-convergence}.

\subsection{Donsker--Varadhan reduction}
\label{subsec:donsker-varadhan-reduction}

The case $\|\nab v\|_{L^\infty}=0$ has already been disposed of in the statement, so we assume $\|\nab v\|_{L^\infty}>0$.  By the Donsker--Varadhan lemma, for any $\vep>0$ and $\sigma\in\{\pm 1\}$,
\begin{align}\label{eq:expDV}
\sigma\E_{\XN\sim f_N}\Big[N\mathsf{A}_n(v,\XN,\mu)\Big] \le \frac{1}{\vep}\Bigg(\Hr(f_N\vert \Q_{N,\be}(\mu)) + \log \E_{\XN\sim \Q_{N,\be}(\mu)}\Big[e^{\sigma\vep N \mathsf{A}_n(v,\XN,\mu)}\Big] \Bigg).
\end{align}
Using the entropy rewriting \eqref{eq:intro-entropy-rewrite}, together with \eqref{eq:CpfDef}, it remains to prove \eqref{eq:QNexpMomentAn} for the choice $\vep=\vep_{\be,n}$ in \eqref{eq:epsnStatement}.

Fix $\sigma\in\{\pm 1\}$.  Unpacking the definition of $\Q_{N,\be}(\mu)$, we have
\begin{align}\label{eq:EQNbeAn0}
\E_{\XN\sim \Q_{N,\be}(\mu)}\Big[e^{\sigma\vep_{\be,n} N \mathsf{A}_n(v,\XN,\mu)}\Big]
= \frac{1}{\K_{N,\be}(\mu)}\E_{\XN\sim \mu^{\otimes N}}\Big[ e^{\sigma\vep_{\be,n} N  \mathsf{A}_n(v,\XN,\mu) - \be N\Fr_N(\XN,\mu)}\Big].
\end{align}
In view of \eqref{eq:KlowerJensen}, it suffices to prove the product-space numerator estimate
\begin{align}\label{eq:numeratorGoal}
\sup_{N\ge1}
\E_{\XN\sim \mu^{\otimes N}}\Big[ e^{\sigma\vep_{\be,n} N  \mathsf{A}_n(v,\XN,\mu) - \be N\Fr_N(\XN,\mu)}\Big]<\infty
\end{align}
uniformly in $N$ and in $\sigma\in\{\pm 1\}$.

\subsection{Commutator truncation estimates}
\label{subsec:commutator-truncation-estimates}

The next lemma converts the potential truncations of \cref{subsec:positive-definite-truncations-riesz-kernel} into the derivative and short-range estimates needed for the commutator argument.

\begin{lemma}[Commutator truncation estimates]\label{lem:commutatorTruncationEstimates}
Let $0<\s<\d$, let $n\ge1$, and let $v:\R^\d\to\R^\d$ be globally Lipschitz.  For a differentiable kernel $\phi$, let
\begin{align}\label{eq:knetadef}
\k_{n,\phi}(x,y) \coloneqq \nab^{\otimes n}\phi(x-y):(v(x)-v(y))^{\otimes n},
\end{align}
which is symmetric whenever $\phi$ is even.  There are constants $C_n$ and $c_n\ge2$, depending only on $\d,\s,n$, such that the truncations in \eqref{eq:HCRStruncDef} satisfy
\begin{align}\label{eq:knPointwise}
|\k_{n,\g_\eta}(x,y)|&\le C_n\|\nab v\|_{L^\infty}^n\g_\eta(x-y),\nn\\
|\k_{n,\f_\eta}(x,y)|&\le C_n\|\nab v\|_{L^\infty}^n\f_{c_n\eta}(x-y).
\end{align}
If $\mu\in L^\infty$ and $\sigma\in\{\pm1\}$, then the short-range deterministic estimate holds:
\begin{multline}\label{eq:detCommShortRange}
\frac{\sigma}{N^2}\sum_{1\le i\ne j\le N}\tl{\k}_{n,\f_\eta}(x_i,x_j) \le \frac{C_{\mathrm{sr},n}\|\nab v\|_{L^\infty}^n}{N^2}\sum_{1\le i\ne j\le N}\f_{c_n\eta}(x_i-x_j)\\
{}+ C_n\|\nab v\|_{L^\infty}^n\|\mu\|_{L^\infty}\eta^{\d-\s},
\end{multline}
where $C_{\mathrm{sr},n}$ depends only on $\d,\s,n$.
\end{lemma}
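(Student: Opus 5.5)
The proof splits into the two pointwise kernel bounds \eqref{eq:knPointwise}, obtained from the subordination representation \eqref{eq:HCRStruncDef}, and the deterministic estimate \eqref{eq:detCommShortRange}, which follows from the second pointwise bound by expanding the Hoeffding centering.

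\textbf{Pointwise bounds.} Since $v$ is globally Lipschitz, $|v(x)-v(y)|\le\|\nab v\|_{L^\infty}|x-y|$, so
\begin{align}
|\k_{n,\phi}(x,y)|\le\|\nab v\|_{L^\infty}^n\,|x-y|^n|\nab^{\otimes n}\phi(x-y)|.
\end{align}
It therefore suffices to prove $|z|^n|\nab^{\otimes n}\g_\eta(z)|\le C_n\g_\eta(z)$ and $|z|^n|\nab^{\otimes n}\f_\eta(z)|\le C_n\f_{c_n\eta}(z)$. Differentiate under the integral in \eqref{eq:HCRStruncDef}. For the Gaussian profile one has $|z|^n|\nab^{\otimes n}\phi_t(z)|\le C_n\,t^{-\d}(1+|z|/t)^{n}e^{-|z|^2/(2t^2)}$. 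The right-hand side is bounded by $C_n\phi_{c t}(z)$ up to a power of $c$, since the polynomial factor is absorbed by widening the Gaussian from scale $t$ to scale $ct$.

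Substituting $t\mapsto ct$ turns $\int_0^\eta t^{\d-\s}\phi_{ct}\,\dd t/t$ into $c^{\s-\d}\int_0^{c\eta}t^{\d-\s}\phi_t\,\dd t/t$. This gives the $\f$ bound with $c_n=c$, and it is the Riesz analogue of \eqref{eq:logfetaDerivative}. For $\g_\eta$ the same substitution yields $\int_{c\eta}^\infty t^{\d-\s}\phi_t\,\dd t/t$, which is at most $c^{\d-\s}\g_\eta(z)$ after enlarging the constant, because the integrand is nonnegative and $c\eta\ge\eta$. For a general Schwartz profile one can instead assume $\phi$ is Gaussian, as the footnote allows. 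This derivative-to-scale comparison is the only step requiring genuine care, and it uses that $\phi$ is a Gaussian-type profile.

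\textbf{Deterministic short-range estimate.} Expanding $\tl\k_{n,\f_\eta}$ via \eqref{eq:tildedef} produces four terms.
\begin{itemize}
\item The diagonal term $\k_{n,\f_\eta}(x_i,x_j)$ is bounded in absolute value by $C_n\|\nab v\|^n\f_{c_n\eta}(x_i-x_j)$, which gives the first term on the right of \eqref{eq:detCommShortRange}, for either sign $\sigma$.
\item Each one-variable average $\int\k_{n,\f_\eta}(x,z)\,\dd\mu(z)$ is bounded by $C_n\|\nab v\|^n(\f_{c_n\eta}\ast\mu)(x)$. By \eqref{eq:HCRSconvBound} at scale $c_n\eta$, this is at most $C_n\|\nab v\|^n\|\mu\|_{L^\infty}\eta^{\d-\s}$.
\item The double average satisfies the same bound.
\end{itemize}
Summing the last three contributions over $i\ne j$ and dividing by $N^2$ leaves at most a constant times $\|\nab v\|^n\|\mu\|_{L^\infty}\eta^{\d-\s}$. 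This proves \eqref{eq:detCommShortRange} with $C_{\mathrm{sr},n}=C_n$. Symmetry of $\k_{n,\phi}$ for even $\phi$ is immediate: $\nab^{\otimes n}\phi$ has parity $(-1)^n$, and this sign cancels the factor $(-1)^n$ produced by swapping $x$ and $y$ in $(v(x)-v(y))^{\otimes n}$.
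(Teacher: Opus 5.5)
The paper states this lemma without proof, treating it as the Riesz counterpart of \eqref{eq:logfetaDerivative} from the truncation package of \cite{HCRS2025}, so there is no in-paper argument to compare with. Your argument is correct and is the natural one.

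The three steps each do what you need:
\begin{itemize}
\item The Lipschitz reduction brings everything down to bounds on $|z|^n|\nab^{\otimes n}\g_\eta(z)|$ and $|z|^n|\nab^{\otimes n}\f_\eta(z)|$.
\item The rescaling $t\mapsto ct$ in \eqref{eq:HCRStruncDef} gives $C\g_{c\eta}\le C\g_\eta$ and $C\f_{c\eta}$, respectively.
\item In the expansion of the Hoeffding centering, the pair term is bounded in absolute value. Each of the three averaged terms is at most $C_n\|\nab v\|_{L^\infty}^n\|\f_{c_n\eta}\ast\mu\|_{L^\infty}=O(\|\nab v\|_{L^\infty}^n\|\mu\|_{L^\infty}\eta^{\d-\s})$ by \eqref{eq:HCRSconvBound}.
\end{itemize}
Together these prove \eqref{eq:knPointwise}--\eqref{eq:detCommShortRange} for both signs $\sigma$, with $C_{\mathrm{sr},n}=C_n$.

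The comparison $|w|^n|\nab^{\otimes n}\phi(w)|\lesssim\phi(w/c)$ is a property of the Gaussian, not a consequence of the hypotheses of \cref{lem:HCRStruncations}. So what you prove is the lemma for the Gaussian profile. That suffices here: the profile is fixed once and for all, constants may depend on it, and the footnote makes the Gaussian admissible. But you should present it as a choice of profile, not as a reduction from the general case. The $\g_\eta$ bound alone does hold for every admissible $\phi$, since $|\nab^{\otimes n}\g_\eta(z)|\lesssim(|z|+\eta)^{-\s-n}$ and $\g_\eta(z)\gtrsim(|z|+\eta)^{-\s}$.

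There are also two harmless slips:
\begin{itemize}
\item The polynomial factor is $(1+|z|/t)^{2n}$, because the degree-$n$ Hermite factor multiplies $(|z|/t)^n$.
\item The rescaling produces the factor $c^{\s-\d}$, together with a factor $c^{\d}$ from widening the Gaussian, not $c^{\d-\s}$.
\end{itemize}
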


\subsection{Local numerator estimate}
\label{subsec:local-numerator-estimate}

This lemma is the main analytic estimate in the commutator proof.  It repeats the partition-function layer-cake mechanism with the commutator observable inserted, choosing the coefficient so that the short-range commutator contribution is absorbed by the modulated energy.

\begin{lemma}[Local numerator estimate]\label{lem:localNumeratorComm}
There exists $\be_{\mathrm{loc}}=\be_{\mathrm{loc}}(\mu,v,n)>0$ such that for every $0<\gamma\le\be_{\mathrm{loc}}$ and every $\sigma\in\{\pm1\}$,
\begin{align}\label{eq:localNumeratorComm}
\sup_{N\ge1}
\E_{\mu^{\otimes N}}
\Bigg[\exp\pa*{
\sigma\frac{\gamma}{4C_{\mathrm{sr},n}\|\nab v\|_{L^\infty}^n}
N\mathsf A_n(v,\XN,\mu)
-\gamma N\Fr_N(\XN,\mu)}\Bigg]
\le e+L.
\end{align}
\end{lemma}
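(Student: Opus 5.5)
The plan is to run the layer-cake mechanism of \cref{prop:masterLayerCake} with the commutator inserted. Write $\vep\coloneqq\gamma/(4C_{\mathrm{sr},n}\|\nab v\|_{L^\infty}^n)$. Since $\k_{n,\g}(x,x)=0$, the observable $N\mathsf A_n$ equals $\frac1N\sum_{i\ne j}\tl\k_{n,\g}(x_i,x_j)$ up to a one-body diagonal term. That term is $-\frac2N\sum_i(\k_{n,\g}\ast\mu)(x_i)$ plus a constant, and it is bounded by $C\|\nab v\|^n_{L^\infty}\|h_{\mu}\|_{L^\infty}$ by \eqref{eq:knPointwise}. Likewise $-\gamma N\Fr_N=-\frac{\gamma}{2N}\sum_{i\ne j}\tl\g(x_i,x_j)+$ bounded one-body terms in the Riesz case. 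In the logarithmic case I would work at finite infrared cutoff $T$ and pair the one-body term with the low modes via \cref{lem:logLargeScaleCancellation}(ii), exactly as in the proof of \cref{prop:uniformBoundRepulsive}, and remove $T$ by Fatou at the end. So the exponent has the form $\Xi_N$ in \eqref{eq:masterXiDef}, with two-body kernel $\gamma\g-2\sigma\vep\k_{n,\g}$ and bounded $w$.

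\textbf{Step 1: short-range absorption.} At an adaptive scale $\eta=\delta_{N,u}$ (chosen as in \eqref{eq:etaNuNoSmallBeta}, with $c_n\eta$ in place of $\eta$), split $\g=\f_{c_n\eta}+\g_{c_n\eta}$ in the energy and $\k_{n,\g}=\k_{n,\f_\eta}+\k_{n,\g_\eta}$ in the commutator. By \eqref{eq:detCommShortRange}, the short-range commutator contributes at most $\frac{\gamma}{4N}\sum_{i\ne j}\f_{c_n\eta}(x_i-x_j)+C\gamma N\eta^{\d-\s}$. The factor $4C_{\mathrm{sr},n}$ in $\vep$ is chosen for exactly this purpose. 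The energy term $-\frac{\gamma}{2N}\sum\tl\f_{c_n\eta}$ equals $-\frac{\gamma}{2N}\sum\f_{c_n\eta}$ plus convolution terms of size $\le C\gamma N\eta^{\d-\s}$ by \eqref{eq:HCRSconvBound}. Hence the sum of the two is $\le C\gamma N\eta^{\d-\s}\le u/8$ by the choice of $\delta_{N,u}$. The positive near-field piece is therefore consumed, with room to spare, which is the analogue of \eqref{eq:nearFieldAdaptiveNoSmallBeta}.

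\textbf{Step 2: remaining kernel and (M1)--(M3).} What remains is $-\frac{\gamma}{2N}\sum\tl\g_{c_n\eta}+\frac{\sigma\vep}{N}\sum\tl\k_{n,\g_\eta}$. I would take $q_{N,u}\coloneqq 2(\gamma\g_{c_n\eta}-2\sigma\vep\k_{n,\g_\eta})$ and skip the Fourier cutoff entirely, since smallness of $\gamma\le\be_{\mathrm{loc}}$ replaces it. By \eqref{eq:knPointwise} and $\s<\d/2$, the centered kernel $\tl q_{N,u}$ has $L^2(\mu^{\otimes2})$, $L^\infty L^2(\mu)$ and operator norms bounded by $C\gamma(1+\|\g\|_{L^2_{\mathrm{loc}}})$, uniformly in $\eta$. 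Choosing $\be_{\mathrm{loc}}$ small therefore gives \eqref{eq:masterNorms}. The $L^\infty$ norm is $\le C\gamma\eta^{-\s}$, and in the log case $\le C\gamma(1+\log(1/\eta))$. (M1) then follows from Step 1 with $U_0=1$, after shrinking $\gamma$ so that the bounded one-body terms are $\le u/3$.

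\textbf{Step 3: the deterministic cap (M2), the main obstacle.} The commutator kernel is not positive definite, so \cref{lem:PDdeterministicDiagonalBound} does not bound it directly. My first attempt is to dominate it by the positive-definite energy. I would use the deterministic pointwise commutator estimate of \cite{HCRS2025}, in its truncated form: $|\frac1N\sum\tl\k_{n,\g_\eta}|\le C\|\nab v\|^n_{L^\infty}\big(-\!\frac1N\sum\tl\g_{\eta}\cdots\big)$, i.e.\ the commutator is controlled by the modulated energy plus the diagonal $C\eta^{-\s}$. Since $\vep\ll\gamma$ through the constant $4C_{\mathrm{sr},n}$ and the small $\be_{\mathrm{loc}}$, this would give $-\frac{1}{2N}\sum\tl q_{N,u}\le\mathsf m_{N,u}\coloneqq C\gamma\delta_{N,u}^{-\s}$, and also $\|\tl q_{N,u}\|_{L^\infty}\le\mathsf m_{N,u}$. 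If that route fails, the fallback is to write $\gamma\g_{c_n\eta}-2\sigma\vep\k_{n,\g_\eta}=\tfrac\gamma2\g_{c_n\eta}+(\tfrac\gamma2\g_{c_n\eta}-2\sigma\vep\k_{n,\g_\eta})$ and use the first half as the diagonal-controlling positive piece. With (M2) in hand, the growth condition \eqref{eq:masterLayerCondition} follows verbatim from the computation \eqref{eq:allTermsBeatNoSmallBeta}, giving $\mathsf S_N\le CN^{2\s/\d}\le N/(2L^2)$ for $N\ge N_0$; in the log case one gets $(\log(e+N))^2$ instead. Shrinking $\gamma$ further handles the small $N$, so one may take $N_0=1$. \cref{prop:masterLayerCake} with $U_0=1$ then yields the bound $e^{U_0}+L=e+L$, uniformly in $\sigma\in\{\pm1\}$.
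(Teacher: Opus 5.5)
Your Steps~1--2 coincide with the paper's proof: the same adaptive scale, the same short-range absorption through \eqref{eq:detCommShortRange}, and the same kernel $q_{N,u}$ (the paper's $-4\tl h^{\sigma}_{n,\eta(u,N)}$), with smallness of $\gamma$ replacing the Fourier cutoff in (M3). The gap is in Step~3.

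\textbf{The cap in (M2).} Your first route needs a truncated deterministic commutator estimate: for every configuration, $\frac{2\vep}{N}\bigl|\sum_{i\ne j}\tl\k_{n,\g_\eta}(x_i,x_j)\bigr|\le\frac{\gamma}{N}\sum_{i\ne j}\tl\g_{c_n\eta}(x_i,x_j)+C\gamma\eta^{-\s}$. Nothing in the paper provides this. The pointwise commutator bound appears only as an extra \emph{hypothesis} in \cref{cor:entropicCommutatorTemperatureInterpolation}; note also that $\s<\d/2$ is sub-Coulomb as soon as $\d\ge4$. Even granting such an estimate, its constant would have to be beaten by the ratio $\vep/\gamma=1/(4C_{\mathrm{sr},n}\|\nab v\|_{L^\infty}^n)$. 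That ratio is fixed by the statement, and shrinking $\be_{\mathrm{loc}}$ does not change it. Your fallback does not help either. The half $\frac\gamma2\g_{c_n\eta}$ is controlled by \cref{lem:PDdeterministicDiagonalBound}, but the remainder $\frac\gamma2\g_{c_n\eta}-2\sigma\vep\k_{n,\g_\eta}$ is still not positive definite. The pointwise bound \eqref{eq:knPointwise} gives no quadratic-form domination, so the remainder admits no deterministic cap of order $\gamma\delta_{N,u}^{-\s}$.

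\textbf{What the paper does instead.} No structured cap is needed. The paper takes the trivial cap $\mathsf m_{N,u}\coloneqq\frac N2\|\tl q_{N,u}\|_{L^\infty}$, for which (M2) is immediate. It then checks \eqref{eq:masterLayerCondition} directly, rather than copying \eqref{eq:allTermsBeatNoSmallBeta}, which relies on a small cap.
\begin{itemize}
\item If $\eta(u,N)=\eta_0$, then $\min\{u,\mathsf m_{N,u}\}(1+\|\tl q_{N,u}\|_{L^\infty})\le\mathsf m_{N,u}(1+\|\tl q_{N,u}\|_{L^\infty})\lesssim\gamma\eta_0^{-\s}(1+\gamma\eta_0^{-\s})N$.
\item If the adaptive entry is active, then $u\lesssim\gamma N$ and $\|\tl q_{N,u}\|_{L^\infty}\lesssim\gamma(\gamma N/u)^{\s/(\d-\s)}$. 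Hence $u(1+\|\tl q_{N,u}\|_{L^\infty})\lesssim u+\gamma^{\d/(\d-\s)}u^{(\d-2\s)/(\d-\s)}N^{\s/(\d-\s)}\lesssim\gamma N$ for $\gamma\le1$.
\end{itemize}
Thus $\mathsf S_N\le M\gamma N$ for every $N\ge1$. The growth is linear in $N$ with an $O(\gamma)$ coefficient, not sublinear as in the partition-function proof. Taking $\gamma\le1/(2L^2M)$ gives $N_0=1$ and the bound $e+L$. This is where the small-$\gamma$ restriction is genuinely essential, beyond its use in (M1) and (M3). In the logarithmic case the same computation works with $1+\log(1/\eta)$ in place of $\eta^{-\s}$, since $r(1+\log(1/r))$ is bounded on $(0,1]$.
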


\begin{proof}
We use the truncation and commutator-kernel notation of \cref{lem:commutatorTruncationEstimates}, and set
\begin{align}\label{eq:epsGammaLocal}
\vep_{\gamma,n}\coloneqq \frac{\gamma}{4C_{\mathrm{sr},n}\|\nab v\|_{L^\infty}^n}.
\end{align}
The one-body terms in the expansion of $\mathsf{A}_n(v,\XN,\mu)$ are bounded by a constant depending on $\mu,v,n$.  More precisely,
\begin{align}\label{eq:AnCenteredExpansion}
\mathsf{A}_n(v,\XN,\mu)=\frac{1}{N^2}\sum_{1\le i\ne j\le N}\tl\k_{n,\g}(x_i,x_j)+R_{n,N}(\XN),\qquad |NR_{n,N}(\XN)|\le C_{\mu,v,n}.
\end{align}
We treat first the positive Riesz case $0<\s<\d/2$; the logarithmic case is Step 4.

\step{Step 1: Reduction to the centered statistics.}  For every $\sigma\in\{\pm1\}$ and $0<\gamma\le\be_{\mathrm{loc}}$, the bound \eqref{eq:localNumeratorComm} follows from
\begin{align}\label{eq:commReducedGoal}
\sup_{N\ge1}\ \E_{\mu^{\otimes N}}\brak[\big]{e^{\Xi_N^{\sigma}}}\le e+L,
\qquad
\Xi_N^{\sigma}\coloneqq \vep_{\gamma,n}C_{\mu,v,n}+\frac{\sigma\vep_{\gamma,n}}{N}\sum_{1\le i\ne j\le N}\tl\k_{n,\g}(x_i,x_j)-\gamma N\Fr_N(\XN,\mu),
\end{align}
and, by \eqref{eq:FNdef}, $\Xi_N^{\sigma}$ is of the master form \eqref{eq:masterXiDef}, with $g=\gamma\g-2\sigma\vep_{\gamma,n}\k_{n,\g}$ and $w=\gamma(h_\mu-I_\mu)+\vep_{\gamma,n}C_{\mu,v,n}$.

\par\smallskip\noindent  By \eqref{eq:AnCenteredExpansion}, the exponent in \eqref{eq:localNumeratorComm} is $\sigma\vep_{\gamma,n}N\mathsf A_n-\gamma N\Fr_N\le\Xi_N^{\sigma}$ pointwise, so \eqref{eq:commReducedGoal} implies \eqref{eq:localNumeratorComm}.  The second assertion is the splitting identity \eqref{eq:FNdef}, the constant $\vep_{\gamma,n}C_{\mu,v,n}$ being part of the one-body function $w$.

\step{Step 2: Choice of the data.}  With $c_n$ and $C_n$ the constants in \eqref{eq:knPointwise} and \eqref{eq:detCommShortRange}, and $C_0$ the near-field constant in \eqref{eq:HCRSconvBound}, set
\begin{align}\label{eq:CabsDef}
C_{\mathrm{abs},n}\coloneqq \frac{C_n\|\mu\|_{L^\infty}}{4C_{\mathrm{sr},n}}+c_n^{\d-\s}\,C_0.
\end{align}
Define $U_0\coloneqq1$ and, for $N\ge1$, $u\ge1$, and $0<\eta\le\eta_0$, the kernels, adaptive scales, and caps
\begin{align}
\tl h_{n,\eta}^{\sigma}&\coloneqq \sigma\vep_{\gamma,n}\tl\k_{n,\g_\eta}-\frac{\gamma}{2}\tl\g_{c_n\eta},
\qquad
\eta(u,N)\coloneqq \min\Bigg\{\eta_0,\Big(\frac{u}{4C_{\mathrm{abs},n}\gamma N}\Big)^{\frac{1}{\d-\s}}\Bigg\},\label{eq:hetandef}\\
q_{N,u}&\coloneqq -4\,\tl h^{\sigma}_{n,\eta(u,N)},
\qquad
\mathsf m_{N,u}\coloneqq \frac N2\,\|\tl q_{N,u}\|_{L^\infty}.\label{eq:adaptiveEtaComm}
\end{align}
\begingroup

Assuming, as we may, $C_{\mathrm{sr},n}\ge C_n$, define the uncentered kernel
\begin{align}
 r_{N,u}^{\sigma}
 \coloneqq
 -4\sigma\vep_{\gamma,n}\k_{n,\g_{\eta(u,N)}}
 +2\gamma\,\g_{c_n\eta(u,N)}.
\end{align}
Then $q_{N,u}$ is the $\mu$-centering of $r_{N,u}^{\sigma}$ and, by \eqref{eq:knPointwise}, \eqref{eq:epsGammaLocal}, and $0\le\g_{c_n\eta}\le\g_\eta$,
\begin{align}\label{eq:commMajorant}
 |r_{N,u}^{\sigma}|\le 3\gamma\,\g_{\eta(u,N)}.
\end{align}
\endgroup
Since $\k_{n,\g_\eta}$ is not Fourier sign-definite, \cref{lem:PDdeterministicDiagonalBound} does not apply, and the full $L^\infty$ norm has to be used in the cap of \eqref{eq:adaptiveEtaComm}.

Then there are constants $M_{\mu,v,n}<\infty$ and $\be_{\mathrm{loc}}=\be_{\mathrm{loc}}(\mu,v,n)>0$ such that, for every $0<\gamma\le\be_{\mathrm{loc}}$ and every $\sigma\in\{\pm1\}$, these data satisfy the hypotheses (M1)--(M3) of \cref{prop:masterLayerCake} for $\Xi_N^{\sigma}$, together with
\begin{align}\label{eq:commSlopeBound}
\sup_{u\ge1}\ \min\brac[\big]{u,\ \mathsf m_{N,u}}\pa[\big]{1+\|\tl q_{N,u}\|_{L^\infty}}\le M_{\mu,v,n}\,\gamma\,N
\qquad\text{and}\qquad
2L^2M_{\mu,v,n}\gamma\le1.
\end{align}
In particular, $2L^2\,\mathsf S_N\le 2L^2M_{\mu,v,n}\gamma N\le N$ for every $N\ge1$, so the growth condition \eqref{eq:masterLayerCondition} holds with $N_0=1$.

\begingroup
\par\smallskip\noindent\textbf{(M2).}  Hypothesis (M2) holds by the definition of $\mathsf m_{N,u}$.  Indeed, for every configuration,
\begin{align}
-\frac{1}{2N}\sum_{1\le i\ne j\le N}\tl q_{N,u}(x_i,x_j)
\le \frac N2\|\tl q_{N,u}\|_{L^\infty}.
\end{align}
\endgroup

\par\smallskip\noindent\textbf{(M1).}  Using the decomposition \eqref{eq:FNdef} with $\g=\g_{c_n\eta}+\f_{c_n\eta}$, and bounding the one-body term by $\gamma\|h_\mu\|_{L^\infty}$ and $-\gamma I_\mu\le0$,
\begin{multline}\label{eq:ZBeforeShortAbsorb}
\Xi_{N}^{\sigma}
\le C_{\mu,v,n}'\gamma
+\frac{1}{N}\sum_{1\le i\ne j\le N}\tl h_{n,\eta}^{\sigma}(x_i,x_j)\\
+\frac{\sigma\vep_{\gamma,n}}{N}\sum_{1\le i\ne j\le N}\tl\k_{n,\f_\eta}(x_i,x_j)-\frac{\gamma}{2N}\sum_{1\le i\ne j\le N}\tl\f_{c_n\eta}(x_i,x_j),
\end{multline}
where $C_{\mu,v,n}'\coloneqq C_{\mu,v,n}/(4C_{\mathrm{sr},n}\|\nab v\|_{L^\infty}^n)+\|h_\mu\|_{L^\infty}$.  By \eqref{eq:detCommShortRange}, \eqref{eq:HCRSfetaDecay}, \eqref{eq:HCRSconvBound}, and the choice \eqref{eq:epsGammaLocal},
\begin{align}\label{eq:shortAbsorb}
\frac{\sigma\vep_{\gamma,n}}{N}\sum_{1\le i\ne j\le N}\tl\k_{n,\f_\eta}(x_i,x_j)-\frac{\gamma}{2N}\sum_{1\le i\ne j\le N}\tl\f_{c_n\eta}(x_i,x_j)\le C_{\mathrm{abs},n}\gamma N\eta^{\d-\s};
\end{align}
indeed, multiplying \eqref{eq:detCommShortRange} by $\vep_{\gamma,n}N$ contributes $\frac{\gamma}{4N}\sum\f_{c_n\eta}+\frac{C_n\|\mu\|_{L^\infty}}{4C_{\mathrm{sr},n}}\gamma N\eta^{\d-\s}$, the total coefficient of $\sum\f_{c_n\eta}$ is then nonpositive, and the centered one-body terms of $-\frac{\gamma}{2N}\sum\tl\f_{c_n\eta}$ are bounded by $\gamma N\|\f_{c_n\eta}\ast\mu\|_{L^\infty}\le c_n^{\d-\s}C_0\,\gamma N\eta^{\d-\s}$, by \eqref{eq:HCRSconvBound}, matching \eqref{eq:CabsDef}.  Combining \eqref{eq:ZBeforeShortAbsorb} and \eqref{eq:shortAbsorb} gives, for every $0<\eta\le\eta_0$,
\begin{align}\label{eq:ZNetabound}
\Xi_{N}^{\sigma}\le C_{\mu,v,n}'\gamma+\frac{1}{N}\sum_{1\le i\ne j\le N}\tl h_{n,\eta}^{\sigma}(x_i,x_j)+C_{\mathrm{abs},n}\gamma N\eta^{\d-\s}.
\end{align}
Assume $4C_{\mu,v,n}'\gamma\le1$, and let $u\ge1$.  Taking $\eta=\eta(u,N)$ in \eqref{eq:ZNetabound}, the last term is at most $u/4$ by \eqref{eq:hetandef}, while $C_{\mu,v,n}'\gamma\le1/4\le u/4$; hence
\begin{align}\label{eq:eventReductionComm}
\Xi_{N}^{\sigma}\ge u
\quad\Longrightarrow\quad
\frac{1}{N}\sum_{1\le i\ne j\le N}\tl h_{n,\eta(u,N)}^{\sigma}(x_i,x_j)\ge \frac{u}{2}
\quad\Longleftrightarrow\quad
-\frac{1}{2N}\sum_{1\le i\ne j\le N}\tl q_{N,u}(x_i,x_j)\ge u,
\end{align}
which is (M1).

\par\smallskip\noindent\textbf{(M3) and the $L^\infty$ norm.}  Since the centering costs at most a factor $4$ in each of the norms of \cref{rem:Ustattail}, the majorant \eqref{eq:commMajorant} bounds every norm of $\tl q_{N,u}$ by $12\gamma$ times the corresponding norm of $\g_{\eta(u,N)}$.  These are bounded uniformly in the scale: $0\le\g_\eta\le\g$, the $L^2(\mu^{\otimes2})$ and $L^\infty L^2(\mu)$ norms of $\g$ against $\mu$ are finite exactly because $2\s<\d$, as in the partition-function case, and the operator norm follows from the Hilbert--Schmidt bound; only the $L^\infty$ norm sees the scale, through $\|\g_{\eta}\|_{L^\infty}\le C_2\eta^{-\s}$ from \eqref{eq:HCRSdiagPD}.  Altogether, for $0<\gamma\le1$, there is $C_{\mathsf K,\mu,v,n}<\infty$, independent of $u$, $N$, $\gamma$, and $\sigma$, with
\begin{align}\label{eq:commKernelNorms}
\max\brac[\big]{\|\tl q_{N,u}\|_{L^2(\mu^{\otimes2})}^2,\ \|\tl q_{N,u}\|_{L^\infty L^2(\mu)}^2,\ \|\Tc_{\tl q_{N,u}}\|_{\mathrm{op}}}&\le \gamma\,C_{\mathsf K,\mu,v,n},\nn\\
\|\tl q_{N,u}\|_{L^\infty}&\le \gamma\,C_{\mathsf K,\mu,v,n}\,\eta(u,N)^{-\s},
\end{align}
in the norms of \cref{rem:Ustattail}; hence (M3) holds as soon as $\gamma C_{\mathsf K,\mu,v,n}\le1/L$.

\par\smallskip\noindent\textbf{Growth bound \eqref{eq:commSlopeBound}.}  Assume $0<\gamma\le1$ and let $u\ge1$.  If $\eta(u,N)=\eta_0$, then, by \eqref{eq:commKernelNorms} and the definition of $\mathsf m_{N,u}$,
\begin{align}
\min\brac[\big]{u,\mathsf m_{N,u}}\pa[\big]{1+\|\tl q_{N,u}\|_{L^\infty}}
\le \frac{\gamma}{2}\,C_{\mathsf K,\mu,v,n}\eta_0^{-\s}\pa[\big]{1+C_{\mathsf K,\mu,v,n}\eta_0^{-\s}}N.
\end{align}
If instead the second entry of the minimum in \eqref{eq:hetandef} is selected, then necessarily $u\le 4C_{\mathrm{abs},n}\eta_0^{\d-\s}\,\gamma N$, and
\begin{align}\label{eq:commGrowthAdaptive}
\min\brac[\big]{u,\mathsf m_{N,u}}\pa[\big]{1+\|\tl q_{N,u}\|_{L^\infty}}
&\le u+\gamma C_{\mathsf K,\mu,v,n}\pa*{\frac{4C_{\mathrm{abs},n}\gamma N}{u}}^{\frac{\s}{\d-\s}}u\nn\\
&= u+C_{\mathsf K,\mu,v,n}\pa[\big]{4C_{\mathrm{abs},n}}^{\frac{\s}{\d-\s}}\,\gamma^{\frac{\d}{\d-\s}}\,u^{\frac{\d-2\s}{\d-\s}}\,N^{\frac{\s}{\d-\s}},
\end{align}
which is at most $M_{\mu,v,n}\gamma N$ by $u\le C\gamma N$, $\frac{\d-2\s}{\d-\s}+\frac{\s}{\d-\s}=1$, $\frac{\d}{\d-\s}+\frac{\d-2\s}{\d-\s}=2$, and $\gamma\le1$.  Enlarging $M_{\mu,v,n}$ to cover the case $\eta(u,N)=\eta_0$, this proves the first bound in \eqref{eq:commSlopeBound}, and it remains to set
\begin{align}\label{eq:admissibleBetaLocPositive}
\be_{\mathrm{loc}}\coloneqq\min\brac[\Big]{1,\ \frac{1}{4C_{\mu,v,n}'},\ \frac{1}{L\,C_{\mathsf K,\mu,v,n}},\ \frac{1}{2L^2M_{\mu,v,n}}},
\end{align}
so that, for $0<\gamma\le\be_{\mathrm{loc}}$, the smallness assumptions used in (M1) and (M3) hold and $2L^2M_{\mu,v,n}\gamma\le1$.  Step 2 is proved.

\step{Step 3: Conclusion, positive Riesz case.}
By Steps 1 and 2, \cref{prop:masterLayerCake} applies to $\Xi_N^{\sigma}$ with $U_0=1$ and $N_0=1$, so \eqref{eq:masterConclusion} gives
\begin{align}
\sup_{N\ge1}\ \E_{\mu^{\otimes N}}\brak[\big]{e^{\Xi_N^{\sigma}}}\le e^{U_0}+L=e+L,
\end{align}
which is \eqref{eq:commReducedGoal}.  This proves \eqref{eq:localNumeratorComm} in the positive Riesz case.

\step{Step 4: The logarithmic case $\s=0$.}  After further decreasing $\be_{\mathrm{loc}}$, the bound \eqref{eq:localNumeratorComm} holds in the logarithmic case as well.

\par\smallskip\noindent  We shall use the following elementary consequence of the logarithmic truncation.  For each fixed $n\ge1$,
\begin{align}\label{eq:logknBounds}
|\k_{n,\g_\eta}(x,y)|\le C_n\|\nab v\|_{L^\infty}^n,
\qquad
|\k_{n,\f_\eta}(x,y)|\le C_n\|\nab v\|_{L^\infty}^n\f_{c_n\eta}(x-y);
\end{align}
the first inequality follows from the homogeneity of $\nab^{\otimes n}\log|x|$ together with the small-scale truncation, and the second is \eqref{eq:logfetaDerivative}.

By \eqref{eq:logEnergyAssumption}, $\Fr_{N,T}\to\Fr_N$ $\mu^{\otimes N}$-a.e.\ as $T\to\infty$, so, by Step 1 and Fatou's lemma, it suffices to prove the bound \eqref{eq:commReducedGoal}, uniformly in $T\ge2$, for
\begin{align}\label{eq:logCommStart}
\Xi_{N,T}^{\sigma}\coloneqq \vep_{\gamma,n}C_{\mu,v,n}+\frac{\sigma\vep_{\gamma,n}}{N}\sum_{1\le i\ne j\le N}\tl\k_{n,\g}(x_i,x_j)-\gamma N\Fr_{N,T}(\XN,\mu),
\end{align}
which, by \eqref{eq:logFiniteTExponent}, is of the master form \eqref{eq:masterXiDef} with $g=\gamma\g_T-2\sigma\vep_{\gamma,n}\k_{n,\g}$ and $w=\gamma(h_{\mu,T}-I_{\mu,T})+\vep_{\gamma,n}C_{\mu,v,n}$.

Fix $K=1$, set $C_I(\mu)\coloneqq\sup_{T\ge2}|I_{\mu,T}|<\infty$, and enlarge the logarithmic constant $C_{\mu,v,n}'$ to include $C_I(\mu)$.  Set $C_{\mathrm{abs},n}'\coloneqq \frac{C_n\|\mu\|_{L^\infty}}{4C_{\mathrm{sr},n}}+c_n^{\d}\,C_0'$ with $C_0'$ the near-field constant in \eqref{eq:logfetaConv}, and repeat Steps 2 and 3 at each fixed $T\ge2$ with the data
\begin{align}\label{eq:logCommKernelDef}
\tl h_{n,\eta,T}^{\sigma}\coloneqq \sigma\vep_{\gamma,n}\tl\k_{n,\g_\eta}-\frac\gamma2\tl\g_{c_n\eta,T,>K},
\qquad
\eta(u,N)\coloneqq \min\Bigg\{\eta_0,\pa*{\frac{u}{4C_{\mathrm{abs},n}'\gamma N}}^{1/\d}\Bigg\},
\end{align}
$q_{N,u}\coloneqq-4\,\tl h^{\sigma}_{n,\eta(u,N),T}$, and $\mathsf m_{N,u}\coloneqq\frac N2\|\tl q_{N,u}\|_{L^\infty}$, so that (M2) again holds by definition.  The remaining verifications change as follows.

\par\smallskip\noindent\textbf{Verification of (M1).}  Decompose $\g=\g_\eta+\f_\eta$ in the commutator term and $\g_T=\g_{c_n\eta,T,\le K}+\g_{c_n\eta,T,>K}+\f_{c_n\eta}$ in the energy term; combining the one-body term in $-\gamma N\Fr_{N,T}$ with the low-frequency energy by \cref{lem:logLargeScaleCancellation}(ii), and using \eqref{eq:logknBounds}, \eqref{eq:logfetaConv}, and the choice \eqref{eq:epsGammaLocal}, gives the analogue of \eqref{eq:ZNetabound},
\begin{align}\label{eq:logZNetabound}
\Xi_{N,T}^{\sigma}
\le C_{\mu,v,n}'\gamma+\frac1N\sum_{1\le i\ne j\le N}\tl h_{n,\eta,T}^{\sigma}(x_i,x_j)
+C_{\mathrm{abs},n}'\gamma N\eta^\d,
\end{align}
with $\eta^\d$ in place of $\eta^{\d-\s}$, uniformly in $T\ge2$; (M1) then follows exactly as in \eqref{eq:eventReductionComm}.

\par\smallskip\noindent\textbf{Verification of (M3) and the $L^\infty$ norm.}   \eqref{eq:logknBounds} and \eqref{eq:logHighFreqNorms} give, uniformly in $T\ge2$, the analogue of \eqref{eq:commKernelNorms} with $\eta(u,N)^{-\s}$ replaced by $1+\log(1/\eta(u,N))$, for a constant $C_{\mathsf K,\mu,v,n}'$; hence (M3) holds after further decreasing $\be_{\mathrm{loc}}$.

\par\smallskip\noindent\textbf{Verification of the growth bound \eqref{eq:commSlopeBound}.}  The case $\eta(u,N)=\eta_0$ is as before.  If the second entry of the minimum in \eqref{eq:logCommKernelDef} is selected, then $r\coloneqq u/(4C_{\mathrm{abs},n}'\gamma N)\le\eta_0^{\d}\le1$ and $1+\log(1/\eta(u,N))=1+\frac1\d\log(1/r)$, so
\begin{align}\label{eq:logCommGrowth}
\min\brac[\big]{u,\mathsf m_{N,u}}\pa[\big]{1+\|\tl q_{N,u}\|_{L^\infty}}
\le u+\gamma C_{\mathsf K,\mu,v,n}'\Big(1+\tfrac1\d\log(1/r)\Big)\,u
\le M_{\mu,v,n}\gamma N,
\end{align}
after enlarging $M_{\mu,v,n}$, because $u=4C_{\mathrm{abs},n}'\gamma N\,r$ and $r\mapsto r\pa[\big]{1+\log(1/r)}$ is bounded on $(0,1]$.

Hence, for $0<\gamma\le\be_{\mathrm{loc}}$, \cref{prop:masterLayerCake}, applied with $U_0=1$ and $N_0=1$, gives, uniformly in $T\ge2$, $\sup_{N\ge1}\E_{\mu^{\otimes N}}[e^{\Xi_{N,T}^{\sigma}}]\le e+L$, and Fatou's lemma concludes.
\end{proof}

\subsection{Removal of the smallness assumption}
\label{subsec:removal-smallness-assumption}

The preceding lemma proves the required numerator estimate only for inverse temperatures at most $\be_{\mathrm{loc}}$.  We now complete the proof of \cref{thm:entropicCommutator} by retaining one factor at this local inverse temperature and controlling the remaining Gibbs factor by H\"older's inequality and the partition-function theorem.

\begin{proof}[Proof of \cref{thm:entropicCommutator}]
In the local range $0<\be\le\be_{\mathrm{loc}}$, applying \cref{lem:localNumeratorComm} with $\gamma=\be$ gives \eqref{eq:numeratorGoal}.  Dividing by $\K_{N,\be}(\mu)$ in \eqref{eq:EQNbeAn0} and using \eqref{eq:KlowerJensen} gives \eqref{eq:QNexpMomentAn}; in the positive Riesz case, one may take $C_{\mathrm{em}}(\be,\mu,v,n)=e+L$, while in the logarithmic case, one may take $C_{\mathrm{em}}(\be,\mu,v,n)=e^{-\be I_\mu}(e+L)$.

Assume next that $\be>\be_{\mathrm{loc}}$.  Then
\begin{align}\label{eq:epsLargeBetaProof}
\vep_{\be,n}
=\frac{\be_{\mathrm{loc}}}{8C_{\mathrm{sr},n}\|\nab v\|_{L^\infty}^n}.
\end{align}
For the positive Riesz energy, and for the finite-$T$ logarithmic energy before passage to the limit, we have the exact decomposition
\begin{align}\label{eq:holderDecomposition}
&\sigma\vep_{\be,n}N\mathsf A_n(v,\XN,\mu)-\be N\Fr_N(\XN,\mu)\nn\\
&\qquad=
\frac12\Bigg[
\sigma\frac{\be_{\mathrm{loc}}}{4C_{\mathrm{sr},n}\|\nab v\|_{L^\infty}^n}
N\mathsf A_n(v,\XN,\mu)
-\be_{\mathrm{loc}}N\Fr_N(\XN,\mu)
\Bigg]
-\pa*{\be-\frac{\be_{\mathrm{loc}}}{2}}N\Fr_N(\XN,\mu).
\end{align}
H\"older's inequality with exponents $2,2$ gives
\begin{align}\label{eq:holderBootstrapComm}
&\E_{\mu^{\otimes N}}
\Big[e^{\sigma\vep_{\be,n}N\mathsf A_n(v,\XN,\mu)
-\be N\Fr_N(\XN,\mu)}\Big]\nn\\
&\qquad\le
\Bigg(
\E_{\mu^{\otimes N}}
\Big[e^{
\sigma\frac{\be_{\mathrm{loc}}}
{4C_{\mathrm{sr},n}\|\nab v\|_{L^\infty}^n}
N\mathsf A_n(v,\XN,\mu)-\be_{\mathrm{loc}}N\Fr_N(\XN,\mu)}\Big]
\Bigg)^{1/2}
\K_{N,2\be-\be_{\mathrm{loc}}}(\mu)^{1/2}\nn\\
&\qquad\le
\pa*{e+L}^{1/2}
C_{\mathrm{ub}}(2\be-\be_{\mathrm{loc}},\mu)^{1/2}.
\end{align}
In the logarithmic case, the same estimate is first applied with $\Fr_{N,T}$ in place of $\Fr_N$, using the finite-$T$ local numerator estimate and the uniform-in-$T$ partition-function bound \eqref{eq:logFiniteTPartitionGoal} from \cref{subsec:uniform-bound-logarithmic}.  Fatou's lemma then gives \eqref{eq:holderBootstrapComm} for $\Fr_N$.

Dividing \eqref{eq:holderBootstrapComm} by $\K_{N,\be}(\mu)$ and using the Jensen lower bound \eqref{eq:KlowerJensen} proves \eqref{eq:QNexpMomentAn} for $\be>\be_{\mathrm{loc}}$ with the constant displayed in \eqref{eq:CemLargeBetaDef}.  This proves the exponential-moment assertion for all $\be>0$.

Finally, inserting \eqref{eq:QNexpMomentAn} and \eqref{eq:CpfDef} into \eqref{eq:expDV} and using the entropy rewriting \eqref{eq:intro-entropy-rewrite} with $\vep=\vep_{\be,n}$, gives, for every $\sigma\in\{\pm 1\}$,
\begin{align}
\sigma\E_{\XN\sim f_N}\Big[N\mathsf{A}_n(v,\XN,\mu)\Big]
&\le \frac{\be}{\vep_{\be,n}}N\Er_N(f_N,\mu)+\frac{1}{\vep_{\be,n}}\pa*{\log\K_{N,\be}(\mu)+\log C_{\mathrm{em}}(\be,\mu,v,n)}\nn\\
&\le \frac{\be}{\vep_{\be,n}}N\Er_N(f_N,\mu)+\frac{1}{\vep_{\be,n}}\pa*{\log C_{\mathrm{pf}}(\be,\mu)+\log C_{\mathrm{em}}(\be,\mu,v,n)}.
\end{align}
Taking the maximum over $\sigma\in\{\pm 1\}$ yields \eqref{eq:entropicCommutatorEstimate}.  The constants are exactly those displayed in \eqref{eq:CnvExplicit}, which completes the proof.
\end{proof}

\subsection{Thermal interpolation corollary}\label{subsec:commutator-temperature-interpolation}

The following corollary records the better of the fixed-temperature entropic estimate and the deterministic pointwise commutator estimate.  The latter becomes useful when the inverse temperature grows with $N$.

\begingroup
\begin{cor}\label{cor:entropicCommutatorTemperatureInterpolation}
Assume the hypotheses of \cref{thm:entropicCommutator} and moreover that the sharp pointwise commutator bound holds with the scale $a_{N,\s}$ from \eqref{eq:temperatureInterpolationScale}: there are constants $C_{\mathrm{pw}}(\mu,v,n),C_{\mathrm{pt}}(\mu)<\infty$ such that, for every $N\ge1$ and every configuration $\XN$,
\begin{align}\label{eq:pointwiseCommutatorBound}
\big|\mathsf A_n(v,\XN,\mu)\big|
\le C_{\mathrm{pw}}(\mu,v,n)
\pa*{\Fr_N(\XN,\mu)+C_{\mathrm{pt}}(\mu)\frac{a_{N,\s}}{N}}.
\end{align}
Then, for every $f_N\in\P((\R^\d)^N)$,
\begin{align}\label{eq:entropicCommutatorTemperatureMin}
\Big|\E_{\XN\sim f_N}\Big[N\mathsf A_n(v,\XN,\mu)\Big]\Big|
\le
\min\Big\{
&C_{\be,n,v}N\Er_N(f_N,\mu)+C_{\be,\mu,v,n},\nn\\
&C_{\mathrm{pw}}(\mu,v,n)N\Er_N(f_N,\mu)
+C_{\mathrm{pw}}'(\mu,v,n)a_{N,\s}
\Big\},
\end{align}
where one may take $C_{\mathrm{pw}}'(\mu,v,n)=C_{\mathrm{pw}}(\mu,v,n)C_{\mathrm{pt}}(\mu)$.
\end{cor}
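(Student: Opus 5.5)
The bound is a minimum of two estimates, so I will prove each one separately. The first entry is exactly \eqref{eq:entropicCommutatorEstimate} of \cref{thm:entropicCommutator}. Its hypotheses are assumed here, so nothing further is needed for it. All the work is in the second entry, which I will derive by averaging the pointwise bound \eqref{eq:pointwiseCommutatorBound} against $f_N$. The point of this entry is that its constant $C_{\mathrm{pw}}$ does not depend on $\be$.

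Taking expectations of \eqref{eq:pointwiseCommutatorBound} under $f_N$ and multiplying by $N$ gives
\begin{align}
\Big|\E_{\XN\sim f_N}\big[N\mathsf A_n(v,\XN,\mu)\big]\Big|
\le C_{\mathrm{pw}}\,N\,\E_{f_N}\big[\Fr_N(\XN,\mu)\big]+C_{\mathrm{pw}}C_{\mathrm{pt}}\,a_{N,\s}.
\end{align}
Here the absolute value can be taken inside the expectation because the pointwise bound already controls $|\mathsf A_n|$. To replace the energy term by the free energy, I use the definition \eqref{eq:ENdef}:
\[
N\,\E_{f_N}[\Fr_N]=N\Er_N(f_N,\mu)-\be^{-1}\Hr(f_N\vert\mu^{\otimes N}).
\]
Relative entropy is nonnegative, so $N\E_{f_N}[\Fr_N]\le N\Er_N(f_N,\mu)$. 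Since $C_{\mathrm{pw}}\ge0$, this yields the second entry with $C'_{\mathrm{pw}}=C_{\mathrm{pw}}C_{\mathrm{pt}}$. Taking the smaller of the two bounds then gives \eqref{eq:entropicCommutatorTemperatureMin}.

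The only step needing care is integrability: $\E_{f_N}[\Fr_N]$ must make sense and the expectation of $\mathsf A_n$ must be finite. If $\Er_N(f_N,\mu)=+\infty$, the second entry is trivially true. Otherwise the pointwise lower bound \eqref{eq:pointwiseLowerBoundForInterpolation} gives $\Fr_N\ge -C_{\mathrm{pt}}a_{N,\s}/N$, so the negative part of $\Fr_N$ is bounded. Finiteness of $\Er_N$ together with nonnegativity of the entropy then gives $\E_{f_N}[\Fr_N]<\infty$, and by \eqref{eq:pointwiseCommutatorBound} this in turn makes $\mathsf A_n$ integrable. This is routine, so I do not expect a real obstacle anywhere in the argument.
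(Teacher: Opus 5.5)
Your proposal is correct and follows the paper's proof essentially verbatim: the first entry is quoted directly from \cref{thm:entropicCommutator}, and the second comes from averaging \eqref{eq:pointwiseCommutatorBound} under $f_N$ and discarding the nonnegative relative entropy in \eqref{eq:ENdef}. Your extra integrability check goes beyond the paper's proof and is harmless; the lower bound on $\Fr_N$ it needs can also be read off from \eqref{eq:pointwiseCommutatorBound} itself whenever $C_{\mathrm{pw}}>0$, since the left-hand side there is nonnegative.
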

\endgroup

\begin{proof}[Proof of \cref{cor:entropicCommutatorTemperatureInterpolation}]
The entropic estimate is precisely \cref{thm:entropicCommutator}.  For the pointwise estimate, \eqref{eq:pointwiseCommutatorBound} gives
\begin{align}
\Big|\E_{\XN\sim f_N}\Big[N\mathsf A_n(v,\XN,\mu)\Big]\Big|
&\le \E_{\XN\sim f_N}\Big[N\big|\mathsf A_n(v,\XN,\mu)\big|\Big]\nn\\
&\le C_{\mathrm{pw}}(\mu,v,n)N\E_{\XN\sim f_N}\Big[\Fr_N(\XN,\mu)\Big]
+C_{\mathrm{pw}}'(\mu,v,n)a_{N,\s}\nn\\
&\le C_{\mathrm{pw}}(\mu,v,n)N\Er_N(f_N,\mu)
+C_{\mathrm{pw}}'(\mu,v,n)a_{N,\s},
\end{align}
using the nonnegativity of the relative entropy in the definition \eqref{eq:ENdef} of $\Er_N$.  Combining the two estimates proves \eqref{eq:entropicCommutatorTemperatureMin}.
\end{proof}

\subsection{Diffusive mean-field convergence}
\label{sec:diffusive-mean-field-convergence}

We have all the ingredients to prove \cref{thm:diffusiveMeanFieldConvergence}.  The dissipation identity for the modulated free energy is standard in the modulated-free-energy method; in the notation used here, it is the dynamical counterpart of the entropy rewriting \eqref{eq:intro-entropy-rewrite} and may be read, for instance, from \cite{RS2023lsi}.  After discarding the nonpositive relative Fisher-information term, this identity gives the abstract differential inequality \eqref{eq:abstractMFEdiss}.

Applying \cref{thm:entropicCommutator} with $n=1$, $\mu=\mu^t$, and $v=u^t$ gives, for a.e. $t$,
\begin{align}\label{eq:dynamicalCommutatorBound}
\Big|\E_{\XN\sim f_N^t}\Big[N\mathsf{A}_1(u^t,\XN,\mu^t)\Big]\Big|
\le C_{\be,1,u^t}N\Er_N^t+C_{\be,\mu^t,u^t,1},
\end{align}
where the constants are those in \cref{thm:entropicCommutator}.  Combining \eqref{eq:abstractMFEdiss} and \eqref{eq:dynamicalCommutatorBound}, we obtain
\begin{align}\label{eq:dynamicMFEGronwallInput}
\frac{\dd}{\dd t}\Er_N^t
\le C_{\be,1,u^t}\Er_N^t+\frac{C_{\be,\mu^t,u^t,1}}{N}
\end{align}
for a.e. $t$.  Multiplying by the integrating factor
$\exp(-\int_0^t C_{\be,1,u^\tau}(\mu^\tau)\,\dd\tau)$ and integrating from $0$ to $t$ gives
\begin{align}\label{eq:dynamicMFEGronwallIntegrated}
\Er_N^t
\le \exp\pa*{\int_0^t C_{\be,1,u^\tau}(\mu^\tau)\,\dd\tau}\Er_N^0
+\frac1N\int_0^t \exp\pa*{\int_\tau^t C_{\be,1,u^r}(\mu^r)\,\dd r}
C_{\be,\mu^\tau,u^\tau,1}\,\dd\tau,
\end{align}
which is exactly \eqref{eq:MFEdiffusiveGronwall} in view of \eqref{eq:MFEGronwallFactors}.  The uniform-in-time claim in \cref{rem:uniform-generation-chaos} follows from the same formula when $\sup_{t\ge0}\mathcal A^t<\infty$ and $\sup_{t\ge0}\mathcal B^t<\infty$.

\begingroup

\section{Static and dynamical fluctuations}
\label{sec:static-dynamic-clt-proofs}

This section proves \cref{prop:linearEmpiricalFluctuationCLT,thm:dynamicLinearStatisticsCLT}.  \Cref{subsec:static-linear-statistics-proof} proves the bounded-test moment-generating-function formula in \cref{prop:linearEmpiricalFluctuationCLT} by inserting a linear tilt into the modulated partition function and combining the joint linear/quadratic fluctuation limit with the determinant normalization; a uniform $L^2(\mu^{\otimes N})$ density bound and truncation then extend the weak central limit theorem to arbitrary $L^2(\mu)$ tests.  \Cref{subsec:dynamic-entropy-transfers} proves \cref{lem:dynamicEntropyTransfers}, which supplies the uniform entropy, commutator, and bounded-linear-statistic estimates needed for the dynamical argument.  Finally, \cref{subsec:dynamic-characteristic-functions} first proves the finite-$N$ adjoint-duality identity \cref{prop:finiteNAdjointDuality} and then uses it to prove the dynamical central limit theorem \cref{thm:dynamicLinearStatisticsCLT}.

\subsection{Static linear statistics}
\label{subsec:static-linear-statistics-proof}

\begin{proof}[Proof of \cref{prop:linearEmpiricalFluctuationCLT}]
\par\smallskip\noindent\textbf{Bounded tests and moment generating functions.}
Suppose first that $\phi_1,\ldots,\phi_k\in L^\infty(\mu)$.  Fix $\theta=(\theta_1,\ldots,\theta_k)\in\R^k$ and set
\begin{align}
\phi_\theta\coloneqq\sum_{i=1}^k\theta_i\bar\phi_i.
\end{align}
For $\XN=(x_1,\ldots,x_N)\sim\mu^{\otimes N}$, set
\begin{align}
U_N\coloneqq\frac1N\sum_{1\le i\ne j\le N}\tl\g(x_i,x_j).
\end{align}
Let $\mathsf Z$ be the centered isonormal Gaussian process over $L_0^2(\mu)$.  If $(\lambda_\ell,e_\ell)_{\ell\ge1}$ are the nonzero spectral data of $T_\mu$, set $G_\ell\coloneqq\mathsf Z(e_\ell)$ and
\begin{align}
U_\infty\coloneqq\sum_{\ell\ge1}\lambda_\ell(G_\ell^2-1),
\end{align}
where the series converges in $L^2$.  The general degenerate $U$-statistic limit theorem \cite[Section~5.5.2]{Serfling1980}, applied jointly with the linear statistic, gives\footnote{The cited result is stated for the quadratic degenerate $U$-statistic.  The joint formulation follows from the same orthogonal-expansion argument after adjoining the linear statistic.}
\begin{align}\label{eq:jointLinearQuadraticLimit}
(S_N(\phi_\theta),U_N)\Longrightarrow(\mathsf Z(\phi_\theta),U_\infty).
\end{align}

From \eqref{eq:FNdef},
\begin{align}
-\be N\Fr_N(\XN,\mu)
=-\frac\be2U_N+\be\int_{\R^\d} h_\mu\,\dd\mu_N-\be I_\mu.
\end{align}
In the logarithmic case, \eqref{eq:logEnergyAssumption} implies $h_\mu\in L^1(\mu)$; in the positive Riesz case, $h_\mu\in L^\infty$.  Hence the weak law of large numbers gives $\int_{\R^\d} h_\mu\,\dd\mu_N\to2I_\mu$ in probability.  Combining this with \eqref{eq:jointLinearQuadraticLimit},
\begin{align}
S_N(\phi_\theta)-\be N\Fr_N(\XN,\mu)
\Longrightarrow
\mathsf Z(\phi_\theta)-\frac\be2U_\infty+\be I_\mu.
\end{align}

To pass to exponential moments, choose $r>1$ and conjugate exponents $p,q>1$.  H\"older's inequality gives
\begin{align}
\E_{\mu^{\otimes N}}
\brak*{e^{rS_N(\phi_\theta)-r\be N\Fr_N(\XN,\mu)}}
\le
\E_{\mu^{\otimes N}}\brak*{e^{prS_N(\phi_\theta)}}^{1/p}
\K_{N,qr\be}(\mu)^{1/q}.
\end{align}
The first factor is bounded uniformly in $N$ because $\phi_\theta$ is bounded and centered, and the second is bounded by \cref{thm:partitionFunctionBound}(i).  Thus, the exponentials are uniformly integrable, and
\begin{align}
\E_{\mu^{\otimes N}}
\brak*{e^{S_N(\phi_\theta)-\be N\Fr_N(\XN,\mu)}}
\longrightarrow
e^{\be I_\mu}\E\brak*{e^{\mathsf Z(\phi_\theta)-\frac\be2U_\infty}}.
\end{align}

For a centered $f\in L^2(\mu)$, let $f_0$ be its orthogonal projection onto $\ker T_\mu$ and set $f_\ell\coloneqq\langle f,e_\ell\rangle_{L^2(\mu)}$.  Writing $\mathsf Z(f)$ and $U_\infty$ in their orthogonal-series representations and using the independence of the Gaussian coordinates,
\begin{align}\label{eq:staticGaussianTiltIdentity}
&\E\brak*{\exp\pa*{\mathsf Z(f)-\frac\be2U_\infty}}\notag\\
&\quad=
\exp\pa*{\frac12\|f_0\|_{L^2(\mu)}^2}
\prod_{\ell\ge1}e^{\be\lambda_\ell/2}
\E\brak*{\exp\pa*{f_\ell G_\ell-\frac{\be\lambda_\ell}{2}G_\ell^2}}\notag\\
&\quad=
\exp\pa*{\frac12\|f_0\|_{L^2(\mu)}^2}
\prod_{\ell\ge1}
e^{\be\lambda_\ell/2}(1+\be\lambda_\ell)^{-1/2}
\exp\pa*{\frac{f_\ell^2}{2(1+\be\lambda_\ell)}}\notag\\
&\quad=
\det_2(\mathrm{Id}+\be T_\mu)^{-1/2}
\exp\pa*{\frac12\langle f,(\mathrm{Id}+\be T_\mu)^{-1}f\rangle_{L^2(\mu)}}.
\end{align}
The product converges because $(\lambda_\ell)_\ell\in\ell^2$, $(f_\ell)_\ell\in\ell^2$, and
\begin{align}
\frac{\be\lambda}{2}-\frac12\log(1+\be\lambda)=O(\lambda^2)
\end{align}
as $\lambda\to0$.  Dividing by the partition-function limit in \cref{thm:partitionFunctionBound}(ii) and taking $f=\phi_\theta$ proves \eqref{eq:linearEmpiricalFluctuationMGF}.  Since the limiting moment generating function is finite in a neighborhood of the origin, the continuity theorem for moment generating functions gives the asserted Gaussian convergence for bounded tests (see, e.g., \cite[Section~30]{Billingsley1995}).

\begingroup

\par\smallskip\noindent\textbf{Extension to $L^2(\mu)$.}
Define the density
\begin{align}
L_N(\XN)
\coloneqq
\frac{\dd\Q_{N,\be}(\mu)}{\dd\mu^{\otimes N}}(\XN)
=
\frac{\exp\{-\be N\Fr_N(\XN,\mu)\}}{\K_{N,\be}(\mu)}.
\end{align}
By the definition of the partition function,
\begin{align}
\E_{\mu^{\otimes N}}\brak*{L_N^2}
=
\frac{\K_{N,2\be}(\mu)}{\K_{N,\be}(\mu)^2}.
\end{align}
At $\be=0$, one has $L_N\equiv1$.  For $\be>0$, \cref{thm:partitionFunctionBound}(i), applied at $2\be$, and the Jensen lower bound \eqref{eq:KlowerJensen} give the required bound.  Thus, in either case, there is a constant $C_{\be,\mu}<\infty$ such that
\begin{align}
\sup_{N\ge1}\|L_N\|_{L^2(\mu^{\otimes N})}
\le C_{\be,\mu}.
\end{align}
In the logarithmic case, only the uniform-bound part of \cref{thm:partitionFunctionBound} is used at $2\be$, so no doubled version of \eqref{eq:logRateExtraAssumptions} is required.

Fix $\theta=(\theta_1,\ldots,\theta_k)\in\R^k$ and set
\begin{align}
f\coloneqq\sum_{i=1}^k\theta_i\phi_i,
\qquad
f^{(M)}\coloneqq(-M)\vee(f\wedge M).
\end{align}
Since $S_N$ centers its argument,
\begin{align}
S_N(f)-S_N(f^{(M)})=S_N(f-f^{(M)}).
\end{align}
Cauchy--Schwarz, the preceding density bound, and the exact product-law variance identity give
\begin{align}
&\sup_{N\ge1}
\E_{\XN\sim\Q_{N,\be}(\mu)}
\brak*{|S_N(f)-S_N(f^{(M)})|}\notag\\
&\quad\le
C_{\be,\mu}
\sup_{N\ge1}
\E_{\mu^{\otimes N}}
\brak*{|S_N(f-f^{(M)})|^2}^{1/2}\notag\\
&\quad=
C_{\be,\mu}
\operatorname{Var}_\mu(f-f^{(M)})^{1/2}
\le
C_{\be,\mu}\|f-f^{(M)}\|_{L^2(\mu)}
\longrightarrow0
\end{align}
as $M\to\infty$.

For each fixed $M$, the bounded-test Gaussian convergence just proved gives
\begin{align}
\E_{\XN\sim\Q_{N,\be}(\mu)}
\brak*{e^{\iu S_N(f^{(M)})}}
\longrightarrow
\exp\pa*{-\frac12\Sigma_{\be,\mu}(f^{(M)},f^{(M)})}.
\end{align}
Moreover, using $|e^{\iu u}-e^{\iu v}|\le|u-v|$ for $u,v\in\R$,
\begin{align}
&\left|
\E_{\XN\sim\Q_{N,\be}(\mu)}\brak*{e^{\iu S_N(f)}}
-
\E_{\XN\sim\Q_{N,\be}(\mu)}\brak*{e^{\iu S_N(f^{(M)})}}
\right|\le
\E_{\XN\sim\Q_{N,\be}(\mu)}
\brak*{|S_N(f)-S_N(f^{(M)})|}.
\end{align}
Since $f^{(M)}\to f$ in $L^2(\mu)$, the continuity of $\Sigma_{\be,\mu}$ yields
\begin{align}
\Sigma_{\be,\mu}(f^{(M)},f^{(M)})
\longrightarrow
\Sigma_{\be,\mu}(f,f).
\end{align}
First letting $N\to\infty$ and then $M\to\infty$ proves \eqref{eq:linearEmpiricalFluctuationCF} for the chosen $\theta$.  Since $\theta\in\R^k$ was arbitrary, L\'evy's continuity theorem gives the joint Gaussian convergence (see, e.g., \cite[Theorem~3.3.17]{Durrett2019}).
\endgroup
\end{proof}

\subsection{Entropy transfers}
\label{subsec:dynamic-entropy-transfers}

Before deriving the finite-$N$ adjoint-duality identity, we collect three uniform estimates used to control the dynamical error terms: a relative-entropy bound, an $L^1$ estimate for the transport commutator, and an $L^1$ estimate for bounded linear statistics of the empirical measure.

\begin{lemma}[Finite-time entropy transfers]\label{lem:dynamicEntropyTransfers}
Assume the hypotheses of \cref{thm:dynamicLinearStatisticsCLT}.  Then:
\begin{enumerate}[(i)]
\item
\begin{align}\label{eq:dynamicUniformEntropy}
\sup_{N\ge1}\sup_{0\le t\le T}
\Hr\pa*{f_N^t\vert\Q_{N,\be}(\mu^t)}<\infty.
\end{align}
\item For each terminal pair $(t,\phi)$, with $f^r=\Uprop^{r,t}\phi$,
\begin{align}\label{eq:dynamicCommutatorL1}
\sup_{0\le r\le t}
\E_{\XN\sim f_N^r}
\brak*{|\mathsf A_1(\nabla f^r,\XN,\mu^r)|}
\le \frac{C_{t,\phi,T}}{N}.
\end{align}
\item If $(q^r)_{0\le r\le T}$ is a deterministic family of bounded measurable functions with $\sup_r\osc(q^r)<\infty$, then
\begin{align}\label{eq:dynamicLinearL1}
\sup_{0\le r\le T}
\E_{\XN\sim f_N^r}
\brak*{\left|\int_{\R^\d} q^r\,\dd(\mu_N^r-\mu^r)\right|}
\le \frac{C_{q,T}}{\sqrt N}.
\end{align}
\end{enumerate}
\end{lemma}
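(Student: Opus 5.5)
Part (i) comes from the entropy rewriting \eqref{eq:intro-entropy-rewrite}, the Gronwall estimate \eqref{eq:MFEdiffusiveGronwall}, and the partition-function bounds \eqref{eq:dynamicUniformPartitionBounds}. At time zero, $f_N^0=\Q_{N,\be}(\mu^0)$, so $\Hr(f_N^0\vert\Q_{N,\be}(\mu^0))=0$. By \eqref{eq:intro-entropy-rewrite},
\begin{align}
\be N\Er_N(f_N^0,\mu^0)=-\log\K_{N,\be}(\mu^0)\le-\log c_{\be,T}.
\end{align}
Multiplying \eqref{eq:MFEdiffusiveGronwall} by $\be N$ and using \eqref{eq:dynamicUniformBackgroundControls} gives
\begin{align}
\sup_N\sup_{t\le T}\be N\Er_N^t\le C\pa*{|\log c_{\be,T}|+1}.
\end{align}
Applying \eqref{eq:intro-entropy-rewrite} at time $t$ then yields
\begin{align}
\Hr(f_N^t\vert\Q_{N,\be}(\mu^t))=\be N\Er_N^t+\log\K_{N,\be}(\mu^t)\le \be N\Er_N^t+\log C_{\be,T},
\end{align}
and the right-hand side is bounded uniformly in $N$ and $t\le T$. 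This proves \eqref{eq:dynamicUniformEntropy}.

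For (ii) and (iii), I would first record a family-uniform exponential-moment estimate under $\Q_{N,\be}(\mu^r)$ and then transfer it through Donsker--Varadhan with the bounded entropy from (i). For (ii), apply \cref{thm:entropicCommutator} with $n=1$, $\mu=\mu^r$, $v=\nabla f^r$, for both signs $\sigma$. The constants $\vep_{\be,1}$ and $C_{\mathrm{em}}$ depend on $\mu^r$ only through $\|\mu^r\|_{L^\infty}$, $\|h_{\mu^r}\|_{L^\infty}$ (or $I_{\mu^r}$ in the log case, controlled by \eqref{eq:dynamicUniformLogEnergy}), and the partition-function constants; they depend on $v$ only through $\|\nabla v\|_{L^\infty}\le\|\nabla^{\otimes2}f^r\|_{L^\infty}$ and the one-body constant $C_{\mu,v,n}$. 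The bookkeeping in \cref{subsec:quantitative-bookkeeping} shows that these are bounded uniformly in $r\in[0,t]$ by \eqref{eq:dynamicUniformBackgroundControls}, \eqref{eq:dynamicUniformPartitionBounds}, and \eqref{eq:dynamicAdjointRegularity}. The bootstrap at temperatures above $\be_{\mathrm{loc}}$ needs $\K_{N,2\be-\be_{\mathrm{loc}}}\le\K_{N,2\be}^{\theta}$ via the H\"older interpolation of \cref{cor:partitionFunctionTemperatureInterpolation}, which is supplied by the bound on $\K_{N,2\be}$. This gives a uniform $\vep>0$ and $C<\infty$ with
\begin{align}
\E_{\Q_{N,\be}(\mu^r)}\big[e^{\vep N|\mathsf A_1|}\big]\le 2C.
\end{align}
Donsker--Varadhan then yields
\begin{align}
\E_{f_N^r}[N|\mathsf A_1|]\le\vep^{-1}\big(\Hr(f_N^r\vert\Q_{N,\be}(\mu^r))+\log 2C\big),
\end{align}
which is $O(1)$ by (i). Dividing by $N$ gives \eqref{eq:dynamicCommutatorL1}.

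For (iii), set $Y\coloneqq\sqrt N\int q^r\,\dd(\mu_N^r-\mu^r)$. Under $\mu^{\otimes N}$, Hoeffding's inequality gives
\begin{align}
\E_{\mu^{\otimes N}}\big[e^{\lambda|Y|}\big]\le 2e^{\lambda^2\osc(q^r)^2/8}.
\end{align}
Cauchy--Schwarz with the density bound $\E_{\mu^{\otimes N}}[L_N^2]=\K_{N,2\be}/\K_{N,\be}^2\le C_{2\be,T}/c_{\be,T}^2$ transfers this to $\Q_{N,\be}(\mu^r)$ uniformly in $r$. Donsker--Varadhan with $\lambda=1$ and (i) then bound $\E_{f_N^r}|Y|$ uniformly, which is \eqref{eq:dynamicLinearL1}.

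The main obstacle is in (ii): one must verify that every constant in \cref{thm:entropicCommutator} is controlled by the quantities held uniform in \eqref{eq:dynamicUniformBackgroundControls}--\eqref{eq:dynamicAdjointRegularity} and by nothing finer. In particular, the Fourier cutoff $K_{\be,\mu}$ and the constant $C_\mu$ depend only on $\|\mu\|_{L^\infty}$, and in the logarithmic case the cancellation constant $C_{\mu,K}$ of \cref{lem:logLargeScaleCancellation} must be checked to depend only on $\|\mu\|_{L^\infty}$ and $\inf_r I_{\mu^r}$. I would settle this first by tracing the dependencies through \cref{rem:masterUniform}.
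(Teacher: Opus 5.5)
Your proposal is correct and follows essentially the same route as the paper. Part (i) uses the entropy rewriting at times $0$ and $t$ together with the Gronwall bound and the partition-function bounds. Part (ii) applies \cref{thm:entropicCommutator} uniformly along the trajectory, with the H\"older/log-convexity control of $\K_{N,2\be-\be_{\mathrm{loc}}}$ by $\K_{N,2\be}$, then uses $e^{|z|}\le e^z+e^{-z}$ and Donsker--Varadhan with (i). Part (iii) uses Hoeffding, Cauchy--Schwarz against the $L^2$ Gibbs density, and Donsker--Varadhan. The dependency tracing you flag as the main obstacle is exactly what the paper delegates to \cref{rem:entropicCommutatorConstants}, via the scalar controls $\|\mu\|_{L^\infty}$, $\|\nabla v\|_{L^\infty}$, $\inf I_\mu$ and the common lower/upper partition-function bounds.
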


\begin{proof}
Since $f_N^0=\Q_{N,\be}(\mu^0)$, \eqref{eq:intro-entropy-rewrite} gives
\begin{align}
\Er_N(f_N^0,\mu^0)=-\frac1{\be N}\log\K_{N,\be}(\mu^0).
\end{align}
Combining \eqref{eq:MFEdiffusiveGronwall} with \eqref{eq:intro-entropy-rewrite},
\begin{align}
\Hr\pa*{f_N^t\vert\Q_{N,\be}(\mu^t)}
\le
-\mathcal A^t\log\K_{N,\be}(\mu^0)
+\be\mathcal B^t
+\log\K_{N,\be}(\mu^t),
\end{align}
which is uniformly bounded by hypotheses~(i) and~(ii) of \cref{thm:dynamicLinearStatisticsCLT}.  This proves (i).

For (ii), fix a terminal pair $(t,\phi)$ and set $f^r=\Uprop^{r,t}\phi$.  By hypotheses~(i) and~(iii),
\begin{align}\label{eq:dynamicCommutatorScalarControls}
M_T&\coloneqq\sup_{0\le r\le T}\|\mu^r\|_{L^\infty}<\infty,
&
\Lambda_{t,\phi}&\coloneqq\sup_{0\le r\le t}\|\nabla^{\otimes2}f^r\|_{L^\infty}<\infty,\notag\\
\text{when }\s=0,\qquad
J_T&\coloneqq\max\left\{0,-\inf_{0\le r\le T}I_{\mu^r}\right\}<\infty.
\end{align}
If $\Lambda_{t,\phi}=0$, then $\mathsf A_1(\nabla f^r,\XN,\mu^r)=0$ for every $r$.  Otherwise, the locally bounded dependence recorded in \cref{rem:entropicCommutatorConstants}, together with hypothesis~(ii), shows that \cref{thm:entropicCommutator} supplies constants $\vep_{t,\phi}>0$ and $C_{t,\phi}<\infty$, independent of $r$ and $N$, for both signs in
\begin{align}\label{eq:dynamicUniformCommutatorBounds}
\sup_{0\le r\le t}\sup_{N\ge1}\sup_{\sigma\in\{-1,1\}}
\E_{\XN\sim\Q_{N,\be}(\mu^r)}
\brak*{\exp\pa*{\sigma\vep_{t,\phi}N
\mathsf A_1(\nabla f^r,\XN,\mu^r)}}
\le C_{t,\phi}.
\end{align}
Consequently, $e^{|z|}\le e^z+e^{-z}$ gives a uniform exponential moment of $N|\mathsf A_1(\nabla f^r,\XN,\mu^r)|$ under the time-$r$ modulated Gibbs measure $\Q_{N,\be}(\mu^r)$.  The Donsker--Varadhan lemma, together with \eqref{eq:dynamicUniformEntropy}, now yields \eqref{eq:dynamicCommutatorL1}.

For (iii), put
\begin{align}
S_N^r(q^r)\coloneqq\sqrt N\int_{\R^\d} q^r\,\dd(\mu_N^r-\mu^r).
\end{align}
For every fixed $\lambda>0$, Cauchy--Schwarz and the definition of $\Q_{N,\be}(\mu^r)$ give
\begin{align}
\E_{\Q_{N,\be}(\mu^r)}\brak*{e^{\lambda|S_N^r(q^r)|}}
\le
\K_{N,\be}(\mu^r)^{-1}
\K_{N,2\be}(\mu^r)^{1/2}
\E_{(\mu^r)^{\otimes N}}\brak*{e^{2\lambda|S_N^r(q^r)|}}^{1/2}.
\end{align}
By Hoeffding's lemma (see, e.g., \cite[Lemma~2.2]{BoucheronLugosiMassart2013}) and $e^{|z|}\le e^z+e^{-z}$,
\begin{align}
\E_{(\mu^r)^{\otimes N}}\brak*{e^{2\lambda|S_N^r(q^r)|}}
\le
2\exp\pa*{\frac{\lambda^2}{2}\osc(q^r)^2}.
\end{align}
The uniform oscillation bound and the partition-function bounds therefore make the preceding exponential moment uniform in $N$ and $r$.  A second application of the Donsker--Varadhan lemma gives
\begin{align}
\sup_{0\le r\le T}\E_{\XN\sim f_N^r}\brak*{|S_N^r(q^r)|}\le C_{q,T},
\end{align}
which is \eqref{eq:dynamicLinearL1} after division by $\sqrt N$.
\end{proof}

\subsection{Adjoint duality and characteristic functions}
\label{subsec:dynamic-characteristic-functions}

The next proposition gives an exact finite-$N$ decomposition of a terminal fluctuation observable into its backward-transported initial value, a continuous martingale, and the transport-commutator remainder controlled by \cref{lem:dynamicEntropyTransfers}.

\begin{prop}[Finite-$N$ adjoint-duality identity]\label{prop:finiteNAdjointDuality}
For $t\in[0,T]$ and $\phi\in\mathscr D_T$, let $f^r=\Uprop^{r,t}\phi$ denote the classical solution of the backward adjoint problem \eqref{eq:backwardAdjointEquation}.  Then
\begin{align}\label{eq:finiteNAdjointDuality}
S_N^t(\phi)
=S_N^0(\Uprop^{0,t}\phi)
+\sqrt N\,M_N^t[f]
-\frac{\sqrt N}{2}\int_0^t
\mathsf A_1(\nabla f^r,\XN^r,\mu^r)\,\dd r,
\end{align}
where
\begin{align}\label{eq:dynamicRawMartingale}
M_N^\tau[f]
\coloneqq
\frac1N\sqrt{\frac2\be}\sum_{i=1}^N
\int_0^\tau\nabla f^r(x_i^r)\cdot \dd B_i^r,
\qquad 0\le\tau\le t.
\end{align}
By the boundedness of $\nabla f$, $(M_N^\tau[f])_{0\le\tau\le t}$ is a continuous square-integrable martingale.  The last term in \eqref{eq:finiteNAdjointDuality} converges to zero in $L^1$ at rate $O(N^{-1/2})$.
\end{prop}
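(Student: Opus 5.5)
We apply It\^o's formula to the time-dependent empirical functional $r\mapsto\int f^r\,\dd\mu_N^r=\frac1N\sum_i f^r(x_i^r)$ on $[0,t]$. By hypothesis~(iii), $f$, $\nabla f$, $\nabla^{\otimes2}f$ and $\partial_rf$ are bounded and jointly continuous, and hypothesis~(i) says the It\^o formula is valid. The result is
\begin{align}
\int\phi\,\dd\mu_N^t-\int f^0\,\dd\mu_N^0
=\int_0^t\int\Big(\partial_rf^r+\tfrac1\be\Delta f^r\Big)\dd\mu_N^r\,\dd r
-\int_0^t\frac1{N^2}\sum_{i\ne j}\nabla\g(x_i^r-x_j^r)\cdot\nabla f^r(x_i^r)\,\dd r+M_N^t[f].
\end{align}
We symmetrize the interaction term in $i\leftrightarrow j$, using that $\nabla\g$ is odd, and rewrite it as
\begin{align}
-\frac12\int_0^t\iint_{\triangle^c}\nabla\g(x-y)\cdot(\nabla f^r(x)-\nabla f^r(y))\,\dd\mu_N^r\,\dd\mu_N^r\,\dd r.
\end{align}

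On the limit side, we use the weak form of \eqref{eq:diffusiveMeanFieldEquation} tested against $f^r$. This gives $\int\phi\,\dd\mu^t-\int f^0\,\dd\mu^0=\int_0^t\int(\partial_rf^r+\frac1\be\Delta f^r)\dd\mu^r\,\dd r-\frac12\int_0^t\iint\nabla\g(x-y)\cdot(\nabla f^r(x)-\nabla f^r(y))\,\dd\mu^r\dd\mu^r\,\dd r$, with the same symmetrization. We subtract the two identities and expand the quadratic interaction form around $\mu^r$: writing $\mu_N^r=\mu^r+(\mu_N^r-\mu^r)$, the bilinear form splits into a cross term and a purely fluctuating term. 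Because the kernel $\nabla\g(x-y)\cdot(\nabla f(x)-\nabla f(y))$ is symmetric, the two cross terms are equal, and together they give
\begin{align}
-\int(\mu_N^r-\mu^r)(\dd x)\int\nabla\g(x-y)\cdot(\nabla f^r(x)-\nabla f^r(y))\,\dd\mu^r(y).
\end{align}
This is exactly the interaction part of $\mathcal L_{\be,\mu^r}f^r$ paired with $\mu_N^r-\mu^r$. Collecting all linear terms produces $\int_0^t\int(\partial_rf^r+\mathcal L_{\be,\mu^r}f^r)\,\dd(\mu_N^r-\mu^r)\,\dd r$, which vanishes by \eqref{eq:backwardAdjointEquation}. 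The remaining quadratic term is $-\frac12\int_0^t\mathsf A_1(\nabla f^r,\XN^r,\mu^r)\,\dd r$ by \eqref{eq:Andef} with $n=1$. Multiplying by $\sqrt N$ then yields \eqref{eq:finiteNAdjointDuality}.

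The main technical point is justifying these manipulations despite the singularity of $\nabla\g$ on the diagonal. For the particle side, the sum runs over $i\ne j$ and hypothesis~(i) guarantees the It\^o formula. For the mean-field side, the diagonal has $\mu^r\otimes\mu^r$-measure zero. The kernel is bounded by $\|\nabla^{\otimes2}f\|_{L^\infty}|x-y|\,|\nabla\g(x-y)|\lesssim\g(x-y)$ (or $\lesssim1$ when $\s=0$), which is locally integrable against bounded densities, so all pairings with $\mu^r$ are finite.

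The martingale claim is standard: the integrands $\nabla f^r(x_i^r)$ are bounded and adapted, so It\^o isometry gives square-integrability and continuity. For the last assertion, we apply Fubini together with \cref{lem:dynamicEntropyTransfers}(ii):
\begin{align}
\E\Big|\tfrac{\sqrt N}{2}\int_0^t\mathsf A_1\,\dd r\Big|\le\tfrac{\sqrt N}{2}\,t\sup_{r\le t}\E_{f_N^r}|\mathsf A_1(\nabla f^r,\XN,\mu^r)|\le C_{t,\phi,T}\,tN^{-1/2}.
\end{align}
Here it is essential that $\XN^r\sim f_N^r$ by definition of the particle laws.
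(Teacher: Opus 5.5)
Your proposal is correct and follows the paper's argument essentially step for step. The steps match: It\^o's formula for the empirical pairing, symmetrization of the interaction drift, subtraction of the weak mean-field identity, the quadratic expansion whose cross term produces the linearized drift and whose remainder is $\mathsf A_1$, cancellation via the backward adjoint equation, and the $O(N^{-1/2})$ remainder bound from \cref{lem:dynamicEntropyTransfers}(ii) and Fubini. The only cosmetic difference is that you apply It\^o's formula directly to the time-dependent test, whereas the paper first derives the fluctuation identity \eqref{eq:fixedTestFluctuationEquation} for a fixed test and then adds the $\partial_r f^r$ drift.
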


\begin{proof}
For a fixed $C_b^2$ test $\varphi$, It\^o's formula in \eqref{eq:diffusiveNParticleSystem} gives
\begin{align}\label{eq:dynamicItoEmpirical}
\dd\!\int_{\R^\d}\varphi\,\dd\mu_N^r
={}&-\frac1{N^2}\sum_{i\ne j}
\nabla\varphi(x_i^r)\cdot\nabla\g(x_i^r-x_j^r)\,\dd r
+\frac1\be\int_{\R^\d}\Delta\varphi\,\dd\mu_N^r\,\dd r\\
&+\sqrt{\frac2\be}\frac1N\sum_{i=1}^N
\nabla\varphi(x_i^r)\cdot \dd B_i^r.
\end{align}
Since $\g$ is even and $\nabla\g$ is odd,
\begin{align}\label{eq:dynamicSymmetrization}
\frac1{N^2}\sum_{i\ne j}
\nabla\varphi(x_i)\cdot\nabla\g(x_i-x_j)
={}&\frac12\int_{(\R^\d)^2\setminus\triangle}
\nabla\g(x-y)\cdot\pa*{\nabla\varphi(x)-\nabla\varphi(y)}
\,\dd\mu_N^{\otimes2}(x,y).
\end{align}
Testing the mean-field equation \eqref{eq:diffusiveMeanFieldEquation} against $\varphi$, integrating by parts, and symmetrizing the interaction term give
\begin{align}\label{eq:dynamicMeanFieldWeakIdentity}
\frac{\dd}{\dd r}\int_{\R^\d}\varphi\,\dd\mu^r
={}&-\frac12\int_{(\R^\d)^2}
\nabla\g(x-y)\cdot\pa*{\nabla\varphi(x)-\nabla\varphi(y)}
\,\dd\mu^r(x)\,\dd\mu^r(y)
+\frac1\be\int_{\R^\d}\Delta\varphi\,\dd\mu^r.
\end{align}
Because $\mu^r$ has a bounded density, it has no atoms, and the difference-of-gradients kernel is locally integrable.  Hence
\begin{align}\label{eq:dynamicQuadraticExpansion}
&\int_{(\R^\d)^2\setminus\triangle}
\nabla\g(x-y)\cdot\pa*{\nabla\varphi(x)-\nabla\varphi(y)}
\,\dd\pa*{(\mu_N^r)^{\otimes2}-(\mu^r)^{\otimes2}}(x,y)\\
&\qquad=
2\int_{(\R^\d)^2}
\nabla\g(x-y)\cdot\pa*{\nabla\varphi(x)-\nabla\varphi(y)}
\,\dd(\mu_N^r-\mu^r)(x)\,\dd\mu^r(y)
+\mathsf A_1(\nabla\varphi,\XN^r,\mu^r).
\end{align}
Subtracting \eqref{eq:dynamicMeanFieldWeakIdentity} from \eqref{eq:dynamicItoEmpirical}, multiplying by $\sqrt N$, and using \eqref{eq:adjointLinearizedOperator}, we obtain
\begin{align}\label{eq:fixedTestFluctuationEquation}
\dd S_N^r(\varphi)
=S_N^r(\mathcal L_{\be,\mu^r}\varphi)\,\dd r
-\frac{\sqrt N}{2}\mathsf A_1(\nabla\varphi,\XN^r,\mu^r)\,\dd r
+\sqrt{\frac2{\be N}}\sum_{i=1}^N\nabla\varphi(x_i^r)\cdot \dd B_i^r.
\end{align}
For the time-dependent test $f^r$, the additional drift $S_N^r(\partial_r f^r)\,\dd r$ cancels the linear term by \eqref{eq:backwardAdjointEquation}.  Integrating this identity with respect to $r$ over $[0,t]$, using $f^t=\phi$ and the backward adjoint equation, proves \eqref{eq:finiteNAdjointDuality}.  Finally, \eqref{eq:dynamicCommutatorL1} and Fubini's theorem give
\begin{align}
\E\brak*{\left|\frac{\sqrt N}{2}\int_0^t
\mathsf A_1(\nabla f^r,\XN^r,\mu^r)\,\dd r\right|}
\le \frac{tC_{t,\phi,T}}{2\sqrt N},
\end{align}
which proves the stated remainder estimate.
\end{proof}

\begin{proof}[Proof of \cref{thm:dynamicLinearStatisticsCLT}]
After relabeling, assume $0\le t_1\le\cdots\le t_k\le T$.  Fix $\theta=(\theta_1,\ldots,\theta_k)\in\R^k$, set $f_a^r=\Uprop^{r,t_a}\phi_a$, and extend $\nabla f_a^r$ by zero for $r>t_a$.  Define
\begin{align}
\phi_\theta&\coloneqq\sum_{a=1}^k\theta_af_a^0,\\
v_\theta^r&\coloneqq\sum_{a:\,r\le t_a}\theta_a\nabla f_a^r,\\
M_{N,\theta}^r&\coloneqq
\sqrt{\frac2{\be N}}\sum_{i=1}^N
\int_0^r v_\theta^u(x_i^u)\cdot \dd B_i^u.
\end{align}
The process $(M_{N,\theta}^r)_{0\le r\le t_k}$ is a continuous square-integrable martingale.  By \cref{prop:finiteNAdjointDuality},
\begin{align}\label{eq:dynamicJointRepresentation}
\sum_{a=1}^k\theta_aS_N^{t_a}(\phi_a)
=S_N^0(\phi_\theta)+M_{N,\theta}^{t_k}+R_{N,\theta},
\qquad
\E\brak*{|R_{N,\theta}|}\longrightarrow0.
\end{align}
Its predictable quadratic variation is
\begin{align}
\dd\langle M_{N,\theta}\rangle_r
=\frac2\be\int_{\R^\d}|v_\theta^r|^2\,\dd\mu_N^r\,\dd r.
\end{align}
By the adjoint regularity, $(|v_\theta^r|^2)_{0\le r\le t_k}$ is a deterministic family satisfying
\begin{align}
\sup_{0\le r\le t_k}\big\||v_\theta^r|^2\big\|_{L^\infty}<\infty.
\end{align}
Applying \eqref{eq:dynamicLinearL1} with $q^r=|v_\theta^r|^2$ gives
\begin{align}\label{eq:dynamicBracketReplacement}
\int_0^{t_k}
\E\brak*{\left|\int_{\R^\d}|v_\theta^r|^2\,\dd(\mu_N^r-\mu^r)\right|}\,\dd r
=O(N^{-1/2}).
\end{align}

Set
\begin{align}
\chi_N(r)
\coloneqq
\E\brak*{\exp\pa*{\iu\pa*{S_N^0(\phi_\theta)+M_{N,\theta}^r}}},
\qquad 0\le r\le t_k.
\end{align}
It\^o's formula gives the exact identity
\begin{align}
\chi_N(r)
=\chi_N(0)
-\frac1\be\int_0^r
\E\brak*{e^{\iu(S_N^0(\phi_\theta)+M_{N,\theta}^u)}
\int_{\R^\d}|v_\theta^u|^2\,\dd\mu_N^u}\,\dd u.
\end{align}
The complex exponential has modulus one, so
\begin{align}\label{eq:dynamicUnitModulusEstimate}
&\left|
\E\brak*{e^{\iu(S_N^0(\phi_\theta)+M_{N,\theta}^r)}
\int_{\R^\d}|v_\theta^r|^2\,\dd(\mu_N^r-\mu^r)}
\right|\le
\E\brak*{\left|\int_{\R^\d}|v_\theta^r|^2\,\dd(\mu_N^r-\mu^r)\right|}.
\end{align}
Using \eqref{eq:dynamicBracketReplacement} and \eqref{eq:dynamicUnitModulusEstimate},
\begin{align}
\sup_{0\le r\le t_k}
\left|\chi_N(r)-\chi_N(0)
+\frac1\be\int_0^r\pa*{\int_{\R^\d}|v_\theta^u|^2\,\dd\mu^u}\chi_N(u)\,\dd u\right|
\longrightarrow0.
\end{align}
Comparison with the corresponding scalar Volterra equation and Gronwall's inequality yield
\begin{align}\label{eq:dynamicCharacteristicFactorization}
\chi_N(t_k)
-\chi_N(0)
\exp\pa*{-\frac1\be\int_0^{t_k}\int_{\R^\d}|v_\theta^r|^2\,\dd\mu^r\,\dd r}
\longrightarrow0.
\end{align}
By \cref{prop:linearEmpiricalFluctuationCLT},
\begin{align}
\chi_N(0)
\longrightarrow
\exp\pa*{-\frac12\Sigma_{\be,\mu^0}(\phi_\theta,\phi_\theta)}.
\end{align}
Moreover,
\begin{align}\label{eq:dynamicTerminalCharacteristicComparison}
&\left|
\E\brak*{\exp\pa*{\iu\sum_{a=1}^k\theta_aS_N^{t_a}(\phi_a)}}
-\chi_N(t_k)
\right|
\le \E\brak*{|R_{N,\theta}|}\longrightarrow0.
\end{align}
Consequently,
\begin{align}\label{eq:dynamicLimitingCharacteristicFunction}
\E\brak*{\exp\pa*{\iu\sum_{a=1}^k\theta_aS_N^{t_a}(\phi_a)}}
\longrightarrow
\exp\pa*{-\frac12\Sigma_{\be,\mu^0}(\phi_\theta,\phi_\theta)
-\frac1\be\int_0^{t_k}\int_{\R^\d}|v_\theta^r|^2\,\dd\mu^r\,\dd r}.
\end{align}
The first quadratic expression in the exponent is
\begin{align}
\Sigma_{\be,\mu^0}(\phi_\theta,\phi_\theta)
=
\sum_{a,b=1}^k\theta_a\theta_b
\Sigma_{\be,\mu^0}\pa*{\Uprop^{0,t_a}\phi_a,\Uprop^{0,t_b}\phi_b},
\end{align}
while the second satisfies
\begin{align}
\frac2\be\int_0^{t_k}\int_{\R^\d}|v_\theta^r|^2\,\dd\mu^r\,\dd r
=
\sum_{a,b=1}^k\theta_a\theta_b\frac2\be
\int_0^{t_a\wedge t_b}\int_{\R^\d}
\nabla\Uprop^{r,t_a}\phi_a\cdot\nabla\Uprop^{r,t_b}\phi_b\,\dd\mu^r\,\dd r.
\end{align}
Their sum is exactly the covariance in \eqref{eq:dynamicFluctuationCovariance}.  The Cram\'er--Wold device and L\'evy's continuity theorem now give the joint convergence (see, e.g., \cite[Theorems~3.10.6 and~3.3.17]{Durrett2019}).
\end{proof}
\endgroup

\section{Consequences and open problems}
\label{sec:further-applications-future-directions}

We close by recording several directions suggested by the preceding estimates.  The first is to go beyond the leading determinant normalization.  In the Hilbert--Schmidt regime, the present work identifies the limiting modulated partition function as the Carleman--Fredholm determinant associated with the centered interaction operator.  It remains natural to ask for sharper asymptotics, including the first correction term and whether rare near-collision configurations, with pair separations below the typical interparticle scale $N^{-1/\d}$, produce additional local contributions not encoded by the limiting Gaussian second chaos and its Carleman--Fredholm determinant.

\begingroup
A second direction is to determine the sharp ultraviolet growth beyond the Hilbert--Schmidt threshold.  \Cref{prop:partition-function-sharpness} gives a polylogarithmic lower bound at $\s=\d/2$ and a stretched-exponential-in-$\log N$ lower bound when $\d/2<\s<\d$.  The larger rates predicted under the local nondegeneracy hypothesis in \cref{rem:expectedSharpUltravioletGrowth} remain open, as do the leading constants and the question whether a renormalized limiting object governs the corresponding modulated Gibbs ensembles.
\endgroup

\begingroup
For the periodic attractive logarithmic interaction with uniform background, the phase diagram formulated in \cref{conj:periodicAttractivePhaseDiagram} leaves four principal problems: proving the sharp zero-defect inequality and determinant limit in dimensions $7\le\d\le10$; determining the exact value of $\be_{\mathrm{gm}}(11)$, the coexisting minimizers, and the partition asymptotics at the first-order transition; proving $\be_{\mathrm{gm}}(\d)=\be_{\mathrm{s}}(\d)$ together with the matching critical upper asymptotic in dimensions $\d\ge12$; and identifying the determinant and fluctuation normalizations around the nonuniform minimizing phases.
\endgroup

Another open problem is to pass from the fixed diffusive regime to growing inverse temperature.  \Cref{cor:partitionFunctionTemperatureInterpolation} gives an upper bound as $\be=\be_N$ increases, up to $\be_N\sim N^{1-\s/\d}$ for $0<\s<\d/2$, where the factor $\be_N/N$ compensates the size $N^{\s/\d}$ of the Riesz kernel at the typical spacing $N^{-1/\d}$; in the logarithmic case, the corresponding scale is $\be_N\sim N$.  The sharpness of this interpolation is not addressed here.  This regime is also connected to forthcoming work by the third author and Serfaty \cite{rosenzweigCommutatorEstimatesSteins}, which proves quantitative Gaussian CLTs for linear statistics of logarithmic and Riesz gases across the range from $\be_N=O(1)$ to the microscopic Riesz scale.

The finite-dimensional fluctuation theorem \cref{thm:dynamicLinearStatisticsCLT} leaves open the corresponding path-space tightness problem.  As explained in \cref{rem:pathSpaceCLT}, tightness in a suitable negative-regularity path space, together with trajectory regularity of the limiting Gaussian field, would upgrade the identification proved here to a path-space central limit theorem.

Finally, as described in the introduction, the estimates proved here serve as inputs for the authors' companion works {\cite{DGRSizeOfChaos2026,DelgadinoGvalaniStein2026}} on sharp size-of-chaos bounds and on quantitative fluctuations of the canonical ensemble.  Together, these applications suggest that the modulated partition-function estimates provide a flexible normalization input for fluctuation, commutator, and quantitative-chaos problems for singular mean-field Gibbs measures.

\begingroup
\appendix
\crefalias{section}{appendix}
\section{Dependence of constants}
\label{subsec:quantitative-bookkeeping}

This appendix records how the constants in the main estimates may be chosen from the quantities appearing in the proofs.  The point is not to optimize universal numerical constants, but to make clear which norms of the background measure and transport field enter the estimates.
Throughout the appendix, universal constants may depend on $\d,\s$ and, in the commutator estimates, on the order $n$, as well as on the fixed choices in the GLZ and truncation estimates.  For the reference measure $\mu$, it is convenient to introduce the notation
\begin{align}\label{eq:measureBookkeepingPackage}
\mathsf M_{\be,\mu}
\coloneqq
1+\|\mu\|_{L^\infty}+|I_\mu|
+\|\tl\g\|_{L^2(\mu^{\otimes2})}
+\|\psi_\mu\|_{L^2(\mu)}
+\E_\mu\brak[\big]{e^{-4\be\psi_\mu}},
\qquad
\psi_\mu\coloneqq h_\mu-2I_\mu.
\end{align}
In the logarithmic case, one also keeps track of
\begin{align}\label{eq:logBookkeepingPackage}
\mathsf L_{\be,\mu}
\coloneqq
1+\int_{(\R^\d)^2}|\log|x-y||\,\dd\mu^{\otimes2}(x,y)
+\int_{\R^\d}e^{-4\be h_\mu}\,\dd\mu
+\|\tl\g\|_{L^2(\mu^{\otimes2})}.
\end{align}
These are exactly the additional logarithmic inputs used for the determinant limit and its consequences.  The commutator theorem itself requires only \eqref{eq:logEnergyAssumption} in the logarithmic case.

For the attractive logarithmic case, fix any admissible constant $C_{\mathrm{mom}}$ in \eqref{eq:attMomentAssumption}.  The threshold $\be_0$ is given by \eqref{eq:attBeta0Def}, with $C_\ast$ the universal constant of the moment bound \eqref{eq:attMomentBound}.  For every $0\le\be<\be_0$ with $e^{2\be h_\mu^{\mathrm{att}}}\in L^1(\mu)$, one admissible attractive uniform-bound constant is
\begin{equation}
C_{\mathrm{ub}}^{\mathrm{att}}(\be,\mu)
=
e^{\be|I_\mu|}\,
\E_\mu\brak[\big]{e^{2\be h_\mu^{\mathrm{att}}}}^{1/2}
\pa*{\frac{2}{1-\pa[\big]{\be C_\ast C_{\mathrm{mom}}}^2}}^{1/2}.
\end{equation}
For the quantitative conclusion in \eqref{eq:attQuantitativeDeterminantRate}, the threshold $\be_{\mathrm q}$ is chosen smaller than $\be_0/2$ so that the factorial-moment estimate and the operator spectral gap hold uniformly along the logarithmic truncation interpolation.  The constants $C_{\mathrm{rate}}^{\mathrm{att}}$ and $c_{\mathrm{att}}$ may depend on $\d$, $\|\mu\|_{L^\infty}$, $C_{\mathrm{mom}}$, $\be$, the logarithmic truncation constants, and $\int_{\R^\d} e^{4\be h_\mu^{\mathrm{att}}}\,\dd\mu$.  No uniformity over a class of backgrounds specified only by $\|\mu\|_{L^\infty}$ is asserted.

For $0<\s<\d/2$ and fixed $\be>0$, an admissible uniform partition-function constant can be obtained as follows.  Choose $K_{\be,\mu}\ge1$ satisfying
\begin{align}\label{eq:bookkeepingKbeta}
\frac{1}{\be^2 C_\mu K_{\be,\mu}^{2\s-\d}}\ge 4L,
\qquad
\frac{1}{\be C_\mu K_{\be,\mu}^{-(\d-\s)}}\ge 4L,
\end{align}
where $C_\mu$ denotes the constants in \eqref{eq:CtailFourier}--\eqref{eq:DtailFourier}.  With the layer-cake threshold
\begin{align}\label{eq:bookkeepingUbeta}
U_{\be,\mu}=4C\be K_{\be,\mu}^{\s}+1,
\end{align}
one may take $C_{\mathrm{ub}}(\be,\mu)$ to dominate the corresponding layer-cake bound
\begin{align}\label{eq:bookkeepingCubRiesz}
e^{\be\|h_\mu\|_{L^\infty}}\pa*{
e^{U_{\be,\mu}}+L\int_{U_{\be,\mu}}^\infty e^{-u}\,\dd u
+\max_{1\le N<N_{\be,\mu}}\K_{N,\be}(\mu)}
\end{align}
after increasing $N_{\be,\mu}$ so that \eqref{eq:allTermsBeatNoSmallBeta} holds.  In the logarithmic case, the same recipe uses the cutoff in \eqref{eq:logChooseK}, the finite-$T$ cancellation estimate \eqref{eq:logLargeScaleCancellation}, and the threshold in \eqref{eq:logLayerThresholdPartition}; the resulting constant is a function of $\be$ and the logarithmic package \eqref{eq:logBookkeepingPackage}.

For the convergence rate, let $C_{\mathrm{det}}(\be,\mu)$ denote any constant for the determinant and truncation error in \eqref{eq:rieszLaplaceLogFallback}.  Then the one-body reduction \eqref{eq:oneBodyReductionRate} shows that $C_{\mathrm{rate}}(\be,\mu)$ may be chosen to dominate
\begin{align}\label{eq:bookkeepingCrate}
C_{\mathrm{det}}(\be,\mu)
+\be\,\|\psi_\mu\|_{L^2(\mu)}
\pa[\Big]{C_{\mathrm{ub}}(2\be,\mu)^{1/2}
+C_{\mathrm{ub}}(4\be,\mu)^{1/4}\,\E_\mu\brak[\big]{e^{-4\be\psi_\mu}}^{1/4}}.
\end{align}
Thus, the only new one-body information needed after the determinant estimate is the $L^2(\mu)$ size and the exponential moment $\E_\mu[e^{-4\be\psi_\mu}]$ of $\psi_\mu$.

For the quantitative divergence estimate in \cref{prop:partition-function-sharpness}, the constants may depend on $\be$, $\mu$, $\d$, and $\s$, as well as on the density-point construction in \cref{lem:quantitativeDyadicCompression}.  More precisely, that construction selects a positive-density level, a density point, a local dyadic cube, and a sufficiently fine scale; the constants $c_0,C_0,L_0$ and the large-$N$ threshold inherit these choices.  The present proof therefore does not assert uniformity over a class of background laws specified only through a common $L^\infty$ bound.

\begin{remark}[Explicit constants]\label{rem:entropicCommutatorConstants}
If $\|\nab v\|_{L^\infty}=0$, then $\mathsf{A}_n(v,\XN,\mu)=0$ and \cref{thm:entropicCommutator} holds trivially with $C_{\mathrm{em}}=1$ and any $\vep_{\be,n}>0$.  If $\|\nab v\|_{L^\infty}>0$, the threshold $\be_{\mathrm{loc}}$ is determined by the constants in the local numerator estimate of \cref{lem:localNumeratorComm}, and the proof yields
\begin{align}\label{eq:epsnStatement}
\vep_{\be,n}=
\begin{cases}
\displaystyle
\frac{\be}{4C_{\mathrm{sr},n}\|\nab v\|_{L^\infty}^n},
&0<\be\le\be_{\mathrm{loc}},\\[1em]
\displaystyle
\frac{\be_{\mathrm{loc}}}{8C_{\mathrm{sr},n}\|\nab v\|_{L^\infty}^n},
&\be>\be_{\mathrm{loc}},
\end{cases}
\end{align}
where $C_{\mathrm{sr},n}$ is the short-range constant in \eqref{eq:detCommShortRange}, depending only on $\d,\s,n$, and
\begin{align}\label{eq:CemLargeBetaDef}
C_{\mathrm{em}}(\be,\mu,v,n)
=\begin{cases}
e+L,
&0<\be\le\be_{\mathrm{loc}},\ 0<\s<\d/2,\\[0.5em]
e^{-\be I_\mu}\pa*{e+L},
&0<\be\le\be_{\mathrm{loc}},\ \s=0,\\[0.5em]
e^{-\be I_\mu}\pa*{e+L}^{1/2}
C_{\mathrm{ub}}(2\be-\be_{\mathrm{loc}},\mu)^{1/2},
&\be>\be_{\mathrm{loc}},
\end{cases}
\end{align}
where $C_{\mathrm{ub}}$ is the uniform partition-function bound from \eqref{eq:CubDef}; see \cref{subsec:quantitative-bookkeeping} for the associated dependency bookkeeping.

The dependence is locally bounded in explicit scalar controls.  More precisely, fix $M,\Lambda<\infty$ and, when $\s=0$, $J<\infty$.  For fixed $\d,\s,n$, the proof of \cref{lem:localNumeratorComm} may be run uniformly over all backgrounds and transports satisfying
\begin{align}\label{eq:commutatorFamilyScalarControls}
\|\mu\|_{L^\infty}\le M,
\qquad
\|\nabla v\|_{L^\infty}\le\Lambda,
\qquad
I_\mu\ge-J\quad\text{if }\s=0.
\end{align}
After the trivial case $\|\nabla v\|_{L^\infty}=0$ is separated, every occurrence of the transport field in the centered commutator kernel and its one-body remainder is controlled by $|v(x)-v(y)|\le\Lambda|x-y|$; the local numerator proof may therefore be run with $\Lambda$ in place of $\|\nabla v\|_{L^\infty}$ in \eqref{eq:epsnStatement}.  In the logarithmic case, the remaining background dependence is controlled by
\begin{align}\label{eq:uniformAbsoluteLogEnergyControl}
\int_{(\R^\d)^2}|\log|x-y||\,\dd\mu^{\otimes2}(x,y)
\le
2M\int_{|z|\le1}-\log|z|\,\dd z+2J,
\end{align}
which follows by decomposing $-\log|x-y|$ into its positive and negative parts.  Consequently, $\be_{\mathrm{loc}}$ has a positive lower bound, and the local numerator constants have finite upper bounds, depending only on $\d,\s,n,M,\Lambda$ and, in the logarithmic case, $J$.

For a family satisfying \eqref{eq:commutatorFamilyScalarControls}, the exponential-moment constants under $\Q_{N,\be}(\mu)$ may therefore be chosen uniformly whenever $\K_{N,\be}(\mu)$ has a common positive lower bound and $\K_{N,2\be}(\mu)$ has a common upper bound.  Indeed, for $0\le\gamma\le\be$, log-convexity of the partition function and $\K_{N,0}(\mu)=1$ give
\begin{align}\label{eq:intermediatePartitionConvexity}
\K_{N,2\be-\gamma}(\mu)
\le
\K_{N,2\be}(\mu)^{1-\gamma/(2\be)},
\end{align}
which uniformly controls the intermediate partition-function factor in the H\"older bootstrap.
\end{remark}

For the commutator estimate, once a local threshold $\be_{\mathrm{loc}}(\mu,v,n)$ has been fixed in \cref{lem:localNumeratorComm}, the constants in \cref{thm:entropicCommutator} are explicitly
\begin{align}\label{eq:bookkeepingCommConstants}
C_{\be,n,v}=\frac{\be}{\vep_{\be,n}},
\qquad
C_{\be,\mu,v,n}
=\frac{1}{\vep_{\be,n}}
\pa*{\log C_{\mathrm{pf}}(\be,\mu)+\log C_{\mathrm{em}}(\be,\mu,v,n)},
\end{align}
with $\vep_{\be,n}$ and $C_{\mathrm{em}}$ given in \eqref{eq:epsnStatement} and \eqref{eq:CemLargeBetaDef}.  The dependence on the transport field enters through $\|\nab v\|_{L^\infty}$ and through the local numerator constants $C_{\mu,v,n}$, $M_{\mu,v,n}$, and $C_{\mathsf K,\mu,v,n}$ appearing in \cref{lem:localNumeratorComm}.  The local-boundedness statement in \cref{rem:entropicCommutatorConstants} shows that common scalar controls yield common choices of these constants and of $\be_{\mathrm{loc}}$.  For time-dependent applications, $\mathcal A^t$ and $\mathcal B^t$ in \eqref{eq:MFEGronwallFactors} are obtained by integrating precisely the resulting $C_{\be,1,u^t}(\mu^t)$ and $C_{\be,\mu^t,u^t,1}$.

\endgroup

\bibliographystyle{amsalpha}
\bibliography{DGR_PFunc_Current_20260902_145019_UTC}
\end{document}